\title[CLT Analysis of Sampling Algorithms]{Utilising the CLT Structure in Stochastic Gradient based Sampling : Improved Analysis and Faster Algorithms}
\definecolor{OliveGreen}{rgb}{0,0.6,0}
\newtheorem{assumption}{Assumption}
\newcommand{\red}[1]{{\color{red}#1}}
\newcommand{\green}[1]{{\color{OliveGreen}#1}}
\newcommand{\bR}{\mathbb{R}}
\newcommand{\bP}{\mathbb{P}}
\newcommand{\vA}{\mathbf{A}}
\newcommand{\tvN}{\Tilde{\mathbf{N}}}
\newcommand{\vx}{\mathbf{x}}
\newcommand{\hvx}{\hat{\vx}}
\newcommand{\hvX}{\hat{\mathbf{X}}}
\newcommand{\hvz}{\hat{\vz}}
\newcommand{\vy}{\mathbf{y}}
\newcommand{\hvy}{\hat{\vy}}
\newcommand{\vY}{\mathbf{Y}}
\newcommand{\disteq}{\stackrel{d}{=}}
\newcommand{\vz}{\mathbf{z}}
\newcommand{\vZ}{\mathbf{Z}}
\newcommand{\vg}{\mathbf{g}}
\newcommand{\vm}{\mathbf{m}}
\newcommand{\bE}{\mathbb{E}}
\newcommand{\dotp}[2]{\left\langle #1, #2 \right \rangle}
\newcommand{\cF}{\mathcal{F}}
\newcommand{\cG}{\mathcal{G}}
\newcommand{\cN}{\mathcal{N}}
\newcommand{\vK}{\mathbf{K}}
\newcommand{\vN}{\mathbf{N}}
\newcommand{\vW}{\mathbf{W}}
\newcommand{\vX}{\mathbf{X}}
\newcommand{\vB}{\mathbf{B}}
\newcommand{\vI}{\mathbf{I}}
\newcommand{\iidsim}{{\overset{\mathrm{iid}}{\sim}}}
\newcommand{\heps}{\hat{\epsilon}}
\newcommand{\TV}{\textrm{TV}}
\newcommand{\wass}[1]{\mathcal{W}_{#1}}
\newcommand{\mubar}{\Bar{\mu}}
\newcommand{\vG}{\mathbf{G}}
\newcommand{\tilvy}{\Tilde{\vy}}
\newcommand{\KL}[2]{\mathsf{KL}\left(#1\bigr|\bigr|#2\right)}
\newcommand{\Renyi}[3]{\mathcal{R}_{#3}\left(#1\bigr|\bigr|#2\right)}
\newcommand{\FD}[2]{\mathsf{FD}\left(#1\bigr|\bigr|#2\right)}
\newcommand{\bEvent}[1]{\mathbb{I}_{\left\{ #1 \right\}}}
\newcommand{\bProb}[1]{\mathbb{P}\left\{ #1 \right\}}
\newcommand{\Thetatilde}{\Tilde{\Theta}}
\newcommand{\Otilde}{\Tilde{O}}
\newcommand{\pistar}{\pi^*}
\newcommand{\CLSI}{\lambda_{\mathsf{LSI}}}
\newcommand{\CPI}{\lambda_{\mathsf{PI}}}
\newcommand{\CLO}[1]{\lambda_{\mathsf{LO}\left( #1 \right)}}
\newcommand{\ddt}{\frac{\dd}{\dd t}}
\newcommand{\Law}[1]{\mathrm{Law}\left(#1\right)}
 \newcommand{\ad}[1]{}
 \newcommand{\dn}[1]{}
 \newcommand{\ar}[1]{}
\newcommand{\aniket}[1]{}
\newcommand{\anant}[1]{}
\newcommand{\dheeraj}[1]{}
\newcommand{\hSigma}{\hat{\Sigma}}
\begin{document}

\maketitle

\begin{abstract}%
\sloppy
We consider stochastic approximations of sampling algorithms, such as Stochastic Gradient Langevin Dynamics (SGLD) and the Random Batch Method (RBM) for Interacting Particle Dynamcs (IPD). We observe that the noise introduced by the stochastic approximation is nearly Gaussian due to the Central Limit Theorem (CLT) while the driving Brownian motion is exactly Gaussian. We harness this structure to absorb the stochastic approximation error inside the diffusion process, and obtain improved convergence guarantees for these algorithms. For SGLD, we prove the first stable convergence rate in KL divergence without requiring uniform warm start, assuming the target density satisfies a Log-Sobolev Inequality. Our result implies superior first-order oracle complexity compared to prior works, under significantly milder assumptions. We also prove the first guarantees for SGLD under even weaker conditions such as H\"{o}lder smoothness and Poincare Inequality, thus bridging the gap between the state-of-the-art guarantees for LMC and SGLD. Our proof techniques also motivate the first known quantitative analysis of covariance correction, a procedure which corrects for the additional noise introduced by the stochastic approximation by rescaling the strength of the diffusion. Finally, we apply our techniques to analyze RBM, and significantly improve upon the guarantees in prior works (such as removing exponential dependence on horizon), under minimal assumptions. \\

\end{abstract}

\begin{keywords}%
Langevin Monte Carlo, SGLD, Sampling, CLT, Random Batch Method.
\end{keywords}

\section{Introduction}
The task of simulating stochastic systems or sampling from a target distribution in continuous domains via discretizations of Stochastic Differential Equations (SDEs) is a fundamental problem in machine learning, theoretical computer science, statistical physics and scientific computing \citep{parisi1981correlation, robert1999monte, frenkel2001understanding, shreve2005stochastic, lee2022manifold}. This problem finds applications in several domains such as Bayesian inference \citep{welling2011bayesian}, generative modelling \citep{ho2020denoising}, differential privacy \citep{gopi2022private} and algorithmic geometry \citep{kannan-lovasz-simonovits}. Popular algorithms for this problem include Langevin Monte Carlo (LMC) \citep{parisi1981correlation}, Hamiltonian Monte Carlo (HMC) \citep{neal2011mcmc}, Stein Variational Gradient Descent (SVGD) \citep{liu2016stein} and Interacting Particle Dynamics (IPD) \citep{carrillo2021consensus}. 
In this work, we study the stochastic approximations of Langevin Monte Carlo (LMC) and Interacting Particle Dynamics (IPD). \ref{eqn:LMC} aims to sample from a target distribution over $\bR^d$, whose density $\pistar(\vx) \propto \exp(-F(\vx))$ is known only upto a normalizing constant. This is achieved via an Euler discretization of the Langevin SDE with a step-size $\eta > 0$ as follows:
\begin{align}
\tag{LMC}
    \vx_{k+1} \leftarrow \vx_k - \eta \nabla F(\vx_k) + \sqrt{2 \eta} \epsilon_k, \ \epsilon_k \sim \cN(0, \vI) 
\label{eqn:LMC}
\end{align}
While \ref{eqn:LMC} is a popular algorithm in various statistical applications, there exist several practical settings where computing $\nabla F$ might be infeasible, or even intractable, whereas an unbiased estimate of $\nabla F$ is easily computable. Perhaps the most popular example is that of problems with a finite-sum structure, where $F(\vx) = \nicefrac{1}{n} \sum_{i=1}^{n} f_i(\vx)$. Here, a stochastic gradient (or a random batch gradient) can be computed as $\frac{1}{B}\sum_{i=1}^{B}\nabla f_{I_i}(\vx)$ for $I_1,\dots,I_B \ \iidsim \ \mathsf{Uniform}([n])$ and used in place of $\nabla F(\vx)$ in the \ref{eqn:LMC} update. The resulting algorithm is a stochastic approximation of \ref{eqn:LMC} called Stochastic Gradient Langevin Dynamics (SGLD) \citep{welling2011bayesian}, and is widely used in practice in large-scale problems. However, theoretical analysis of its convergence properties is relatively unexplored. Most prior works on analyzing the convergence of SGLD require restrictive assumptions like dissipativity and warm start \citep{raginsky2017non, quanquan-sgld}, with some being specific to finite-sum problems with smooth components \citep{suzuki-VR-SGLD-2022}. On the contrary, the convergence of LMC is well characterized \citep{vempalawibisono19, erdogdu2021-tail-growth, sinho-lmc-poincare-lsi} under a variety of isoperimetric conditions (e.g. Poincare Inequality, Log Sobolev Inequality) which are significantly weaker than the assumptions used to analyze SGLD. This situation is in stark contrast to that of optimization where Gradient Descent (GD) and Stochastic Gradient Descent (SGD) are analyzed under (nearly) identical assumptions.  \\

\noindent IPD is an algorithm that simulates the aggregation-diffusion dynamics of $n$ particles with pairwise interactions, and is actively used for simulating physical systems, sampling, and optimization (see Appendix~\ref{sec:lit_review_more}). When implemented naively, IPD incurs a per-step complexity of $O(n^2)$ which can be prohibitive for several problems. To ameliorate this, the Random Batch Method (RBM) performs a stochastic approximation of the inter-particle interactions by only considering the interaction of each particle with a random subset of $B$ particles, thereby reducing the per-step complexity to $O(nB)$.
\begin{equation}
\tag{IPD}
    \vx_{k+1}^{i} = \vx_k^{i} + \eta \vg^{i}_k(\vx_k^{i}) +\frac{\eta}{n} \sum_{j =1}^{n} \vK^{ij}_{k}(\vx^{i}_k,\vx^{j}_k) + \sqrt{\eta}\sigma \epsilon^{i}_k,  \quad \forall \ i\in[n], \epsilon_k^{i} \sim \cN(0,\vI)
\label{eqn:IPD}
\end{equation}
For both SGLD and RBM, the stochastic approximation takes the form of a conditionally i.i.d. average, and hence the error produced by the stochastic approximation is nearly Gaussian due to the Central Limit Theorem (CLT). This is in addition to, and independent of the Brownian motion driving the system, which is typically of higher magnitude. The key focus of our work is to utilize the i.i.d average structure (or CLT structure) of the stochastic approximation and apply non-asymptotic CLTs (or CLT-like arguments) to understand the interaction between the two independent sources of stochasticity. To this end, we show that the stochastic approximation noise can be effectively \emph{absorbed inside} the Brownian motion, and use this insight to derive state-of-the-art guarantees for these algorithms. Motivated by our analysis, we also design a novel covariance-correction strategy which compensates for the stochastic approximation error by  adaptively rescaling the diffusion term, and show that this leads to faster convergence without increase in computational complexity. 

\subsection{Contributions}
\begin{table}[t]
\label{tab:sgld-table}
\begin{center}
\label{tab:results-table}
  \small
    \begin{tabular}
    {|c|c|c|c|c|c|}
    \hline
    Result & Algorithm & Assumptions & Metric & Complexity \\
    \hline
    \cite{raginsky2017non}
    & SGLD & \shortstack{\red{Component Smooth}, \red{Dissipative}} & $\wass{2}$ & \normalsize $\tfrac{\textrm{poly}(d)}{\epsilon^4}$\small (\red{Unstable}) \\ [0.5ex]
    \hline
    \cite{quanquan-sgld} & SGLD & \shortstack{\red{Dissipative}, \red{Warm Start}\\ \red{Component Smooth} }  & $\TV$ & \normalsize $\tfrac{d^4}{\epsilon^2}$\small 
 (\red{Unstable}) \\ [0.5ex]
    \hline
    \hyperlink{cite.suzuki-VR-SGLD-2022}{\shortstack{Kinoshita and \\ Suzuki (2022)}}
     & VR-SGLD & \shortstack{Finite-Sum, LSI\\ \red{Component Smooth}} & \normalsize$\sqrt{\mathsf{KL}}$ & $\tfrac{d \sqrt{n}}{\epsilon^2}$ \small  (Stable)\\ [0.5ex]
    \hline
    \textbf{Theorem \ref{thm:sgld-lsi-stable}} & SGLD &  \shortstack{Smooth, LSI, $4^{\textrm{th}}$ moment} & $\sqrt{\mathsf{KL}}$ & \normalsize $\tfrac{d^{1.5}}{\epsilon^2}$ \small (Stable)  \\ [0.5ex]
    \hline
    \textbf{Theorem \ref{thm:absgld-lsi-stable}} & AB-SGLD & \shortstack{Finite-Sum, Smooth, LSI} & $\sqrt{\mathsf{KL}}$ & \normalsize $\tfrac{d^{1.5}}{\epsilon^2}$ \small (Stable) \\ [0.5ex]
    \hline
    \textbf{Corollary \ref{cor:cc-sgld-unstable-lo}} & CC-SGLD & Smooth, LSI, $6^{\textrm{th}}$ moment & $\TV$ & \normalsize $\tfrac{d^{\nicefrac{4}{3}}}{\epsilon^2}$\small (\red{Unstable})\\ [0.5ex]
    \hline
    \textbf{Theorem \ref{thm:sgld-fd-and-pi}} & Averaged SGLD & Smooth, \green{\textbf{PI}}, $4^{\textrm{th}}$ moment & $\TV$ & \normalsize $\tfrac{d^{2.5}}{\epsilon^4}$ \small (Stable)\\ [0.5ex]
    \hline
     \textbf{Corollary \ref{cor:sgld-unstable-lo}} & SGLD & Smooth, \green{\textbf{PI}}, $6^{\textrm{th}}$ moment & $\TV$ & \normalsize $\tfrac{d^{3.5}}{\epsilon^2}$ \small (\red{Unstable})\\ [0.5ex]
    \hline
    \textbf{Corollary \ref{cor:cc-sgld-unstable-lo}} & CC-SGLD & Smooth, \green{\textbf{PI}}, $6^{\textrm{th}}$ moment & $\TV$ & $\tfrac{d^{\nicefrac{10}{3}}}{\epsilon^2}$ (\red{Unstable})\\
    \hline
    \end{tabular}
    \normalsize
\end{center}
\vspace{-0.5cm}
\caption{Comparison with prior works for SGLD. $d,\epsilon$ and $n$ respectively mean the dimension, error and number of components (for finite-sum problems). Note that 1. dissipativity implies LSI and $p^{\textrm{th}}$ moment bounds 2.  $\sqrt{\mathsf{KL}}$ is a stronger `metric' than $\TV$ and under LSI it is stronger than $\wass{2}$}.
\vspace{-1cm}
\end{table}
\normalsize
Our work develops non-asymptotic CLTs (our CLT-like arguments) to quantify the approximate Gaussianity of the noise introduced by the random batch-based stochastic approximations used in SGLD and RBM, leading to state-of-the-art convergence guarantees and algorithmic improvements. Our key contributions are summarized as follows.

\paragraph{SOTA Guarantee for Smooth SGLD under LSI} When $F$ is smooth and $\pistar$ satisfies a Log-Sobolev Inequality (LSI), we prove that the oracle complexity (i.e. number of stochastic gradient evaluations) of SGLD to attain last-iterate $\epsilon$-convergence in KL divergence is $\Otilde(\nicefrac{d^{1.5}}{\epsilon})$, assuming every iterate has $O(d^2)$ $4^{\textrm{th}}$ moment. When $F$ has a finite-sum structure, we remove the $4^{\mathsf{th}}$ moment assumption by using adaptive batch-sizes. Compared to prior works, our result obtains a significantly improved oracle complexity in a stronger metric, without imposing restrictive assumptions like dissipativity, component smoothness and warm start (see Table 1). Our last-iterate error bounds are stable, i.e. they do not diverge as the number of iterations $k \to \infty$. Our results for SGLD also improve upon the best-known rates for more sophisticated algorithms such as variance-reduced SGLD (VR-SGLD).

\paragraph{Stationarity Guarantee for Smooth SGLD} In the above setting, without any isoperimetric assumptions on $\pistar$, we obtain a stable $O(\nicefrac{d^{2.5}}{\epsilon^2})$ oracle complexity for average-iterate $\epsilon$-convergence of SGLD in Fisher Divergence (FD). In the sampling literature, convergence in FD is considered analogous to first-order stationarity in nonconvex optimization \citep{sinho-non-log-concave-lmc, sinho-fisher-lb}, making our result the first known stationarity guarantee for smooth SGLD. As a corollary, we derive a stable oracle complexity of $O(\nicefrac{d^{2.5}}{\epsilon^4})$ for $\epsilon$-convergence in Total Variation (TV) whenever $\pistar$ satisfies a Poincare Inequality (PI), providing the first known rate for SGLD under PI. Prior to this work, such guarantees were only known for LMC \citep{sinho-non-log-concave-lmc}. Thus, our results significantly bridge the gap between SGLD and LMC.

\paragraph{Statistical Indistinguishability of SGLD and LMC} Without imposing any smoothness or isoperimetric assumptions, we obtain trajectory-level KL divergence bounds between SGLD and LMC, which show that the two algorithms are nearly statistically indistinguishable. This result leads to a highly general technique for obtaining last-iterate TV guarantees for SGLD under a variety of settings. As a corollary, we obtain a $\Thetatilde(\nicefrac{d^{3.5}}{\epsilon^2})$ last-iterate oracle complexity for SGLD when $F$ is smooth and $\pistar$ satisfies PI.

\paragraph{Covariance Correction} We apply our proof techniques to analyze Covariance Correction, a popular technique for implementing stochatic gradient MCMC algorithms (dating back to at least \cite{ma2015complete} \footnote{The reference \cite{ma2015complete} came to the authors' notice after the publication of this work}) which involves compensating for the excess noise due to stochastic approximation by appropriately rescaling the diffusion term. By obtaining the first known convergence guarantees for SGLD with covariance correction (CC-SGLD), we rigorously establish that covariance correction indeed leads to faster convergence without increase in computational complexity. In particular, for CC-SGLD, we obtain last-iterate TV guarantees of $\Thetatilde(\nicefrac{d^{4/3}}{\epsilon^2})$ under smoothness and LSI, and $\Thetatilde(\nicefrac{d^{10/3}}{\epsilon^2})$ under smoothness and PI.

\paragraph{Analysis of RBM} Assuming only boundedness of the inter-particle interactions $\vK^{ij}_k$, we derive an $\Otilde(\nicefrac{\eta^2 nK}{B^2})$ trajectory level upper bound between $K$ iterations of RBM (run with $n$ particles, batch-size $B$ and step-size $\eta$) and IPD. We improve this to $\Otilde(\nicefrac{\eta^2 nK}{B^3})$ for Covariance Corrected RBM (CC-RBM). Our results (see Table 2) significantly improves upon prior works that either make stringent assumptions on $\vg_k, \vK^{ij}_k$ and its derivatives \citep{jin2020random}, or suffer an exponential dependence on $\eta K$ \citep{jin2021convergence}.

\begin{table}[t]
\begin{center}
\label{tab:results-table-IPD}
  \small
    \begin{tabular}
    {|c|c|c|c|c|c|}
    \hline
    Result & Algorithm & Assumptions & Metric & Bound \\
    \hline
    \cite{jin2020random}
    & RBM & \shortstack{$-\vg^{i}_k(\vx) = \nabla V(\vx)$, $\vK_k^{ij}(\vx,\vy) = H(\vx-\vy)$\\ $\|\nabla H\| \leq L$, $\nabla^2 V \succeq r\vI$, $r > 2L$ } & \shortstack{averaged \\$\wass{2}$} & \normalsize $\sqrt{\frac{\eta}{B}+\eta^2}$ \\
    \hline
    \cite{jin2021convergence} & RBM & \shortstack{poly growth of $\vg_k^{i},\nabla \vg_k^{i}$ \\ $\vK_t^{ij}, \nabla \vK_k^{ij}, \nabla^2 \vK_k^{ij}$ are bounded} & \shortstack{averaged \\$\wass{2}$} & $e^{CK\eta}\sqrt{\tfrac{\eta}{B}+\eta^2}$ \\
    \hline 
    Theorem~\ref{thm:rbm-stat-indistinguishability} & RBM & \shortstack{Finite $\vg_k^{ij}$, bounded $\vK_k^{ij}$ } & $\sqrt{\mathsf{KL}}$ & $\frac{\sqrt{n\eta^2 K}}{B}$\\
    \hline
    Theorem~\ref{thm:cc-rbm-stat-indistinguishability} & CC-RBM & \shortstack{Finite $\vg_k^{ij}$, bounded $\vK_k^{ij}$} & $\sqrt{\mathsf{KL}}$ & $\frac{\sqrt{n\eta^2 K}}{B^{\nicefrac{3}{2}}}$\\
    \hline
    \end{tabular}
    \normalsize
\end{center}
\vspace{-0.5cm}
\caption{Comparison with prior works for RBM (prior works consider convergence to continuous time version of IPD). The function $H$ is bounded uniformly. The exponential bound can be inferred by following the proof of \cite[Theorem 3.1]{jin2021convergence}. Averaged $\wass{2}$ refers to average of $\wass{2}$ distance of laws of individual particles and $\mathsf{KL}$ is the KL divergence between the laws of entire trajectories of all particles.}
\vspace{-0.5cm}
\end{table}
\normalsize

\subsection{Related Work}
Sampling algorithms such LMC and IPD have been extensively studied by prior works. Below we give a concise review of the relevant literature and refer to Appendix~\ref{sec:lit_review_more} for a detailed discussion
\paragraph{LMC and SGLD} 
In typical Bayesian inference settings, the function $F$ in \ref{eqn:LMC} is an empirical average of $n$ sample functions, which can be expensive to evaluate as $n$ is generally very large. For such large-scale problems, SGLD has emerged as the sampling algorithm of choice \citep{welling2011bayesian}. While some prior works have investigated the convergence properties of SGLD \citep{raginsky2017non, quanquan-sgld}, they generally impose stringent assumptions like dissipativity and warm start. On the contrary, the convergence of LMC is well understood under much weaker isoperimetric assumptions \citep{vempalawibisono19,sinho-lmc-poincare-lsi}.

\paragraph{IPD and RBM} IPD can be computationally prohibitive whenever the number of particles $n$ is large. To ameliorate this, the Random Batch Method, which considers the interaction of each particle only with a random subset of other particles, was first proposed in \cite{jin2020random}, which also provided convergence guarantees under stringent regularity conditions on the confining and interaction forces (see Table \ref{tab:results-table-IPD}). These conditions were relaxed in \cite{jin2021convergence} at the cost of an exponential dependence on the horizon $\eta K$. Subsequent works \citep{ko2021uniform, daus2021random} also analyze RBM in specialized settings. 

\subsection{Notation and Organization}
By $\KL{\nu}{\mu}$, we denote the KL-divergence between probability measures $\nu$ and $\mu$. $\FD{\nu}{\mu} = \mathbb{E}_{\nu} \left[\|\nabla \ln (\nu/\mu)\|^2\right]$ denotes the Fisher Divergence between $\nu$ and $\mu$. Whenever $\mu$ and $\nu$ are probability measures over $\mathbb{R}^d$ with finite $p^{\mathsf{th}}$ moments, we denote their $p$-Wasserstein distance with respect to the Euclidean metric by $\wass{p}(\mu,\nu)$. Whenever $X \sim \mu$ and $Y \sim \nu$, we use $\KL{X}{Y}$ to denote $\KL{\mu}{\nu}$ and $\wass{p}(X,Y)$ to mean $\wass{p}(\mu,\nu)$. Similarly, given a sigma algebra $\mathcal{G}$ over the same measure space as the random variables $X,Y$, we use $\KL{X}{Y|\mathcal{G}}$ to denote the $\mathcal{G}$ measurable random variable $\KL{\Law{X|\mathcal{G}}}{\Law{Y|\mathcal{G}}}$. The first order oracle complexity of any algorithm refers to the number of stochastic gradient computations, i.e., the number of computations of the for, $\nabla f(\vx, \xi)$ (see Section~\ref{sec:prelim} for a formal construction). We use $\stackrel{d}{=}$ to denote equality in distribution and $\stackrel{d}{\approx}$ to denote approximate equality in distribution. We use $[n]$ and $(n)$ to denote the sets $\{1, \dots, n \}$ and $\{ 0, \dots, n-1 \}$ respectively. By $C$ we denote universal constants which can change in every appearance. By $\lesssim$ we denote $\leq$ up-to universal constants.  

\noindent In Sections~\ref{sec:prelim} and~\ref{sec:assumptions}, we define the exact problem setup and state the set of assumptions which we use. In Section~\ref{sec:tech_results}, we discuss our main technical results and in Section~\ref{sec:main_results} we apply them to establish the convergence of SGLD, RBM, CC-SGLD, and CC-RBM and discuss their significance. 
\section{Preliminaries}
\label{sec:prelim}
\textbf{LMC and SGLD}
We consider the problem of sampling from a distribution over $\bR^d$ with density $\pistar(\vx) \propto \exp(-F(\vx))$. Under minimal assumptions, Langevin SDE $\dd \vx_t = -\nabla F(\vx_t) \dd t + \sqrt{2}\dd B_t$ admits $\pistar$ as the stationary distribution \citep{roberts1996exponential}. Here, $\dd B_t$ denotes the standard Brownian increment in $\bR^d$ \citep{stroock1997multidimensional}. The forward Euler discretization of this SDE with step size $\eta > 0$ leads to \ref{eqn:LMC}. Stochastic Gradient Langevin Dynamics (SGLD) is a stochastic approximation of the \ref{eqn:LMC} updates, which arises as follows : For $k \in \mathbb{N}\cup\{0\}$ and $i \in \mathbb{N}$, let $\xi_{k, i} \ \iidsim \ \mathbb{P}_{\xi}$ be a sequence of random variables (supported on the domain $\Xi$) that are sampled independently of $\vx_0$, $(\epsilon_k)_{k\in \mathbb{N}\cup\{0\}}$. Given access to a first order stochastic gradient oracle, which inputs $\vx \in \bR^d$ and batch size $B \in \mathbb{N}$, and outputs stochastic gradients $\nabla f(\vx,\xi_{k,1}),\dots,\nabla f(\vx,\xi_{k,B})$ such that $\mathbb{E}\left[\nabla f(\vx_k,\xi_{k,1})\bigr|\vx_k = \vx\right] = \nabla F(\vx)$, the updates of \ref{eqn:SGLD} are given by:
\begin{equation}
\tag{SGLD}
    \hvx_{k+1} \leftarrow \hvx_k - \frac{\eta}{B}\sum_{i=1}^{B} \nabla f(\hvx_k,\xi_{k,i}) + \sqrt{2 \eta} \epsilon_k, \ \epsilon_k \sim \cN(0, \vI)
\label{eqn:SGLD}
\end{equation}
\noindent \textbf{Covariance Correction} When conditioned on $\hvx_k = \vx$, the stochastic approximation noise in \ref{eqn:SGLD}, which is given by $\frac{1}{B}\sum_{i=1}^{B}(\nabla f(\hvx_k,\xi_{k,i}) - \nabla F(\hvx_k))$ is close in distribution to $\cN(0, \Sigma(\vx))$ due to the Central Limit Theorem (CLT). Thus, \ref{eqn:SGLD} updates can be approximately expressed as:
\begin{equation*}
    \hvx_{k+1} \stackrel{d}{\approx} \hvx_k - \eta \nabla F(\hvx_k) + \eta \sqrt{\Sigma(\hvx_k)}\Tilde{\epsilon}_k +\sqrt{2\eta}\epsilon_k 
\end{equation*}
where $\Tilde{\epsilon}_k \sim \cN(0,\vI)$ independent of $\epsilon_k$. Since $\sqrt{\eta\tfrac{\Sigma(\hvx_k)}{2}}\Tilde{\epsilon}_k +\epsilon_k \sim \cN\left(0, \vI + \tfrac{\eta\Sigma(\hvx_k)}{2} \right)$, we conclude that SGLD approximately resembles LMC but driven with higher noise covariance  $\vI + \frac{\eta\Sigma(\hvx_k)}{2}$. Thus, given an estimate $\hSigma(\hvx_k)$ of $\Sigma(\hvx_k)$ (computed using another random batch of size $B$) one can correct for this additional noise via a covariance corrected update as follows:
\begin{align*}
    \hvx_{k+1} = \hvx_k - \frac{\eta}{B}\sum_{i=1}^{B} \nabla f(\hvx_k,\xi_{k,i}) + \sqrt{2 \eta\bigr(\vI - \tfrac{\eta \hat{\Sigma}(\hvx_k)}{2}\bigr)} \epsilon_k
\end{align*}
The estimator $\hSigma(\hvx_k)$ is specified in Section \ref{subsec:cc-sgld-rbm}. To ameliorate the computational expense of computing the matrix square root, we apply the linearization $\sqrt{\vI - \nicefrac{\eta \hat{\Sigma}(\hvx_k)}{2}} \approx \vI - \nicefrac{\eta \hSigma(\hvx_k)}{4}$. The resulting linearized update, called Covariance Corrected SGLD (or CC-SGLD) is as follows,  
\begin{equation}
\tag{CC-SGLD}
    \hvx_{k+1} = \hvx_k - \frac{\eta}{B}\sum_{i=1}^{B} \nabla f(\hvx_k,\xi_{k,i}) + \sqrt{2 \eta}\left(\vI - \tfrac{\eta \hat{\Sigma}(\hvx_k)}{4}\right)\epsilon_k
\label{eqn:CC-SGLD}
\end{equation}
As we shall show in Section \ref{subsec:cc-sgld-rbm}, the estimator $\hSigma(\hvx_k)$ is chosen to be a sum of rank 1 matrices such that $(\vI - \tfrac{\eta \hat{\Sigma}(\hvx_k)}{4})\epsilon_k$ can be evaluated with $O(dB)$ computational cost. Thus, each step of \ref{eqn:CC-SGLD} has the same oracle complexity and computational complexity as that of \ref{eqn:SGLD}.
\paragraph{IPD, RBM and CC-RBM}
Let $\vx_0^{1},\dots,\vx_0^{n} \in \bR^d$ be the initial positions of $n$ particles drawn from an arbitrary initial distribution. We denote their positions at time $k \in \mathbb{N}\cup\{0\}$ by $\vx_k^{i}$ for $i \in [n]$. Let $\epsilon_i^{k} \ \iidsim \ \cN(0, \vI)$ be independent of the initial positions. Fixing a step-size $\eta > 0$ and diffusion strength $\sigma >0$, the resulting discrete-time aggregation-diffusion dynamics for the system, driven by an external force $\vg_k^{i}:\mathbb{R}^{d}\to \mathbb{R}^d$ and interaction forces  $\vK_k^{ij} : \mathbb{R}^d\times\mathbb{R}^d \to \mathbb{R}^d$, is known as Intracting Particle Dynamics \eqref{eqn:IPD}. Popular choices for $\vg_k^{i}$ and $\vK_k^{ij}$ are $\vg_k^{i} = -\nabla \phi(\vx^i_k)$ and $\vK_k^{ij}(\vx,\vy) = -\nabla_\vx \psi(\|
 \vx - \vy\|)$, where $\phi$ and $\psi$ are the confining and interaction potentials respectively. Implementing $K$ steps of \ref{eqn:IPD} involves $\Theta(n^2 K)$ evaluations of $\vK^{ij}_{k}$ which may be prohibitively expensive. To mitigate this, we define the Random Batch Method (RBM) and Covariance Corrected Random Batch Method (CC-RBM). At each time step $k$, the Random Batch Method draws $I^{i1}_k, \dots,I^{iB}_k \ \iidsim \ \mathsf{Uniform}([n])$ for $B \in [n]$, independent of everything else, and deploys a stochastic approximation of the inter-particle interactions as, $\hat{\vK}^{i}_k(\hvx_k) = \frac{1}{B}\sum_{j=1}^{B}\vK_k^{iI^{ij}_k}(\hvx_k^{i},\hvx_k^{I^{ij}_k})$. The RBM updates are then given by
\begin{equation}
\tag{RBM}
    \hat{\vx}_{k+1}^{i} = \hat{\vx}_k^{i} + \eta \vg_k^{i}(\hat{\vx}_k^{i}) + \eta \hat{\vK}^{i}_k(\hat{\vx}_k) + \sqrt{\eta}\sigma\epsilon_k^{i}
\label{eqn:RBM}
\end{equation}
Covariance Corrected RBM additionally samples $J^{i1}_k, \bar{J}^{i1}_k, \dots, J^{iB^{\prime}}_k, \bar{J}^{iB^{\prime}}_k \ \iidsim \ \mathsf{Uniform}([n])$ for some $B^{\prime} \in [n]$, and constructs the covariance estimator for $\hat{\vK}^{i}_k$ as follows,
\small
\begin{align*}
    \hat{\Sigma}_k^{i} := \frac{1}{2BB^{\prime}}\sum_{l=1}^{B^{\prime}}\left(\vK^{iJ^{il}}_k(\hvx_k^{i},\hvx_k^{J^{il}_k})-\vK^{i\bar{J}^{il}}_k(\hvx_k^{i},\hvx_k^{\bar{J}^{il}_k})\right)\left(\vK^{iJ^{il}}_k(\hvx_k^{i},\hvx_k^{J^{il}_k})-\vK^{i\bar{J}^{il}}_k(\hvx_k^{i},\hvx_k^{\bar{J}^{il}_k})\right)^{\intercal}
\end{align*} \normalsize
The covariance corrected update for RBM is then given by
\begin{align*}
    \hat{\vx}_{k+1}^{i} = \hat{\vx}_k^{i} + \eta \vg_k^{i}(\hat{\vx}_k^{i}) + \eta \hat{\vK}^{i}_k(\hat{\vx}_k) + \sigma\sqrt{\eta}\sqrt{\vI - \nicefrac{\eta}{\sigma^2}\hat{\Sigma}_k^{i}}\epsilon_k^{i}
\end{align*}
Similar to~\ref{eqn:CC-SGLD}, approximating the matrix square-root as $\sqrt{\vI - \nicefrac{\eta}{\sigma^2}\hat{\Sigma}_k^{i}} \approx \vI - \nicefrac{\eta}{2\sigma^2}\hat{\Sigma}_k^{i}$ gives us the following update for Covariance Corrected RBM (or CC-RBM).
\begin{equation}
\tag{CC-RBM}
    \hat{\vx}_{k+1}^{i} = \hat{\vx}_k^{i} + \eta \vg_k^{i}(\hat{\vx}_k^{i}) + \eta \hat{\vK}^{i}_k(\hat{\vx}_k) + \sigma\sqrt{\eta}\left(\vI - \tfrac{\eta}{2\sigma^2}\hat{\Sigma}_k^{i}\right)\epsilon_k^{i}
\label{eq:covrbm}
\end{equation}
Since $\hat{\Sigma}^{i}_k$ is a sum of rank-1 matrices, the cost of evaluating $(\vI - \tfrac{\eta}{2\sigma^2}\hat{\Sigma}_k^{i})\epsilon_k^{i}$ is $O(dB)$. Thus, \ref{eq:covrbm} has the same computational complexity as that of \ref{eqn:RBM}.
\section{Assumptions}
\label{sec:assumptions}
Our analysis of SGLD makes the following expected smoothness assumption on the potential $F$, which is weaker than that of prior works which assume component smoothness \citep{raginsky2017non, quanquan-sgld, suzuki-VR-SGLD-2022}, i.e., smoothness of $f(\vx, \xi) \ \forall \ \xi \in \Xi$. 
\begin{assumption}[Expected H\"{o}lder Smoothness]
\label{as:stoc-grad-holder}
$F(\vx)$ is $s$-H\"{o}lder smooth (or $s$-gradient H\"{o}lder continuous) for some $s \in (0,1]$, i.e., there exists a constant $L \geq 0$ such that $\nabla F(\vx)$ satisfies:
\begin{equation}
\tag{$s$-H\"{o}lder}
\norm{\nabla F(\vx) - \nabla F(\vy)} \leq L \norm{\vx - \vy}^{s}, \ \ \forall \ \vx, \vy \in \bR^d
\label{eqn:defn-holder-smoothness}
\end{equation}
When $s=1$, $F(\vx)$ is said to be $L$-smooth (or $L$-gradient Lipschitz), i.e.,
\begin{equation}
\tag{$L$-smooth}
\norm{\nabla F(\vx) - \nabla F(\vy)} \leq L \norm{\vx - \vy}, \ \ \forall \ \vx, \vy \in \bR^d
\label{eqn:defn-smoothness}
\end{equation}
\end{assumption}
We use the following growth condition to control the stochastic gradients. As shown in Appendix \ref{app-subsec:grad-growth}, this assumption is significantly weaker than that of prior works. \citep{raginsky2017non, quanquan-sgld, suzuki-VR-SGLD-2022}.
\begin{assumption}[Almost-sure Stochastic Gradient Growth]
\label{as:stoc-grad-growth}
For $M, G \geq 0$, the functions $f(\vx, \xi)$ satisfy:
\begin{equation}
\tag{lin-growth a.s}
\norm{\nabla f(\vx, \xi) - \nabla F(\vx)} \leq M \norm{\vx} + G, \ \ \forall \ \vx \in \bR^d, \xi \in \Xi 
\label{eqn:defn-grad-growth}
\end{equation}
\end{assumption}
For the analysis of SGLD in the \ref{eqn:defn-smoothness} setting, our techniques generalize to the following relaxed assumption on the stochastic gradients which is satisfied for a large class of problems.
\begin{assumption}[Subgaussian Stochastic Gradient Growth]
\label{as:stoc-grad-subg}
For $M, G \geq 0$, the functions $f(\vx, \xi)$ satisfy the following norm-subgaussianity condition \citep{norm-subgaussian-shortnote} for the stochastic gradients:
\begin{equation}
\tag{lin-growth subG}
\bP\left\{\norm{\nabla f(\vx, \xi) - \nabla F(\vx)} \geq t \biggr| \vx \right\} \leq 2 \exp\left(-\frac{t^2}{2 \left[M \norm{\vx} + G\right]^2}\right) \ \forall \ \vx \in \bR^d
\label{eqn:defn-grad-growth-subg}
\end{equation}
\end{assumption}
To establish the convergence of SGLD to the target distribution, we impose the following isoperimetric assumption on $\pistar$ proposed by \cite{latala-oleskiewicz-paper}, which interpolates between the well known Poincare and Log-Sobolev Inequalities \citep{bakry2014analysis}.
\begin{assumption}[Lata\l{}a-Oleskiewicz Inequality]
\label{as:target-lo}
The target $\pistar$ satisfies the Lata\l{}a-Oleskiewicz inequality of order $\alpha \in [1,2]$, i.e., for every smooth function $g : \bR^d \to \bR$, $\pistar$ satisfies the following inequality for some constant $\CLO{\alpha}$
\begin{equation}
\tag{$\alpha$-LO}
\sup_{p \in (1, 2)} \frac{\bE_{\pistar}\left[g^2\right] - \bE_{\pistar}\left[g^p\right]^{\nicefrac{2}{p}} }{(2 - p)^{2 - \nicefrac{2}{\alpha}}} \leq \frac{1}{\CLO{\alpha}} \bE_{\pistar}\left[\norm{\nabla g}^2\right]
\label{eqn:defn-latala-oleskiewicz}
\end{equation}
When $\alpha = 1$, \ref{eqn:defn-latala-oleskiewicz} is equivalent to the Poincare Inequality with constant $\CPI = \CLO{1}$,
\begin{equation}
\tag{PI}
\bE_{\pistar}\left[g^2\right] - \bE_{\pistar}[g]^2 \leq \frac{1}{\CPI} \bE_{\pistar}\left[\norm{\nabla g}^2\right]
\label{eqn:defn-poincare}
\end{equation}
When $\alpha = 2$, \ref{eqn:defn-latala-oleskiewicz} reduces to the Logarithmic Sobolev Inequality with constant $\CLSI = \CLO{2}$:
\begin{equation}
\tag{LSI}
\bE_{\pistar}\left[g^2 \log\left(g^2\right)\right] - \bE_{\pistar}\left[g^2 \right] \log\left(\bE_{\pistar}\left[g^2 \right] \right) \leq \frac{1}{\CLSI} \bE_{\pistar}\left[\norm{\nabla g}^2\right]
\label{eqn:defn-log-sobolev}
\end{equation}
\end{assumption}
We highlight that the exponent $s$ in \ref{eqn:defn-holder-smoothness} and the order $\alpha$ in \ref{eqn:defn-latala-oleskiewicz} must satisfy the relation $\alpha \leq 1 + s$. Finally, we impose the following mild technical condition on the iterates of SGLD.
\begin{assumption}[Moment Growth]
\label{as:sgld-moment-growth}
The iterates of $\hvx_1, \dots, \hvx_K$ SGLD satisfy the following $p$-moment growth condition for some $p \geq 1$.
\begin{equation}
\tag{$p$-moment growth}
\bE[\norm{\hvx_k}^q] \leq C_q d^{\nicefrac{q}{2}}, \ \ \forall \ q \in [0, p]
\label{eqn:defn-moment-growth}
\end{equation}
where $C_q \geq 0 \ \forall \ q \in [0, p]$ and can be an arbitrary non-negative increasing function of $q$. 
\end{assumption}
As we shall show in Appendix \ref{app-subsec:assumption-moment-bounds}, the \ref{eqn:defn-moment-growth} condition is milder than assumptions like dissipativity and strong convexity outside of a compact set used in prior works \citep{quanquan-sgld, raginsky2017non, cheng2020sharp}. On the contrary, we assume \ref{eqn:defn-moment-growth} only for $p=4$ (in Theorems \ref{thm:sgld-lsi-stable} and \ref{thm:sgld-fd-and-pi}) or $p=6$ (in Theorems \ref{thm:sgld-stat-indistinguishable} and \ref{thm:cc-sgld-stat-indistinguishable}), making our assumptions much weaker than that of prior works. Furthermore, for finite-sum problems, we remove this assumption by using adaptive batch-sizes (Theorem \ref{thm:absgld-lsi-stable}).
\section{Technical Results}
\label{sec:tech_results}
Our main results on SGLD and RBM, presented in Section~\ref{sec:main_results}, are divided into three major groups:
\begin{enumerate}
    \item Stable convergence results for SGLD which use a conditional CLT type result (Lemma \ref{lem:noise-control-chisquare}) to establish a differential inequality for the KL divergence to the target distribution (Lemma \ref{lem:sgld-descent-lemma}).
    \item A sharp convergence analysis of the law of the trajectory of SGLD (and RBM), to the law of the trajectory of LMC (and IPD), without any smoothness or isoperimetric assumptions, using a novel Wasserstein CLT (Lemma \ref{lem:wass-clt-new}) and associated entropic CLT.
    \item Analysis of CC-SGLD and CC-RBM using the above entropic CLTs.
\end{enumerate}
In this section, we establish our key technical results like Lemmas \ref{lem:kl-flow-interpolating}, \ref{lem:noise-control-chisquare} and \ref{lem:wass-clt-new}.
\subsection{Conditional CLT Type Analysis and Descent EVI}
\label{subsec:evi_descent}
\sloppy 
Using $\vN(\hvx_k, \xi_k) := \frac{1}{B} \sum_{i=1}^{n} \nabla f(\hvx_k, \xi_{k, j}) - \nabla F(\hvx_k)$ to denote the stochastic gradient noise, we can express the SGLD updates as follows : 
\begin{equation}
\label{eqn:sgld-noisy-update}
    \hvx_{k+1} = \hvx_k - \eta (\nabla F(\hvx_k) + \vN(\hvx_k, \xi_k)) + \sqrt{2 \eta} \epsilon_k, \ \  \epsilon_k \sim \cN(0, \vI)
\end{equation}
Adapting the arguments of \cite{vempalawibisono19} to SGLD, we construct a piecewise-continuous stochastic process $(\vx_t)_{t \in [0, K \eta]}$ defined as follows for any  $d$-dimensional Brownian motion $B_t$ (which is independent of $\vx_0$):
\begin{align*}
    \Law{\vx_0} &= \Law{\hvx_0}   \\
    \dd \vx_t &= -\left[\nabla F(\vx_{k\eta}) + \vN(\vx_{k\eta}, \xi_k)\right] \dd t + \sqrt{2} \dd B_t, \ t \in [k\eta, (k+1)\eta], \ k \in [K]
\end{align*}
Note that $\vx_t \stackrel{d}{=} \vx_{k\eta} - (t-k\eta) (\nabla F(\vx_{k\eta}) + \vN(\vx_{k\eta}, \xi_k)) + \sqrt{2(t - k\eta)} \vz_t, \ \vz_t \sim \cN(0, \vI)$ where $t \in [k\eta, (k+1)\eta]$, $k \in (K)$. An inductive argument shows that $\Law{\vx_{k\eta}} = \Law{\hvx_k}$. To this end, we call $(\vx_t)_{t \in [0, K \eta]}$ an \emph{interpolating process} for SGLD, and write $\mu_t = \Law{\vx_t}$. We shorten $\vN(\vx_{k\eta}, \xi_k)$ to $\vN$, whenever clear from the context. Our analysis begins by establishing a differential inequality for the time-evolution of KL divergence along the interpolating process. We present the proof of this lemma in Appendix \ref{proof:lem-kl-flow-interpolating}.
\begin{lemma}
\label{lem:kl-flow-interpolating}
Assume $\KL{\mu_0}{\pistar} < \infty$. Then, for any $k \in (K)$ and $t \in [k\eta, (k+1)\eta]$,
\begin{align*}
    \ddt \KL{\mu_t}{\pistar} \leq -\frac{1}{2}\FD{\mu_t}{\pistar} + \bE\left[\norm{\nabla F(\vx_t) - \nabla F(\vx_{k\eta})}^2\right] + \bE\left[\norm{\bE[\vN | \vx_{k\eta}, \vx_t]}^2\right]
\end{align*}
\end{lemma}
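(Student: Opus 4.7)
The plan is to mimic the strategy of Vempala-Wibisono for LMC, adapted to handle the stochastic gradient noise $\vN$. First I would observe that, conditionally on $\vx_{k\eta}$ and $\xi_k$, the interpolating process is a Brownian motion with constant drift, so it has a smooth Lebesgue density. Marginalizing yields the density $\mu_t$, which by the standard disintegration / Kolmogorov forward calculation solves the nonlinear Fokker-Planck equation
\begin{equation*}
\delt \mu_t(\vx) = \nabla \cdot \bigl(\mu_t(\vx)\, \vu_t(\vx)\bigr) + \Delta \mu_t(\vx), \qquad \vu_t(\vx) := \bE\bigl[\nabla F(\vx_{k\eta}) + \vN(\vx_{k\eta}, \xi_k) \,\bigl|\, \vx_t = \vx\bigr].
\end{equation*}
The key point is that the drift at position $\vx$ is the conditional expectation of the random drift given the current location — this is what allows the evolution of $\mu_t$ alone to be a closed equation.

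Next, I would differentiate $\KL{\mu_t}{\pistar}$, plug in the Fokker-Planck equation, and integrate by parts (justified by the Gaussian smoothing provided by $\sqrt{2}\,\dd B_t$, which gives $\mu_t$ enough decay and regularity). Writing $\nabla \log \pistar = -\nabla F$ and collecting terms produces the identity
\begin{equation*}
\ddt \KL{\mu_t}{\pistar} = -\FD{\mu_t}{\pistar} + \bE\bigl[\langle \nabla F(\vx_t) - \vu_t(\vx_t),\ \nabla \log(\mu_t/\pistar)(\vx_t)\rangle\bigr].
\end{equation*}
I would then apply Young's inequality $\langle a, b\rangle \leq \tfrac12 \|a\|^2 + \tfrac12 \|b\|^2$ so that half of the Fisher divergence absorbs the inner-product term, leaving
\begin{equation*}
\ddt \KL{\mu_t}{\pistar} \leq -\tfrac12 \FD{\mu_t}{\pistar} + \tfrac12 \bE\bigl[\|\nabla F(\vx_t) - \vu_t(\vx_t)\|^2\bigr].
\end{equation*}

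Finally, I would decompose the error term. Since $\vx_t$ is measurable with respect to itself, $\vu_t(\vx_t) = \bE[\nabla F(\vx_{k\eta}) + \vN \mid \vx_t]$, so by the tower property $\nabla F(\vx_t) - \vu_t(\vx_t) = \bE\bigl[\nabla F(\vx_t) - \nabla F(\vx_{k\eta}) - \bE[\vN \mid \vx_{k\eta}, \vx_t] \,\bigl|\, \vx_t\bigr]$. Jensen's inequality on the conditional expectation followed by $\|a+b\|^2 \leq 2\|a\|^2 + 2\|b\|^2$ bounds this by $2\|\nabla F(\vx_t) - \nabla F(\vx_{k\eta})\|^2 + 2\|\bE[\vN \mid \vx_{k\eta}, \vx_t]\|^2$ inside the expectation. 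Combining with the prefactor $\tfrac12$ yields exactly the claimed inequality.

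The main obstacle is really a bookkeeping one rather than a technical one: ensuring that the marginal density $\mu_t$ satisfies the above Fokker-Planck equation with the correct conditional-expectation drift, and that the boundary terms in the integration by parts vanish. Both are standard for Langevin-type interpolations with nondegenerate Gaussian noise, but the conditional form of $\vu_t$ (as opposed to a deterministic vector field) is what must be handled carefully — and the clean appearance of $\bE[\vN \mid \vx_{k\eta}, \vx_t]$ in the final bound (rather than $\vN$ itself) is precisely the payoff of using the tower property to introduce the $\vx_{k\eta}$ conditioning before applying Jensen.
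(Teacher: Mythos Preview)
Your proposal is correct and follows essentially the same approach as the paper: derive the Fokker--Planck equation for $\mu_t$ with the conditional-expectation drift, differentiate the KL divergence, integrate by parts, and use Young's inequality together with the tower property to isolate $\bE[\vN \mid \vx_{k\eta}, \vx_t]$. The only cosmetic difference is that the paper first splits the cross term into the discretization and noise pieces and then applies Young's inequality (with parameter $\tfrac14$) to each separately, whereas you apply Young's inequality once (with parameter $\tfrac12$) and then split via $\|a+b\|^2 \le 2\|a\|^2 + 2\|b\|^2$; both routes land on the identical bound.
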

The term $\bE\left[\norm{\nabla F(\vx_t) - \nabla F(\vx_{k\eta})}^2\right]$ in Lemma \ref{lem:kl-flow-interpolating} corresponds to the discretization error of SGLD, which can be controlled in a straightforward manner. The key technical challenge is to sharply bound the term $\bE\left[\norm{\bE[\vN | \vx_{k\eta}, \vx_t]}^2\right]$, which corresponds to the error due to stochastic approximation. When conditioned on $\vx_{k\eta}$, $\vN$ is an average of $B$ i.i.d random vectors, and hence, is approximately Gaussian due to the Central Limit Theorem. In fact, if $\vN_j|\vx_{k\eta} \sim \cN(0,\Sigma)$ were exactly Gaussian, then $\vN$ and $\vx_{t}$ would be jointly Gaussian conditioned on $\vx_{k\eta}$. In this case, standard results on conditional expectations of Gaussians would imply that:
$\bE\left[\norm{\bE[\vN | \vx_{k\eta}, \vx_t]}^2\right] \leq \frac{4(t-k\eta)(\Tr(\Sigma))^2}{B^2} $, where $\mathbb{E}\left[\Tr(\Sigma)^2\right] \lesssim (M^4 C_{4} d^{2 } + G^4)$ under the \ref{eqn:defn-grad-growth-subg} and \ref{eqn:defn-moment-growth} conditions for $p=4$.  However, since $\vN$ need not be Gaussian in general, we require a conditional CLT type result \citep{dedecker2002necessary} to sharply control the stochastic approximation error. To this end, we derive sharp bounds for $\norm{\bE[\vN | \vx_{k\eta}, \vx_t]}^2$ by exploiting the fact that $\vN$ is conditionally an i.i.d. sum of zero-mean vectors. The proof of this result is presented in Appendix \ref{proof:lem-noise-control-chisquare}. 

\begin{lemma}
\label{lem:noise-control-chisquare}
Let the \ref{eqn:defn-grad-growth-subg} and \ref{eqn:defn-moment-growth} conditions be satisfied with $p = 4 $. Then, 
\begin{align*}
    \bE\left[\norm{\bE[\vN | \vx_{k\eta}, \vx_t]}^2\right] \leq \frac{C(t - k\eta)(M^4 C_{4 } d^{2 } + G^4)}{B^2}
\end{align*}
\end{lemma}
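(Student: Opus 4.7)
The plan is, by exchangeability and tower conditioning, to reduce the problem to a one-sample MMSE estimation of a single centered gradient noise $\vn_1 := \nabla f(\vx_{k\eta},\xi_{k,1}) - \nabla F(\vx_{k\eta})$ observed through independent Gaussian noise, and then to exploit the resulting low signal-to-noise ratio regime. Setting $\tau := t-k\eta$, I first note that $\vx_t = \vx_{k\eta} - \tau\nabla F(\vx_{k\eta}) + \vY$ with $\vY := -\tau\vN + \sqrt{2\tau}\vz_t$, so $\bE[\vN|\vx_{k\eta},\vx_t] = \bE[\vN|\vx_{k\eta},\vY]$. Writing $\vN = \frac{1}{B}\sum_{j=1}^{B}\vn_j$ with $(\vn_j)_j$ iid centered conditional on $\vx_{k\eta}$, exchangeability of the $\vn_j$'s and symmetry of $\vY$ in them give $\bE[\vN|\vY,\vx_{k\eta}] = \bE[\vn_1|\vY,\vx_{k\eta}]$. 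Splitting $\vY = -(\tau/B)\vn_1 + \vW_1$ where $\vW_1 := \sqrt{2\tau}\vz_t - (\tau/B)\sum_{j\geq 2}\vn_j$ is independent of $\vn_1$ given $\vx_{k\eta}$, and then conditioning on $(\vn_2,\dots,\vn_B)$ and applying the tower property together with Jensen's inequality, I obtain the one-sample reduction
\[
\bE\bigl[\norm{\bE[\vn_1|\vY,\vx_{k\eta}]}^2 \bigr| \vx_{k\eta}\bigr] \;\leq\; \bE\bigl[\norm{\bE[\vn_1|\vY^{*},\vx_{k\eta}]}^2 \bigr| \vx_{k\eta}\bigr], \quad \vY^{*} := -\tfrac{\tau}{B}\vn_1 + \sqrt{2\tau}\,\vz_t.
\]

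The problem now is to bound the right-hand side: a one-sample MMSE for $\vn_1$ observed through $\cN(0,2\tau\vI)$ noise with signal strength $\tau/B$, hence signal-to-noise ratio $\Theta(\tau/B^2)$. If $\vn_1$ were Gaussian with covariance $\Sigma := \Sigma(\vx_{k\eta}) := \bE[\vn_1\vn_1^\intercal|\vx_{k\eta}]$, this is a linear-Gaussian problem with sharp value $\Tr((\tau/B)^2 \Sigma^2((\tau/B)^2\Sigma + 2\tau\vI)^{-1}) \leq \tau \Tr(\Sigma^2)/(2B^2)$. For the general (non-Gaussian) case I would use Tweedie's identity $\bE[\vn_1|\vY^{*}] = -(B/\tau)\vY^{*} - (2B/\tau)\nabla \log p_{\vY^{*}}(\vY^{*})$, expand the squared norm, and apply Gaussian integration by parts together with the matrix Stam (Brascamp--Lieb) inequality: since $\vY^{*}$ is an independent additive convolution with $\cN(0,2\tau\vI)$, its Fisher information matrix satisfies $\vJ(\vY^{*}) \preceq (2\tau)^{-1}\vI$, and working out the first-order correction to this bound produces the exact cancellation of the $d/\tau$ and $\Tr(\Sigma)/B$ leading terms, leaving $\bE\bigl[\norm{\bE[\vn_1|\vY^{*},\vx_{k\eta}]}^2 \bigr| \vx_{k\eta}\bigr] \leq C\tau\, \Tr(\Sigma(\vx_{k\eta}))^2 / B^2$ (using $\Tr(\Sigma^2) \leq \Tr(\Sigma)^2$ to pass to the convenient upper bound).

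To finish, the subgaussian stochastic gradient growth condition \ref{eqn:defn-grad-growth-subg} gives $\Tr(\Sigma(\vx_{k\eta})) = \bE[\norm{\vn_1}^2 | \vx_{k\eta}] \lesssim (M\norm{\vx_{k\eta}}+G)^2$, hence $\Tr(\Sigma(\vx_{k\eta}))^2 \lesssim M^4 \norm{\vx_{k\eta}}^4 + G^4$. Taking the unconditional expectation and invoking the \ref{eqn:defn-moment-growth} assumption with $p=4$ yields $\bE[\Tr(\Sigma(\vx_{k\eta}))^2] \lesssim M^4 C_4 d^2 + G^4$, which gives the claimed bound. The technical heart of the argument is the one-sample MMSE inequality: the naive Cauchy--Schwarz bound $\bE[\norm{\bE[\vn_1|\vY^{*}]}^2] \leq \bE[\norm{\vn_1}^2] = \Tr(\Sigma)$ is off by the critical factor $\tau/B$, and recovering this extra SNR factor requires a genuinely non-Gaussian argument that exploits the smoothing by the independent Gaussian component of $\vY^{*}$ via Tweedie / Stam-type identities --- morally, a non-asymptotic conditional CLT (cf.\ \citealt{dedecker2002necessary}) applied directly at the level of the conditional mean estimator rather than to the full distribution.
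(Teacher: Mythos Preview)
Your reduction to the one-sample problem is correct and matches the paper exactly: exchangeability gives $\bE[\vN\mid\vx_{k\eta},\vx_t]=\bE[\vn_1\mid\vx_{k\eta},\vx_t]$, and conditioning on $\vn_{2:B}$ plus the tower/Jensen step reduces everything to bounding $\bE\bigl[\|\bE[\vn_1\mid\vY^{*},\vx_{k\eta}]\|^2\bigr]$ with $\vY^{*}=-(\tau/B)\vn_1+\sqrt{2\tau}\,\vz_t$. The gap is in how you handle this one-sample MMSE.

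Your proposed Tweedie/Stam argument is circular. Carrying out your expansion gives
\[
\bE\|\bE[\vn_1\mid\vY^{*}]\|^2
= \Tr\Sigma - \tfrac{2B^2 d}{\tau} + 4B^2\,\Tr J(\vY^{*}),
\]
and the Stam bound $J(\vY^{*})\preceq(2\tau)^{-1}\vI$ yields only $\Tr\Sigma$, the trivial estimate. The ``first-order correction'' you invoke is, via second-order Tweedie, the exact identity $J(\vY^{*})=\tfrac{1}{2\tau}\vI-\tfrac{1}{4\tau^2}\bE[\mathrm{Cov}(-\tfrac{\tau}{B}\vn_1\mid\vY^{*})]$; plugging this back in collapses the display to the Pythagorean identity $\bE\|\bE[\vn_1\mid\vY^{*}]\|^2=\Tr\Sigma-\mathrm{mmse}$, which is the statement you are trying to prove, not a bound on it. A genuine low-SNR MMSE lower bound of the form $\mathrm{mmse}\geq\Tr\Sigma-C\tau u_k^4/B^2$ is not delivered by Stam/Brascamp--Lieb alone (indeed, without tail control on $\vn_1$ the inequality $\bE\|\bE[\vn_1\mid\vY^{*}]\|^2\leq Cs\Tr\Sigma^2$ is false in general).

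The paper instead attacks the one-sample quantity directly: write $\bE[\vn_1\mid\vx_{k\eta},\vn_{2:B},\vx_t]=\bE_{\vn_1}\bigl[\vn_1(\rho_1/\rho_0-1)\bigr]$ with $\rho_1,\rho_0$ the relevant Gaussian conditional densities of $\vx_t$, apply Cauchy--Schwarz to get $\|\cdot\|^2\leq 4u_k^2\cdot\chi^2(\rho_1\|\rho_0)$, then compute the $\chi^2$ between shifted isotropic Gaussians and use a Jensen step on the mixture denominator. This yields a geometric-series bound $\lesssim \tau u_k^4/B^2$ valid when $\tau u_k^2/B^2$ is small; on the complementary event it falls back to the coarse bound $4u_k^2/B$ and controls its contribution via a tail-Markov argument using the fourth moment. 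Your proposal is missing precisely this concrete mechanism (or an equivalent one) for extracting the extra $\tau/B^2$ factor, as well as the regime split that handles large $u_k$.
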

Equipped with Lemmas \ref{lem:kl-flow-interpolating} and \ref{lem:noise-control-chisquare}, we derive the following differential inequality for the time evolution of the KL divergence along the trajectory of the interpolating process. The proof of this result is presented in Appendix \ref{proof:lem-sgld-descent-lemma}
\begin{lemma}
\label{lem:sgld-descent-lemma}
Let the \ref{eqn:defn-smoothness}, \ref{eqn:defn-grad-growth-subg} and \ref{eqn:defn-moment-growth} conditions be satisfied with $p = 4 $. Then, for $\eta \leq \nicefrac{1}{6L}$, the following inequality is satisfied for any $k \in (K)$ and  $t \in [k\eta, (k+1)\eta]$,\small
\begin{align*}
    \ddt \KL{\mu_t}{\pistar} &\leq -\frac{1}{4}\FD{\mu_t}{\pistar} + CL^2 (t - k \eta)d + \frac{CL(t - k \eta)(M^2 C_{2 } d  + G^2)}{B} \\
    &+ \frac{C(t - k \eta) (M^4 C_{4 } d^{2 } + G^4)}{B^2}
\end{align*}\normalsize
\end{lemma}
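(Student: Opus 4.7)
The strategy is to start from Lemma~\ref{lem:kl-flow-interpolating}, apply Lemma~\ref{lem:noise-control-chisquare} directly to the conditional expectation of $\vN$, and control the remaining discretization term $\bE[\norm{\nabla F(\vx_t) - \nabla F(\vx_{k\eta})}^2]$ via \ref{eqn:defn-smoothness} together with a second-moment bound on $\vx_t - \vx_{k\eta}$ read off from the interpolating-process definition.

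Lemma~\ref{lem:kl-flow-interpolating} supplies
\begin{align*}
\ddt \KL{\mu_t}{\pistar} \leq -\tfrac{1}{2}\FD{\mu_t}{\pistar} + \bE[\norm{\nabla F(\vx_t) - \nabla F(\vx_{k\eta})}^2] + \bE[\norm{\bE[\vN \mid \vx_{k\eta}, \vx_t]}^2],
\end{align*}
and Lemma~\ref{lem:noise-control-chisquare} immediately contributes the $\tfrac{C(t-k\eta)(M^4 C_4 d^2 + G^4)}{B^2}$ bound on the last term. For the middle term, \ref{eqn:defn-smoothness} gives $\bE[\norm{\nabla F(\vx_t) - \nabla F(\vx_{k\eta})}^2] \leq L^2 \bE[\norm{\vx_t - \vx_{k\eta}}^2]$. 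Substituting $\vx_t - \vx_{k\eta} = -(t-k\eta)(\nabla F(\vx_{k\eta}) + \vN) + \sqrt{2(t-k\eta)}\vz_t$ from the interpolating process and using independence of $\vz_t$ along with $\bE[\vN \mid \vx_{k\eta}] = 0$ to kill the cross terms yields
\begin{align*}
\bE[\norm{\vx_t - \vx_{k\eta}}^2] = (t-k\eta)^2\left(\bE[\norm{\nabla F(\vx_{k\eta})}^2] + \bE[\norm{\vN}^2]\right) + 2(t-k\eta)d.
\end{align*}

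Two scalar moments remain. Conditional i.i.d.\ structure of the mini-batch gives $\bE[\norm{\vN}^2 \mid \vx_{k\eta}] = \tfrac{1}{B}\bE[\norm{\nabla f(\vx_{k\eta},\xi) - \nabla F(\vx_{k\eta})}^2 \mid \vx_{k\eta}]$, and \ref{eqn:defn-grad-growth-subg} together with standard norm-subgaussian moment estimates bounds this by $\tfrac{C(M\norm{\vx_{k\eta}} + G)^2}{B}$; averaging via \ref{eqn:defn-moment-growth} at $q=2$ yields $\bE[\norm{\vN}^2] \lesssim \tfrac{M^2 C_2 d + G^2}{B}$. For $\bE[\norm{\nabla F(\vx_{k\eta})}^2]$, \ref{eqn:defn-smoothness} and \ref{eqn:defn-moment-growth} at $q=2$ yield $\bE[\norm{\nabla F(\vx_{k\eta})}^2] \lesssim L^2 d$ after absorbing the value of $\nabla F$ at a convenient reference point. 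Multiplying through by $L^2$ and using $\eta \leq \nicefrac{1}{6L}$ to trade one factor of $(t-k\eta)$ for $\nicefrac{1}{L}$ collapses the drift-driven $L^4(t-k\eta)^2 d$ piece into the advertised $CL^2(t-k\eta)d$ scaling and shrinks the $\vN$ contribution to $\tfrac{CL(t-k\eta)(M^2 C_2 d + G^2)}{B}$. The tightening of the Fisher coefficient from $-\tfrac{1}{2}$ (as in Lemma~\ref{lem:kl-flow-interpolating}) to $-\tfrac{1}{4}$ is slack headroom that can be recovered by re-running the Young's inequality step inside the proof of Lemma~\ref{lem:kl-flow-interpolating} with a slightly smaller multiplier.

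The main obstacle is matching powers of $L$ and $(t-k\eta)$: naively the drift produces an $L^4 (t-k\eta)^2 d$ contribution, and only the step-size restriction $\eta \leq \nicefrac{1}{6L}$ lets one rewrite this as $CL^2(t-k\eta) d$ without loss. Once this trade-off is set up, the three error terms fall out by routine bookkeeping with Young's inequality and the moment assumption \ref{eqn:defn-moment-growth}, with Lemma~\ref{lem:noise-control-chisquare} doing the genuinely CLT-flavoured work on the stochastic-approximation piece.
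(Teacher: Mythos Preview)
Your overall scaffold is right—start from Lemma~\ref{lem:kl-flow-interpolating}, plug in Lemma~\ref{lem:noise-control-chisquare}, and bound the discretization term through $L^2\bE[\|\vx_t-\vx_{k\eta}\|^2]$—but the treatment of $\bE[\|\nabla F(\vx_{k\eta})\|^2]$ does not land where you need it.

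First, your estimate $\bE[\|\nabla F(\vx_{k\eta})\|^2]\lesssim L^2 d$ from smoothness plus \ref{eqn:defn-moment-growth} implicitly requires control on $\|\nabla F\|$ at a reference point (e.g.\ $\|\nabla F(0)\|$ or the location of a minimizer), and that is \emph{not} among the stated hypotheses; \ref{eqn:defn-moment-growth} constrains the iterates, not the geometry of $F$. Second, even granting that bound, the arithmetic in your collapse is off by one power of $L$: trading one factor of $h=t-k\eta$ for $1/L$ in $L^4h^2d$ yields $L^3hd$, not the advertised $CL^2hd$.

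The paper handles this differently, and the $-\tfrac14$ coefficient is not slack headroom but the price of that handling. The paper first passes from $\nabla F(\vx_{k\eta})$ to $\nabla F(\vx_t)$ via the triangle inequality and a self-referential estimate (solved using $h\le 1/(6L)$), then invokes Lemma~\ref{lem:grad-fd-bound}: $\bE[\|\nabla F(\vx_t)\|^2]\le \FD{\mu_t}{\pistar}+2dL$. The $\FD{\mu_t}{\pistar}$ piece, multiplied by $O(L^2h^2)\le \tfrac14$, is absorbed into the $-\tfrac12\FD{\mu_t}{\pistar}$ from Lemma~\ref{lem:kl-flow-interpolating}—that is exactly what degrades the coefficient to $-\tfrac14$. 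The remaining $2dL$ piece, multiplied by $L^2h^2$ and after one $h\to 1/L$ trade, delivers the correct $L^2hd$ scaling with no extra assumption on $F$. So the step you flagged as optional is in fact the mechanism that secures both the $L^2$ dependence and the assumption set of the lemma.
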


\subsection{Random Function Representation and Entropic CLT based Analysis}
\label{subsec:entropic_clt}

We now describe our technical approach to the analysis of SGLD and RBM via the random function representation and Gaussian-smoothed Wasserstein CLTs. While the description below focuses on SGLD, our analysis of CC-SGLD, RBM and CC-RBM applies similar arguments. 

\noindent Since LMC is a discrete-time Markov Process (with iterates $\vx_0,\vx_1,\dots$), it admits a random function representation, i.e., for any $K \in \mathbb{N}$, there exists a measurable function $H_K$ such that $\vx_{1:K+1} = H_K(\vx_0, \epsilon_0, \dots, \epsilon_K)$. Moreover, from equation~\eqref{eqn:sgld-noisy-update}, we recall that the SGLD iterates can be expressed as $\hvx_{k+1} = \hvx_{k} - \eta \nabla F(\vx_k) + \sqrt{2\eta}(\heps_k + \sqrt{\nicefrac{\eta}{2}} \vN_k), \ \heps_k \sim \cN(0, \vI), \ \vN_k = \vN(\hvx_k, \xi_k)$. Comparing with the update rule of~\ref{eqn:LMC}, we conclude that \emph{LMC and SGLD must admit the same random function representation}, i.e., $\hvx_{1:K+1} = H_K(\hvx_0, \hvz_1, \dots, \hvz_K)$ where $\hvz_k = \heps_k + \sqrt{\nicefrac{\eta}{2}} \vN_k$. Applying the data-processing inequality and the chain rule for KL divergence, and using the fact that $\Law{\vx_0} = \Law{\hvx_0}$, we conclude the following:
\begin{align*}
    \KL{\hvx_{1:K}}{\vx_{1:K}} \leq \sum_{k=0}^{K} \bE\left[ \KL{ \hvz_k | \cF_k}{\epsilon_k} \right],
\end{align*}
where $\cF_k = \sigma(\hvx_{0:K}, \hvz_{0:K-1})$. Since $\hvz_k = \heps_k + \sqrt{\nicefrac{\eta}{2}} \vN_k$ where $\vN_k | \cF_k$ is approximately Gaussian and $\heps_k$ is exactly Gaussian, controlling the term $\KL{ \hvz_k | \cF_k}{\epsilon_k}$ involves establishing an entropic CLT for $\hvz_k | \cF_k$. To this end, we prove a reverse $\mathsf{T}_2$-type inequality for Gaussian convolutions to control this term as $\KL{ \hvz_k | \cF_k}{\epsilon_k} \leq \wass{2}^2( \sqrt{\vI + \eta \Sigma_k} \vX, \vY | \cF_k) + \wass{2}^2( \vX + \sqrt{\eta}\vN_k , \sqrt{\vI + \eta \Sigma_k} \vY | \cF_k)$ where $\Sigma_k = \bE\left[\vN_k \vN^{T}_k | \hvx_k\right]$ and $\vX, \vY \ \iidsim \ \cN(0, \vI)$. The first term is bounded by directly computing the Wasserstein distance between zero-mean Gaussians. To control the second term, we quantify the approximate Gaussianity of $\vN_k$ by developing a novel Wasserstein CLT for Gaussian convolutions of bounded random vectors, which may be of independent interest. This result, which is an extension of \cite{zhai2018clt}, is proved in Appendix \ref{proof:lem-wass-clt-new}  
\begin{lemma}[$\wass{2}$ CLT for Gaussian Convolutions of Bounded RVs]
\label{lem:wass-clt-new}
Let $\vY_1, \dots, \vY_B$ be i.i.d random vectors in $\bR^d$ satisfying $\bE\left[ \vY_i \right] = 0$, $\bE \left[\vY_i \vY^T_i \right] = \Sigma_{\vY}$ and $\norm{\vY_i} \leq \beta$ almost surely, and let $\vY = \frac{1}{\sqrt{B}} \sum_{i=1}^{B} \vY_i$. Furthermore, let $\vX$ and $\vZ$ be sampled iid from $\cN(0, \vI)$, independent of $\vY_1, \dots, \vY_B$. Then, the following holds for $\beta^2 \leq \nicefrac{1}{5}$ and $\norm{\Sigma_{\vY}}_2 \leq \nicefrac{1}{5d}$
\begin{align*}
    \wass{2}^2(\sqrt{\vI - \Sigma_{\vY}}\vX + \vY,  \vZ) \leq \frac{25 \beta^6 d (1 + \log(B))^2}{B}
\end{align*}
\end{lemma}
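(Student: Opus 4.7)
The plan is to rewrite the left-hand side as a normalized sum of iid random vectors and adapt the high-dimensional Wasserstein CLT of \cite{zhai2018clt} to this smoothed setting. The improved scaling (with $\beta^6$ rather than the $\beta^2$ that a direct application of Zhai's bound would yield) will arise from leveraging the Gaussian convolution $\sqrt{\vI - \Sigma_\vY}\vX$ to obtain a Stein-type estimate that exploits the third moments of $\vY_i$ instead of the second moments.

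First, decompose $\vX \disteq \tfrac{1}{\sqrt{B}}\sum_{i=1}^B \vX_i$ with $\vX_i \iidsim \cN(0, \vI)$, so that
\[
    \sqrt{\vI - \Sigma_\vY}\,\vX + \vY \disteq \frac{1}{\sqrt{B}}\sum_{i=1}^B \vW_i, \qquad \vW_i := \vY_i + \sqrt{\vI - \Sigma_\vY}\,\vX_i.
\]
Each $\vW_i$ has mean $0$ and covariance $\vI$ (matching $\vZ$), and its law is the convolution of the law of $\vY_i$ with a non-degenerate Gaussian; this Gaussian is well-defined because $\|\Sigma_\vY\|_2 \leq \nicefrac{1}{5d}$ guarantees $\vI - \Sigma_\vY \succ 0$. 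Analogously, $\vZ \disteq \tfrac{1}{\sqrt{B}}\sum_i \vG_i$ for iid $\vG_i \sim \cN(0, \vI)$, so the task reduces to bounding $\wass{2}^2$ between two normalized sums of $B$ iid random vectors sharing the same first and second moments. Following Zhai's interpolation approach via the Ornstein-Uhlenbeck semigroup, one represents $\wass{2}^2$ between such sums as an integral of a score-type functional along the interpolation, where the Stein kernel of each $\vW_i$ differs from the identity only through the non-Gaussian part $\vY_i$. Consequently, the discrepancy driving the interpolating bound scales with the third moment $\bE\|\vY_i\|^3 \leq \beta^3$ rather than the variance $\beta^2$. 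After accumulating the contribution from $B$ summands, together with the $d$-dimensional factor and the $(1+\log B)^2$ correction (arising, as in Zhai, from truncating the integrand at the level $\Theta(\sqrt{\log B})$ and balancing the tail bound), the resulting squared Wasserstein bound scales as $\beta^6 d (1 + \log B)^2/B$.

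The principal obstacle will be handling the unboundedness of $\vW_i$, since Zhai's original theorem requires bounded summands, whereas $\vW_i$ inherits a Gaussian tail from $\vX_i$. This can be dealt with either by working directly with the Stein kernel of the convolved law, exploiting that Gaussian smoothing produces a density with well-controlled logarithmic derivatives, or by truncating the Gaussian component at radius $\Theta(\sqrt{\log B})$ and separately bounding the exponentially small tail contribution. The assumption $\beta^2 \leq \nicefrac{1}{5}$ ensures that the higher-order contributions (fourth and beyond moments of $\vY_i$) are negligible compared with the third-moment term in the Stein expansion, while $\|\Sigma_\vY\|_2 \leq \nicefrac{1}{5d}$ keeps $\sqrt{\vI - \Sigma_\vY}$ close to the identity so that the Gaussian convolution supplies the order-one smoothing required for the Stein-kernel estimate to be applied dimension-independently in its matrix-norm factor.
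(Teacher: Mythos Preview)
Your proposal is too sketchy to be a proof and mischaracterizes Zhai's method. Zhai's argument is not an Ornstein--Uhlenbeck or Stein-kernel interpolation; it is a Lindeberg-type replacement, swapping one summand at a time and bounding each swap's $\wass{2}$ cost through an explicit chi-square (density-ratio) computation. Consequently, the $(1+\log B)$ factor arises from the harmonic sum $\sum_{k=1}^B 1/k$ of per-step costs, not from truncating a Gaussian tail at level $\Theta(\sqrt{\log B})$ as you claim; an OU-based argument would not produce this factor in this way.

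The paper follows exactly this Lindeberg structure. It starts from $\sum_{j=1}^B \vX_j$ and replaces $\vX_k$ by $\sqrt{\vI-\Sigma_{\vY}}\vX_k + \vY_k$ one index at a time. At step $k$, after peeling off the common independent part, the cost is
\[
\wass{2}\!\Bigl(\textstyle\sum_{j=1}^k \vX_j,\ \textstyle\sum_{j=1}^{k-1}\vX_j + \sqrt{\vI-\Sigma_{\vY}}\vX_k + \vY_k\Bigr)
=\sqrt{k}\,\wass{2}\!\Bigl(\vZ_1,\ \sqrt{\vI - \tfrac{1}{k}\Sigma_{\vY}}\,\vZ_1 + \tfrac{1}{\sqrt{k}}\vY_k\Bigr),
\]
because the $k{-}1$ pure Gaussians together with the new Gaussian piece have covariance $k(\vI-\Sigma_{\vY}/k)$. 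The crucial one-step lemma (the paper's analogue of Zhai's Lemma~1.6) is a direct density-ratio estimate showing that the right-hand factor is at most $\sqrt{25d\beta^6/k^3}$, so the per-step cost is $5\beta^3\sqrt{d}/k$; summing over $k$ and dividing by $\sqrt{B}$ gives the bound. Your intuition that the gain is ``third-order'' because the covariance is already matched is correct, but it is realized through this one-step chi-square bound, not through a Stein kernel of the convolved summand $\vW_i$. Indeed, your decomposition into iid $\vW_i = \vY_i + \sqrt{\vI-\Sigma_{\vY}}\vX_i$ obscures the mechanism: the $\beta^6/B$ scaling comes precisely from the fact that at step $k$ there are $k$ \emph{pure} Gaussian summands present, making the effective perturbation $\vY_k/\sqrt{k}$ rather than $\vY_k$; if you instead swap $\vW_k$ for a standard Gaussian, the already-swapped non-Gaussian pieces $\vY_j$ remain and you cannot exploit this accumulating-variance effect in the same clean way.
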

Our entropic CLT analysis does not require any smoothness assumptions on $F$ or isoperimetric assumptions on $\pistar$, and only uses the \ref{eqn:defn-grad-growth} and \ref{eqn:defn-moment-growth} assumptions for $p = 6$. 
\section{Results for SGLD and RBM}
\label{sec:main_results}
\subsection{Convergence of SGLD under Smoothness and LSI}
 Our first result is a stable convergence guarantee for the last iterate of SGLD with respect to KL divergence when $F$ is smooth and $\pistar$ satisfies \ref{eqn:defn-log-sobolev}. The proof, which is presented in Appendix \ref{proof:thm-sgld-lsi-stable}, follows from Lemma \ref{lem:sgld-descent-lemma} and uses the fact that \ref{eqn:defn-log-sobolev} is a gradient dominance condition for $\KL{.}{\pistar}$ in the space of measures \citep{ottovillanithm}, i.e., $\FD{\mu_t}{\pistar} \geq 2 \CLSI \KL{\mu_t}{\pistar}$.
 \begin{theorem}[Convergence of SGLD under Smoothness and LSI]
\label{thm:sgld-lsi-stable}    
Let the \ref{eqn:defn-smoothness}, \ref{eqn:defn-grad-growth-subg} and \ref{eqn:defn-moment-growth} conditions be satisfied with $p = 4$ and let $\pistar$ satisfy \ref{eqn:defn-log-sobolev}. Then, for $\eta \leq \nicefrac{\CLSI}{6L^2}$, the last iterate of SGLD satisfies the following.\small
\begin{align*}
    \KL{\Law{\hvx_{K}}}{\pistar} &\leq e^{-\nicefrac{\CLSI \eta K}{2}} \KL{\Law{\hvx_{0}}}{\pistar} + \frac{CL^2 \eta d}{\CLSI} + \frac{CL\eta(M^2 C_{2} d + G^2)}{B\CLSI} \nonumber\\&\quad+ \frac{C\eta (M^4 C_{4} d^{2 } + G^4)}{\CLSI B^2} 
\end{align*}\normalsize
\end{theorem}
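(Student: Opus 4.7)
My plan is to combine the per-step descent inequality of Lemma~\ref{lem:sgld-descent-lemma} with the gradient dominance of \ref{eqn:defn-log-sobolev} via a standard Grönwall argument on the interpolating process $(\vx_t)_{t\in[0,K\eta]}$. Set $\phi(t) = \KL{\mu_t}{\pistar}$. Inside the interval $t \in [k\eta, (k+1)\eta]$, I first bound the three error terms of Lemma~\ref{lem:sgld-descent-lemma} using $t - k\eta \leq \eta$, getting a uniform upper bound
\begin{align*}
    E := CL^2 \eta d + \frac{CL\eta(M^2 C_2 d + G^2)}{B} + \frac{C\eta (M^4 C_4 d^2 + G^4)}{B^2}
\end{align*}
on the non-dissipative contribution. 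Then, using \ref{eqn:defn-log-sobolev} in its de Bruijn form $\FD{\mu_t}{\pistar} \geq 2\CLSI \KL{\mu_t}{\pistar}$, Lemma~\ref{lem:sgld-descent-lemma} yields the one-sided differential inequality $\phi'(t) \leq -\tfrac{\CLSI}{2}\phi(t) + E$ on every sub-interval $[k\eta,(k+1)\eta]$. Since $\phi(t)$ is absolutely continuous (the interpolating process has a smooth density throughout, Brownian motion being non-degenerate), this inequality is valid Lebesgue-a.e.~on the entire interval $[0, K\eta]$.

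The next step is to apply the integrating factor $e^{\CLSI t/2}$: $\frac{d}{dt}\bigl(e^{\CLSI t/2}\phi(t)\bigr) \leq E\, e^{\CLSI t/2}$, and integrate from $0$ to $K\eta$. This yields
\begin{align*}
    \phi(K\eta) \leq e^{-\CLSI K\eta/2}\phi(0) + \frac{2E}{\CLSI}\bigl(1 - e^{-\CLSI K\eta/2}\bigr) \leq e^{-\CLSI K\eta/2}\KL{\Law{\hvx_0}}{\pistar} + \frac{2E}{\CLSI}\,.
\end{align*}
Finally, because the interpolating construction satisfies $\Law{\vx_{k\eta}} = \Law{\hvx_k}$ at every grid point, we have $\phi(K\eta) = \KL{\Law{\hvx_K}}{\pistar}$, which is exactly the claimed bound after substituting $E$ and collecting constants.

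The step-size condition $\eta \leq \CLSI/(6L^2)$ is in place to (i) satisfy the hypothesis $\eta \leq 1/(6L)$ required by Lemma~\ref{lem:sgld-descent-lemma} (noting $\CLSI \leq L$ under smoothness, so $\CLSI/(6L^2) \leq 1/(6L)$), and (ii) ensure that the discretization error term $CL^2\eta d/\CLSI$ is of order $d$, which is essential for the stated oracle complexity. There is no real obstacle here: the analytical heavy lifting has already been done in Lemmas~\ref{lem:kl-flow-interpolating} and~\ref{lem:noise-control-chisquare}, and the present theorem is a routine Grönwall consequence. The only minor care needed is the justification that the differential inequality, which is derived piecewise on each sub-interval $[k\eta,(k+1)\eta]$, extends globally — this follows from continuity of $\phi$ at the grid points $k\eta$ (the interpolating process is continuous in $t$, so its law is continuous in total variation, hence $\phi$ is continuous at these gluing points).
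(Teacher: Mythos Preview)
Your proposal is correct and follows essentially the same route as the paper. The only cosmetic difference is that the paper applies Gr\"onwall on each sub-interval $[k\eta,(k+1)\eta]$ separately (multiplying by $e^{\CLSI(t-k\eta)/2}$), obtains a one-step contraction $\KL{\Law{\hvx_{k+1}}}{\pistar} \leq e^{-\CLSI\eta/2}\KL{\Law{\hvx_k}}{\pistar} + (\text{error})$, and then sums the resulting geometric series; you instead bound $(t-k\eta)\leq\eta$ upfront, glue the sub-intervals via continuity of $\phi$, and do a single global Gr\"onwall on $[0,K\eta]$ --- both yield exactly the same bound up to constants.
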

The first term $e^{-\nicefrac{\CLSI \eta K}{2}} \KL{\Law{\hvx_{0}}}{\pistar}$ arises from the error due to initialization which decays exponentially fast. The second term $\frac{CL^2 \eta d}{\CLSI}$, which is independent of the batch-size $B$ and diminishes as $\eta \to 0$, corresponds to the inherent bias of LMC \citep{bernton2018-lmc-jko, wibisono2018-sampling-as-opt}. The remaining terms, which diminish with increasing batch-size $B$ for any fixed $\eta > 0$, encapsulate the error due to stochastic approximation. 

\paragraph{Oracle Complexity} Lemma 1 of \cite{vempalawibisono19} shows that $\KL{\Law{\hvx_{0}}}{\pistar} = O(d)$ is easily ensured via appropriate Gaussian initialization. Thus, for any $\epsilon \leq 1$, Theorem \ref{thm:sgld-lsi-stable} implies that $\eta = O\left(\nicefrac{\epsilon}{d}\right)$, $B = O(\sqrt{d})$ and $K = \Otilde\left(\nicefrac{d}{\epsilon}\right)$ ensures $\KL{\Law{\hvx_{K}}}{\pistar} \leq \epsilon$. Thus, Theorem \ref{thm:sgld-lsi-stable} implies an oracle complexity of $\Otilde\left(\nicefrac{d^{1.5}}{\epsilon}\right)$ for $\epsilon$-convergence in KL divergence, which translates to $\Otilde\left(\nicefrac{d^{1.5}}{\epsilon^2}\right)$ oracle complexity for $\epsilon$-convergence in $\TV$ and $\wass{2}$ via Pinsker's inequality and Talagrand's inequality respectively.  

\paragraph{Stability and Tightness of the Analysis}
For any $\eta \leq \nicefrac{\CLSI}{6L^2}$, and any batch size $B \geq 1$ the rate obtained in Theorem \ref{thm:sgld-lsi-stable} is stable, i.e., does not diverge as $K \to \infty$. Moreover, Theorem \ref{thm:sgld-lsi-stable} doesn't require any uniform warm start assumption. In the noise-free setting (i.e., $M, G=0$) Theorem \ref{thm:sgld-lsi-stable} recovers the result of \cite{vempalawibisono19} for LMC upto constant factors.
\\

\noindent When $F$ admits a finite-sum structure, i.e., $F(\vx) = \nicefrac{1}{n} \sum_{i=1}^{n} f_i(\vx)$, we propose an adaptive batch-size schedule for SGLD, for which we establish results similar to Theorem~\ref{thm:sgld-lsi-stable} without the \ref{eqn:defn-moment-growth} assumption. The algorithm, called AB-SGLD, is as follows:
\begin{enumerate}
    \item  $B_k := \min \{ n, 1 + \left \lceil{M \norm{\hvx_k} + G}\right \rceil  \}$ ; Sample indices $i_1, \dots, i_{B_k} \ \iidsim \ \mathsf{Uniform}([n])$
    \item $\hvx_{k+1} \leftarrow \hvx_k - \nicefrac{\eta}{B_k} \sum_{j=1}^{B_k} \nabla f_{i_j}(\hvx_k) + \sqrt{2 \eta} \epsilon_k$ where $\epsilon_k \sim \cN(0, \vI)$
\end{enumerate}
We refer to Appendix \ref{app-sec:absgld-smooth-stable-convergence} for the proof of Theorem~\ref{thm:absgld-lsi-stable}. For simplicity, we assume $\eta \leq \nicefrac{1}{8}$ and $M \geq L \geq 1$.
\begin{theorem}[Stable Convergence of AB-SGLD]
\label{thm:absgld-lsi-stable}
Let the \ref{eqn:defn-smoothness} and \ref{eqn:defn-grad-growth-subg} conditions be satisfied, and let $\pistar$ satisfy \ref{eqn:defn-log-sobolev}. Then, for $\eta \leq \frac{\CLSI^2}{8L(LM + 128M^2)}$ and any $K \in \mathbb{N}$:
\small
\begin{align*}
    \KL{\Law{\hvx_{K}}}{\pistar} &\leq e^{-\nicefrac{\CLSI \eta K}{4}}\KL{\Law{\hvx_{0}}}{\pistar} + \frac{C\eta}{\CLSI} \left(L^2 d + M^2 \vm^2_2 + G^2 \right) + \frac{CL^2 \eta^2}{\CLSI} \left(M\vm_1 + M + G \right)
\end{align*} \normalsize
where $\vm_p = \bE_{\pistar}[\|\vx\|^p]^{\nicefrac{1}{p}}$. Furthermore, the amortized batch size $\Bar{B} = \nicefrac{1}{K} \sum_{k=0}^{K} \bE\left[B_k\right]$ satisfies,
\small
\begin{align*}
    \Bar{B} \lesssim 2 + G + \frac{M}{\CLSI^{\nicefrac{3}{2}}\eta K}\sqrt{\KL{\Law{\hvx_0}}{\pistar}} + \frac{M \sqrt{\eta}}{\CLSI}\left(L \sqrt{d} + M \vm_2 + G\right) + \frac{L \eta}{\CLSI} \sqrt{ M \vm_1 + M + G}
\end{align*}\normalsize
\end{theorem}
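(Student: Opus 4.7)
}

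The plan is to run the differential-inequality machinery of Lemmas~\ref{lem:kl-flow-interpolating}--\ref{lem:sgld-descent-lemma} along the interpolating process of AB-SGLD, but with the fixed batch size $B$ replaced by the state-dependent $B_k$, and to close the moment dependence via Talagrand's $\mathsf{T}_2$ inequality (implied by \ref{eqn:defn-log-sobolev}) rather than by directly assuming \ref{eqn:defn-moment-growth}. The crucial structural observation is that $B_k$ is $\sigma(\hvx_k)$-measurable and satisfies $B_k \geq 1 + M\|\hvx_k\| + G$, so conditional on $\hvx_k$ the centred stochastic gradient $\vN_j = \nabla f_{i_j}(\hvx_k) - \nabla F(\hvx_k)$ is an i.i.d.\ sum of (conditionally) bounded summands with $\|\vN_j\| \leq M\|\hvx_k\|+G$ almost surely.

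First I would redo Lemma~\ref{lem:noise-control-chisquare} in the adaptive regime. Because the conditional CLT argument only uses that $\vN = B_k^{-1}\sum_j \vN_j$ is a conditionally i.i.d.\ centred sum, it yields
\[
\bE\bigl[\|\bE[\vN\mid \vx_{k\eta},\vx_t]\|^2 \bigm| \hvx_k\bigr] \;\lesssim\; (t-k\eta)\,\frac{(M\|\hvx_k\|+G)^4}{B_k^{2}} \;\leq\; (t-k\eta)\bigl(M\|\hvx_k\|+G\bigr)^2,
\]
where the last inequality uses $B_k^2 \geq (M\|\hvx_k\|+G)^2$. Analogously, $\bE[\|\vN\|^2\mid\hvx_k]\leq (M\|\hvx_k\|+G)^2/B_k \leq M\|\hvx_k\|+G$, which controls the discretisation-error contribution $L^2\bE[\|\vx_t-\vx_{k\eta}\|^2]$ and produces an $O(L^2\eta^2(M\|\hvx_k\|+G))$ correction beyond the usual $L^2\eta d$ term. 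Plugging these into Lemma~\ref{lem:kl-flow-interpolating} gives an inequality of the form
\[
\ddt \KL{\mu_t}{\pistar} \;\leq\; -\tfrac{1}{2}\FD{\mu_t}{\pistar} + C(t-k\eta)\Bigl[L^2 d + M^2\bE\|\hvx_k\|^2 + G^2 + \eta L^2\bigl(M\bE\|\hvx_k\| + G\bigr)\Bigr].
\]

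Next, I would close the loop on $\bE\|\hvx_k\|^p$ using $\mathsf{T}_2$: since $\pistar$ satisfies \ref{eqn:defn-log-sobolev}, $\wass{2}^2(\mu_k,\pistar)\leq 2\KL{\mu_k}{\pistar}/\CLSI$, so
\[
\bE\|\hvx_k\|^2 \;\lesssim\; \vm_2^2 + \CLSI^{-1}\KL{\mu_k}{\pistar}, \qquad \bE\|\hvx_k\| \;\lesssim\; \vm_1 + \sqrt{\CLSI^{-1}\KL{\mu_k}{\pistar}}.
\]
Substituting and using the LSI gradient-dominance $\FD{\mu_t}{\pistar}\geq 2\CLSI\KL{\mu_t}{\pistar}$ yields a one-step inequality
\[
\KL{\mu_{(k+1)\eta}}{\pistar} \;\leq\; \bigl(1 - \tfrac{\CLSI\eta}{2} + C\tfrac{\eta^2 M^2}{\CLSI}\bigr)\KL{\mu_{k\eta}}{\pistar} + C\tfrac{\eta^2}{\CLSI}\bigl[L^2 d + M^2\vm_2^2 + G^2\bigr] + C\tfrac{L^2\eta^3}{\CLSI}(M\vm_1+M+G).
\]
Under the step-size hypothesis $\eta\leq \CLSI^2/(8L(LM+128M^2))$, the self-referential $M^2\eta^2/\CLSI$ correction is smaller than $\CLSI\eta/4$, so the effective contraction factor is $1 - \CLSI\eta/4$, and an elementary geometric-series/Grönwall unrolling yields the stated KL bound.

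Finally, the amortised batch-size bound follows by averaging $\bE[B_k] \leq 2 + G + M\bE\|\hvx_k\|$ and using the $\bE\|\hvx_k\|$ estimate above: $\bar B \lesssim 2 + G + M\vm_1 + M\sqrt{\CLSI^{-1}K^{-1}\sum_{k<K}\KL{\mu_k}{\pistar}}$, and summing the one-step KL contraction telescopically bounds $K^{-1}\sum_k \KL{\mu_k}{\pistar}$ by $(\CLSI\eta K)^{-1}\KL{\mu_0}{\pistar}$ plus the stationary bias $\CLSI^{-1}(L^2 d + M^2\vm_2^2 + G^2) + L^2\eta\CLSI^{-1}(M\vm_1+M+G)$. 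Taking square-roots and simplifying gives the four-term bound in the statement. The main obstacle is (i) verifying that the conditional-CLT / Wasserstein argument of Lemma~\ref{lem:noise-control-chisquare} is insensitive to $B_k$ being a random, $\hvx_k$-measurable integer (it is, because all relevant identities are conditional on $\hvx_k$), and (ii) sharpening the step-size threshold so that the $M^2\eta^2/\CLSI$ term from the self-referential moment closure is strictly dominated by the LSI contraction $\CLSI\eta/2$.
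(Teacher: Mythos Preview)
Your proposal is correct and follows essentially the same route as the paper: rerun the conditional-CLT bound of Lemma~\ref{lem:noise-control-chisquare} with the $\hvx_k$-measurable batch $B_k\geq M\|\hvx_k\|+G$ to collapse $u_k^4/B_k^2$ to $u_k^2$, close the resulting $\bE\|\hvx_k\|^2$ and $\bE\|\hvx_k\|$ dependence via Talagrand's $\mathsf{T}_2$ (Lemma~\ref{lem:moment-control-talagrand}), absorb the self-referential $O(\eta^2(LM+M^2)/\CLSI)\,\KL{\mu_{k\eta}}{\pistar}$ term into the LSI contraction using the step-size hypothesis, and then unroll. The amortised-batch bound is likewise obtained exactly as you sketch, by averaging $\bE[B_k]\leq 2+G+M\bE\|\hvx_k\|$ and bounding $K^{-1}\sum_k\sqrt{\KL{\mu_{k\eta}}{\pistar}}$ via the established recurrence; the only cosmetic difference is that the paper keeps the $-\tfrac14\FD{\mu_t}{\pistar}$ (not $-\tfrac12$) after spending a quarter of the Fisher divergence on the $\|\nabla F(\vx_t)\|^2$ term in the discretisation error, which you implicitly subsume under ``the usual $L^2\eta d$ term''.
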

\paragraph{Expected Oracle Complexity} Since \ref{eqn:defn-log-sobolev} implies subgaussianity \citep{van2014probability}, it is reasonable to consider $\vm_1, \vm_2 = O(\sqrt{d})$. Moreover, appropriate Gaussian initialization ensures $\KL{\Law{\hvx_0}}{\pistar} = O(d)$. Hence, for any $\epsilon \leq 1$, setting $\eta = O(\nicefrac{\epsilon}{d})$ and $K = \Otilde\left(\nicefrac{d}{\epsilon}\right)$ ensures $\epsilon$-convergence in KL divergence. Under this setting, the amortized batch size of AB-SGLD is $\Bar{B} = \Otilde(\sqrt{d})$. Hence, we conclude that the expected oracle complexity of AB-SGLD to achieve $\epsilon$-convergence in KL is $K \Bar{B} = \Otilde\left(\nicefrac{d^{1.5}}{\epsilon}\right)$, which further implies  $\Otilde\left(\nicefrac{d^{1.5}}{\epsilon^2}\right)$ oracle complexity for $\epsilon$-convergence in $\TV$ and $\wass{2}$.


\subsection{First Order Stationarity of Averaged SGLD under Smoothness}
Recalling the definition of $\mu_t$ from Section~\ref{subsec:evi_descent}, we denote the averaged law of SGLD as $\mubar_{K\eta} = \nicefrac{1}{K \eta} \int_{0}^{K\eta} \mu_t dt$. We prove a stable convergence guarantee for $\FD{\mubar_{K\eta}}{\pistar}$ without imposing any isoperimetry assumptions on $\pistar$. Such guarantees for sampling are considered analogous to first-order stationary point analysis for nonconvex optimization \citep{sinho-non-log-concave-lmc, sinho-fisher-lb}. Our result, which follows from Lemma \ref{lem:sgld-descent-lemma} and convexity of Fisher Divergence, implies a stable convergence guarantee in $\TV$ if $\pistar$ satisfies \ref{eqn:defn-poincare}. We present the proof in Appendix \ref{proof:thm-sgld-fd-and-pi}. 
\begin{theorem}[First Order Stationarity of SGLD]
\label{thm:sgld-fd-and-pi}
Let the \ref{eqn:defn-smoothness}, \ref{eqn:defn-grad-growth-subg} and \ref{eqn:defn-moment-growth} conditions be satisfied with $p = 4$. For any $\eta \leq \nicefrac{1}{6L}$, the following holds
\small
\begin{align*}
    \FD{\mubar_{K\eta}}{\pistar} \lesssim \frac{\KL{\Law{\hvx_0}}{\pistar}}{K \eta} + L^2 \eta d + \frac{L\eta(M^2 C_{2} d + G^2)}{B} + \frac{\eta (M^4 C_{4} d^{2} + G^4)}{B^2}
\end{align*}\normalsize
Additionally, the following holds if $\pistar$ satisfies \ref{eqn:defn-poincare},\small
\begin{align*}
     \TV(\mubar_{K\eta}, \pistar)^2 \lesssim \frac{ \KL{\Law{\hvx_0}}{\pistar}}{\CPI K \eta} + \frac{L^2 \eta d}{\CPI} + \frac{L\eta(M^2 C_{2} d + G^2)}{\CPI B} + \frac{ \eta (M^4 C_{4} d^{2} + G^4)}{\CPI B^2}
\end{align*} \normalsize
\end{theorem}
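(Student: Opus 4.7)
The plan is to integrate the differential inequality in Lemma~\ref{lem:sgld-descent-lemma} over the entire interpolating trajectory $[0,K\eta]$ to obtain a bound on the time-averaged Fisher divergence, then to use convexity of $\FD{\cdot}{\pistar}$ for the first assertion and a Poincar\'e-based transfer from Fisher divergence to total variation for the second.

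For the Fisher bound, I would rearrange Lemma~\ref{lem:sgld-descent-lemma} to isolate $\tfrac{1}{4}\FD{\mu_t}{\pistar}$ on the left-hand side and integrate over each sub-interval $[k\eta,(k+1)\eta]$. Since $(t - k\eta) \leq \eta$, the three error terms each contribute an $O(\eta^2)$ factor per step, while $\int_{k\eta}^{(k+1)\eta} -\ddt\KL{\mu_t}{\pistar}\,dt$ telescopes across the sum over $k \in (K)$. Discarding the non-positive $-\KL{\mu_{K\eta}}{\pistar}$ and dividing by $K\eta$ then yields the desired upper bound on $\frac{1}{K\eta}\int_0^{K\eta} \FD{\mu_t}{\pistar}\,dt$, with the initial-KL contribution $\KL{\Law{\hvx_0}}{\pistar}/(K\eta)$ emerging from the boundary of the telescoping sum. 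Finally, Jensen's inequality applied to the convex functional $\mu \mapsto \FD{\mu}{\pistar}$ yields $\FD{\mubar_{K\eta}}{\pistar} \leq \frac{1}{K\eta}\int_0^{K\eta} \FD{\mu_t}{\pistar}\,dt$, completing the first claim.

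For the $\TV$ bound, I would apply \ref{eqn:defn-poincare} to $g = \sqrt{\rho}$ with $\rho = d\mubar_{K\eta}/d\pistar$. Using $\mathbb{E}_{\pistar}[\|\nabla g\|^2] = \tfrac{1}{4}\FD{\mubar_{K\eta}}{\pistar}$ and $\mathbb{E}_{\pistar}[\rho] = 1$ gives
\begin{equation*}
1 - \bigl(\mathbb{E}_{\pistar}[\sqrt{\rho}]\bigr)^2 \leq \tfrac{1}{4\CPI}\FD{\mubar_{K\eta}}{\pistar}.
\end{equation*}
Factoring the left-hand side as $(1-\mathbb{E}_{\pistar}[\sqrt{\rho}])(1+\mathbb{E}_{\pistar}[\sqrt{\rho}])$ and noting that the second factor is at least $1$, this is bounded below by $1 - \mathbb{E}_{\pistar}[\sqrt{\rho}]$, which is exactly the squared Hellinger distance $\mathrm{Hell}^2(\mubar_{K\eta},\pistar)$. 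Combining with the standard Cauchy--Schwarz bound $\TV(\mubar_{K\eta},\pistar)^2 \leq 2\,\mathrm{Hell}^2(\mubar_{K\eta},\pistar)$ then yields $\TV(\mubar_{K\eta},\pistar)^2 \lesssim \tfrac{1}{\CPI}\FD{\mubar_{K\eta}}{\pistar}$, and substituting the first bound gives the second claim.

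The obstacles are largely technical rather than conceptual: one must check that $\mu_t$ admits a density with finite Fisher information for a.e.\ $t$ so that the integrand is well-defined (which is immediate from the Gaussian-convolution structure of $\vx_t$ on each sub-interval), and one must specify how to implementably sample from the continuous-time averaged law $\mubar_{K\eta}$, which is most naturally handled by running \ref{eqn:SGLD} alongside a uniformly chosen interpolation time $T\sim\mathrm{Uniform}[0,K\eta]$ and returning the corresponding interpolating state.
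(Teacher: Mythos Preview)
Your proposal is correct and follows essentially the same approach as the paper: integrate Lemma~\ref{lem:sgld-descent-lemma} over $[0,K\eta]$, telescope the KL terms, and apply convexity of the Fisher divergence. The only cosmetic difference is in the second claim: the paper invokes the ready-made transport inequality $\TV(\mu,\pistar)^2 \leq \tfrac{4}{\CPI}\FD{\mu}{\pistar}$ (Lemma~\ref{lem:gullin-fd-info-transport}, from \cite{guillin2009transportation}) as a black box, whereas you derive it inline via Poincar\'e applied to $g=\sqrt{\rho}$ and the Hellinger--TV comparison---yielding the same conclusion (indeed with a slightly better constant).
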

\paragraph{Sampling from $\mubar_{K\eta}$} It is easy to sample from $\mubar_{K\eta}$ without any added sample complexity as follows : 1. Sample $t_0 \sim \mathsf{Unif}[0, K\eta]$. Let $k_0$ be the largest integer such that $\eta k_0 \leq t_0$. 2. Run SGLD for $k_0$ steps 3. Perform a partial update $\bar{\vx} = \hvx_{k_0} - (t_0 - \eta k_0) \vg_{k_0} + \sqrt{2(t_0 - \eta k_0)} \epsilon, \ \epsilon \sim \cN(0, \vI)$, where $\vg_{k_0}$ is the stochastic gradient at step $k_0$. By the construction of the interpolating process, it is easy to show that $\Law{\bar{\vx}} = \mubar_{K\eta}$.

\paragraph{Oracle Complexity} Appropriate Gaussian initialization ensures that $\KL{\Law{\hvx_{0}}}{\pistar} = O(d)$ Thus, for $\epsilon \leq 1$, setting $\eta = O\left(\nicefrac{\epsilon}{d}\right)$, $B = O(\sqrt{d})$ and $K = O\left(\nicefrac{d^2}{\epsilon^2}\right)$ suffices to ensure $\epsilon$-convergence of averaged SGLD in FD, thereby implying an oracle complexity of $O\left(\nicefrac{d^{2.5}}{\epsilon^2}\right)$. Under the same setting, Theorem \ref{thm:sgld-fd-and-pi} also implies an oracle complexity of $O\left(\nicefrac{d^{2.5}}{\epsilon^4}\right)$ for $\epsilon$-convergence in $\TV$ when $\pistar$ satisfies \ref{eqn:defn-poincare}. 

\paragraph{Stability} For any $\eta \leq \nicefrac{1}{6L}$ and $B \geq 1$, the rates presented in Theorem \ref{thm:sgld-fd-and-pi} are stable as they don't diverge when $K \to \infty$, nor do they require any uniform warm start assumption. Furthermore, in the noise-free setting (i.e. $M,G=0$), Theorem \ref{thm:sgld-fd-and-pi} recovers the result of \cite[Theorem 1 and Corollary 5]{sinho-non-log-concave-lmc} upto constant factors.  

\paragraph{Batch Size} Theorems \ref{thm:sgld-lsi-stable}, \ref{thm:absgld-lsi-stable} and \ref{thm:sgld-fd-and-pi} require a batch size of $B = O(\sqrt{d})$ for $\epsilon$-convergence, which significantly improves upon prior works. In comparison, \cite{quanquan-sgld} require $B = O(d)$ and \cite{raginsky2017non} assume $B = O\left(\mathsf{poly}\left(\nicefrac{d}{\epsilon}\right)\right)$. The VR-SGLD algorithm of \cite{suzuki-VR-SGLD-2022} requires a batch size and inner loop length of $O(\sqrt{n})$. Note that $n \gg d$ for most practical scenarios.

\subsection{Analysis of SGLD and RBM via Wasserstein CLT}
We use the arguments presented in Section~\ref{subsec:entropic_clt} to prove convergence bounds between the trajectories of SGLD and LMC (as well as RBM and IPD) under minimal assumptions (assuming only \ref{eqn:defn-grad-growth} and \ref{eqn:defn-moment-growth}).
The proof is presented in Appendix \ref{proof:thm-sgld-stat-indistinguishable}
\begin{theorem}[Statistical Indistinguishability of SGLD and LMC]
\label{thm:sgld-stat-indistinguishable}
Let $\vx_{1:K}$ and $\hvx_{1:K}$ and denote the iterates of LMC and SGLD respectively, with the same step-size and initialization. Furthermore, let the \ref{eqn:defn-grad-growth} and \ref{eqn:defn-moment-growth} conditions be satisfied. Then, 
\begin{align*}
    \KL{\hvx_{1:K}}{\vx_{1:K}} &\lesssim \frac{\eta^2 K}{B^2} \left(M^{4} C_{4} d^{2} + G^{4}\right) + \frac{\eta^3 K}{B^4}\left(M^{6} C_{6} d^{6} + G^{6} d^3 \right)\left(1 + \log B \right)^2
\end{align*}
\end{theorem}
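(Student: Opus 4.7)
The plan is to follow the sketch in Section~\ref{subsec:entropic_clt}. As a first step, I would observe that LMC and SGLD share a common random function representation $H_K$: both updates fit the template $\vx_{k+1} = \vx_k - \eta \nabla F(\vx_k) + \sqrt{2\eta}\cdot(\text{noise})$, so the entire trajectory is a deterministic measurable function of the initial state and the noise inputs. Concretely, $\hvx_{1:K} = H_K(\hvx_0,\hvz_0,\dots,\hvz_{K-1})$ and $\vx_{1:K} = H_K(\vx_0,\epsilon_0,\dots,\epsilon_{K-1})$, with $\hvz_k = \heps_k + \sqrt{\nicefrac{\eta}{2}}\vN_k$. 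Combining the data-processing inequality with the chain rule for KL divergence (using $\Law{\hvx_0} = \Law{\vx_0}$) reduces the trajectory-level bound to $\KL{\hvx_{1:K}}{\vx_{1:K}} \leq \sum_{k=0}^{K-1}\bE\bigl[\KL{\hvz_k|\cF_k}{\epsilon_k}\bigr]$ for the natural filtration $\cF_k = \sigma(\hvx_{0:k},\hvz_{0:k-1})$. All the work is now in controlling a one-step conditional KL divergence.

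For the one-step bound, the central observation is that $\hvz_k$ is a Gaussian convolution: it adds the independent standard Gaussian $\heps_k$ to $\sqrt{\nicefrac{\eta}{2}}\vN_k$. This permits a reverse $\mathsf{T}_2$-type inequality that dominates $\KL{\cdot}{\cN(0,\vI)}$ by (a constant multiple of) $\wass{2}^2$. To then bring Lemma~\ref{lem:wass-clt-new} into play, I would insert an intermediate centered Gaussian with covariance $\vI + \nicefrac{\eta}{2}\Sigma_k$, where $\Sigma_k = \bE[\vN_k\vN_k^{\intercal} | \hvx_k]$, and split via the triangle inequality for $\wass{2}$:
\begin{align*}
\wass{2}^2(\hvz_k,\epsilon_k\,|\,\cF_k) &\lesssim \wass{2}^2\bigl(\hvz_k,\sqrt{\vI+\tfrac{\eta}{2}\Sigma_k}\,\vY\,\big|\,\cF_k\bigr) + \wass{2}^2\bigl(\sqrt{\vI+\tfrac{\eta}{2}\Sigma_k}\,\vY,\vZ\bigr),
\end{align*}
with $\vY,\vZ\iidsim\cN(0,\vI)$. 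The second piece is a Bures-type distance between two zero-mean Gaussians, which by the explicit formula yields $O(\eta^2\|\Sigma_k\|_F^2)\lesssim \eta^2(\Tr\Sigma_k)^2$, and under \ref{eqn:defn-grad-growth} this is $O(\eta^2(M\|\hvx_k\|+G)^4/B^2)$.

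The first piece is exactly the conditional setting of Lemma~\ref{lem:wass-clt-new}. Setting $\vY_i = \sqrt{\nicefrac{\eta}{2B}}\bigl(\nabla f(\hvx_k,\xi_{k,i}) - \nabla F(\hvx_k)\bigr)$ rescales the summands so that $\sqrt{\nicefrac{\eta}{2}}\vN_k = (1/\sqrt{B})\sum_i \vY_i$; Assumption~\ref{as:stoc-grad-growth} then gives a pointwise bound $\|\vY_i\|\leq \beta_k := \sqrt{\nicefrac{\eta}{2B}}(M\|\hvx_k\|+G)$ almost surely, and $\bE[\vY_i\vY_i^{\intercal}|\hvx_k]$ aligns (up to rescaling) with the covariance correction of the intermediate Gaussian. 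Applied conditionally on $\hvx_k$ (with the smallness hypotheses on $\beta_k$ and $\|\Sigma_{\vY}\|_2$ verified in the admissible step-size/batch-size regime), Lemma~\ref{lem:wass-clt-new} delivers a bound of order $\beta_k^{6}\,d\,(1+\log B)^2/B$ for this Wasserstein piece.

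Finally, I would take expectation and plug in Assumption~\ref{as:sgld-moment-growth} with $p=6$: using $\bE\|\hvx_k\|^q \leq C_q d^{q/2}$ for $q\in\{4,6\}$, the expectations $\bE[(M\|\hvx_k\|+G)^4]$ and $\bE[(M\|\hvx_k\|+G)^6]$ reduce (together with the appropriate dimensional factors inherited from the Wasserstein CLT and from the reverse $\mathsf{T}_2$ passage) to the two terms in the theorem. Summing the one-step bounds over $k=0,\dots,K-1$ gives the claim. I expect the main obstacle to be twofold: (i) establishing the reverse $\mathsf{T}_2$-type inequality in a form sharp enough to pass from KL to $\wass{2}^2$ without losing factors, and (ii) handling the conditional form of Lemma~\ref{lem:wass-clt-new} when the parameters $\beta_k$ and $\Sigma_k$ are themselves random — which I would resolve by working pointwise in $\hvx_k$ and then integrating, truncating if necessary to enforce the small-$\beta$ regime required by the CLT.
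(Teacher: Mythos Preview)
Your plan matches the paper's proof almost exactly: random-function representation plus chain rule, Gaussian splitting to invoke the reverse-$\mathsf{T}_2$ inequality (Lemma~\ref{lem:rev_t_2}), then a $\wass{2}$ triangle inequality against the intermediate Gaussian $\sqrt{\vI+\eta\Sigma_k}\,\vZ$, with the covariance-mismatch piece handled by the closed-form Bures distance and the CLT piece by Lemma~\ref{lem:wass-clt-new}.

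The one place you need to be more concrete is the truncation you flag at the end. The hypotheses of Lemma~\ref{lem:wass-clt-new} require $\beta_k^2\leq\tfrac{1}{5}$ and $\|\Sigma_\vY\|_2\leq\tfrac{1}{(5d)}$, which conditionally forces $u_k^2:=(M\|\hvx_k\|+G)^2\leq B/(5\eta d)$. On the complementary event the paper does not merely truncate: it falls back to Zhai's coarser CLT (Lemma~\ref{lem:zhai-clt}), yielding a bound $\lesssim \eta d(1+\log B)^2 u_k^2/B^2$ for the CLT piece, and then controls $\bE\bigl[u_k^2\,\mathbb{I}_{\{u_k^2>B/(5\eta d)\}}\bigr]$ against the sixth moment via the Markov-type tail estimate of Lemma~\ref{lem:tail-markov-control}. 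This bad-event contribution is precisely what produces the $d^6$ and $d^3$ factors in the theorem statement; your good-event bound from Lemma~\ref{lem:wass-clt-new} alone would give only $M^6 C_6 d^{4}+G^6 d$, so the case split is not a technicality but the source of the stated dimensional dependence.
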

Theorem \ref{thm:sgld-stat-indistinguishable} implies that the convergence of LMC to $\pistar$ is nearly sufficient to ensure the convergence of SGLD. To this end, Theorem \ref{thm:sgld-stat-indistinguishable} gives us the following highly general technique to derive last-iterate TV guarantees for SGLD : 1. Using any existing convergence guarantee for LMC, choose $\eta, K$ such that $\TV(\vx_{K}, \pistar) \leq \Thetatilde(\epsilon)$ 2. Set $B$ such that $\KL{\hvx_{1:K}}{\vx_{1:K}} \leq \Thetatilde(\epsilon^2)$. Consequently, $\TV(\hvx_K, \pistar) \leq \Thetatilde(\epsilon)$ by the Data Processing Inequality and Pinsker's inequality. 

\noindent We quantitatively demonstrate this technique in Appendix \ref{proof:cor-sgld-unstable-lo}, we use Theorem \ref{thm:sgld-stat-indistinguishable} to prove last-iterate TV guarantees for SGLD under the assumptions of \ref{eqn:defn-holder-smoothness} and \ref{eqn:defn-latala-oleskiewicz}, obtaining an oracle complexity of $\Thetatilde(\tfrac{d^\gamma}{\epsilon^{\nicefrac{2}{s}}})$ for $\epsilon$-convergence in TV, where $\gamma = \max \left\{ 1 + \beta\left(1 + \nicefrac{1}{s}\right), \nicefrac{3}{2}\left(1+\beta\right) + \nicefrac{\beta}{2s} \right\}$ and $\beta = \nicefrac{2}{\alpha} - 1$. For $s = 1$, this result implies an oracle complexity of $\Thetatilde(\nicefrac{d^{3.5}}{\epsilon^2})$ when $\pistar$ satisfies \ref{eqn:defn-poincare} (i.e., $\alpha=1$). This has a better $\epsilon$ dependence but worse $d$ dependence than the TV guarantee of Theorem \ref{thm:sgld-fd-and-pi}. However, unlike Theorems \ref{thm:sgld-lsi-stable}, \ref{thm:sgld-fd-and-pi} and \ref{thm:absgld-lsi-stable}, these guarantees are not stable. \\

\noindent Our techniques apply whenever a sampling algorithm and its stochastic approximation admit the same random function representation, and as such, can be extended to other sampling algorithms such as HMC. In fact, using similar techniques, we obtain the following statistical indistinguishability guarantee between RBM and IPD, whose proof is presented in Appendix \ref{proof:thm-rbm-stat-indistinguishability}.
\begin{theorem}[Statistical Indistinguishability of IPD and RBM]
\label{thm:rbm-stat-indistinguishability}
Let $(\vx^i_k)_{i \in [n], k \in [K]}$ and $(\hvx^i_k)_{i \in [n], k \in [K]}$ denote the iterates of the interacting particle method and the random batch method respectively, with the same initial distribution and step-size $\eta$. Suppose $\norm{K^{ij}_k(\hvx^i_k, \hvx^j_k)} \leq M$ holds almost surely, for every $k \in [K]$ and $i,j \in [n]$. Then, the following holds for any $\eta \leq \frac{B \sigma^2}{40 M^2 d}$
\begin{align*}
    \KL{(\hvx^i_k)_{i \in [n], k \in [K]}}{(\vx^i_k)_{i \in [n], k \in [K]}} &\lesssim  \frac{\eta^2 M^4 nK}{B^2 \sigma^4} +  \frac{d\eta^3 M^6 nK (1 + \log B)^2}{B^4 \sigma^6} 
\end{align*}
 
\end{theorem}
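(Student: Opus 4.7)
The plan is to adapt the statistical-indistinguishability argument used for Theorem~\ref{thm:sgld-stat-indistinguishable} (sketched in Section~\ref{subsec:entropic_clt}) to the $n$-particle setting. The new element here is only bookkeeping: conditional independence of the $n$ random batches across particles decouples the KL calculation into a per-particle sum, so the SGLD analysis applies essentially verbatim per particle.

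First, I would rewrite each RBM update so as to isolate the stochastic-approximation noise inside the Gaussian increment:
\[
  \hat{\vx}^i_{k+1} = \hat{\vx}^i_k + \eta\, \vg^i_k(\hat{\vx}^i_k) + \frac{\eta}{n}\sum_{j=1}^{n} \vK^{ij}_k(\hat{\vx}^i_k, \hat{\vx}^j_k) + \sigma\sqrt{\eta}\,\hat{\vz}^i_k,
\]
with $\hat{\vz}^i_k := \epsilon^i_k + \tfrac{\sqrt{\eta}}{\sigma}\vN^i_k$ and $\vN^i_k := \hat{\vK}^i_k(\hat{\vx}_k) - \tfrac{1}{n}\sum_{j} \vK^{ij}_k(\hat{\vx}^i_k, \hat{\vx}^j_k)$. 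In this form RBM and IPD share an identical measurable map $H_K$ from initial data and driving noise to trajectory, with RBM driven by $(\hat{\vz}^i_k)$ and IPD driven by $(\epsilon^i_k)\iidsim\cN(0,\vI)$. By the data processing inequality and the chain rule for KL, together with conditional independence of the batches across $i$,
\[
  \KL{(\hat{\vx}^i_k)_{i,k}}{(\vx^i_k)_{i,k}} \leq \sum_{k=0}^{K-1}\sum_{i=1}^{n} \bE\bigl[\KL{\hat{\vz}^i_k \mid \cF_k}{\epsilon^i_k}\bigr],
\]
where $\cF_k$ is the history through step $k$. This reduces the problem to a single conditional smoothed-entropic-CLT bound per $(i,k)$.

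Next, following the reverse-$\mathsf{T}_2$ decomposition from Section~\ref{subsec:entropic_clt}, each conditional KL term is bounded by two $\wass{2}^2$ pieces: a Gaussian covariance-matching piece $\wass{2}^2(\cN(0,\vI+\tfrac{\eta}{\sigma^2}\Sigma^i_k),\cN(0,\vI))$ with $\Sigma^i_k := \mathrm{Cov}[\vN^i_k \mid \cF_k]$, which is $\lesssim \tfrac{\eta^2}{\sigma^4}\|\Sigma^i_k\|_F^2$ by the closed-form Bures formula; and a Gaussian-smoothed CLT piece $\wass{2}^2(\sqrt{\vI-\tfrac{\eta}{\sigma^2}\Sigma^i_k}\,\vX + \tfrac{\sqrt{\eta}}{\sigma}\vN^i_k,\,\vZ \mid \cF_k)$, which is precisely the setting of Lemma~\ref{lem:wass-clt-new} applied conditionally with $\vY_l := \tfrac{\sqrt{\eta}}{\sigma\sqrt{B}}\bigl(\vK^{iI^{il}_k}_k - \tfrac{1}{n}\sum_j \vK^{ij}_k\bigr)$, which are $B$ conditionally i.i.d.\ centered vectors with covariance $\Sigma_\vY = \tfrac{\eta}{\sigma^2}\Sigma^i_k$.

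Finally, I would plug in the boundedness hypothesis $\|\vK^{ij}_k\|\leq M$. The triangle inequality gives $\|\vY_l\|\leq \beta := \tfrac{2M\sqrt{\eta}}{\sigma\sqrt{B}}$, while $\Tr(\Sigma^i_k),\|\Sigma^i_k\|_2 \leq \tfrac{4M^2}{B}$, hence $\|\Sigma^i_k\|_F^2 \leq \Tr(\Sigma^i_k)\|\Sigma^i_k\|_2 \leq \tfrac{16M^4}{B^2}$. The first piece then contributes $\lesssim \tfrac{\eta^2 M^4}{\sigma^4 B^2}$ per $(i,k)$, and the second piece, via Lemma~\ref{lem:wass-clt-new}, contributes $\lesssim \tfrac{d\,\eta^3 M^6 (1+\log B)^2}{\sigma^6 B^4}$. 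The hypothesis $\eta \leq \tfrac{B\sigma^2}{40M^2 d}$ is exactly what is needed to verify the preconditions $\beta^2 \leq \tfrac{1}{5}$ and $\|\Sigma_\vY\|_2 \leq \tfrac{1}{5d}$ of Lemma~\ref{lem:wass-clt-new}. Summing over $i \in [n]$ and $k \in (K)$ recovers the two stated terms. The main obstacle is really only the reverse-$\mathsf{T}_2$ step plus the careful application of Lemma~\ref{lem:wass-clt-new}, but both have already been set up in Section~\ref{subsec:entropic_clt} for SGLD; conditional independence of the batches across particles lets us apply them per particle unchanged, with the $n$-sum yielding the particle-count factor in the final bound.
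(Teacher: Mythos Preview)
Your proposal is correct and follows essentially the same approach as the paper's proof: identical random-function representation plus chain rule to reduce to per-$(i,k)$ conditional KL terms, then the reverse-$\mathsf{T}_2$ step, the same covariance-mismatch/Wasserstein-CLT split, and Lemma~\ref{lem:wass-clt-new} for the CLT piece under the stated step-size condition. The only cosmetic difference is that the paper writes the CLT piece as $\wass{2}^2(\vX+\vY,\sqrt{\vI+\Sigma_\vY}\vZ)$ and then rescales by $(\vI+\Sigma_\vY)^{-1/2}$ to reach the form of Lemma~\ref{lem:wass-clt-new}, whereas you jump straight to the $\sqrt{\vI-\Sigma_\vY}$ form; this is harmless up to the $\|\vI+\Sigma_\vY\|\leq 6/5$ factor.
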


\subsection{CC-SGLD and CC-RBM for IPD}
\label{subsec:cc-sgld-rbm}
We now consider covariance correction introduced in Section~\ref{sec:prelim}. For \ref{eqn:CC-SGLD}, we define the estimator $\hat{\Sigma}(\hvx_k)$ as follows: \small
    \begin{equation*}
        \hSigma(\hvx_k) = \begin{cases}
        0 \qquad \text{ if } \qquad \left(M\norm{\hvx_k} + G\right)^2 > \nicefrac{B}{5 \eta d}\\ 
        \nicefrac{1}{2B^2} \sum_{j=1}^{B} \left(\nabla f(\vx_k, \xi^{(1)}_{k, j}) - \nabla f(\vx_k, \xi^{(2)}_{k, j})\right) \left(\nabla f(\vx_k, \xi^{(1)}_{k, j}) - \nabla f(\vx_k, \xi^{(2)}_{k, j})\right)^{T} \ \text{ o/w}
        \end{cases}
    \end{equation*}\normalsize
Where $\nabla f(\hvx_k, \xi^{(1)}_{k, 1}), \nabla f(\hvx_k, \xi^{(2)}_{k, 1}), \dots, \nabla f(\hvx_k, \xi^{(1)}_{k, B}),  \nabla f(\hvx_k, \xi^{(2)}_{k, B})$ is a fresh batch of stochastic gradients. We use the entropic CLT approach discussed in Section \ref{subsec:entropic_clt} to establish Theorem~\ref{thm:cc-sgld-stat-indistinguishable} which is proved in Appendix \ref{proof:thm-cc-sgld-stat-indistinguishable}.

\begin{theorem}[Statistical Indistinguishability of CC-SGLD and LMC]
\label{thm:cc-sgld-stat-indistinguishable}
Let the \ref{eqn:defn-grad-growth} and \ref{eqn:defn-moment-growth} conditions be satisfied with $p = 6$. Then, the iterates of CC-SGLD satisfy the following guarantee. \small
\begin{align*}
    \KL{\hvx_{1:K}}{\vx_{1:K}} &\lesssim \frac{\eta^2 K}{B^3}\left(M^4 C_{4} d^{2} + G^4\right) + \frac{\eta^3 K}{B^3}\left(M^6 C_{6} d^{3} + G^6\right) + \frac{\eta^5 K d^6}{B^3}  +\frac{\eta^3 K d^6 (1 + \log B)^2}{B^4}.
\end{align*}\normalsize
\end{theorem}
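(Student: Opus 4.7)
}
The plan is to mimic the blueprint of Theorem \ref{thm:sgld-stat-indistinguishable} (as sketched in Section \ref{subsec:entropic_clt}) but exploit the extra cancellation afforded by covariance correction. I begin by rewriting the \ref{eqn:CC-SGLD} update as $\hvx_{k+1}=\hvx_k-\eta\nabla F(\hvx_k)+\sqrt{2\eta}\,\hvz_k$ with effective noise
\[
\hvz_k \;=\; -\sqrt{\tfrac{\eta}{2}}\,\bar{\vN}_k \;+\; \bigl(\vI - \tfrac{\eta}{4}\hat{\Sigma}(\hvx_k)\bigr)\epsilon_k,\qquad \bar{\vN}_k=\tfrac{1}{B}\sum_{i=1}^B\vN(\hvx_k,\xi_{k,i}).
\]
Thus CC-SGLD and LMC share the same random function representation, and by the data-processing inequality and the chain rule for KL, $\KL{\hvx_{1:K}}{\vx_{1:K}}\le \sum_{k=0}^{K-1}\bE[\KL{\hvz_k\mid\cF_k}{\epsilon_k}]$. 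All that remains is a per-step entropic CLT for $\hvz_k$ given $\cF_k$.

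Because $\hat{\Sigma}(\hvx_k)$ uses a \emph{fresh} batch of queries $\{\xi^{(1)}_{k,j},\xi^{(2)}_{k,j}\}$ independent of $\{\xi_{k,i}\}$ and of $\epsilon_k$, I enlarge the filtration to $\cG_k=\sigma(\cF_k,\hat{\Sigma}(\hvx_k))$ and use convexity of KL to bound $\bE[\KL{\hvz_k\mid\cF_k}{\epsilon_k}]\le \bE[\KL{\hvz_k\mid\cG_k}{\epsilon_k}]$. Conditionally on $\cG_k$, $\hvz_k=\vW_k+A_k\epsilon_k$ with $A_k=\vI-\tfrac{\eta}{4}\hat{\Sigma}(\hvx_k)$ deterministic and $\vW_k=-\sqrt{\eta/2}\,\bar{\vN}_k$ independent of $\epsilon_k$. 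The key observation, which is the reason covariance correction helps, is that $\bE[\hat{\Sigma}(\hvx_k)\mid\hvx_k]=\Sigma(\hvx_k)/B$ exactly, so
\[
\mathrm{Cov}(\hvz_k\mid\cF_k) \;=\; \tfrac{\eta}{2B}\Sigma(\hvx_k) + \vI - \tfrac{\eta}{2B}\Sigma(\hvx_k) + \tfrac{\eta^2}{16}\bE[\hat{\Sigma}(\hvx_k)^2\mid\hvx_k] \;=\; \vI + O(\eta^2),
\]
i.e.\ the $O(\eta/B)$ covariance bias present in vanilla SGLD is cancelled, leaving only an $O(\eta^2)$ residual. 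This cancellation is what upgrades the $B^{-2}$ rate of Theorem \ref{thm:sgld-stat-indistinguishable} to the $B^{-3}$ rate.

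Next, I bound $\KL{\hvz_k\mid\cG_k}{\cN(0,\vI)}$ via the decomposition $\KL{\mu}{\cN(0,\vI)} = \KL{\mu}{\cN(0,C_k)} + \KL{\cN(0,C_k)}{\cN(0,\vI)}$ with $C_k=\mathrm{Cov}(\hvz_k\mid\cG_k)$. The Gaussian-versus-Gaussian term is computed explicitly and is $O(\eta^4 \|\hat{\Sigma}\|_F^2)$ per step; summed over $k$ and multiplied by the cutoff-enforced bound on $\hat{\Sigma}$, this yields the $\eta^2 K (M^4C_4 d^2+G^4)/B^3$ and $\eta^3 K(M^6 C_6 d^3+G^6)/B^3$ contributions. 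The non-Gaussian term is controlled by Talagrand's $T_2$ inequality for the Gaussian target $\cN(0,C_k)$, which reduces it (up to $\|C_k^{-1}\|\le 2$) to $\wass{2}^2(\hvz_k,\cN(0,C_k)\mid\cG_k)$. Setting $\vY_i=-\sqrt{\eta/(2B)}\,\vN_{k,i}$, so $\vW_k=\tfrac{1}{\sqrt B}\sum_i\vY_i$, $\Sigma_{\vY}=\tfrac{\eta}{2B}\Sigma(\hvx_k)$ and $\beta=\sqrt{\eta/(2B)}(M\|\hvx_k\|+G)$, the cutoff built into $\hat{\Sigma}$ (namely $(M\|\hvx_k\|+G)^2\le B/(5\eta d)$) is precisely calibrated to enforce the hypotheses $\beta^2\le\tfrac15$ and $\|\Sigma_{\vY}\|\le\tfrac{1}{5d}$ of Lemma \ref{lem:wass-clt-new}. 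Applying Lemma \ref{lem:wass-clt-new} then gives the $\eta^3 K d^6(1+\log B)^2/B^4$ term after summing, while a triangle inequality in $\wass{2}$ absorbs the discrepancy between $A_k=\vI-\tfrac{\eta}{4}\hat{\Sigma}$ and the ideal matrix $\sqrt{\vI-\Sigma_{\vY}}$ (two errors: linearisation of the square root and sampling error of $\hat{\Sigma}$ around $\Sigma/B$), producing the $\eta^5 K d^6/B^3$ contribution.

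The main obstacle I expect is the region where the cutoff activates, i.e.\ $\hat{\Sigma}=0$. There CC-SGLD reduces to a vanilla SGLD step, and Lemma \ref{lem:wass-clt-new} cannot be invoked directly; instead I will bound the contribution of this event by the probability $\bP\{(M\|\hvx_k\|+G)^2>B/(5\eta d)\}$ times a crude per-step KL bound. Using Markov with the sixth-moment hypothesis $p=6$ in \ref{eqn:defn-moment-growth}, this tail probability is of order $\eta^3 d^3/B^3 \cdot d^3$, which is precisely what is needed to fit the $\eta^3 K d^6/B^3$ and $\eta^5 K d^6/B^3$ shape of the bound; matching the powers of $\eta$, $d$, and $B$ here — and ensuring that the truncation error is consistent with the main Wasserstein CLT bound — will be the most delicate bookkeeping in the proof. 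The rest of the argument, including the repeated use of $\bE\bigl[(M\|\hvx_k\|+G)^q\bigr]\lesssim M^q C_q d^{q/2}+G^q$, is routine.
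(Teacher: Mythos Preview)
Your overall architecture---random-function representation, enlarging to $\cG_k=\cF_k\vee\sigma(\hat\Sigma_k)$, per-step entropic bounds, casework on the cutoff event, and the use of Lemma~\ref{lem:wass-clt-new} on the ``good'' event---matches the paper. But two steps, as written, do not go through.

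First, the non-Gaussian piece. You invoke ``Talagrand's $T_2$'' to pass from $\KL{\hvz_k}{\cN(0,C_k)}$ to $\wass{2}^2(\hvz_k,\cN(0,C_k))$; that inequality goes the other way. What is needed is a \emph{reverse} $T_2$ for Gaussian convolutions (Lemma~\ref{lem:rev_t_2}), and to use it you must first peel off an isotropic Gaussian component from $A_k\epsilon_k$. The paper does exactly this in Lemma~\ref{lem:kl_triangle}: write $A_k\epsilon_k\stackrel{d}{=}\tfrac{1}{\sqrt2}Z_1+B_kZ_2$, absorb $\tfrac{1}{\sqrt2}Z_1$ via Lemma~\ref{lem:rev_t_2}, then apply a $\wass{2}$ triangle inequality that inserts the ``ideal'' comparison $\sqrt{\tfrac12\vI-\tfrac{\eta}{2}\Sigma_k}Z_2+\sqrt{\eta/2}\,\bar\vN_k$. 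One branch is precisely Lemma~\ref{lem:wass-clt-new}; the other is a Gaussian-vs-Gaussian $\wass{2}^2$, converted back to KL via (forward) $T_2$ and expanded as the power series of Lemma~\ref{lem:kl_mismatch}, whose linear term vanishes because $\bE[\hat\Sigma_k\mid\cF_k]=\Sigma_k$ and whose quadratic term is controlled by Lemma~\ref{lem:cov-estimation-guarantee}. Your Pythagorean decomposition with $C_k=\mathrm{Cov}(\hvz_k\mid\cG_k)$ does not shortcut this: once you correctly reduce to $\wass{2}^2$, you still need the same insertion to make Lemma~\ref{lem:wass-clt-new} applicable, and you are then effectively comparing to $\cN(0,\vI)$ anyway.

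Second, the Gaussian-vs-Gaussian estimate. You compute $\mathrm{Cov}(\hvz_k\mid\cF_k)=\vI+O(\eta^2)$, but then work conditionally on $\cG_k$, where $C_k-\vI=\tfrac{\eta}{2}(\Sigma_k/B-\hat\Sigma_k)+O(\eta^2)$ is only $O(\eta)$ pointwise. Thus $\KL{\cN(0,C_k)}{\cN(0,\vI)}\approx\tfrac{\eta^2}{16}\Tr\bigl((\Sigma_k/B-\hat\Sigma_k)^2\bigr)$, not $O(\eta^4\|\hat\Sigma_k\|_F^2)$. The $B^{-3}$ gain appears only \emph{after} averaging over $\hat\Sigma_k$ and using $\bE[\Tr(\hat\Sigma_k^2)\mid\cF_k]-\Tr((\Sigma_k/B)^2)\le 4u_k^4/B^3$ (Lemma~\ref{lem:cov-estimation-guarantee}); this is exactly the mechanism in the paper, just reached through a different decomposition. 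Finally, on the cutoff event the paper does not use ``tail probability times crude KL'': it bounds $\bE[u_k^2\,\mathbb{I}_E]$ and $\bE[u_k^4\,\mathbb{I}_E]$ directly via Lemma~\ref{lem:tail-markov-control} with the sixth moment, which is what produces the $\eta^5 d^6/B^3$ and $\eta^3 d^6/B^4$ shapes.
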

The leading order term in Theorem \ref{thm:cc-sgld-stat-indistinguishable} is $O(\nicefrac{\eta^2 K}{B^3})$ whereas that in Theorem \ref{thm:sgld-stat-indistinguishable} for SGLD is $O(\nicefrac{\eta^2 K}{B^2})$. This indicates that CC-SGLD is a better stochastic approximation to LMC in comparison to SGLD, and enjoys faster convergence. Using this, we derive an unstable last-iterate TV guarantee for CC-SGLD under \ref{eqn:defn-holder-smoothness} and \ref{eqn:defn-latala-oleskiewicz}. The result, which is stated in Appendix \ref{proof:cor-cc-sgld-lo}, improves upon that implied by Theorem \ref{thm:sgld-stat-indistinguishable} under the same setting. In particular For $s=1$, we obtain a $\Thetatilde(\nicefrac{d^{\nicefrac{4}{3}}}{\epsilon^2})$ oracle complexity for CC-SGLD when $\pistar$ satisfies \ref{eqn:defn-log-sobolev}, which improves upon our previous guarantees for SGLD and AB-SGLD under smoothness and \ref{eqn:defn-log-sobolev}. When $\pistar$ satisfies \ref{eqn:defn-poincare}, we obtain an oracle complexity of $\Thetatilde(\nicefrac{d^{\nicefrac{10}{3}}}{\epsilon^2})$. In a similar way, we derive the following guarantee for covariance corrected RBM, which is proved in Appendix  \ref{proof:cc-rbm-stat-indistinguishability}.
\begin{theorem}[Statistical Indistinguishability of IPD and CC-RBM]
\label{thm:cc-rbm-stat-indistinguishability}
Let $(\vx^i_k)_{i \in [n], k \in [K]}$ and $(\hvx^i_k)_{i \in [n], k \in [K]}$ denote the iterates of the interacting particle method and the covariance corrected random batch method respectively, with the same initial distribution and step-size $\eta$. Suppose $\norm{K^{ij}_k(\hvx^i_k, \hvx^j_k)} \leq M$ holds almost surely, for every $k \in [K]$ and $i,j \in [n]$. Then, for any $\eta \leq \frac{B \sigma^2}{40 M^2 d}$
\small
\begin{align*}
    \KL{(\hvx^i_k)_{i \in [n], k \in [K]}}{(\vx^i_k)_{i \in [n], k \in [K]}} &\lesssim  \frac{\eta^2 M^4 n K}{B^2 B^{\prime} \sigma^4} + \frac{\eta^3 M^6 n K}{B^3 \sigma^6}  + \frac{d\eta^3 M^6 nK \left(1 + \log B\right)^2}{B^4 \sigma^6} 
\end{align*} \normalsize
\end{theorem}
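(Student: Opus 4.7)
The plan is to apply the random function representation and Wasserstein CLT framework of Section~\ref{subsec:entropic_clt}, in direct analogy with the proofs of Theorems~\ref{thm:rbm-stat-indistinguishability} and~\ref{thm:cc-sgld-stat-indistinguishable}. I would first rewrite the \ref{eq:covrbm} update as $\hvx^{i}_{k+1} = \hvx^{i}_k + \eta\vg^{i}_k(\hvx^{i}_k) + \tfrac{\eta}{n}\sum_{j}\vK^{ij}_k(\hvx^{i}_k,\hvx^{j}_k) + \sigma\sqrt{\eta}\,\tilde{\vz}^{i}_k$, where the effective noise is
\begin{align*}
\tilde{\vz}^{i}_k := \bigl(\vI - \tfrac{\eta}{2\sigma^2}\hat{\Sigma}^{i}_k\bigr)\epsilon^{i}_k + \tfrac{\sqrt{\eta}}{\sigma}\bigl(\hat{\vK}^{i}_k(\hvx_k) - \tfrac{1}{n}\textstyle\sum_{j}\vK^{ij}_k(\hvx^{i}_k,\hvx^{j}_k)\bigr).
\end{align*}
The \ref{eqn:IPD} recursion has the same structure with $\tilde{\vz}^{i}_k$ replaced by $\epsilon^{i}_k\sim\cN(0,\vI)$, so the two processes admit a common random function representation. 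Data processing, the chain rule of KL, and the conditional independence of the random batches used by distinct particles at a given step then yield
\begin{align*}
\KL{(\hvx^{i}_k)_{i,k}}{(\vx^{i}_k)_{i,k}} \leq \sum_{k=0}^{K-1}\sum_{i=1}^{n}\bE\bigl[\KL{\tilde{\vz}^{i}_k\mid\cF_k}{\cN(0,\vI)}\bigr].
\end{align*}

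For each conditional KL on the right, I would exploit that $\tilde{\vz}^{i}_k$ is a Gaussian convolution through $\epsilon^{i}_k$ and apply the reverse $T_2$-type inequality of Section~\ref{subsec:entropic_clt} to upper bound it by a squared $\wass{2}$ distance. A triangle inequality then splits this distance into two pieces via the intermediate vector $\sqrt{\vI - \tilde{\Sigma}^{i}_k}\vX + \vY^{i}$, where $\vY^{i} := \tfrac{\sqrt{\eta}}{\sigma B}\sum_{j=1}^{B}\bigl(\vK^{iI^{ij}_k}_k - \bar{\vK}^{i}_k\bigr)$, $\bar{\vK}^{i}_k := \tfrac{1}{n}\sum_{j}\vK^{ij}_k(\hvx^{i}_k,\hvx^{j}_k)$, and $\tilde{\Sigma}^{i}_k := \tfrac{\eta}{\sigma^2 B}\Sigma^{i}_{\vK}$ is the conditional covariance of $\vY^{i}$. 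The first piece is a Wasserstein CLT: writing $\vY^{i} = \tfrac{1}{\sqrt{B}}\sum_j\vY_j$ with $\|\vY_j\|\leq\beta := \tfrac{2M\sqrt{\eta}}{\sigma\sqrt{B}}$, the step-size hypothesis $\eta\leq\tfrac{B\sigma^2}{40M^2 d}$ forces $\beta^2\leq 1/5$ and $\|\Sigma^{i}_{\vY}\|_2\leq 1/(5d)$, so Lemma~\ref{lem:wass-clt-new} delivers an $O\bigl(M^6\eta^3 d(1+\log B)^2/(\sigma^6 B^4)\bigr)$ per-term bound, which after summation over $(i,k)$ produces the third term of the claim.

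The second piece, obtained by coupling $\epsilon^{i}_k$ to itself, reduces to controlling $\bE\|\sqrt{\vI - \tilde{\Sigma}^{i}_k} - \vI + \tfrac{\eta}{2\sigma^2}\hat{\Sigma}^{i}_k\|_F^2$. The covariance estimator is built from the i.i.d.\ paired indices $J^{il}_k,\bar{J}^{il}_k$ precisely so that $\bE[\hat{\Sigma}^{i}_k\mid\hvx_k] = \tfrac{1}{B}\Sigma^{i}_{\vK} = \tfrac{\sigma^2}{\eta}\tilde{\Sigma}^{i}_k$, which cancels the leading linear term in the matrix Taylor expansion of $\sqrt{\vI-\tilde{\Sigma}^{i}_k}$. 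The surviving residue splits into (a) the estimator variance $\tfrac{\eta^2}{4\sigma^4}\bE\|\hat{\Sigma}^{i}_k - \tfrac{1}{B}\Sigma^{i}_{\vK}\|_F^2$, which the $B'$ independent paired samples in $\hat{\Sigma}^{i}_k$ drive down to $O\bigl(\eta^2 M^4/(\sigma^4 B^2 B')\bigr)$ --- supplying the first term of the statement --- and (b) a higher-order matrix Taylor remainder, controlled under $\|\vK^{ij}_k\|\leq M$ by operator-to-Frobenius inequalities, which yields the second term $O\bigl(\eta^3 M^6/(\sigma^6 B^3)\bigr)$. Summing over $i\in[n]$, $k\in[K]$ completes the proof.

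The main technical obstacle is this second piece: the estimator $\hat{\Sigma}^{i}_k$ from Section~\ref{sec:prelim} must be handled so that its conditional expectation exactly matches the linear Taylor coefficient of $\sqrt{\vI-\tilde{\Sigma}^{i}_k}$, without which an $O(\tilde{\Sigma})$ bias would dominate and wipe out the benefit of covariance correction. Extracting the surviving variance and Taylor remainder with the correct $1/(B^2 B')$ and $1/B^3$ scalings --- while confining the dimension dependence to the logarithmic factor from the Wasserstein CLT --- requires careful use of operator-to-Frobenius bounds such as $\mathrm{tr}(A^4)\leq\|A\|_{\mathrm{op}}^2\,\mathrm{tr}(A^2)$ rather than naive trace estimates, and is the delicate step of the argument.
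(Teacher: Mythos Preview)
Your proposal is correct and follows essentially the same route as the paper: random-function representation, chain rule for KL, a reverse-$T_2$ reduction to $\wass{2}$, a triangle through the intermediate $\sqrt{\vI-\tilde\Sigma}\,\vX+\vY$, and Lemma~\ref{lem:wass-clt-new} for the CLT piece. The one place you diverge is the covariance-mismatch term. The paper (via Lemma~\ref{lem:kl_triangle}) first splits off a common Gaussian factor $\tfrac{1}{\sqrt 2}\vZ_1$, leaving a residual $\vB\vZ_2$ with $\vB=\sqrt{(\vI-\tfrac{\eta}{2\sigma^2}\hat\Sigma)^2-\tfrac12\vI}$, then passes back to a KL between centered Gaussians via $T_2$ and evaluates it with the exact log-determinant expansion of Lemma~\ref{lem:kl_mismatch}; the linear term $\Tr\bigl((\vI-2\Sigma)^{-1}(\Sigma-\hat\Sigma)\bigr)$ vanishes in expectation and the tail is bounded term-by-term using Lemma~\ref{lem:cov-estimation-guarantee}. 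You instead bound the mismatch directly as a squared Frobenius norm and Taylor-expand $\sqrt{\vI-\tilde\Sigma}$, using unbiasedness of $\hat\Sigma$ to kill the first-order term. Both approaches yield the same three contributions; yours is a bit more elementary, but note that after the Gaussian-splitting (which you need so that the CLT piece is a genuine convolution and Lemma~\ref{lem:wass-clt-new} applies) the matrix appearing in the second piece is $\vB$, not $(\vI-\tfrac{\eta}{2\sigma^2}\hat\Sigma)$ itself---the discrepancy is exactly the square-root linearization error, of order $\Tr(\tfrac{\eta}{\sigma^2}\hat\Sigma)^3$, which is already absorbed into your $O(\eta^3 M^6/(\sigma^6 B^3))$ remainder.
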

As per Theorem~\ref{thm:rbm-stat-indistinguishability}, a batch size of $B \gg (\eta^2nK)^{1/2}$ suffices to ensure that the trajectories of all the particles of RBM and IPD stay close in distribution. Similarly, Theorem~\ref{thm:cc-rbm-stat-indistinguishability} ensures the same for CC-RBM and IPD when $B \gg (\eta^2nK)^{1/3}$ (assuming $B = B^{\prime})$. Both these results significantly improve upon prior works on the convergence of RBM \citep{jin2020random, jin2021convergence} that either require highly stringent regularity assumptions on $\vg_k$, $\vK_k$ and their derivatives, or suffer from an exponential dependence on $\eta K$ (See Table 2). 

\section{Conclusion and Future Work}
In this work, we introduced techniques to give sharp analyses of stochastic approximations of sampling algorithms, significantly improving upon prior work. Future work can further extend these techniques to understand convergence properties under the R\'enyi Divergence, and other algorithms like Hamiltonian Monte Carlo and randomized midpoint methods.

\bibliography{references}
\clearpage
\tableofcontents
\clearpage
\appendix

\section{More References}
\label{sec:lit_review_more}
\paragraph{LMC and SGLD} 
Discretizations of diffusion processes \cite{bakry2014analysis} such as Langevin Monte Carlo (LMC) and its closely related cousins like Hamiltonian monte-carlo and underdamped LMC have been widely studied in various settings, including large scale  \citep{vempalawibisono19,sinho-lmc-poincare-lsi,cheng2018convergence,dalalyan2019user,lovasz-simonovits, lovasz-vempala-hitrun,lee2018convergence}. Recent advancements have focused on the sharp study of LMC with non-log-concave densities and have obtained sharp analyses with minimal assumptions under various metrics like the KL divergence and Renyi divergences whenever the target distribution satisfies certain functional inequalities. \cite{vempalawibisono19} established precise and succinct convergence bounds whenever the target distribution satisfies log-Sobolev inequalities (LSI) and \citep{sinho-lmc-poincare-lsi,murat-mlsi-paper} considered settings more general settings like the Lata\l{}a-Oleskiewicz (LO) Inequality and the Modified Log-Sobolev Inequality (MLSI)

\paragraph{Interacting Particle Methods for Sampling and Optimization}
 Equation~\eqref{eqn:IPD} a system which is made up of multiple agents which evolve in time by interacting with the environment and each other via the aggregation-diffusion equations \citep{carrillo2019aggregation}. These models give rise to rich dynamics, and can lead to various phenomenon like swarming, flocking, chemotaxis and vortex formation \citep{degond2017coagulation, vicsek1995novel,cucker2007emergent,bertozzi2012characterization,chorin1973numerical} and hence are widely used by the scientific community. This has also has generated significant interest from a mathematical perspective \citep{backhoff2020mean,carrillo2019aggregation,bertozzi2012characterization,ha2009simple} where precise behavior of popular models, large deviations principles and convergence to mean field limits have been studied. Inspired by these models, various optimization, sampling and control algorithms have been designed \citep{carrillo2018analytical,carrillo2021consensus,duncan2019geometry,liu2016stein,craig2022blob,chen2021density}.

\section{Stable Convergence of SGLD under Smoothness}
\label{app-sec:sgld-smooth-stable-convergence}
\subsection{Technical Lemmas}
\begin{lemma}
\label{lem:tail-markov-control}
Let $X$ be a non-negative random variable such that $\bE\left[X^k\right] < \infty$ for some $k > 1$. Then, the following holds for any $\alpha > 0$,
\begin{align*}
    \bE\left[X \bEvent{X \geq \alpha }\right] \leq \frac{\bE\left[X^k\right]}{\alpha^{k-1}}\left(1 + \frac{1}{k-1}\right)
\end{align*}
\end{lemma}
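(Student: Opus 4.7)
The plan is to use a layer-cake (tail integration) representation of the truncated expectation. Writing
\begin{align*}
\bE\left[X \bEvent{X \geq \alpha}\right] &= \int_0^\infty \bP\left\{X \bEvent{X \geq \alpha} > t\right\} \dd t = \int_0^\alpha \bP\{X \geq \alpha\} \dd t + \int_\alpha^\infty \bP\{X \geq t\} \dd t,
\end{align*}
we get a clean split into a ``boundary'' term and a genuine tail integral. The first piece collapses to $\alpha \bP\{X \geq \alpha\}$, and applying Markov's inequality at level $k$, i.e.\ $\bP\{X \geq \alpha\} \leq \bE[X^k]/\alpha^k$, gives $\alpha \bP\{X \geq \alpha\} \leq \bE[X^k]/\alpha^{k-1}$.

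For the second piece I will apply Markov pointwise, $\bP\{X \geq t\} \leq \bE[X^k]/t^k$, and carry out the elementary integral
\begin{align*}
\int_\alpha^\infty \frac{\dd t}{t^k} = \frac{1}{(k-1)\alpha^{k-1}},
\end{align*}
which is precisely where the condition $k > 1$ is used to make the integral finite. Adding the two contributions yields
\begin{align*}
\bE\left[X \bEvent{X \geq \alpha}\right] \leq \frac{\bE[X^k]}{\alpha^{k-1}} + \frac{\bE[X^k]}{(k-1)\alpha^{k-1}} = \frac{\bE[X^k]}{\alpha^{k-1}}\left(1 + \frac{1}{k-1}\right),
\end{align*}
matching the stated bound exactly.

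There is no real obstacle here; the whole argument is just Fubini plus Markov applied carefully. The only thing to watch is using the right exponent ($k$, not $1$) in the Markov step for both pieces, and ensuring $k>1$ so the tail integral converges. As a sanity check, one could alternatively apply H\"older's inequality with conjugate exponents $(k, k/(k-1))$, which yields the slightly sharper bound $\bE[X^k]/\alpha^{k-1}$ without the $1+1/(k-1)$ prefactor; this confirms the stated inequality is correct (in fact, slightly loose), and I will stick with the layer-cake proof since it delivers the stated constant transparently.
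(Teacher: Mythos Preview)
Your proof is correct and is essentially the same as the paper's: both obtain the identity $\bE[X\bEvent{X\geq\alpha}]=\alpha\,\bP\{X\geq\alpha\}+\int_\alpha^\infty \bP\{X\geq t\}\,\dd t$ via a layer-cake/Fubini argument and then apply Markov's inequality with exponent $k$ to each piece. Your side remark that H\"older gives the sharper constant $\bE[X^k]/\alpha^{k-1}$ is also correct and a nice sanity check.
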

\begin{proof}
Since $X$ is a non-negative random variable, the following holds almost surely,
\begin{align*}
    X &= \int_{0}^{\infty} \bEvent{X \geq t} \dd t \\
    X \bEvent{X \geq \alpha} &= \int_{0}^{\infty} \bEvent{X \geq \alpha} \bEvent{X \geq t} \dd t \\
    &= \int_{0}^{\infty} \bEvent{X \geq \max\{\alpha, t\}} \dd t \\
    &= \int_{0}^{\alpha} \bEvent{X \geq \alpha} \dd t + \int_{\alpha}^{\infty} \bEvent{X \geq t} \dd t \\
    &= \alpha \bEvent{X \geq \alpha} + \int_{\alpha}^{\infty} \bEvent{X \geq t} \dd t
\end{align*}
Then, by Fubini's Theorem and Markov's Inequality,
\begin{align*}
    \bE\left[X \bEvent{X \geq \alpha }\right] &=\alpha \bProb{X \geq \alpha} + \int_{\alpha}^{\infty} \bProb{X \geq t} \dd t \\
    &\leq \frac{\bE\left[X^k\right]}{\alpha^{k-1}} + \bE\left[X^k\right] \int_{\alpha}^{\infty} t^{-k} \dd t \\
    &= \frac{\bE\left[X^k\right]}{\alpha^{k-1}}\left(1 + \frac{1}{k-1}\right)
\end{align*}
\end{proof}
\begin{lemma}
\label{lem:moment-growth-subg}
Let the \ref{eqn:defn-grad-growth-subg} condition be satisfied. Then, for any $\vx \in \bR^d$ and $k \in \mathbb{N}$
\begin{align*}
    \bE\left[ \norm{\nabla f(\vx, \xi) - \nabla F(\vx)}^{2k} \biggr | \vx\right] \leq 2^{k+1} k! \left(M \norm{\vx} + G\right)^{2k}
\end{align*}
\end{lemma}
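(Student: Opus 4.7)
The statement is a standard moment bound for a norm-subgaussian random variable, so the natural approach is to convert the tail bound \ref{eqn:defn-grad-growth-subg} into a moment bound via the layer-cake / tail-integral identity. Fix $\vx$ and write $Z := \norm{\nabla f(\vx,\xi) - \nabla F(\vx)}$ and $\sigma := M\norm{\vx} + G$. The hypothesis gives $\bP\{Z \ge t \mid \vx\} \le 2\exp(-t^2/(2\sigma^2))$ for all $t \ge 0$.

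The plan is to expand $\bE[Z^{2k} \mid \vx]$ using the identity $\bE[Z^{2k} \mid \vx] = \int_0^\infty 2k\, t^{2k-1}\, \bP\{Z \ge t \mid \vx\}\, dt$, substitute the subgaussian tail, and reduce the resulting integral to a Gamma function. Concretely, after plugging in the bound,
\begin{align*}
\bE[Z^{2k} \mid \vx] \le 4k \int_0^\infty t^{2k-1} \exp\!\left(-\frac{t^2}{2\sigma^2}\right) dt.
\end{align*}
The change of variables $u = t^2/(2\sigma^2)$, so that $t^{2k-1}\,dt = \sigma^{2k}\, 2^{k-1}\, u^{k-1}\, du$, turns this integral into $4k \sigma^{2k} 2^{k-1} \Gamma(k) = 2^{k+1} k!\, \sigma^{2k}$, which is exactly the claimed bound.

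There is essentially no obstacle here; the only care needed is to keep track of the factor of $2k$ (from differentiating $t^{2k}$) and the factor of $2$ (from the two-sided form of the subgaussian tail), and to recognize that the integrand, after substitution, is the density of a Gamma$(k,1)$ random variable up to normalization. Since the $k$ in the statement is an arbitrary positive integer, the constant $2^{k+1}k!$ arises cleanly from $4k \cdot 2^{k-1}\cdot (k-1)!$; no further case analysis or optimization over free parameters is required.
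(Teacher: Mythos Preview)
Your proposal is correct and follows essentially the same approach as the paper: the paper also applies the layer-cake identity to $Z^{2k}$, substitutes the subgaussian tail bound, and reduces the resulting integral to $\Gamma(k)$ via the same change of variables. The only cosmetic difference is that the paper first normalizes by $u=M\norm{\vx}+G$ and works with $(z/u)^{2k}$, whereas you carry $\sigma$ through the computation directly.
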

\begin{proof}
Let $z = \norm{\nabla f(\vx, \xi) - \nabla F(\vx)}$ and $u = M \norm{\vx} + G$. Since $z$ is a non-negative random variable, it follows that,
\begin{align*}
    \bE\left[\left(\nicefrac{z}{u}\right)^{2k} \biggr| \vx \right] &= \int_{0}^{\infty} \bP\left\{  \left(\nicefrac{z}{u}\right)^{2k}\geq y \biggr| \vx \right\} \dd y \\
    &= 2k \int_{0}^{\infty} v^{2k-1} \bP\left\{  \nicefrac{z}{u}\geq v \biggr| \vx \right\} \dd v \\
    &\leq 4k \int_{0}^{\infty} x^{2k-1} e^{-\nicefrac{x^2}{2}} \dd x \\
    &= k 2^{k+1} \int_{0}^{\infty} t^{k-1} e^{-t} \dd t \\
    &= 2^{k+1} k!
\end{align*}
where we use the \ref{eqn:defn-grad-growth-subg} condition in step 3. Hence, we conclude that,
\begin{align*}
    \bE\left[ \norm{\nabla f(\vx, \xi) - \nabla F(\vx)}^{2k} \biggr | \vx\right] \leq 2^{k+1} k! \left(M \norm{\vx} + G\right)^{2k}
\end{align*}
\end{proof}
\begin{lemma}[Otto-Villani Theorem {\cite{ottovillanithm}}]
\label{lem:otto-villani-thm}
Let $\pistar$ satisfy \ref{eqn:defn-log-sobolev} with constant $\CLSI$. Then, $\pistar$ satisfies Talagrand's inequality $\mathsf{T}_p $ for any $p \in [1, 2]$, i.e., for any probability measure $\mu$
\begin{align*}
    \wass{p}^2 (\mu, \pistar) \leq \frac{2}{\CLSI} \KL{\mu}{\pistar}, \ \ \forall p \in [1, 2]
\end{align*}
\end{lemma}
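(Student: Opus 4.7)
My plan is to prove the $p=2$ case (Talagrand's $\mathsf{T}_2$) and then deduce $\mathsf{T}_p$ for $p \in [1,2]$ by monotonicity of Wasserstein distances. The route for $\mathsf{T}_2$ is the standard Otto--Villani interpolation: let $\mu_t = \Law{X_t}$ where $X_t$ solves the Langevin SDE $\dd X_t = -\nabla F(X_t)\dd t + \sqrt{2}\dd B_t$ with $X_0 \sim \mu$, so that $\mu_t \to \pistar$ as $t \to \infty$. A de Bruijn / Fokker--Planck computation yields $\ddt \KL{\mu_t}{\pistar} = -\FD{\mu_t}{\pistar}$. Under \ref{eqn:defn-log-sobolev} this already gives exponential entropy decay $\KL{\mu_t}{\pistar} \leq e^{-2\CLSI t}\KL{\mu}{\pistar}$, so $\KL{\mu_t}{\pistar} \to 0$ and (via lower semicontinuity plus tightness) $\mu_t \to \pistar$ in $\wass{2}$.

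\textbf{Key Wasserstein speed estimate.} The heart of the argument is the metric-derivative bound
\begin{equation*}
    \ddt \wass{2}(\mu_t, \pistar) \leq \sqrt{\FD{\mu_t}{\pistar}},
\end{equation*}
which follows from the Benamou--Brenier dynamical formulation together with the fact that the velocity field generating the Fokker--Planck evolution of $\mu_t$ is precisely $-\nabla \log(\mu_t/\pistar)$, whose squared $L^2(\mu_t)$ norm is $\FD{\mu_t}{\pistar}$. Integrating from $0$ to $\infty$ and using $\mu_t \to \pistar$ in $\wass{2}$ yields $\wass{2}(\mu, \pistar) \leq \int_0^\infty \sqrt{\FD{\mu_t}{\pistar}}\,\dd t$.

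\textbf{Combining the two ingredients.} Now apply \ref{eqn:defn-log-sobolev} in the form $\FD{\mu_t}{\pistar} \geq 2\CLSI \KL{\mu_t}{\pistar}$ and the identity $\FD{\mu_t}{\pistar} = -\ddt \KL{\mu_t}{\pistar}$ to write
\begin{equation*}
    \sqrt{\FD{\mu_t}{\pistar}} = \frac{-\ddt \KL{\mu_t}{\pistar}}{\sqrt{\FD{\mu_t}{\pistar}}} \leq \frac{-\ddt \KL{\mu_t}{\pistar}}{\sqrt{2\CLSI \KL{\mu_t}{\pistar}}} = -\sqrt{\tfrac{2}{\CLSI}}\,\ddt \sqrt{\KL{\mu_t}{\pistar}}.
\end{equation*}
Integrating over $t \in [0,\infty)$ collapses the telescoping derivative and gives $\wass{2}^2(\mu, \pistar) \leq \tfrac{2}{\CLSI}\KL{\mu}{\pistar}$, establishing $\mathsf{T}_2$. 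For $p \in [1,2]$, for any coupling $(X,Y)$ of $(\mu,\pistar)$, Jensen's inequality gives $(\bE\|X-Y\|^p)^{1/p} \leq (\bE\|X-Y\|^2)^{1/2}$; taking infimum over couplings yields $\wass{p}(\mu,\pistar) \leq \wass{2}(\mu,\pistar)$ and hence $\mathsf{T}_p$ from $\mathsf{T}_2$.

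\textbf{Main obstacle.} The delicate step is rigorously justifying the Wasserstein speed bound $\ddt \wass{2}(\mu_t, \pistar) \leq \sqrt{\FD{\mu_t}{\pistar}}$, which requires either the full Benamou--Brenier machinery together with regularity of the density $\mu_t$ along the flow, or a regularization argument (e.g.\ replace $\mu$ by its convolution with a small Gaussian so that $\FD{\mu_0}{\pistar} < \infty$ and densities are smooth, then pass to the limit using lower semicontinuity of $\wass{2}$ and of KL). Since this lemma is quoted essentially verbatim from \cite{ottovillanithm}, I would expect the authors to simply cite the classical statement rather than re-derive it; the sketch above outlines how a self-contained argument would proceed.
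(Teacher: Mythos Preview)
Your proposal is correct, and you anticipated exactly what the paper does: the lemma is stated with a citation to \cite{ottovillanithm} and no proof is given. The sketch you provide is the classical Otto--Villani interpolation argument and is sound; the only cosmetic point is that the ``Wasserstein speed estimate'' should really be phrased in terms of the metric derivative $|\dot\mu_t| \leq \sqrt{\FD{\mu_t}{\pistar}}$ together with the curve-length bound $\wass{2}(\mu_0,\mu_\infty) \leq \int_0^\infty |\dot\mu_t|\,\dd t$, rather than as a one-sided bound on $\ddt \wass{2}(\mu_t,\pistar)$ (which is nonpositive and so trivially bounded above by $\sqrt{\FD{}{}}$), but your subsequent integrated inequality and the telescoping computation are exactly right.
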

\begin{lemma}[{\cite[Theorem 3.1]{guillin2009transportation}}]
\label{lem:gullin-fd-info-transport}  
Let $\pistar$ satisfy \ref{eqn:defn-poincare} with constant $\CPI$. Then, the following holds for any probability measure $\mu$.
\begin{align*}
    \TV(\mu, \pistar)^2 \leq \frac{4}{\CPI} \FD{\mu}{\pistar}
\end{align*}
\end{lemma}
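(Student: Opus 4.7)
The plan is to bridge Fisher divergence and total variation via the squared Hellinger distance, using Poincar\'{e} applied to the square-root density. Let $\rho = d\mu / d\pistar$ (if $\mu$ is not absolutely continuous with respect to $\pistar$ then $\FD{\mu}{\pistar} = +\infty$ and the claim is vacuous). The starting identity is the standard rewriting
\[
\FD{\mu}{\pistar} = \int \rho \, \|\nabla \log \rho\|^2 \, d\pistar = 4 \int \|\nabla \sqrt{\rho}\|^2 \, d\pistar,
\]
obtained from $\nabla \sqrt{\rho} = \nabla \rho / (2\sqrt{\rho})$. Setting $g = \sqrt{\rho}$ and invoking \ref{eqn:defn-poincare} then gives, using $\int \rho \, d\pistar = 1$,
\[
1 - \Bigl(\int \sqrt{\rho} \, d\pistar \Bigr)^{2} \;\leq\; \frac{1}{\CPI} \int \|\nabla \sqrt{\rho}\|^2 \, d\pistar \;=\; \frac{\FD{\mu}{\pistar}}{4 \CPI}.
\]

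Next, I would convert this into a bound on $\TV$ via the Cauchy--Schwarz factorization $|\rho - 1| = |\sqrt{\rho} - 1| \cdot (\sqrt{\rho} + 1)$, which is the standard device for passing from a second-order quantity to the $L^1$-type quantity $\TV$. Writing $a := \int \sqrt{\rho} \, d\pistar$ and noting that the squared Hellinger distance is $H^2 := \int (\sqrt{\rho} - 1)^2 \, d\pistar = 2(1 - a)$, Cauchy--Schwarz yields
\[
\Bigl( \int |\rho - 1| \, d\pistar \Bigr)^{2} \;\leq\; \int (\sqrt{\rho} - 1)^2 \, d\pistar \,\cdot\, \int (\sqrt{\rho} + 1)^2 \, d\pistar \;=\; H^2 \cdot (2 + 2a) \;=\; 4(1-a)(1+a).
\]
Since $2 \TV(\mu, \pistar) = \int |\rho - 1| \, d\pistar$, combining with the Poincar\'{e} bound above gives $4 \TV(\mu,\pistar)^2 \leq 4 (1 - a^2) \leq \FD{\mu}{\pistar}/\CPI$, which is in fact \emph{tighter} by a factor of $16$ than the claimed $4/\CPI$ constant and so certainly implies the stated inequality.

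There is no real obstacle in this plan; the two nontrivial moves are (i) recognizing that Poincar\'{e} applied to $\sqrt{\rho}$ directly bounds a Hellinger-type object (because $\int \rho \, d\pistar = 1$ eliminates the mean term), and (ii) the $|\rho - 1| = |\sqrt{\rho}-1|(\sqrt{\rho}+1)$ factorization. If one is uncomfortable with densities that vanish, the argument can be applied to $\rho_\varepsilon = (\rho + \varepsilon)/(1 + \varepsilon)$ and pass to the limit $\varepsilon \downarrow 0$, using lower semi-continuity of the Fisher divergence. The suboptimal constant $4/\CPI$ in the statement, rather than $1/(4\CPI)$, presumably reflects the generality of the cited theorem in \cite{guillin2009transportation}, which is proved in a broader setting of weighted Poincar\'{e} inequalities, but the simple proof sketched above suffices for the paper's applications.
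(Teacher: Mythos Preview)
The paper does not supply its own proof of this lemma; it is stated with a citation to \cite{guillin2009transportation} and used as a black box. Your argument is correct and self-contained: the identity $\FD{\mu}{\pistar} = 4\,\bE_{\pistar}\|\nabla\sqrt{\rho}\|^{2}$, Poincar\'e applied to $g=\sqrt{\rho}$, and the Cauchy--Schwarz factorization $|\rho-1|=|\sqrt{\rho}-1|(\sqrt{\rho}+1)$ combine exactly as you describe to give $\TV(\mu,\pistar)^{2}\le \FD{\mu}{\pistar}/(4\CPI)$, which indeed improves the stated constant by a factor of $16$. The regularization remark for vanishing densities is the right way to make the application of \ref{eqn:defn-poincare} to $\sqrt{\rho}$ fully rigorous. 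Since the paper only needs the inequality up to a constant (it feeds into Theorem~\ref{thm:sgld-fd-and-pi} where the constants are already tracked loosely), the discrepancy in the constant is immaterial for the downstream results.
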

\begin{lemma}[{\cite[Lemma 16]{sinho-lmc-poincare-lsi}}]
\label{lem:grad-fd-bound}
Assume $F$ satisfies the \ref{eqn:defn-smoothness} condition. Then, for any arbitrary probability measure $\mu$, 
\begin{align*}
    \bE_{\vx \sim \mu}\left[\norm{\nabla F(\vx)}^2\right] \leq \FD{\mu}{\pistar} + 2dL
\end{align*}
\end{lemma}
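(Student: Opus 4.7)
The plan is to exploit the identity $\nabla \log(\mu/\pistar) = \nabla \log \mu + \nabla F$, which follows directly from $\pistar \propto e^{-F}$, and to recognize the Fisher divergence as a squared norm that can be expanded into a sum of three terms, one of which will be $\bE_\mu[\|\nabla F\|^2]$.

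First, I would write
\begin{equation*}
\FD{\mu}{\pistar} = \bE_{\mu}\bigl[\|\nabla \log \mu + \nabla F\|^2\bigr] = \bE_\mu[\|\nabla F\|^2] + \bE_\mu[\|\nabla \log \mu\|^2] + 2\,\bE_\mu\bigl[\langle \nabla F, \nabla \log \mu\rangle\bigr].
\end{equation*}
The next step is to handle the cross term via integration by parts. Writing $\bE_\mu[\langle\nabla F,\nabla\log\mu\rangle] = \int \langle\nabla F,\nabla\mu\rangle\,d\vx$ and integrating by parts (assuming enough decay of $\mu$ at infinity for the boundary terms to vanish), this becomes $-\int \mu\,\Delta F\,d\vx = -\bE_\mu[\Delta F]$. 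Substituting back and rearranging gives the identity
\begin{equation*}
\bE_\mu[\|\nabla F\|^2] = \FD{\mu}{\pistar} - \bE_\mu[\|\nabla\log\mu\|^2] + 2\,\bE_\mu[\Delta F].
\end{equation*}

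The remaining step is to drop the non-negative Fisher information $\bE_\mu[\|\nabla\log\mu\|^2]$ and bound $\Delta F$ using smoothness. Since $F$ is $L$-smooth, $\nabla^2 F \preceq L\,\vI$ (and likewise $\nabla^2 F \succeq -L\,\vI$), so $\Delta F = \operatorname{tr}(\nabla^2 F) \leq dL$ pointwise. Combining these yields the desired inequality $\bE_\mu[\|\nabla F\|^2] \leq \FD{\mu}{\pistar} + 2dL$.

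The only delicate point is the integration by parts, which requires enough regularity and tail decay on $\mu$; this is standard and can be justified either by a density argument or by initially restricting to smooth compactly supported $\mu$ and extending by approximation. I do not expect this to be a real obstacle, so the proof is essentially a two-line expansion plus a dimensional trace bound.
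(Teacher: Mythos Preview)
Your proof is correct and is exactly the standard argument. Note that the paper does not give its own proof of this lemma; it simply cites it as \cite[Lemma~16]{sinho-lmc-poincare-lsi}, and the proof there proceeds in the same way as yours---expand $\FD{\mu}{\pistar}=\bE_\mu[\|\nabla\log\mu+\nabla F\|^2]$, integrate the cross term by parts to get $-2\bE_\mu[\Delta F]$, drop the nonnegative Fisher information of $\mu$, and bound $\Delta F\le dL$ from $L$-smoothness.
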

\begin{lemma}[Chi Square Between Two Isotropic Gaussians]\label{lem:chi-sq-gaussian-control}
Let $p$ be the density of $\cN(\mu_1, \sigma^2 \vI)$ and $q$ be the density of $\cN(\mu_2, \sigma^2 \vI)$. Then,
\begin{align*}
    \bE_{x \sim q}\left[\left(\frac{p}{q}\right)^2 - 1\right] =  e^{\nicefrac{\norm{\mu_1 - \mu_2}^2}{\sigma^2}} - 1
\end{align*}
\end{lemma}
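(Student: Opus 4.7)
The plan is to compute the integral $\int \frac{p(x)^2}{q(x)} \dd x$ directly, since $\bE_{x \sim q}[(p/q)^2] = \int p^2/q \, \dd x$ and the claim follows by subtracting $1$ from this quantity. Substituting the Gaussian densities, the prefactors combine to $(2\pi\sigma^2)^{-d/2}$, and the exponent becomes $-\frac{\|x - \mu_1\|^2}{\sigma^2} + \frac{\|x - \mu_2\|^2}{2\sigma^2}$, so the entire task reduces to a clean Gaussian integration.

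The main step I would carry out is to complete the square in $x$ inside the exponent. Expanding and collecting, the coefficient of $\|x\|^2$ is $-\frac{1}{2\sigma^2}$ and the linear term is $\frac{1}{\sigma^2}\langle x, 2\mu_1 - \mu_2 \rangle$, which means the quadratic in $x$ is (up to the constant) $-\frac{1}{2\sigma^2}\|x - (2\mu_1 - \mu_2)\|^2$. The leftover constant from completing the square works out (after a short expansion) to $\frac{\|\mu_1 - \mu_2\|^2}{\sigma^2}$, using the identity $\|2\mu_1 - \mu_2\|^2 - 2\|\mu_1\|^2 + \|\mu_2\|^2 = 2\|\mu_1 - \mu_2\|^2$.

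Once the exponent is in this form, the integrand is precisely the density of $\cN(2\mu_1 - \mu_2, \sigma^2 \vI)$ multiplied by the $x$-independent factor $\exp(\|\mu_1 - \mu_2\|^2/\sigma^2)$. Integration gives $1$ against the Gaussian, so $\int p^2/q \, \dd x = e^{\|\mu_1 - \mu_2\|^2/\sigma^2}$, and subtracting $1$ yields the claim. There is no real obstacle here: this is a standard bookkeeping computation and the only thing worth double-checking is the sign and factor arithmetic in the square-completion step, which I have verified above.
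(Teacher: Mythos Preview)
Your proof is correct; the square-completion arithmetic checks out and the identity $\|2\mu_1 - \mu_2\|^2 - 2\|\mu_1\|^2 + \|\mu_2\|^2 = 2\|\mu_1 - \mu_2\|^2$ is exactly what is needed. The paper in fact states this lemma without proof (it is listed among the technical lemmas as a standard identity), so there is no alternative argument to compare against---your direct Gaussian-integral computation is the natural one.
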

\begin{lemma}[Controlling Moments under \ref{eqn:defn-log-sobolev}]
\label{lem:moment-control-talagrand}
Assume $\pistar$ satisfies \ref{eqn:defn-log-sobolev}. Then, for any arbitrary probability measure $\mu$ and any $p \in [1,2]$,
\begin{align*}
    \bE_{\vx \sim \mu}\left[\norm{\vx}^p\right] \leq 2^{p-1} \left(\frac{2}{\CLSI}\KL{\mu}{\pistar}\right)^{\nicefrac{p}{2}} + 2^{p-1} \vm^p_p
\end{align*}
where $\vm_p = \bE_{\vx \sim \pistar}\left[\norm{\vx}^p\right]^{\nicefrac{1}{p}}$
\end{lemma}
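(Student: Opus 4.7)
The plan is to combine the Otto--Villani theorem (Lemma \ref{lem:otto-villani-thm}) with a triangle inequality for Wasserstein distances, exploiting the fact that the $L^p$ norm of the identity map under a measure $\mu$ equals the Wasserstein distance of $\mu$ from the Dirac mass at the origin.

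First I would observe that, for $p \in [1,2]$, the quantity $\bE_{\vx \sim \mu}[\norm{\vx}^p]^{1/p}$ is precisely $\wass{p}(\mu, \delta_0)$, where $\delta_0$ is the Dirac measure at the origin (any coupling is forced, and the cost is exactly this moment). By the triangle inequality for the $p$-Wasserstein distance,
\begin{align*}
\wass{p}(\mu, \delta_0) \;\leq\; \wass{p}(\mu, \pistar) + \wass{p}(\pistar, \delta_0) \;=\; \wass{p}(\mu, \pistar) + \vm_p.
\end{align*}

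Next, I would apply Otto--Villani (Lemma \ref{lem:otto-villani-thm}), which under \ref{eqn:defn-log-sobolev} yields $\wass{p}(\mu,\pistar) \leq \sqrt{\tfrac{2}{\CLSI}\KL{\mu}{\pistar}}$ for every $p \in [1,2]$. Substituting and raising to the $p$-th power gives
\begin{align*}
\bE_{\vx \sim \mu}[\norm{\vx}^p] \;\leq\; \Bigl(\sqrt{\tfrac{2}{\CLSI}\KL{\mu}{\pistar}} + \vm_p\Bigr)^{p}.
\end{align*}
The final step is to apply the elementary inequality $(a+b)^p \leq 2^{p-1}(a^p + b^p)$, valid for $a,b \geq 0$ and $p \geq 1$ by convexity of $x \mapsto x^p$, to split the right-hand side into the two desired terms.

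This proof is essentially a one-line chain, so I don't anticipate any serious obstacle; the only subtle point is ensuring that Talagrand's $\mathsf{T}_p$ inequality in the stated form is available for all $p \in [1,2]$ (not just $p=2$), which is exactly what Lemma \ref{lem:otto-villani-thm} provides via the monotonicity $\wass{p} \leq \wass{2}$ for $p \leq 2$ combined with the standard Otto--Villani $\mathsf{T}_2$ bound.
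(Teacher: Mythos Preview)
Your proposal is correct and takes essentially the same approach as the paper. The paper couples $\vx \sim \mu$ with $\vx^* \sim \pistar$ via the $\wass{p}$-optimal coupling and applies $\|\vx\|^p \leq 2^{p-1}(\|\vx-\vx^*\|^p + \|\vx^*\|^p)$ directly, then invokes Lemma~\ref{lem:otto-villani-thm}; your version simply rephrases the first step as the triangle inequality $\wass{p}(\mu,\delta_0) \leq \wass{p}(\mu,\pistar) + \wass{p}(\pistar,\delta_0)$, which is the same inequality in metric language.
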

\begin{proof}
Let $\Gamma$ denote the $\wass{p}$ optimal coupling between $\mu$ and $\pistar$. Then, by Jensen's inequality
\begin{align*}
    \bE_{\vx \sim \mu}\left[\norm{\vx}^p\right] &\leq 2^{p-1} \bE_{(\vx, \vx^*) \sim \Gamma} \left[\norm{\vx - \vx^*}^p\right] + 2^{p-1} \bE_{\vx^* \sim \pistar}\left[\norm{\vx^*}^p\right] \\
    &\leq 2^{p-1} \wass{p}^{p} \left(\mu, \pistar\right) + 2^{p-1} \vm^p_p \\
    &\leq 2^{p-1} \left(\frac{2}{\CLSI}\KL{\mu}{\pistar}\right)^{\nicefrac{p}{2}} + 2^{p-1} \vm^p_p
\end{align*}
where the last inequality follows from Lemma \ref{lem:otto-villani-thm}. 
\end{proof}
\subsection{Proof of Lemma \ref{lem:kl-flow-interpolating}}
\label{proof:lem-kl-flow-interpolating}
\begin{proof}
Consider any $k \in (K)$ and $t \in [k\eta, (k+1)\eta]$. Defining $g(\vx_{k\eta}, \xi_k) = \nabla F(\vx_{k\eta}) + \vN(\vx_{k\eta}, \xi_k)$, we note that the interpolating process is described by the following SDE for $t \in [k\eta, (k+1)\eta]$,
\begin{align*}
    \dd \vx_t = - g(\vx_{k\eta}, \xi_k) \dd t + \sqrt{2} \dd B_t
\end{align*}
Let $\mu_{t|k\eta}(\vx_t | \vx_{k\eta}, \xi_k)$ denote the density of $\Law{\vx_t | \vx_{k\eta}, \xi_k}$. Then, the Fokker Planck Equation for $\mu_{t|k\eta}$ is given by,
\begin{align*}
    \frac{\partial}{\partial t} \mu_{t|k\eta}(\vx_t | \vx_{k\eta}, \xi_k) = \nabla . (g(\vx_{k\eta}, \xi_k) \mu_{t|k\eta}(\vx_t | \vx_{k\eta}, \xi_k)) + \Delta \mu_{t|k\eta}(\vx_t | \vx_{k\eta}, \xi_k)
\end{align*}
Marginalizing out $\vx_{k\eta}$ and $\xi$, we obtain,
\begin{align*}
    \mu_t(\vx_t) = \int_{\bR^d \times \Xi^{B}} \mu_{t|k\eta}(\vx_t | \vx_{k\eta}, \xi_k) \mu_{k\eta} (\vx_{k\eta}) \dd \vx_{k\eta} \dd \bP^{B}_{\xi} 
\end{align*}
Taking the partial differential with respect to $t$ on both sides, and interchanging the differential and the integral wherever appropriate,
\begin{align*}
    \frac{\partial}{\partial t}\mu_t(\vx_t) &= \int_{\bR^d \times \Xi^{B}} \left(\frac{\partial}{\partial t}\mu_{t|k\eta}(\vx_t | \vx_{k\eta}, \xi_k)\right) \mu_{k\eta} (\vx_{k\eta}) \dd \vx_{k\eta} \dd \bP^{B}_{\xi} \\
    &= \int_{\bR^d \times \Xi^{B}}  \left(\nabla . (g(\vx_{k\eta}, \xi_k) \mu_{t|k\eta}(\vx_t | \vx_{k\eta}, \xi_k))\right) \mu_{k\eta}(\vx_{k\eta}) \dd \vx_{k\eta} \dd \bP^{B}_{\xi} \\
    &+ \int_{\bR^d \times \Xi^{B}} \left(\Delta \mu_{t|k\eta}(\vx_t | \vx_{k\eta}, \xi_k)\right) \mu_{k\eta}(\vx_{k\eta}) \dd \vx_{k\eta} \dd \bP^{B}_{\xi} \\
    &= \nabla . \left(\int_{\bR^d \times \Xi^B}g(\vx_{k\eta}, \xi_k) \mu_{t|k\eta}(\vx_t | \vx_{k\eta}, \xi_k) \mu_{k\eta}(\vx_{k\eta}) \dd \vx_{k\eta} \dd \bP^{B}_{\xi}\right) \\
    &+ \Delta \left(\int_{\bR^d \times \Xi^B} \mu_{t|k\eta}(\vx_t | \vx_{k\eta}, \xi_k) \mu_{k\eta}(\vx_{k\eta}) \dd \vx_{k\eta} \dd \bP^{B}_{\xi}\right) \\
    &= \nabla . \left(\int_{\bR^d \times \Xi^B}g(\vx_{k\eta}, \xi_k) \mu_{t|k\eta}(\vx_t | \vx_{k\eta}, \xi_k) \mu_{k\eta}(\vx_{k\eta}) \dd \vx_{k\eta} \dd \bP^{B}_{\xi}\right) + \Delta \mu_t(\vx_t)
\end{align*}
We note that, by disintegration theorem,
\begin{align*}
    \int_{\bR^d \times \Xi^B}g(\vx_{k\eta}, \xi_k) \mu_{t|k\eta}(\vx_t | \vx_{k\eta}, \xi_k) \mu_{k\eta}(\vx_{k\eta}) \dd \vx_{k\eta} \dd \bP^{B}_{\xi} &= \mu_t(\vx_t) \int_{\bR^d \times \Xi^B}g(\vx_{k\eta}, \xi_k) \dd \bP_{\vx_{k\eta}, \xi_k | \vx_t} (\vx_{k\eta}, \xi_k) \\
    &=\mu_t(\vx_t) \bE\left[g(\vx_{k\eta}, \xi_k) | \vx_t \right]
\end{align*}
Hence, we obtain the following continuity equation for $\mu_t(\vx_t)$
\begin{align}
\label{eqn:interpolating-continuity-pde}
    \frac{\partial}{\partial t} \mu_t(\vx_t) = \nabla . \left(\mu_t(\vx_t) \bE\left[g(\vx_{k\eta}, \xi_k) | \vx_t \right]\right) + \Delta \mu_t(\vx_t) 
\end{align}
We now analyze the time-evolution of $\KL{\mu_t}{\pistar}$ using equation \eqref{eqn:interpolating-continuity-pde}. Interchanging differentials and integrals wherever appropriate,
\begin{align}
\label{eqn:kl-differential-eq1}
    \frac{\dd}{\dd t} \KL{\mu_t}{\pistar} &= \frac{\dd }{\dd t} \int_{\bR^d} \mu_t(\vx) \log\left(\frac{\mu_t(\vx)}{\pistar(\vx)}\right) \dd \vx \nonumber \\
    &= \int_{\bR^d} \left(\frac{\partial}{\partial t} \mu_t(\vx)\right) \log\left(\frac{\mu_t(\vx)}{\pistar(\vx)}\right) \dd \vx + \int_{\bR^d} \frac{\partial}{\partial t} \mu_t(\vx) \dd \vx \nonumber \\
    &= \int_{\bR^d} \left(\frac{\partial}{\partial t} \mu_t(\vx)\right) \log\left(\frac{\mu_t(\vx)}{\pistar(\vx)}\right) \dd \vx + \frac{\partial}{\partial t} \int_{\bR^d} \mu_t(\vx) \dd \vx \nonumber \\
    &= \int_{\bR^d} \nabla . \left(\mu_t(\vx) \bE\left[g(\vx_{k\eta}, \xi_k) | \vx_t = \vx \right]\right) \log\left(\frac{\mu_t(\vx)}{\pistar(\vx)}\right) \dd \vx \nonumber \\
    &+ \int_{\bR^d} \Delta \mu_t(\vx) \log\left(\frac{\mu_t(\vx)}{\pistar(\vx)}\right) \dd \vx
\end{align}
where the last step uses the fact that $\frac{\partial}{\partial t} \int_{\bR^d} \mu_t(\vx) \dd \vx = 0$ since $\int_{\bR^d} \mu_t(\vx) \dd \vx = 1$ for any $t \in [k\eta, (k+1)\eta]$. We now note that,
\begin{align*}
    \mu_t(\vx) \nabla \log\left(\frac{\mu_t(\vx)}{\pistar(\vx)}\right) &= \nabla \mu_t(\vx) + \mu_t(\vx) \nabla F(\vx)
\end{align*}
Taking the divergence on both sides, we obtain the following identity,
\begin{align*}
    \Delta \mu_t(\vx) = \nabla . \left(\mu_t(\vx) \left(\nabla \log\left(\frac{\mu_t(\vx)}{\pistar(\vx)}\right) - \nabla F(\vx)\right)\right)
\end{align*}
It follows that,
\begin{align}
\label{eqn:kl-differential-eq2}
    \int_{\bR^d} \Delta \mu_t(\vx) \log\left(\frac{\mu_t(\vx)}{\pistar(\vx)}\right) \dd \vx &= \int_{\bR^d} \nabla . \left(\mu_t(\vx) \left(\nabla \log\left(\frac{\mu_t(\vx)}{\pistar(\vx)}\right) - \nabla F(\vx)\right)\right) \log\left(\frac{\mu_t(\vx)}{\pistar(\vx)}\right) \dd \vx \nonumber \\
    &= \int_{\bR^d} \mu_t(\vx)\dotp{\nabla F(\vx) - \nabla \log\left(\frac{\mu_t(\vx)}{\pistar(\vx)}\right)}{\nabla \log\left(\frac{\mu_t(\vx)}{\pistar(\vx)}\right)} \dd \vx \nonumber \\
    &= \bE\left[\dotp{\nabla F(\vx_t)}{\nabla \log\left(\frac{\mu_t(\vx_t)}{\pistar(\vx_t)}\right)}\right] - \bE\left[\norm{\nabla \log\left(\frac{\mu_t(\vx_t)}{\pistar(\vx_t)}\right)}^2 \right] \nonumber \\
    &= -\FD{\mu_t}{\pistar} + \bE\left[\dotp{\nabla F(\vx_t)}{\nabla \log\left(\frac{\mu_t(\vx_t)}{\pistar(\vx_t)}\right)}\right]
\end{align}
where the second equality follows from integration by parts on $\bR^d$, the third follows from the fact that $\mu_t = \Law{\vx_t}$ and the last follows from the definition of Fisher Divergence. Following similar steps, we obtain,
\begin{align}
\label{eqn:kl-differential-eq3}
    &\int_{\bR^d} \nabla . \left(\mu_t(\vx) \bE\left[g(\vx_{k\eta}, \xi_k) | \vx_t = \vx \right]\right) \log\left(\frac{\mu_t(\vx)}{\pistar(\vx)}\right) \dd \vx \nonumber \\ &= - \int_{\bR^d} \mu_t(\vx) \dotp{\bE\left[g(\vx_{k\eta}, \xi_k) | \vx_t = \vx \right]}{\nabla \log\left(\frac{\mu_t(\vx)}{\pistar(\vx)}\right)} \dd \vx \nonumber \\
    &= -\bE\left[\dotp{\bE\left[g(\vx_{k\eta}, \xi_k) | \vx_t\right]}{\nabla \log\left(\frac{\mu_t(\vx_t)}{\pistar(\vx_t)}\right)}\right] \nonumber \\
    &= - \bE\left[\dotp{g(\vx_{k\eta}, \xi_k)}{\nabla \log\left(\frac{\mu_t(\vx_t)}{\pistar(\vx_t)}\right)}\right] \nonumber \\
    &= - \bE\left[\dotp{\nabla F(\vx_{k\eta}) + \vN}{\nabla \log\left(\frac{\mu_t(\vx_t)}{\pistar(\vx_t)}\right)}\right] 
\end{align}
where $\vN$ stands for $\vN(\vx_{k\eta}, \xi_k)$. We note that, 
\begin{align*}
    \bE\left[\dotp{\vN}{\nabla \log\left(\frac{\mu_t(\vx_t)}{\pistar(\vx_t)}\right)}\right] &= \bE\left[\dotp{\bE\left[\vN | \vx_t, \vx_{k\eta}\right]}{\nabla \log\left(\frac{\mu_t(\vx_t)}{\pistar(\vx_t)}\right)}\right]
\end{align*}
Substituting the above into equation \eqref{eqn:kl-differential-eq3}, we obtain,
\begin{align}
\label{eqn:kl-differential-eq4}
\int_{\bR^d} \nabla . \left(\mu_t(\vx) \bE\left[g(\vx_{k\eta}, \xi_k) | \vx_t = \vx \right]\right) \log\left(\frac{\mu_t(\vx)}{\pistar(\vx)}\right) \dd \vx &= - \bE\left[\dotp{\nabla F(\vx_{k\eta})}{\nabla \log\left(\frac{\mu_t(\vx_t)}{\pistar(\vx_t)}\right)}\right] \nonumber \\
&- \bE\left[\dotp{\bE\left[\vN | \vx_t, \vx_{k\eta}\right]}{\nabla \log\left(\frac{\mu_t(\vx_t)}{\pistar(\vx_t)}\right)}\right]
\end{align}
From equations \eqref{eqn:kl-differential-eq1}, \eqref{eqn:kl-differential-eq2} and \eqref{eqn:kl-differential-eq4}, we obtain,
\begin{align*}
    \frac{\dd}{\dd t} \KL{\mu_t}{\pistar} &= -\FD{\mu_t}{\pistar} + \bE\left[\dotp{\nabla F(\vx_t)-\nabla F(\vx_{k\eta})}{\nabla \log\left(\frac{\mu_t(\vx_t)}{\pistar(\vx_t)}\right)}\right] \\
    &- \bE\left[\dotp{\bE\left[\vN | \vx_t, \vx_{k\eta}\right]}{\nabla \log\left(\frac{\mu_t(\vx_t)}{\pistar(\vx_t)}\right)}\right] \\
    &\leq -\frac{1}{2}\FD{\mu_t}{\pistar} + \bE\left[\norm{\nabla F(\vx_t) - \nabla F(\vx_{k\eta})}^2\right] + \bE\left[\norm{\bE[\vN | \vx_{k\eta}, \vx_t]}^2\right]
\end{align*}
\end{proof}
\subsection{Proof of Lemma \ref{lem:noise-control-chisquare}}
\label{proof:lem-noise-control-chisquare}
\begin{proof}
Define $u_k = M \norm{\vx_{k\eta}} + G$ and $h = t - k\eta$. Denoting $\vN = \nicefrac{1}{B} \sum_{j=1}^{B} \vN_j$, where $\vN_j = \nabla f(\vx_{k\eta}, \xi_{k,j}) - \nabla F(\vx_{k\eta})$, we conclude from \ref{eqn:defn-grad-growth-subg} and Lemma \ref{lem:moment-growth-subg} that $\bE\left[ \norm{\vN_j}^{2m} | \vx_{k\eta}\right] \leq  2^{m+1} m! u^{2m}_k$ for any $j \in [B]$ and $m \in \mathbb{N}$.

Our proof begins by obtaining a coarse bound for $\bE_{\vx_t | \vx_{k\eta}}\left[\norm{\bE[\vN | \vx_{k\eta}, \vx_t]}^2\right]$. Using the conditional CLT structure of $\vN$ (i.e., using the fact that $\vN$ is an empirical average of zero-mean random variables that are i.i.d conditioned on $\vx_{k\eta}$), we obtain the following via Jensen's inequality.
\begin{align}
\label{eqn:stoc-noise-coarse-bound}
\bE_{\vx_t | \vx_{k\eta}}\left[\norm{\bE[\vN | \vx_{k\eta}, \vx_t]}^2\right] &\leq \bE_{\vx_t | \vx_{k\eta}}\left[\bE[\norm{\vN}^2 | \vx_{k\eta}, \vx_t]\right] = \bE\left[\norm{\vN}^2 | \vx_{k\eta}\right]  \nonumber \\ 
&= \tfrac{1}{B} \bE\left[\norm{\vN_1}^2 | \vx_{k\eta}\right] \leq \nicefrac{4 u^2_k}{B}
\end{align}
We now proceed to refine this bound as follows. Note that, since $\vN_1, \dots, \vN_B$ are i.i.d conditioned on $\vx_{k\eta}$ and $\Law{\vx_t | \vx_{k\eta}, \vN_1, \dots, \vN_B} = \cN(\vx_{k\eta}-h \nabla F(\vx_{k\eta}) - \nicefrac{h}{B} \sum_{j=1}^{B} \vN_j, 2h)$, it is easy to see that they remain i.i.d even after conditioning on $\vx_t$, due to the permutation invariance of the function $\sum_{j=1}^{B} \vN_j$ . Hence $\bE\left[\vN | \vx_{k\eta}, \vx_t\right] \disteq \bE\left[\vN_1 | \vx_{k\eta}, \vx_t\right]$ by linearity of conditional expectation. It follows by Jensen's inequality,
\begin{align}
\label{eqn:conditional-norm-control-1}
    \bE_{\vx_t | \vx_{k\eta}} \left[\norm{\bE[\vN | \vx_{k\eta}, \vx_t]}^2\right] &= \bE_{\vx_t  | \vx_{k\eta}}\left[\norm{\bE\left[ \vN_1 | \vx_{k\eta}, \vx_t\right]}^2\right] \nonumber \\
    &= \bE_{\vx_t  | \vx_{k\eta}}\left[\norm{\bE_{\vN_{2:B} | \vx_{k\eta}}\left[\bE\left[ \vN_1 | \vx_{k\eta}, \vN_{2:B}, \vx_t\right]\right]}^2\right] \nonumber \\
    &\leq \bE_{\vN_{2:B}, \vx_t  | \vx_{k\eta}}\left[\norm{\bE\left[ \vN_1 | \vx_{k\eta}, \vN_{2:B}, \vx_t\right]}^2\right]
\end{align}
Let $\rho(\vx_t | \vx_{k\eta}, \vN_1, \vN_{2:B})$ denote the density of $\Law{\vx_t | \vx_{k\eta}, \vN_1, \vN_{2:B}}$ and $\rho(\vx_t | \vx_{k\eta}, \vN_{2:B})$ denote the density of $\Law{\vx_t | \vx_{k\eta}, \vN_{2:B}}$.
\begin{align*}
    \bE\left[ \vN_1 | \vx_{k\eta}, \vN_{2:B}, \vx_t\right] &= \int_{\bR^d} \vN_1 \dd \bP_{\vN_1 | \vx_{k\eta}, \vN_{2:B}, \vx_t}(\vN_1) \\
    &= \int_{\bR^d} \vN_1 \frac{\rho(\vx_t | \vx_{k\eta}, \vN_1, \vN_{2:B})}{\rho(\vx_t | \vx_{k\eta}, \vN_{2:B})} \dd P_{\vN_1 | \vN_{2:B}, \vx_{k\eta}}(\vN_1) \\
    &= \int_{\bR^d} \vN_1 \frac{\rho(\vx_t | \vx_{k\eta}, \vN_1, \vN_{2:B})}{\rho(\vx_t | \vx_{k\eta}, \vN_{2:B})} \dd P_{\vN_1 | \vx_{k\eta}}(\vN_1) \\
    &= \int_{\bR^d} \vN_1 \left(\frac{\rho(\vx_t | \vx_{k\eta}, \vN_1, \vN_{2:B})}{\rho(\vx_t | \vx_{k\eta}, \vN_{2:B})} - 1\right) \dd P_{\vN_1 | \vx_{k\eta}}(\vN_1) \\
    &= \bE_{\vN_1 | \vx_{k\eta}}\left[\vN_1 \left(\frac{\rho(\vx_t | \vx_{k\eta}, \vN_1, \vN_{2:B})}{\rho(\vx_t | \vx_{k\eta}, \vN_{2:B})} - 1\right)\right]
\end{align*}
where the third equality uses the fact that $\vN_1, \dots, \vN_B$ are i.i.d conditioned on $\vx_{k\eta}$ and the fourth equality uses the fact that $\vN_1$ is zero-mean conditioned on $\vx_{k\eta}$. Now, applying Cauchy-Schwarz inequality,
\begin{align}
\label{eqn:conditional-norm-control-2}
    \norm{\bE\left[ \vN_1 | \vx_{k\eta}, \vN_{2:B}, \vx_t\right]}^2 &\leq \bE\left[\norm{\vN_1}^2 | \vx_{k\eta}\right] \bE\left[\left(\frac{\rho(\vx_t | \vx_{k\eta}, \vN_1, \vN_{2:B})}{\rho(\vx_t | \vx_{k\eta}, \vN_{2:B})} - 1\right)^2\right] \nonumber \\
    &\leq 4u^2_k  \bE_{\vN_1 | \vx_{k\eta}}\left[\left(\frac{\rho(\vx_t | \vx_{k\eta}, \vN_1, \vN_{2:B})}{\rho(\vx_t | \vx_{k\eta}, \vN_{2:B})}\right)^2 - 1\right]
\end{align}
Let $\tvN_1$ be an identical and independent copy of $\vN_1$ conditioned on $\vx_{k\eta}$. It follows that, 
\begin{align*}
\bE_{\vN_1 | \vx_{k\eta}}\left[\left(\frac{\rho(\vx_t | \vx_{k\eta}, \vN_1, \vN_{2:B})}{\rho(\vx_t | \vx_{k\eta}, \vN_{2:B})}\right)^2 - 1 \right] &= \bE_{\vN_1 | \vx_{k\eta}}\left[\left(\frac{\rho(\vx_t | \vx_{k\eta}, \vN_1, \vN_{2:B})}{\bE_{\tvN_1 | \vx_{k\eta}}\left[\rho(\vx_t | \vx_{k\eta}, \tvN_1, \vN_{2:B})\right]}\right)^2 - 1 \right] \\
&\leq \bE_{\vN_1, \tvN_1 | \vx_{k\eta}} \left[\left(\frac{\rho(\vx_t | \vx_{k\eta}, \vN_1, \vN_{2:B})}{\rho(\vx_t | \vx_{k\eta}, \tvN_1, \vN_{2:B})}\right)^2 - 1\right]
\end{align*}
where the last step is an application of Jensen's inequality and the convexity of $\nicefrac{1}{x^2}$. Taking expectations wrt $\vN_{2:B}, \vx_t | \vx_{k\eta}$ on both sides,
\begin{align*}
   & \bE_{\vN_1, \vN_{2:B}, \vx_t | \vx_{k\eta}}\left[\left(\frac{\rho(\vx_t | \vx_{k\eta}, \vN_1, \vN_{2:B})}{\rho(\vx_t | \vx_{k\eta}, \vN_{2:B})}\right)^2 - 1 \right] \leq \bE_{\vN_1, \tvN_1, \vN_{2:B}, \vx_t | \vx_{k\eta}} \left[\left(\frac{\rho(\vx_t | \vx_{k\eta}, \vN_1, \vN_{2:B})}{\rho(\vx_t | \vx_{k\eta}, \tvN_1, \vN_{2:B})}\right)^2 - 1\right] \\
    &= \bE_{\vN_1, \tvN_1, \vN_{2:B} | \vx_{k\eta}} \left[\bE_{\vx_t | \vx_{k\eta}, \tvN_1, \vN_{2:B}}\left[\left(\frac{\rho(\vx_t | \vx_{k\eta}, \vN_1, \vN_{2:B})}{\rho(\vx_t | \vx_{k\eta}, \tvN_1, \vN_{2:B})}\right)^2 - 1\right]\right]
\end{align*}
Since $\rho(\vx_t | \vx_{k\eta}, \vN_1, \vN_{2:B}) = \cN(\vx_{k\eta}-h \nabla F(\vx_{k\eta}) -\nicefrac{h}{B}\vN_1 - \nicefrac{h}{B} \sum_{j=2}^{B} \vN_j, 2h)$, we conclude from Lemma \ref{lem:chi-sq-gaussian-control} that,
\begin{align*}
    \bE_{\vx_t | \vx_{k\eta}, \tvN_1, \vN_{2:B}}\left[\left(\frac{\rho(\vx_t | \vx_{k\eta}, \vN_1, \vN_{2:B})}{\rho(\vx_t | \vx_{k\eta}, \tvN_1, \vN_{2:B})}\right)^2 - 1\right] = e^{\frac{h^2\norm{\vN_1 - \tvN_1}^2}{2B^2}} - 1
\end{align*}
Furthermore, by Jensen's inequality and Lemma \ref{lem:moment-growth-subg}, 
\begin{align*}
    \bE_{\vN_1, \tvN_1 | \vx_{k\eta}}\left[\norm{\vN_1-\tvN_1}^{2m} \right] \leq 2^{2m} \bE\left[\norm{\vN_1}^{2m} | \vx_{k\eta}\right] \leq 2^{3m+1} m! \ u^{2m}_k \  \ \forall \ m \in \mathbb{N} \\
\end{align*}
Hence, the exponential can be controlled as follows
\begin{align*}
    \bE_{\vN_1, \Tilde{\vN_1}, \vN_{2:B}| \vx_{k\eta}}[e^{\frac{h \norm{\vN_1 - \Tilde{\vN}_1}^2}{2B^2}} - 1] &= \bE_{\vN_1, \Tilde{\vN_1}| \vx_{k\eta}}[e^{\frac{h \norm{\vN_1 - \Tilde{\vN}_1}^2}{2B^2}} - 1] \\
    &= \sum_{m=1}^{\infty} \frac{h^m}{2^m B^{2m} m!} \bE_{\vN_1, \Tilde{\vN_1}| \vx_{k\eta}}\left[\norm{\vN_1 - \Tilde{\vN}_1}^{2m}\right] \\
    &\leq \sum_{m=1}^{\infty} \frac{h^m 2^{2m+1} u^{2m}_k}{B^{2m}} \\
    &\leq \frac{8h u^2_k}{B^2} \sum_{m=0}^{\infty} \left(\tfrac{4h u^2_k}{B^2}\right)^{m}
\end{align*}
It follows that,
\begin{align}
\label{eqn:conditional-norm-control-3}
    \bE_{\vN_1, \vN_{2:B}, \vx_t | \vx_{k\eta}}\left[\left(\frac{\rho(\vx_t | \vx_{k\eta}, \vN_1, \vN_{2:B})}{\rho(\vx_t | \vx_{k\eta}, \vN_{2:B})}\right)^2 - 1 \right] \leq \frac{8h u^2_k}{B^2} \sum_{m=0}^{\infty} \left(\tfrac{4h u^2_k}{B^2}\right)^{m}
\end{align}
From equations \eqref{eqn:conditional-norm-control-1}, \eqref{eqn:conditional-norm-control-2} and \eqref{eqn:conditional-norm-control-3}, we obtain the following finer bound, 
\begin{align}
\label{eqn:stoc-noise-fine-bound}
\bE_{\vx_t | \vx_{k\eta}}\left[\norm{\bE[\vN | \vx_{k\eta}, \vx_t]}^2\right] \leq \frac{8h u^4_k}{B^2} \sum_{m=0}^{\infty} \left(\tfrac{4h u^2_k}{B^2}\right)^{m}
\end{align}
Thus, from equations \eqref{eqn:stoc-noise-coarse-bound} and \eqref{eqn:stoc-noise-fine-bound}, it follows that,
\begin{align*}
    \bE_{\vx_t | \vx_{k\eta}}\left[\norm{\bE[\vN | \vx_{k\eta}, \vx_t]}^2\right] \leq \min \left\{\frac{8h u^4_k}{B^2} \sum_{m=0}^{\infty} \left(\tfrac{4h u^2_k}{B^2}\right)^{m}, \frac{4u^2_k}{B} \right\}
\end{align*}
Note that, when $u^2_k \leq \nicefrac{B}{6h}$, the following holds,
\begin{align*}
    \frac{8h u^4_k}{B^2} \sum_{m=0}^{\infty} \left(\tfrac{4h u^2_k}{B^2}\right)^{m} &\leq \frac{8h u^4_k}{B^2} \sum_{m=0}^{\infty} \left(\nicefrac{2}{3}\right)^{m} \leq \frac{24 h u^4_k}{B^2} \leq \frac{4u^2_k}{B}
\end{align*}
Hence, the coarse bound and fine bound are combined as,
\begin{align*}
    \bE_{\vx_t | \vx_{k\eta}}\left[\norm{\bE[\vN | \vx_{k\eta}, \vx_t]}^2\right] &\leq \frac{4u^2_k}{B} \bEvent{u^2_k > \nicefrac{B}{6h}} + \left[\frac{8h u^4_k}{B^2} \sum_{m=0}^{\infty} \left(\tfrac{4h u^2_k}{B^2}\right)^{m}\right] \bEvent{u^2_k \leq \nicefrac{B}{6h}} \\
    &\leq \frac{24h u^4_k}{B^2} + \frac{4u^2_k}{B} \bEvent{u^2_k > \nicefrac{B}{6h}}
\end{align*}
Since the \ref{eqn:defn-moment-growth} condition holds with $p=4$, and $\Law{\vx_{k\eta}} = \Law{\hvx_k}$ by construction of the interpolating process,
\begin{align*}
    \bE_{\vx_{k\eta}}\left[u^4_k\right] \leq 8\left(M^4 \bE\left[\norm{\hvx_k}^4\right] + G^4\right) \leq 8\left(M^4 C_4 d^2 + G^4\right)
\end{align*}
Furthermore, applying Lemma \ref{lem:tail-markov-control},
\begin{align*}
    \bE\left[u^2_k \bEvent{u^2_k > \nicefrac{B}{6h}}\right] \leq \frac{12h}{B}\bE[u^4_k]
\end{align*}
Hence, it follows that,
\begin{align*}
    \bE\left[\norm{\bE[\vN | \vx_{k\eta}, \vx_t]}^2\right] &\leq \frac{24h}{B^2} \bE\left[u^4_k\right] + \bE\left[\frac{4u^2_k}{B} \bEvent{u^2_k > \nicefrac{B}{6h}}\right] \\
    &\leq \frac{72 h}{B^2} \bE\left[u^4_k\right] \\
    &\leq \frac{576(t - k\eta)(M^4 C_{4} d^{2 } + G^4)}{B^2}
\end{align*}
\end{proof}
\subsection{Proof of Lemma \ref{lem:sgld-descent-lemma}}
\label{proof:lem-sgld-descent-lemma}
\begin{proof}
From Lemmas \ref{lem:kl-flow-interpolating} and \ref{lem:noise-control-chisquare}, we infer that,
\begin{align}
\label{eqn:sgld-descent-mid-diffeq}
    \ddt \KL{\mu_t}{\pistar} \leq -\frac{1}{2}\FD{\mu_t}{\pistar} + \bE\left[\norm{\nabla F(\vx_t) - \nabla F(\vx_{k\eta})}^2\right] + \frac{576(t - k\eta)(M^4 C_{4} d^{2 } + G^4)}{B^2}
\end{align}
The remainder of this proof controls the discretization error term $\bE\left[\norm{\nabla F(\vx_t) - \nabla F(\vx_{k\eta})}^2\right]$. To this end, let $h = t - k \eta$. We note that, $0 \leq h \leq \eta \leq \nicefrac{1}{6L}$. Furthermore, via direct integration of the interpolating process, we infer,
\begin{align*}
    \vx_t \disteq \vx_{k\eta} - h (\nabla F(\vx_{k\eta}) + \vN) + \sqrt{2h} \vz_t, \ \vz_t \sim \cN(0, \vI)
\end{align*}
Hence, by the $L$-smoothness of $F$,
\begin{align*}
    \bE\left[\norm{\nabla F(\vx_{t}) - \nabla F(\vx_{k \eta})}^2\right] &\leq L^2 \bE\left[ \norm{\vx_t - \vx_{k \eta}}^2 \right] \\
    &\leq L^2 h^2 \bE\left[\norm{\nabla F(\vx_{k \eta})}^2\right] + L^2 h^2 \bE\left[\norm{\vN}^2\right] + 2L^2 h d 
\end{align*}
We now rewrite the RHS in terms of $\vx_t$. To this end, we note that by the $L$-lipschitzness of $\nabla F$,
\begin{align}
\label{eqn:sgld-descent-disc-term}
    \norm{\nabla F(\vx_{k\eta})} &\leq \norm{\nabla F(\vx_t)} + L \norm{\vx_t - \vx_{k \eta}} \nonumber \\
    &\leq \norm{\nabla F(\vx_t)} + L h \norm{\nabla F(\vx_{k \eta})} + L h \norm{\vN} + L\sqrt{2h}\norm{\vz_t}
\end{align}
Using the fact that $h \leq \eta < \nicefrac{1}{3L}$ and rearranging, 
\begin{align*}
    \norm{\nabla F(\vx_{k\eta})} &\leq \frac{3}{2}\norm{\nabla F(\vx_t)} + \frac{1}{2}\norm{\vN} + \frac{3L \sqrt{h}}{\sqrt{2}}\norm{\vz_t} \\
    \norm{\nabla F(\vx_{k\eta})}^2 &\leq \frac{27}{4}\norm{\nabla F(\vx_t)}^2 + \frac{3}{4}\norm{\vN}^2 + \frac{27L^2 h}{2}\norm{\vz_t}^2 
\end{align*}
Substituting the above into \eqref{eqn:sgld-descent-disc-term}, we get,
\begin{align*}
    \bE\left[ \norm{\nabla F(\vx_t) - \nabla F(\vx_{k\eta})}^2 \right] &\leq \frac{27 L^2 h^2}{4} \bE\left[\norm{\nabla F(\vx_t)}^2\right] + 2L^2 h^2 \bE\left[\norm{\vN}^2\right] + \frac{27 L^4 h^3 d}{2} + 2L^2hd 
\end{align*}
Using the fact that $\bE\left[\norm{\nabla F(\vx_t)}^2\right] \leq \FD{\mu_t}{\pistar} + 2dL$ as per Lemma \ref{lem:grad-fd-bound}, and $h \leq \nicefrac{1}{6L}$, we get
\begin{align*}
    \bE\left[ \norm{\nabla F(\vx_t) - \nabla F(\vx_{k\eta})}^2 \right] &\leq \frac{27 L^2 h^2}{4} \FD{\rho_t}{\gamma} + 2L^2 h^2 \bE\left[\norm{\vN}^2\right] + 5 L^2 h d \\
    &\leq \frac{1}{4} \FD{\mu_t}{\pistar} + 2L^2 h^2 \bE\left[\norm{\vN}^2\right] + 5 L^2 h d 
\end{align*}
Recall that, due to the \ref{eqn:defn-grad-growth-subg} and \ref{eqn:defn-moment-growth} conditions, the following holds as a consequence of Lemma \ref{lem:moment-growth-subg}.
\begin{align*}
    \bE\left[\norm{\vN}^2\right] \leq \frac{4}{B}\bE\left[\left(M \norm{\hvx_k} + G\right)^2\right] \leq \frac{8}{B} \left(M^2 C_2 d + G^2\right)
\end{align*}
Hence, 
\begin{align*}
    \bE\left[ \norm{\nabla F(\vx_t) - \nabla F(\vx_{k\eta})}^2 \right] &\leq \frac{1}{4} \FD{\mu_t}{\pistar} + \frac{3Lh}{B}\left(M^2 C_{2} d + G^2\right) + 5 L^2 h d
\end{align*}
Substituting the above into \eqref{eqn:sgld-descent-mid-diffeq},
\begin{align*}
    \ddt \KL{\mu_t}{\pistar} &\leq -\frac{1}{4}\FD{\mu_t}{\pistar} + 5L^2 (t - k \eta)d + \frac{3L(t - k \eta)(M^2 C_{2} d + G^2)}{B} \\
    &+ \frac{576(t - k \eta) (M^4 C_{4} d^{2 } + G^4)}{B^2}
\end{align*}
\end{proof}
\subsection{Proof of Theorem \ref{thm:sgld-lsi-stable}}
\label{proof:thm-sgld-lsi-stable}
\begin{proof}
Since $\pistar$ satisfies \ref{eqn:defn-log-sobolev}, we know that $\KL{\mu_t}{\pistar} \leq \frac{1}{2 \CLSI}\FD{\mu_t}{\pistar}$. Thus, from Lemma \ref{lem:sgld-descent-lemma}, we conclude,
\begin{align*}
    \ddt \KL{\mu_t}{\pistar} &\leq -\frac{\CLSI}{2}\KL{\mu_t}{\pistar} + 5L^2 (t - k \eta)d + \frac{3L(t - k \eta)(M^2 C_{2} d + G^2)}{B} \\
    &+ \frac{576(t - k \eta) (M^4 C_{4} d^{2} + G^4)}{B^2}
\end{align*}
Multiplying both sides by $e^{\nicefrac{\CLSI (t-k\eta)}{2}}$ and using the fact that $k \eta \leq t \leq (k+1)\eta$,
\begin{align*}
    \ddt \left(e^{\nicefrac{\CLSI (t - k \eta)}{2}} \KL{\mu_t}{\pistar}  \right) &\leq e^{\nicefrac{\CLSI (t - k\eta)}{2}} \left[5L^2 \eta d + \frac{3L\eta(M^2 C_{2} d + G^2)}{B} + \frac{576\eta (M^4 C_{4} d^{2 } + G^4)}{B^2} \right]
\end{align*}
Applying Gr\"{o}nwall's Lemma for $t \in [k\eta, (k+1)\eta]$
\begin{align*}
   & e^{\nicefrac{\CLSI \eta}{2}} \KL{\mu_{(k+1)\eta}}{\pistar} - \KL{\mu_{k\eta}}{\pistar} \nonumber\\
   &\leq \frac{2}{\CLSI}\left(e^{\nicefrac{\CLSI \eta}{2}} - 1\right) \left[5L^2 \eta d + \frac{3L\eta(M^2 C_{2} d + G^2)}{B} + \frac{576\eta (M^4 C_{4} d^{2} + G^4)}{B^2} \right]  \\
    &\leq 5.25L^2 \eta^2 d + \frac{3.15L\eta^2(M^2 C_{2} d + G^2)}{B} + \frac{604.8\eta^2 (M^4 C_{4} d^{2} + G^4)}{B^2}
\end{align*}
Where the last inequality uses the fact that $\CLSI \eta \leq \nicefrac{\CLSI^2}{6L^2} \leq \nicefrac{1}{6}$, and $e^{\nicefrac{x}{2}} - 1 \leq 0.525x$ for $x \leq \nicefrac{1}{6}$. Furthermore, recalling the fact that $\mu_{k\eta} = \Law{\hvx_k}$, we obtain the following descent lemma for the discrete-time iterates of SGLD,
\begin{align*}
    \KL{\Law{\hvx_{k+1}}}{\pistar} &\leq e^{-\nicefrac{\CLSI \eta}{2}} \KL{\Law{\hvx_{k+1}}}{\pistar} \\
    &+ e^{-\nicefrac{\CLSI \eta}{2}} \left[5.25L^2 \eta^2 d + \frac{3.15L\eta^2(M^2 C_{2} d + G^2)}{B} + \frac{604.8\eta^2 (M^4 C_{4} d^{2 } + G^4)}{B^2} \right]
\end{align*}
Iterating through the above lemma, we get,
\begin{align*}
    \KL{\Law{\hvx_{K}}}{\pistar} &\leq e^{-\nicefrac{\CLSI \eta K}{2}} \KL{\Law{\hvx_{0}}}{\pistar} \\
    &+ \frac{e^{-\nicefrac{\CLSI \eta}{2}}}{1 - e^{-\nicefrac{\CLSI \eta}{2}}} \left[5.25L^2 \eta^2 d + \frac{3.15L\eta^2(M^2 C_{2} d + G^2)}{B} + \frac{604.8\eta^2 (M^4 C_{4} d^{2 } + G^4)}{B^2} \right]
\end{align*}
Using the fact that $e^x \geq 1 + x$, we obtain,
\begin{align*}
    \KL{\Law{\hvx_{K}}}{\pistar} &\leq e^{-\nicefrac{\CLSI \eta K}{2}} \KL{\Law{\hvx_{0}}}{\pistar} + \frac{11L^2 \eta d}{\CLSI}\nonumber\\&\quad + \frac{7L\eta(M^2 C_{2} d + G^2)}{B\CLSI} + \frac{1210\eta (M^4 C_{4} d^{2 } + G^4)}{\CLSI B^2} 
\end{align*}
\end{proof}
\subsection{Analysis of AB-SGLD}
\label{app-sec:absgld-smooth-stable-convergence}
Recall that $F$ has a finite-sum structure of the form $F(\vx) = \sum_{i=1}^{n} f_i(\vx)$. Similar to our analysis of smooth SGLD, we construct the following interpolating process $(\vx_t)_{t \in [0, K \eta]}$
\begin{align*}
    \Law{\vx_0} &= \Law{\hvx_0} \ \ \xi_{k,1}, \dots, \xi_{k, n} \ \iidsim \ \mathsf{Unif}[n] \ \forall k \in [K]  \\
    \dd \vx_t &= -\left[\nabla F(\vx_{k\eta}) + \vN(\vx_{k\eta}, \xi_k)\right] \dd t + \sqrt{2} \dd B_t, \ t \in [k\eta, (k+1)\eta], \ k \in [K]
\end{align*}
where $B_t$ is a Brownian motion independent of $\vx_0$ and $\vN(\vx_{k\eta}, \xi_k)$ is defined as,
\begin{align*}
    \vN(\vx_{k\eta}, \xi_k) &= \frac{1}{B_k} \sum_{j=1}^{B_k} f_{\xi_{k,j}}(\hvx_k) - \nabla F(\hvx_k) \\
    B_k &= \min \{ n, 1 + \left \lceil{M \norm{\hvx_k} + G}\right \rceil  \}
\end{align*}
Since $\Law{\vx_0} = \Law{\hvx_0}$, and $B_k$ is a deterministic function of $\vx_{k\eta}$, writing a closed form for $\Law{\vx_t | \vx_{k\eta}}$ and performing an inductive argument shows that $\Law{\vx_{k\eta}} = \Law{\hvx_k}$, i.e., the above stochastic process is an interpolating process for AB-SGLD. As before, let $\mu_t = \Law{\vx_t}$. Our first step is to analyze the time-evolution of $\KL{\mu_t}{\pistar}$. 
\begin{lemma}[ABSGLD : Flow of KL along Interpolating Process]
\label{lem:ab-sgld-kl-flow-interpolating}
Assume $\KL{\mu_0}{\pistar} < \infty$. Then, the following differential inequality is satisfied for any $k \in (K)$ and $t \in [k\eta, (k+1)\eta]$,
\begin{align*}
    \ddt \KL{\mu_t}{\pistar} \leq -\frac{1}{2}\FD{\mu_t}{\pistar} + \bE\left[\norm{\nabla F(\vx_t) - \nabla F(\vx_{k\eta})}^2\right] + \bE\left[\norm{\bE[\vN | \vx_{k\eta}, \vx_t]}^2\right]
\end{align*}
\end{lemma}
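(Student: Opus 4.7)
The plan is to mirror the proof of Lemma~\ref{lem:kl-flow-interpolating} essentially verbatim, with one structural observation that makes the adaptive batch size harmless: the random quantity $B_k$ is a deterministic (measurable) function of $\vx_{k\eta}$, so after conditioning on $\vx_{k\eta}$ the batch size is frozen and the rest of the argument goes through unchanged. Concretely, I would fix $k \in (K)$, $t \in [k\eta, (k+1)\eta]$, set $h = t - k\eta$, and write $g(\vx_{k\eta}, \xi_k) = \nabla F(\vx_{k\eta}) + \vN(\vx_{k\eta}, \xi_k)$ so that on $[k\eta, (k+1)\eta]$ the interpolating process satisfies $d\vx_t = -g(\vx_{k\eta}, \xi_k)\, dt + \sqrt{2}\, dB_t$. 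Since $B_k$ is $\sigma(\vx_{k\eta})$-measurable, the conditional density $\mu_{t|k\eta}(\cdot \mid \vx_{k\eta}, \xi_k)$ is well defined (here I can take $\xi_k = (\xi_{k,1},\ldots,\xi_{k,n})$, with only the first $B_k(\vx_{k\eta})$ coordinates entering $\vN$), and it obeys the Fokker--Planck equation
\begin{align*}
\tfrac{\partial}{\partial t}\mu_{t|k\eta}(\vx_t \mid \vx_{k\eta}, \xi_k) = \nabla\cdot\bigl(g(\vx_{k\eta}, \xi_k)\, \mu_{t|k\eta}(\vx_t \mid \vx_{k\eta}, \xi_k)\bigr) + \Delta \mu_{t|k\eta}(\vx_t \mid \vx_{k\eta}, \xi_k).
\end{align*}

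Next, I would marginalize over $\vx_{k\eta}$ and $\xi_k$ (the latter drawn from $\mathsf{Unif}[n]^{\otimes n}$ independently of $\vx_{k\eta}$) to obtain the continuity equation
\begin{align*}
\tfrac{\partial}{\partial t}\mu_t(\vx) = \nabla\cdot\bigl(\mu_t(\vx)\, \bE[g(\vx_{k\eta}, \xi_k) \mid \vx_t = \vx]\bigr) + \Delta \mu_t(\vx),
\end{align*}
using disintegration exactly as in the proof of Lemma~\ref{lem:kl-flow-interpolating}. The adaptivity of $B_k$ is absorbed inside the conditional expectation $\bE[g(\vx_{k\eta}, \xi_k) \mid \vx_t]$ and causes no additional difficulty because we never need to differentiate in $B_k$.

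From there, I would compute $\ddt \KL{\mu_t}{\pistar}$ by differentiating under the integral, splitting into the drift and Laplacian contributions, and integrating by parts on $\bR^d$ to rewrite the Laplacian term as $-\FD{\mu_t}{\pistar} + \bE[\langle \nabla F(\vx_t), \nabla \log(\mu_t/\pistar)(\vx_t)\rangle]$ and the drift term as $-\bE[\langle \nabla F(\vx_{k\eta}) + \bE[\vN \mid \vx_t, \vx_{k\eta}], \nabla \log(\mu_t/\pistar)(\vx_t)\rangle]$, where in the last step I use the tower property to replace $\vN$ with $\bE[\vN \mid \vx_t, \vx_{k\eta}]$. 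Combining these two contributions and applying Young's inequality $\langle a, b\rangle \leq \tfrac{1}{4}\|b\|^2 + \|a\|^2$ to both cross-terms (with $b = \nabla \log(\mu_t/\pistar)(\vx_t)$) yields the stated inequality.

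The main step that needs a sanity check is the integrability justifying the interchange of $\partial_t$ and $\int$ when marginalizing over the adaptive $(\xi_{k,1}, \ldots, \xi_{k,n})$ and over $\vx_{k\eta}$. Since $B_k \leq n$ uniformly and $\vN(\vx_{k\eta}, \xi_k)$ is bounded by $n (M\|\vx_{k\eta}\| + G)$ almost surely under \ref{eqn:defn-grad-growth}, dominated convergence applies with $\KL{\mu_0}{\pistar} < \infty$, and no new obstacle arises beyond what was already handled in the proof of Lemma~\ref{lem:kl-flow-interpolating}.
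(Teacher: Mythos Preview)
Your proposal is correct and matches the paper's own proof, which simply observes that since $B_k$ is a deterministic function of $\vx_{k\eta}$, the lemma is a special case of Lemma~\ref{lem:kl-flow-interpolating} with $\Xi = [n]$ and $\bP_\xi = \mathsf{Unif}[n]$. Your device of sampling the full vector $\xi_k = (\xi_{k,1},\dots,\xi_{k,n})$ and letting $\vN$ read only the first $B_k(\vx_{k\eta})$ coordinates is a clean way to make that reduction precise.
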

\begin{proof}
Since $B_k$ is a deterministic function of $\vx_{k\eta}$, this lemma can be considered a special case of Lemma \ref{lem:kl-flow-interpolating} with $\Xi = [n]$ and $P_{\xi} = \mathsf{Unif}[n]$. The proof is completed by applying the same arguments. 
\end{proof}
We shall now use the adaptive batch-size property to control the stochastic gradient error term. Our analysis crucially uses smoothness and LSI.  
\begin{lemma}[AB-SGLD: Controlling Stochastic Gradient Error]
\label{lem:absgld-noise-control}
Let the \ref{eqn:defn-grad-growth} condition be satisfied and let $\pistar$ satisfy \ref{eqn:defn-log-sobolev}. Then, the following holds for any $t \in [k\eta, (k+1)\eta], \ k \in (K), \ \eta \leq 1$.
\begin{align*}
    \bE\left[\norm{\bE\left[\vN \ | \ \vx_t, \vx_{k\eta} \right]}^2\right] &\leq \frac{128 (t - k \eta) M^2}{\CLSI} \KL{\mu_{k\eta}}{\pistar} + 32 (t - k\eta) \left(2M^2 \vm^2_2 + G^2 \right)
\end{align*}
where $\vm^2_2 = \bE_{\pistar}\left[\norm{\vx}^2\right]$
\end{lemma}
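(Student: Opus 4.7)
The strategy is to mimic the CLT-type analysis of Lemma \ref{lem:noise-control-chisquare}, but exploit the adaptive batch size $B_k \geq 1 + M\|\hvx_k\| + G = 1 + u_k$ (where $u_k := M\|\vx_{k\eta}\| + G$) to dispense with the moment growth assumption, and then convert the resulting moment bound on $\vx_{k\eta}$ into a KL-divergence bound via Talagrand's inequality.

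\textbf{Step 1: Conditional CLT bound.} I would first reproduce the argument from the proof of Lemma \ref{lem:noise-control-chisquare} verbatim up through the inequality
\[
\bE_{\vx_t|\vx_{k\eta}}\left[\|\bE[\vN|\vx_{k\eta},\vx_t]\|^2\right] \leq \frac{8 h u_k^4}{B_k^2}\sum_{m=0}^{\infty}\Bigl(\tfrac{4h u_k^2}{B_k^2}\Bigr)^m,
\]
where $h = t-k\eta$. Nothing in that derivation uses how $B_k$ is chosen, only that conditioned on $\vx_{k\eta}$ the $\vN_j$ are i.i.d.\ with $\|\vN_j\|\leq u_k$ a.s.\ (by \ref{eqn:defn-grad-growth}). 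Now I invoke the adaptive batch rule $B_k \geq 1 + u_k$: the geometric ratio satisfies $4h u_k^2/B_k^2 \leq 4h \leq 1/2$ once $\eta \leq 1/8$ (which is already covered by $\eta \leq 1$ after tightening constants), so the series sums to at most $2$. Moreover $u_k^4/B_k^2 \leq u_k^4/(1+u_k)^2 \leq u_k^2$ for all $u_k \geq 0$. Combining gives the clean conditional bound
\[
\bE_{\vx_t|\vx_{k\eta}}\left[\|\bE[\vN|\vx_{k\eta},\vx_t]\|^2\right] \leq 16 h\, u_k^2.
\]

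\textbf{Step 2: Moment-to-KL conversion via LSI.} Taking expectations and using $u_k^2 \leq 2M^2\|\vx_{k\eta}\|^2 + 2G^2$ yields
\[
\bE\left[\|\bE[\vN|\vx_{k\eta},\vx_t]\|^2\right] \leq 32 h \left(M^2\, \bE[\|\vx_{k\eta}\|^2] + G^2\right).
\]
Since $\Law(\vx_{k\eta}) = \mu_{k\eta}$ and $\pistar$ satisfies \ref{eqn:defn-log-sobolev}, Lemma \ref{lem:moment-control-talagrand} with $p=2$ gives $\bE[\|\vx_{k\eta}\|^2] \leq \tfrac{4}{\CLSI}\KL{\mu_{k\eta}}{\pistar} + 2\vm_2^2$. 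Substituting produces exactly
\[
\bE\left[\|\bE[\vN|\vx_{k\eta},\vx_t]\|^2\right] \leq \frac{128 h M^2}{\CLSI}\KL{\mu_{k\eta}}{\pistar} + 32 h\left(2 M^2 \vm_2^2 + G^2\right),
\]
as claimed.

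\textbf{Main obstacle.} The argument is largely bookkeeping; the one delicate point is ensuring that the CLT chi-square series from Lemma \ref{lem:noise-control-chisquare} actually converges without the moment growth condition. This is precisely where the adaptive choice $B_k \geq 1 + u_k$ is essential: it simultaneously (i) forces the geometric ratio $4hu_k^2/B_k^2$ below $1$ uniformly in $\vx_{k\eta}$ (so the coarse-versus-fine bound dichotomy of Lemma \ref{lem:noise-control-chisquare} degenerates to a single regime) and (ii) reduces the power of $u_k$ from $u_k^4$ to $u_k^2$, allowing the final expectation to be handled by an $L^2$ moment bound rather than the $L^4$ bound needed in the non-adaptive case. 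Once this reduction is in place, LSI via Talagrand's transport inequality closes the loop.
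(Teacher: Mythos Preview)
Your proposal is correct and essentially identical to the paper's proof: both rerun the chi-square CLT argument of Lemma~\ref{lem:noise-control-chisquare} to obtain the series bound $\tfrac{8h u_k^4}{B_k^2}\sum_{m\geq 0}(4hu_k^2/B_k^2)^m$, use the adaptive rule $B_k \geq u_k$ together with $h\leq\eta\leq 1/8$ to collapse this to $16h u_k^2$, and then apply Lemma~\ref{lem:moment-control-talagrand} to convert $\bE[\|\vx_{k\eta}\|^2]$ into the KL term. Your observation about needing $\eta\leq 1/8$ rather than $\eta\leq 1$ matches the paper, which silently imposes $\eta\leq 1/8$ in the surrounding text before Theorem~\ref{thm:absgld-lsi-stable}.
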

\begin{proof}
Let $u_k = M \norm{\vx_{k\eta}} + G$ and $h = t - k \eta$.  Denoting $\vN = \nicefrac{1}{B} \sum_{j=1}^{B_k} \vN_j$, where $\vN_j = \nabla f(\vx_{k\eta}, \xi_{k,j}) - \nabla F(\vx_{k\eta})$,
we conclude from \ref{eqn:defn-grad-growth-subg} and Lemma \ref{lem:moment-growth-subg} that $\bE\left[ \norm{\vN_j}^{2m} | \vx_{k\eta}\right] \leq  2^{m+1} m! u^{2m}_k$ for any $j \in [B]$ and $m \in \mathbb{N}$. Furthermore, since $B_k$ is a deterministic function of $\vx_{k\eta}$ and $\vN_1, \dots, \vN_{B_k}$ are zero-mean i.i.d conditioned on $\vx_{k\eta}$, we can repeat the same arguments as Lemma \ref{lem:noise-control-chisquare} to obtain the following bound,
\begin{align*}
    \bE_{\vx_t \ | \ \vx_{k\eta}}\left[\norm{\bE\left[\vN \ | \ \vx_t, \vx_{k\eta} \right]}^2\right] &\leq \frac{8h u^4_k}{B^2_k} \sum_{m=0}^{\infty} \left(\tfrac{4h u^2_k}{B^2_k}\right)^{m} 
    \leq 16 h u^2_k \leq 32 h \left(M^2 \norm{\vx_{k\eta}}^2 + G^2\right)   
\end{align*}
where the second inequality follows from the fact that $B_k \geq u_k$ and $h \leq \eta \leq \nicefrac{1}{8}$. Now, applying Lemma \ref{lem:moment-control-talagrand}, we obtain
\begin{align*}
    \bE\left[\norm{\bE\left[\vN \ | \ \vx_t, \vx_{k\eta} \right]}^2\right] &\leq 32h \left( M^2 \bE\left[\norm{\vx_{k\eta}}^2\right] + G^2 \right) \\
    &\leq \frac{128 M^2 h}{\CLSI} \KL{\mu_{k\eta}}{\pistar} + 32h \left(2M^2 \vm^2_2 + G^2\right)
\end{align*}
\end{proof}
\subsubsection{Proof of Theorem \ref{thm:absgld-lsi-stable}}
\label{proof:thm-absgld-lsi-stable}
\begin{proof}
Consider any $k \in (K)$ and $t \in [k\eta, (k+1)\eta]$. Furthermore, let $h = t - k \eta \leq \eta \leq \nicefrac{1}{6L}$. From Lemma \ref{lem:ab-sgld-kl-flow-interpolating} and Lemma \ref{lem:absgld-noise-control}, it follows that,
\begin{align*}
    \ddt \KL{\mu_t}{\pistar} &\leq -\frac{1}{2}\FD{\mu_t}{\pistar} + \bE\left[\norm{\nabla F(\vx_t) - \nabla F(\vx_{k\eta})}^2\right] \nonumber\\ &\quad+ \frac{128 M^2 h}{\CLSI} \KL{\mu_{k\eta}}{\pistar} + 32h \left(2M^2 \vm^2_2 + G^2 \right)
\end{align*}
To control $\bE\left[\norm{\nabla F(\vx_t) - \nabla F(\vx_{k\eta})}^2\right]$, we recall that $h \leq \nicefrac{1}{6L}$ and follow the same steps as Lemma \ref{lem:sgld-descent-lemma} to obtain,
\begin{align*}
    \bE\left[ \norm{\nabla F(\vx_t) - \nabla F(\vx_{k\eta})}^2 \right] &\leq \frac{1}{4} \FD{\mu_t}{\pistar} + 2L^2 h^2 \bE\left[\norm{\vN}^2\right] + 5 L^2 h d 
\end{align*}
We now control $\bE\left[\norm{\vN}^2\right]$. From Lemma \ref{lem:moment-growth-subg}, we obtain the following.
\begin{align*}
    \bE\left[\norm{\vN}^2 | \vx_{k\eta}\right] &\leq \frac{4(M \norm{\vx_{k\eta}} + G)^2}{B_k} \leq 4M \norm{\vx_{k\eta}} + 4G
\end{align*}
Now, applying Lemma \ref{lem:moment-control-talagrand},
\begin{align*}
    \bE\left[\norm{\vN}^2\right] &\leq 4M\sqrt{\frac{2}{\CLSI} \KL{\mu_{k\eta}}{\pistar}} + 4M\vm_1 + 4G \\
    &\leq 4M + \frac{2M}{ \CLSI} \KL{\mu_{k\eta}}{\pistar} + 4M\vm_1 + 4G 
\end{align*}
where the last inequality uses $\sqrt{x} \leq 1 + \nicefrac{x}{4}$ for any $x \geq 0$. Hence, 
\begin{align*}
    \bE\left[ \norm{\nabla F(\vx_t) - \nabla F(\vx_{k\eta})}^2 \right] &\leq \frac{1}{4} \FD{\mu_t}{\pistar} + \frac{4 L^2 h^2 M}{ \CLSI} \KL{\mu_{k\eta}}{\pistar} \\
    &+ 5 L^2 h d + 8L^2 M h^2 + 8L^2 M h^2\vm_1 + 8L^2 h^2 G
\end{align*}
Using $h \leq \eta \leq \nicefrac{1}{6L}$, it follows that, 
\begin{align*}
\ddt \KL{\mu_t}{\pistar} &\leq -\frac{1}{4}\FD{\mu_t}{\pistar} + \frac{(LM + 128M^2) h}{\CLSI}\KL{\mu_{k\eta}}{\pistar} \\
&+ 32h \left(L^2 d + 2M^2 \vm^2_2 + G^2 \right) + 8L^2 h^2 \left(M\vm_1 + M + G \right) \\
&\leq -\frac{\CLSI}{2} \KL{\mu_t}{\pistar} + \frac{(LM + 128M^2) \eta}{\CLSI}\KL{\mu_{k\eta}}{\pistar} \\
&+ 32\eta \left(L^2 d + 2M^2 \vm^2_2 + G^2 \right) + 8L^2 \eta^2 \left(M\vm_1 + M + G \right)
\end{align*}
Multiplying both sides by $e^{\nicefrac{\CLSI (t - k \eta)}{2}}$, applying Gr\"{o}nwall's Lemma for $t \in [k\eta, (k+1)\eta]$ and using the fact that $e^{\nicefrac{x}{2}} - 1 \leq x$ for $x \leq \nicefrac{1}{6}$,
\begin{align*}
    e^{\nicefrac{\CLSI \eta}{2}} \KL{\mu_{(k+1)\eta}}{\pistar}  - \KL{\mu_{k\eta}}{\pistar} &\leq \frac{(2LM + 256M^2) \eta^2}{\CLSI}\KL{\mu_{k\eta}}{\pistar} \\
    &+ 64\eta^2 \left(L^2 d + 2M^2 \vm^2_2 + G^2 \right) + 16L^2 \eta^3 \left(M\vm_1 + M + G \right)
\end{align*}
It follows that
\begin{align*}
    \KL{\mu_{(k+1)\eta}}{\pistar} &\leq e^{-\nicefrac{\CLSI \eta}{2}}\left(1 + \frac{(2LM + 256M^2) \eta^2}{\CLSI}\right)\KL{\mu_{k\eta}}{\pistar} \\
    &+ e^{-\nicefrac{\CLSI \eta}{2}} \left[64\eta^2 \left(L^2 d + 2M^2 \vm^2_2 + G^2 \right) + 16L^2 \eta^3 \left(M\vm_1 + M + G \right)\right] \\
    &\leq e^{-\nicefrac{\CLSI \eta}{4}}\KL{\mu_{k\eta}}{\pistar} \nonumber\\&\quad+ e^{-\nicefrac{\CLSI \eta}{4}} \left[64\eta^2 \left(L^2 d + 2M^2 \vm^2_2 + G^2 \right) + 16L^2 \eta^3 \left(M\vm_1 + M + G \right)\right]
\end{align*}
Where the last inequality follows from the fact that $\eta \leq \frac{\CLSI^2}{8L\left(LM+128M^2\right)}$, implies $1 + \frac{(2LM + 256 M^2) \eta^2}{\CLSI} \leq 1 + \frac{\eta \CLSI}{4} \leq e^{\nicefrac{\eta \CLSI}{4}}$. Iterating through the above recurrence, we conclude that for any $k \in (K)$,
\begin{align}
\label{eqn:absgld-lsi-recurrence}
    \KL{\mu_{(k+1)\eta}}{\pistar} &\leq e^{-\nicefrac{\CLSI \eta k}{4}}\KL{\mu_{0}}{\pistar} \nonumber \\ &\quad+ \frac{e^{-\nicefrac{\CLSI \eta }{4}}}{1 - e^{-\nicefrac{\CLSI \eta }{4}}} \left[64\eta^2 \left(L^2 d + 2M^2 \vm^2_2 + G^2 \right) + 16L^2 \eta^3 \left(M\vm_1 + G \right)\right] \nonumber \\
    &\leq e^{-\nicefrac{\CLSI \eta k}{4}}\KL{\mu_{0}}{\pistar} + \frac{256\eta}{\CLSI} \left(L^2 d + 2M^2 \vm^2_2 + G^2 \right) + \frac{64L^2 \eta^2}{\CLSI} \left(M\vm_1 + M + G \right)
\end{align}
Recalling that $\mu_{k\eta} = \Law{\hvx_k}$, the desired last iterate guarantee is,
\begin{align*}
    \KL{\Law{\hvx_{K+1}}}{\pistar} &\leq e^{-\nicefrac{\CLSI \eta K}{4}}\KL{\mu_{0}}{\pistar} + \frac{256\eta}{\CLSI} \left(L^2 d + 2M^2 \vm^2_2 + G^2 \right) + \frac{64L^2 \eta^2}{\CLSI} \left(M\vm_1 +  M + G \right)
\end{align*}
We now control the expected amortized batch size $\Bar{B} = \nicefrac{1}{K} \sum_{k=1}^{K} \bE\left[B_k\right]$. We note that,
\begin{align*}
    \bE\left[B_k\right] &\leq 1 + \bE\left[1 + \left \lceil{M \norm{\hvx_k} + G}\right \rceil\right] \leq 2 + M \bE\left[\norm{\vx_{k\eta}}\right] + G \\
    &\leq 2 + \frac{M\sqrt{2}}{\sqrt{\CLSI}} \sqrt{\KL{\mu_{k\eta}}{\pistar}} + G
\end{align*}
From the above inequality and \eqref{eqn:absgld-lsi-recurrence}, it follows that,
\begin{align*}
    \Bar{B} &= \frac{1}{K} \sum_{k=1}^{K} \bE\left[B_k\right] \\
    &\leq 2 + G + \frac{M\sqrt{2 \KL{\Law{\hvx_0}}{\pistar}}}{K\sqrt{\CLSI}}\sum_{k=1}^{K} e^{-\nicefrac{\CLSI \eta (k-1)}{8}} \\
    &+ \frac{28M \sqrt{\eta}}{\CLSI}\left(L \sqrt{d} + M \vm_2 + G\right) + \frac{8L \eta}{\CLSI} \sqrt{2 M \vm_1 + G} \\
    &\leq 2 + G + \frac{50 M}{\CLSI^{\nicefrac{3}{2}}\eta K}\sqrt{\KL{\Law{\hvx_0}}{\pistar}} + \frac{28M \sqrt{\eta}}{\CLSI}\left(L \sqrt{d} + M \vm_2 + G\right) + \frac{8L \eta}{\CLSI} \sqrt{2 M \vm_1 + 2M  + 2G}
\end{align*}
\end{proof}

\subsection{Proof of Theorem \ref{thm:sgld-fd-and-pi}}
\label{proof:thm-sgld-fd-and-pi}
\begin{proof}
From Lemma \ref{lem:sgld-descent-lemma}, and using the fact that $k \eta \leq t \leq (k+1)\eta$, we obtain,
\begin{align*}
    \ddt \KL{\mu_t}{\pistar} &\leq -\frac{1}{4}\FD{\mu_t}{\pistar} + 5L^2 \eta d + \frac{3L\eta(M^2 C_{2} d + G^2)}{B} + \frac{576\eta (M^4 C_{4} d^{2 } + G^4)}{B^2}
\end{align*}
Integrating from $t \in [k\eta, (k+1)\eta]$, and rearranging, we get,
\begin{align*}
    \int_{k\eta}^{(k+1)\eta} \FD{\mu_t}{\pistar} &\leq 4\left[\KL{\mu_{k\eta}}{\pistar} - \KL{\mu_{(k+1)\eta}}{\pistar}\right] \\
    &+ 20L^2 \eta^2 d + \frac{12L\eta^2(M^2 C_{2} d + G^2)}{B} + \frac{2304\eta^2 (M^4 C_{4} d^{2 } + G^4)}{B^2}
\end{align*}
Averaging the above inequality for $k \in (K)$ and using the fact that $\mu_{k\eta} = \Law{\hvx_k}$, we obtain.
\begin{align*}
    \frac{1}{K \eta} \int_{0}^{K\eta} \FD{\mu_t}{\pistar} \leq \frac{4\KL{\Law{\hvx_0}}{\pistar}}{K \eta} + 20L^2 \eta d + \frac{12L\eta(M^2 C_{2} d + G^2)}{B} + \frac{2304\eta (M^4 C_{4} d^{2 } + G^4)}{B^2}
\end{align*}
Using the fact that $\FD{\mu}{\pi}$ is a convex functional of $\mu$ for any probability measure $\pi$ \citep{wu2000-fisher-convexity}, we conclude that $\FD{\mubar_{K\eta}}{\pistar} \leq \frac{1}{K \eta} \int_{0}^{K\eta} \FD{\mu_t}{\pistar}$. Hence, we obtain the following guarantee.
\begin{align*}
    \FD{\mubar_{K\eta}}{\pistar} \leq \frac{4\KL{\Law{\hvx_0}}{\pistar}}{K \eta} + 20L^2 \eta d + \frac{12L\eta(M^2 C_{2} d + G^2)}{B} + \frac{2304\eta (M^4 C_{4} d^{2 } + G^4)}{B^2}
\end{align*}
We now consider the case when $\pistar$ satisfies \ref{eqn:defn-poincare}. From Lemma \ref{lem:gullin-fd-info-transport}, we conclude that,
\begin{align*}
     \TV(\mubar_{K\eta}, \pistar)^2 \leq \frac{16 \KL{\Law{\hvx_0}}{\pistar}}{\CPI K \eta} + \frac{80  L^2 \eta d}{\CPI} + \frac{48  L\eta(M^2 C_{2} d + G^2)}{\CPI B} + \frac{9216  \eta (M^4 C_{4} d^{2 } + G^4)}{\CPI B^2}
\end{align*}
\end{proof}


\section{Improved Wasserstein CLT with Gaussian Convolutions}
\label{proof:lem-wass-clt-new}
We now prove the $\wass{2}$ CLT for Gaussian convolutions of bounded random vectors, as stated in Lemma \ref{lem:wass-clt-new}. Our proof is an extension of the high-dimensional $\wass{2}$ CLT of \cite{zhai2018clt}, whose proof structure we closely follow and adapt. In particular, the presence of the Gaussian convolution leads to improved regularity, which allows us to derive sharper bounds than \cite{zhai2018clt} by adapting their proof. Without loss of generality, we begin by assuming that $\Sigma_{\vY}$ is diagonal with entries $\varsigma_i^2$ for $i \in [d]$. Under our assumptions, $\varsigma^2_i \leq \nicefrac{1}{5d}$ for every $i \in [d]$, a fact which is crucial to our proof. Let $\hvX = \sqrt{\vI - \Sigma_{\vY}} \vX + \vY$. Then, $\hvX$ can be written as,
\begin{align*}
    \hvX \disteq \frac{1}{\sqrt{B}} \sum_{k=1}^{B} \sqrt{\vI - \Sigma_{\vY}} \vX_k + \vY_k
\end{align*}
where $\vX_k \ \iidsim \cN(0, \vI)$ are sampled independently of $\vY_1, \dots, \vY_B$. Our proof relies on the following key technical lemma, which is analogous to Lemma 1.6 of \cite{zhai2018clt}. We present a proof of this lemma in Appendix \ref{proof:basic-lem-clt}
\begin{lemma}\label{lem:basic_lem_clt}
$$\wass{2}\left(\vX_1,\sqrt{I-\frac{\Sigma_Y}{k}}\vX_1 + \frac{1}{\sqrt{k}}\vY_1 \right)\leq \sqrt{\frac{25d\beta^6}{k^3}}$$
\end{lemma}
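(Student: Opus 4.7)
The key observation is that both $\vX_1$ and $\vW := \sqrt{\vI - \Sigma_\vY/k}\,\vX_1 + \vY_1/\sqrt{k}$ have mean $\mathbf{0}$ and covariance $\vI$, so the identity coupling $\vX = \vX_1$ gives only $\wass{2}^2 \leq \bE\|\vY_1\|^2/k = \operatorname{tr}(\Sigma_\vY)/k$, of order $1/k$ --- far too weak for the target $1/k^3$. To extract the sharper rate I would exploit the smoothness that the Gaussian convolution with $\sqrt{\vI - \Sigma_\vY/k}\,\vX_1$ injects into the law of $\vW$.

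My plan is to route through KL divergence using Talagrand's $T_2$ inequality for the standard Gaussian: $\wass{2}^2(\vW, \vX_1) \leq 2\,\KL{\Law{\vW}}{\cN(0,\vI)}$. Conditioned on $\vY_1$, $\vW \sim \cN(\vY_1/\sqrt{k},\, \vI - \Sigma_\vY/k)$, so $\vW$ has the explicit density
\begin{align*}
p_{\vW}(z) = \bE_{\vY_1}\!\bigl[\phi_{\vI - \Sigma_\vY/k}(z - \vY_1/\sqrt{k})\bigr],
\end{align*}
where $\phi_A$ is the Gaussian density with covariance $A$. Writing the log-ratio against $\varphi := \phi_{\vI}$ and simplifying yields
\begin{align*}
\log\!\frac{p_{\vW}(z)}{\varphi(z)} = -\tfrac{1}{2}\log\det(\vI - \Sigma_\vY/k) - \tfrac{1}{2}z^\top B z + \log h(z),
\end{align*}
with $B := (\vI - \Sigma_\vY/k)^{-1} - \vI$ and $h(z) := \bE_{\vY_1}\!\bigl[\exp\bigl(z^\top(\vI + B)\vY_1/\sqrt{k} - \tfrac{1}{2k}\vY_1^\top(\vI + B)\vY_1\bigr)\bigr]$. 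I would Taylor expand $\log h(z)$ as a cumulant series in $k^{-1/2}$: the hypothesis $\bE[\vY_1] = 0$ kills the $k^{-1/2}$ term, while $\bE[\vY_1\vY_1^\top] = \Sigma_\vY$ forces the order-$k^{-1}$ (Gaussian) contribution to cancel exactly against $-\tfrac{1}{2}\log\det(\vI - \Sigma_\vY/k) - \tfrac{1}{2}z^\top B z$. This cancellation is morally dictated by the fact that if $\vY_1 \sim \cN(0, \Sigma_\vY)$ then $\vW \disteq \vX_1$ and the KL vanishes identically, so only the non-Gaussian cumulants of $\vY_1$ can contribute. The first non-vanishing residual is the third-cumulant term, of size $\mathcal{O}(k^{-3/2})$.

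Squaring the residual under $\bE_{\vW}[\cdot]$ and using Pinsker/Jensen produces $\mathcal{O}(k^{-3})$ for the KL. The third-moment tensor of $\vY_1$ is controlled via $\sum_{i,j,\ell}\bE[Y_{1,i}Y_{1,j}Y_{1,\ell}]^2 \leq \sum_{i,j,\ell}\bE[Y_{1,i}^2 Y_{1,j}^2 Y_{1,\ell}^2] = \bE\|\vY_1\|^6 \leq \beta^6$, and the factor $d$ emerges when tracing out the third-derivative tensor of $\phi_{\vI - \Sigma_\vY/k}$ against $p_{\vW}$ via a Hermite-orthogonality identity. The main obstacle is bounding the Taylor remainder rigorously: the smallness hypotheses $\beta^2 \leq 1/5$ and $\|\Sigma_\vY\|_2 \leq 1/(5d)$ ensure both that the exponent inside $h(z)$ has small fluctuations on the effective support of $\vW$, and that the geometric series generated by the higher-order remainders converges to a uniform numerical constant, yielding the explicit $25$. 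A final application of $T_2$ then gives the stated $\wass{2}$ bound.
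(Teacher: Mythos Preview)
Your route through Talagrand's $T_2$ and a cumulant expansion of the log-density ratio is quite different from the paper's, which follows Zhai (2018) essentially verbatim: it writes the density ratio $f=\tau/\rho$, computes $\bE[f(\vZ_1)^2]$ \emph{exactly} as $\bE_{\bar{\vY},\bar{\vY}'}[\exp(\sum_i Q_i)]$, establishes closed-form bounds on $\bE Q_i$, $\bE Q_iQ_j$, $\bE(Q-Q_i)Q_i$, $\bE Q^2$, and plugs these into Zhai's coordinate-wise $\wass{2}$ estimate. There is no Taylor expansion, no $T_2$, and no tail argument; the factor $d$ arises from summing Zhai's per-coordinate displacement bound over the $d$ coordinates.

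The genuine gap in your proposal is the step ``squaring the residual under $\bE_{\vW}[\cdot]$ and using Pinsker/Jensen produces $\mathcal{O}(k^{-3})$ for the KL.'' You have (correctly) argued that $g(z):=\log(p_{\vW}/\varphi)(z)$ has leading term $g_3(z)\,k^{-3/2}$ coming from the third cumulant of $\vY_1$. But $\KL{p_{\vW}}{\varphi}=\bE_{p_{\vW}}[g]$ is \emph{linear} in $g$, so a pointwise $O(k^{-3/2})$ estimate only yields $\KL=O(k^{-3/2})$. To reach $O(k^{-3})$ you must additionally verify that the $k^{-3/2}$, $k^{-2}$ and $k^{-5/2}$ coefficients of $g$ integrate to zero under $\varphi$; this is true (it follows from the normalization $\bE_\varphi[e^g]=1$), but it is neither Pinsker nor Jensen, and it is absent from your sketch. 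If instead you mean to pass through $\chi^2=\bE_\varphi[(e^g-1)^2]\approx\bE_\varphi[g^2]$, note that $g_3(z)$ is a cubic polynomial in $z$, so $e^g-1\approx g$ breaks down once $\|z\|^3\gtrsim k^{3/2}$, and the hypotheses $\beta^2\le 1/5$, $\|\Sigma_\vY\|_2\le 1/(5d)$ do not by themselves make that region negligible uniformly in $k\ge 1$. Incidentally, a direct computation gives $\bE_\varphi[g_3^2]=\tfrac{1}{6}\sum_{i,j,\ell}(\bE[Y_{1,i}Y_{1,j}Y_{1,\ell}])^2\le\beta^6/6$ with \emph{no} factor of $d$, so your Hermite-orthogonality explanation for the $d$ is also off; in the paper the $d$ comes from Zhai's method, not from the third-moment tensor.
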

\subsection{Proof of Lemma \ref{lem:wass-clt-new}}
\begin{proof}
The proof proceeds similar to the proof of Theorem 1.1 in \cite{zhai2018clt}, using Lemma~\ref{lem:basic_lem_clt} as the key ingredient (analogous to Lemma 1.6 in \cite{zhai2018clt}). First, by the property that a sum of independent Gaussian random variables is also a Gaussian random variable, we have:
\begin{align}
    \wass{2}(\sum_{j=1}^{k} \vX_j, \sum_{j=1}^{k-1}\vX_j + \sqrt{\vI-\Sigma_Y}\vZ_k + \vY_k) &= \wass{2}\left(\sqrt{k}\vZ_1,\sqrt{k}\sqrt{\vI-\frac{\Sigma_Y}{k}}\vZ_1 + \vY_k\right) \nonumber \\
    &=\sqrt{k} \wass{2}\left(\vZ_1,\sqrt{\vI-\frac{\Sigma_Y}{k}}\vZ_1 + \frac{\vY_k}{\sqrt{k}}\right) \nonumber \\
    &\leq \frac{\sqrt{25\beta^6 d}}{k}
\end{align}
In the third step, we have applied Lemma~\ref{lem:basic_lem_clt}. Now, by triangle inequality, we must have:
\begin{align}
    &\sqrt{B}\wass{2}\left(\vX,\hvX\right) = \wass{2}(\sum_{j=1}^{B}\hvX_j,\sum_{j=1}^{B} \sqrt{\vI-\Sigma_Y}\vX_j + \vY_j) \nonumber \\&\leq \sum_{k=1}^{B} \wass{2}\left(\sum_{j=1}^{k}\vX_j + \sum_{j=k+1}^{B}\sqrt{\vI-\Sigma_Y}\vX_j + \vY_j, \sum_{j=1}^{k-1}\vX_j + \sum_{j=k}^{B}\sqrt{\vI-\Sigma_Y}\vX_j + \vY_j\right) \nonumber\\
    &\leq \sum_{k=1}^{B} \wass{2}\left(\sum_{j=1}^{k}\vX_j , \sum_{j=1}^{k-1}\vX_j + \sqrt{\vI-\Sigma_Y}\vX_k + \vY_k\right) \leq \sum_{k=1}^{B}\frac{5\beta^3\sqrt{d}}{k} \nonumber \\ &\leq 5\beta^3\sqrt{d}(1+\log B)
\end{align}
In the third step, we have used the fact that $\wass{2}(\vZ+\vA,\vZ+\vB) \leq \wass{2}(\vA,\vB)$ whenever $\vZ,\vA$ and $\vZ,\vB$ are independent random variables. Hence, we have proved that $\wass{2}^2\left(\vX,\hvX\right) \leq \nicefrac{25 \beta^6 d}{B} \left(1 + \log B\right)^2$
\end{proof}
\subsection{Proof of Lemma \ref{lem:basic_lem_clt}}
\label{proof:basic-lem-clt}
Define $n_i := \frac{k}{\varsigma_i^2} $. Suppose $\vY,\vZ$ are independent random variables such that $\vZ \sim \mathcal{N}(0,\vI)$ and $\vY$ has the same distribution as any one of $\vY_1,\dots,\vY_B$ used in the statement of Lemma~\ref{lem:wass-clt-new}.  In this section only, for any vector $\vx \in \mathbb{R}^d$, we will let $\vx_i$ denote its component along the $i$-th standard basis vector. Notice that the i-th co-ordinate of $\sqrt{\vI-\frac{\Sigma_\vY}{k}}\vZ + \frac{1}{\sqrt{k}}\vY$ is given by $\sqrt{1-\frac{1}{n_i}}\vZ_{i} + \bar{\vY}_{i}$, where, $\bar{\vY} := \frac{\vY}{\sqrt{k}}$. Clearly, $\|\bar{\vY}\| \leq \frac{\beta}{\sqrt{k}}$, a fact which we will use heavily below. We will also use the observation that $\sum_i \varsigma_i^2 \leq \beta^2$ and $\beta^2 n_j \geq k$.

Let $f(x) := \frac{\tau(\vx)}{\rho(\vx)}$ where $\tau$ is the density of $\sqrt{\vI-\frac{\Sigma_\vY}{k}}\vZ_{1} + \bar{\vY}_{1}$ and $\rho$ is the density of $\vZ_1$. The proof of the following lemma is identical to that of Lemma 4.1 of \cite{zhai2018clt}, with $n$ replaced with the co-ordinate dependent $n_i$, and $\sigma_i$ replaced with $1$. The result relies on the fact that $\varsigma^2_i \leq \nicefrac{1}{5d} < 1$.
 \begin{lemma}
 $$\mathbb{E}\left[f(\vZ_1)^2\right] = \mathbb{E}_{\bar{\vY},\bar{\vY}^{\prime}}\left[\exp\left(\sum_{i=1}^{d} \frac{2n_i^2 \bar{\vY}_i\bar{\vY}_i^{\prime} - n_i \bar{\vY}^2_i - n_i (\bar{\vY}^{\prime}_i)^2 + 1}{2(n_i^2 - 1)} - r(n_i)\right)\right]$$
 Where $\bar{\vY}^{\prime}_i$ is an independent copy of $\bar{\vY}_i$ and $r(n):= \frac{1}{2(n^2-1)} - \frac{1}{2}\log(1+\frac{1}{n^2-1})$
 \end{lemma}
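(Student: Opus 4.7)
The plan is to carry out a direct Gaussian-against-Gaussian integral, exploiting the diagonal structure of $\vI - \Sigma_\vY/k$. First observe that, conditional on $\bar{\vY}$, the vector $\sqrt{\vI - \Sigma_\vY/k}\,\vZ + \bar{\vY}$ is $\mathcal{N}(\bar{\vY}, \mathrm{diag}(1-1/n_i))$. Writing $\tau_{\bar{\vY}}$ for this conditional density and using the ``square-the-expectation'' trick $\tau(\vx)^2 = \mathbb{E}_{\bar{\vY},\bar{\vY}'}[\tau_{\bar{\vY}}(\vx)\tau_{\bar{\vY}'}(\vx)]$ with $\bar{\vY}'$ an independent copy of $\bar{\vY}$, I would write
\begin{align*}
\mathbb{E}[f(\vZ_1)^2] \;=\; \int \frac{\tau(\vx)^2}{\rho(\vx)}\,d\vx \;=\; \mathbb{E}_{\bar{\vY},\bar{\vY}'}\!\left[\int \frac{\tau_{\bar{\vY}}(\vx)\,\tau_{\bar{\vY}'}(\vx)}{\rho(\vx)}\,d\vx\right].
\end{align*}

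Because the conditional covariance is diagonal, the inner integral factors as a product over coordinates, so it suffices to evaluate a one-dimensional integral per coordinate. Dropping the $i$ subscript and setting $\alpha = 1 - 1/n = (n-1)/n$, that integral is
\begin{align*}
I \;=\; \frac{1}{\alpha\sqrt{2\pi}} \int \exp\!\left(-\frac{(z-\bar y)^2 + (z-\bar y')^2}{2\alpha} + \frac{z^2}{2}\right)\,dz.
\end{align*}
Expanding and collecting, the coefficient of $z^2$ is $\tfrac{1}{2} - \tfrac{1}{\alpha} = -\tfrac{n+1}{2(n-1)} < 0$, so the integral converges. Completing the square in $z$ and performing the Gaussian integral produces a prefactor $\tfrac{1}{\alpha}\sqrt{\tfrac{n-1}{n+1}} = \sqrt{1 + \tfrac{1}{n^2-1}}$ together with a residual exponential quadratic in $\bar y,\bar y'$.

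The remaining algebra is to show that this per-coordinate contribution equals the summand in the claim. Rewrite the prefactor as $\exp\!\bigl(\tfrac{1}{2}\log(1 + \tfrac{1}{n^2-1})\bigr) = \exp\!\bigl(\tfrac{1}{2(n^2-1)} - r(n)\bigr)$ by the very definition of $r$; this generates the ``$+1$'' in the numerator and the $-r(n_i)$ term. A short calculation, using $\alpha^2(n+1) = (n-1)^2(n+1)/n^2 = (n-1)(n^2-1)/n^2$, collapses the residual quadratic exactly to $[\,2n^2 \bar y\bar y' - n\bar y^2 - n(\bar y')^2\,]/\bigl(2(n^2-1)\bigr)$. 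Multiplying across coordinates and pulling the expectation outside gives the stated identity.

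No probabilistic subtlety beyond the ``double copy'' identity $\mathbb{E}[A]^2 = \mathbb{E}[A \cdot A']$ is needed; the main obstacle is simply the algebraic bookkeeping, in particular verifying that the three pieces $\tfrac{1}{2}-\tfrac{1}{\alpha}$, the completed-square constant, and the normalization combine over a common denominator $2(n^2-1)$ to give exactly the form claimed, with the $+1$ arising from the Jacobian factor and the $-r(n_i)$ absorbing the small logarithmic correction. Everything else is a routine Gaussian integral.
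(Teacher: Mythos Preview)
Your proposal is correct and is exactly the computation the paper defers to: the paper does not give its own argument but states that the proof is identical to Lemma~4.1 of Zhai (2018) with $n$ replaced by the coordinate-dependent $n_i$, and your ``double-copy plus one-dimensional Gaussian integral'' calculation is precisely that argument. The algebraic identities you outline (the prefactor $\tfrac{1}{\alpha}\sqrt{(n-1)/(n+1)}=\sqrt{1+1/(n^2-1)}$ and the collapse of the completed-square residual to $[2n^2\bar y\bar y'-n\bar y^2-n(\bar y')^2]/(2(n^2-1))$) check out, so nothing is missing.
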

Proceeding similarly, we let $Q_i := \frac{2n_i^2 \bar{\vY}_i\bar{\vY}_i^{\prime} - n_i \bar{\vY}^2_i - n_i (\bar{\vY}^{\prime}_i)^2 + 1}{2(n_i^2 - 1)} - r(n_i)$ and $Q := \sum_{i=1}^{d}Q_i$. Let $f_{(i)}(\vx)$ be the ratio $\frac{\tau_{(i)}(\vx)}{\gamma_{(i)}(\vx)}$, where $\tau_{(i)}(\vx)$ denotes the marginal density under $\tau$ of all co-ordinates other than the $i$-th co-ordinate.  The following lemma is a rewriting of Lemmas 4.4 and 4.5 in \cite{zhai2018clt}

\begin{lemma}
$\sup_i 5\varsigma_i^{2}d<1$, $5\beta^2 \leq 1$. Then, the following bounds hold:

\begin{enumerate}
    \item $|Q_i| \leq \frac{n_i^2|\bar{Y}_i||\bar{Y}_i^{\prime}|}{n_i^2 -1} + \frac{1}{2n_i}$, $|Q| \leq 1$, $|Q-Q_i|\leq 1$
    \item $$\mathbb{E}Q_i = -\frac{1}{2(n_i^2-1)}-r(n_i)$$
    \item 
    $$\mathbb{E}Q_iQ_j \leq \frac{n_i^2\delta_{ij}}{(n_i^2-1)^2} + \frac{n_in_j\mathbb{E}\bar{Y}_i^2\bar{Y}_j^2}{2(n_i^2-1)(n_j^2-1)} + \frac{1}{2(n_i^2-1)(n_j^2-1)}$$ 
    \item $$\mathbb{E}Q_i^2 \leq \frac{2n_i^2 +n_i +1}{2(n_i^2-1)^2}$$
    \item $$\mathbb{E}(Q-Q_i)Q_i \leq  \sup_{j}\frac{k(d-1)+\beta^2n_j}{2k(n_j^2-1)^2}\,.$$
    \item $$\mathbb{E}Q^2\leq \sup_j\frac{\beta^2n_jd + 3n^2_jkd}{2k(n^2_j-1)^2} \leq \sup_j\frac{ 2n^2_jd}{(n^2_j-1)^2}$$
\end{enumerate}

\end{lemma}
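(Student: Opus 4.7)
All six bounds come from expanding
$$Q_i \;=\; \underbrace{\frac{n_i^2\bar{Y}_i\bar{Y}_i'}{n_i^2-1}}_{\text{cross}} \;-\; \underbrace{\frac{n_i(\bar{Y}_i^2+(\bar{Y}_i')^2)}{2(n_i^2-1)}}_{\text{diag}} \;+\; \underbrace{\Bigl(\tfrac{1}{2(n_i^2-1)}-r(n_i)\Bigr)}_{=\;\frac12\log(1+1/(n_i^2-1))\;\geq\;0}$$
and exploiting three basic ingredients: (a) $\bar{Y},\bar{Y}'$ are independent copies with $\mathbb{E}\bar{Y}_i=0$, $\mathbb{E}\bar{Y}_i^2 = 1/n_i$, and $\mathbb{E}\bar{Y}_i\bar{Y}_j=0$ for $i\ne j$ (diagonality of $\Sigma_{\bar Y}$); (b) the a.s. bound $\|\bar Y\|^2\leq \beta^2/k$, which gives $\bar Y_i^2\leq \beta^2/k$ and $\mathbb{E}\bar Y_i^4\leq (\beta^2/k)\mathbb{E}\bar Y_i^2 = \beta^2/(kn_i)$; and (c) the quantitative smallness $r(n)\in[0,\tfrac{1}{4(n^2-1)^2}]$, coming from $x-\tfrac{x^2}{2}\leq \log(1+x)\leq x$ on $[0,1]$. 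Under the hypotheses $5\beta^2\leq 1$ and $5\varsigma_i^2 d\leq 1$ one has the convenient consequence $n_i = k/\varsigma_i^2 \geq 5kd$, which makes $1/(n_i^2-1)$ and $\bar Y_i^2$ both much smaller than $1/n_i$, and allows us to round the coefficients freely.

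For item 1, the triangle inequality on the three-term decomposition above immediately gives $|Q_i|\leq \frac{n_i^2|\bar Y_i||\bar Y_i'|}{n_i^2-1}+\frac{n_i(\bar Y_i^2+(\bar Y_i')^2)}{2(n_i^2-1)}+\frac{1}{2n_i^2}$; using $\bar Y_i^2\leq \beta^2/k$ and $n_i\geq 5k$ collapses the last two contributions into $\tfrac{1}{2n_i}$. Summing over $i$ with Cauchy--Schwarz gives $\sum_i \frac{n_i^2|\bar Y_i||\bar Y_i'|}{n_i^2-1}\lesssim \langle \bar Y,\bar Y'\rangle$, which is $\leq \beta^2/k$, and the remaining sums collapse via $\sum_i 1/n_i\leq \beta^2/k$ and $\sum_i 1/n_i^2\leq 1/(5kd)^2\cdot d$; combining yields $|Q|\leq 1$, and the same argument applied with one term omitted yields $|Q-Q_i|\leq 1$. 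For item 2 one simply takes expectations of the three-term decomposition: the cross term vanishes by independence of $\bar Y,\bar Y'$ and zero mean, the diagonal term yields $-1/(n_i^2-1)$, and the bias reshuffles to give the stated $-\frac{1}{2(n_i^2-1)}-r(n_i)$.

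Items 3 and 4 are pure second-moment computations. Writing $2(n_i^2-1)\bigl(Q_i+r(n_i)\bigr) = a_i+b_i+c_i+d_i$ with $a_i=2n_i^2\bar Y_i\bar Y_i'$, $b_i=-n_i\bar Y_i^2$, $c_i=-n_i(\bar Y_i')^2$, $d_i=1$, I expand the product for $i\ne j$: independence of $\bar Y$ and $\bar Y'$, together with $\mathbb{E}\bar Y_i\bar Y_j=0$ and $\mathbb{E}\bar Y_i=0$, kills every term involving an $a$-factor, and what remains collapses to $2n_in_j\mathbb{E}[\bar Y_i^2\bar Y_j^2]-1$; dividing by $4(n_i^2-1)(n_j^2-1)$ and then adding the cross-bias contribution $r(n_i)/(2(n_j^2-1))+r(n_j)/(2(n_i^2-1))+r(n_i)r(n_j)$, which is absorbed by the slack $+\tfrac{1}{2(n_i^2-1)(n_j^2-1)}$ on the RHS using $r(n)\leq 1/(4(n^2-1)^2)$. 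The $i=j$ case is identical except for cross terms $bc,bd,cd$ that do not cancel and the new piece $2n_i^2\mathbb{E}\bar Y_i^4$; item 4 then follows by using $\mathbb{E}\bar Y_i^4\leq \beta^2/(kn_i)\leq 1/n_i$ (since $\beta^2/k\leq 1$), reducing $4n_i^2+2n_i^2\mathbb{E}\bar Y_i^4-1 \leq 4n_i^2 + 2n_i + 2$.

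Items 5 and 6 follow by summing item 3. For item 5, $\mathbb{E}(Q-Q_i)Q_i=\sum_{j\ne i}\mathbb{E}Q_iQ_j$; the constant $\tfrac{1}{2(n_i^2-1)(n_j^2-1)}$ sums to at most $\tfrac{d-1}{2}\sup_j \tfrac{1}{(n_j^2-1)^2}$, contributing the $k(d-1)$ piece, while the random piece is handled by the pointwise estimate $n_j\mathbb{E}[\bar Y_i^2\bar Y_j^2]\leq \mathbb{E}[\|\bar Y\|^2\cdot n_j\bar Y_j^2]\leq (\beta^2/k)\cdot n_j\mathbb{E}\bar Y_j^2 = \beta^2/k$, giving the $\beta^2 n_j$ piece after one more sup. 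Item 6 is obtained by combining item 4 (diagonal) and item 5 (off-diagonal), using $\sum_i n_i\mathbb{E}\bar Y_i^2 = d$ to turn the $\mathbb{E}\bar Y_i^4$ contribution into a term of order $d\cdot \sup_j n_j^2/(n_j^2-1)^2$, which fits inside the stated envelope. The main obstacle will be keeping the many $r(n_i)$ remainder terms under control and verifying that each coarsened sum telescopes into a single $\sup_j$; the key principle I will use repeatedly is that under $5\varsigma_i^2 d\leq 1$ all $1/(n^2-1)^2$-type remainders are smaller by a factor of $1/(n_j^2-1)$ than the leading terms they accompany.
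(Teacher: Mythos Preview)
Your overall strategy is the right one and matches the paper, which simply points to Zhai's Lemmas~4.4--4.5 for items 1--4 and 6 (with $n$ replaced by the coordinate-dependent $n_i$) and writes out only item~5. Your reconstructions of items 1--4 are accurate; in particular the expansion $\mathbb{E}[(a_i+b_i+c_i+d_i)(a_j+b_j+c_j+d_j)] = 2n_in_j\mathbb{E}[\bar Y_i^2\bar Y_j^2]-1$ for $i\neq j$ and the handling of the $r(n_i)$ cross-terms via $r(n)\le 1/[4(n^2-1)^2]$ are exactly what is needed.

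There is, however, a genuine gap in your item~5 argument. Your ``pointwise'' bound $n_j\,\mathbb{E}[\bar Y_i^2\bar Y_j^2]\le (\beta^2/k)\,n_j\mathbb{E}\bar Y_j^2=\beta^2/k$ kills the $i$-dependence but still leaves you with $\sum_{j\ne i}\frac{n_i}{2(n_i^2-1)(n_j^2-1)}\cdot\frac{\beta^2}{k}$, a sum of $(d-1)$ terms; taking a sup on each summand then yields $(d-1)\cdot\sup_l\frac{\beta^2 n_l}{2k(n_l^2-1)^2}$, which is a factor $(d-1)$ too large for the stated bound. The paper's computation avoids this by reversing the order: it first pulls the $j$-dependent coefficient out via $\frac{n_j}{(n_i^2-1)(n_j^2-1)}\le \sup_l\frac{n_l}{(n_l^2-1)^2}$ and \emph{then} collapses the sum over $j$ inside the expectation,
\[
\sum_{j\ne i} n_i\,\mathbb{E}\bigl[\bar Y_i^2\bar Y_j^2\bigr]
= n_i\,\mathbb{E}\Bigl[\bar Y_i^2\sum_{j\ne i}\bar Y_j^2\Bigr]
\le n_i\cdot\frac{\beta^2}{k}\cdot\mathbb{E}\bar Y_i^2 = \frac{\beta^2}{k},
\]
using $\sum_j\bar Y_j^2\le\beta^2/k$ almost surely and $n_i\mathbb{E}\bar Y_i^2=1$. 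This is the step you are missing: summing $\bar Y_j^2$ over $j$ \emph{before} taking expectations is what turns $(d-1)$ contributions into a single $\beta^2/k$. The same summation trick is what makes item~6 close as stated; combining items~4 and~5 as you propose would again pick up an unwanted factor of $d$ from the constant piece of item~5.
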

\begin{proof}
Items 1 - 4 and item 6 can be shown by essentially the same methods used in Lemmas 4.4 and 4.5 of \cite{zhai2018clt}. For item 5, we have:
\begin{align}
    \mathbb{E}(Q-Q_i)Q_i &= \sum_{j\neq i}Q_jQ_i \leq \sup_l\frac{d-1}{2(n_l^2-1)^2} + \sum_{j\neq i} \frac{n_in_j\mathbb{E}\bar{Y}_i^2\bar{Y}_j^2}{2(n_i^2-1)(n_j^2-1)}  \nonumber \\
    &\leq \sup_l\frac{d-1}{2(n_l^2-1)^2} + \sup_l \frac{n_l}{2(n_l^2-1)^2}  \sum_{j\neq i}n_i\mathbb{E}\bar{Y}_i^2\bar{Y}_j^2 \nonumber \\
    &\leq \sup_l\frac{d-1}{2(n_l^2-1)^2} + \sup_l \frac{n_l}{2(n_l^2-1)^2}  \frac{\beta^2}{k}n_i\mathbb{E}\bar{Y}_i^2 \nonumber \\
    &= \sup_l \frac{n_l\beta^2 + k(d-1)}{2k(n_l^2-1)^2}
\end{align}
In the last step we have used the fact that $\mathbb{E}\bar{Y}_i^2  = \frac{1}{n_i}$.
\end{proof}

The proof of Lemma~\ref{lem:basic_lem_clt} now follows by using the bounds established above along with the proof of Lemma 1.6 in \cite{zhai2018clt} and by noting that $\beta^2n_i^2 \geq k $ for every $i \in [d]$. 

\section{Analysis of SGLD via Entropic CLT}
\label{app-sec:sgld-entropic-clt}
\subsection{Technical Lemmas}
\begin{lemma}[Chain Rule for KL Divergence ]\label{lem:kl_chain_rule}
Suppose $\nu$ is a distribution over some Polish space $\Xi^T$ and $\mu$ be a product distribution over $\Xi^T$ given as $\mu = \otimes_{t=1}^{T}\mu_t$. Let $\nu_t(\cdot|X_{<t})$ denote the conditional distribution of the $t$-th co-ordinate conditioned on the co-ordinates $1,\dots,t-1$ (and the marginal of the first co-ordinate under $\nu$ when $t = 1$) and let $\nu_{< t}$ denote the joint marginal law of the co-ordinates $1,\dots,t-1$ under the measure $\nu$.
$$\KL{\nu}{\mu} = \sum_{t=1}^{T}\mathbb{E}_{X_{<t}\sim \nu_{< t}} \KL{\nu_t(\cdot|X_{<t})}{\mu_t}$$
\end{lemma}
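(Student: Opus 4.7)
The plan is to establish this identity by disintegration of $\nu$ into its successive conditionals and then matching term by term with the product structure of $\mu$. First I would observe that, provided $\KL{\nu}{\mu} < \infty$ (otherwise both sides are infinite and we are done by a standard approximation argument), $\nu$ is absolutely continuous with respect to $\mu$. By Kolmogorov's disintegration theorem applied successively, $\nu$ admits the chain-rule factorization $\nu(\dd x_{1:T}) = \prod_{t=1}^{T} \nu_t(\dd x_t \mid x_{<t})$, and by hypothesis $\mu(\dd x_{1:T}) = \prod_{t=1}^{T} \mu_t(\dd x_t)$. In particular, for $\nu$-almost every $x_{<t}$, the conditional $\nu_t(\cdot \mid x_{<t})$ is absolutely continuous with respect to $\mu_t$, and the Radon-Nikodym derivative factors as
\begin{align*}
\frac{\dd \nu}{\dd \mu}(x_{1:T}) = \prod_{t=1}^{T} \frac{\dd \nu_t(\cdot \mid x_{<t})}{\dd \mu_t}(x_t).
\end{align*}

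Next I would take logs to linearize and integrate against $\nu$, which yields
\begin{align*}
\KL{\nu}{\mu} = \sum_{t=1}^{T} \mathbb{E}_{X_{1:T} \sim \nu}\left[ \log \frac{\dd \nu_t(\cdot \mid X_{<t})}{\dd \mu_t}(X_t) \right].
\end{align*}
Because the $t$-th summand depends only on the coordinates $X_{1:t}$, the tower property lets me replace the expectation by first averaging over $X_{<t} \sim \nu_{<t}$ and then, conditionally on $X_{<t}$, over $X_t \sim \nu_t(\cdot \mid X_{<t})$. The inner conditional expectation is, by definition, $\KL{\nu_t(\cdot \mid X_{<t})}{\mu_t}$, giving the desired identity.

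The main technical point (and the only non-routine piece) is the measurability of $x_{<t} \mapsto \KL{\nu_t(\cdot \mid x_{<t})}{\mu_t}$ and the justification of Fubini/the tower property for a possibly signed integrand; this is handled by noting that $u \log u \geq -e^{-1}$ so that the negative part of $\log(\dd \nu_t / \dd \mu_t)$ is uniformly integrable, allowing a standard monotone/dominated convergence argument after truncation. I would not expect any serious obstacle beyond bookkeeping: the whole argument is the standard disintegration-and-tower proof of the KL chain rule, and since $\mu$ is already a product measure, no telescoping between $\mu$'s conditionals and marginals is needed.
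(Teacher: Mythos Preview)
Your argument is correct and is the standard disintegration-and-tower proof of the KL chain rule. The paper itself does not give a proof of this lemma at all: it simply cites Lemma~4.18 of van Handel's probability notes, so there is nothing to compare beyond noting that your sketch is presumably what that reference contains.
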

\begin{proof}
Lemma 4.18 in \cite{van2014probability}
\end{proof}
\begin{lemma}[Tensorization of $\mathsf{T}_2$]\label{lem:t2_tensor}
Suppose $P$ is the law of $\mathcal{N}(0,\Sigma)$ for some non-singular $\Sigma$. Then for any probability measure $Q$ over $\mathbb{R}^d$, we have:
$$\wass{2}^2(P,Q) \leq 2\lambda_{\max}(\Sigma)\KL{Q}{P}$$
\end{lemma}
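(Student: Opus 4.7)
The statement is a non-isotropic version of Talagrand's $T_2$ inequality for Gaussian measures, so the plan is to reduce it to the standard (isotropic) $T_2$ inequality via a linear change of variables. Recall that if $\gamma_d$ denotes the standard Gaussian on $\mathbb{R}^d$, then for any probability measure $\tilde{Q}$ on $\mathbb{R}^d$ one has $\wass{2}^2(\tilde{Q}, \gamma_d) \leq 2\KL{\tilde{Q}}{\gamma_d}$; this is the classical Talagrand inequality, which follows from the Otto--Villani theorem (Lemma~\ref{lem:otto-villani-thm} in the paper, applied with $\CLSI = 1$) combined with the fact that $\gamma_d$ satisfies \ref{eqn:defn-log-sobolev} with constant $1$.

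Let $A = \Sigma^{1/2}$ be the symmetric positive-definite square root of $\Sigma$, which is well-defined and invertible since $\Sigma$ is non-singular. If $Y \sim Q$, define $\tilde{Q}$ to be the law of $A^{-1} Y$. Since $A$ is a bijection, and since pushing $\gamma_d$ forward through $A$ gives exactly $P = \mathcal{N}(0,\Sigma)$, the KL divergence is invariant under this change of variables, yielding $\KL{\tilde{Q}}{\gamma_d} = \KL{Q}{P}$. Applying the standard Talagrand inequality to $\tilde{Q}$ then gives $\wass{2}^2(\tilde{Q}, \gamma_d) \leq 2\KL{Q}{P}$.

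It remains to relate $\wass{2}^2(P, Q)$ back to $\wass{2}^2(\tilde{Q}, \gamma_d)$. For this, take an optimal $\wass{2}$ coupling $(Z, Z')$ of $(\gamma_d, \tilde{Q})$ and push it forward to obtain the coupling $(AZ, AZ')$ of $(P, Q)$. Since $A$ is symmetric, $\|A v\|^2 = v^\top \Sigma v \leq \lambda_{\max}(\Sigma)\,\|v\|^2$ for any $v \in \mathbb{R}^d$, so
\begin{equation*}
  \wass{2}^2(P, Q) \;\leq\; \mathbb{E}\|A Z - A Z'\|^2 \;\leq\; \lambda_{\max}(\Sigma)\, \mathbb{E}\|Z - Z'\|^2 \;=\; \lambda_{\max}(\Sigma)\, \wass{2}^2(\tilde{Q}, \gamma_d).
\end{equation*}
Chaining the two inequalities gives $\wass{2}^2(P, Q) \leq 2 \lambda_{\max}(\Sigma)\, \KL{Q}{P}$, as required.

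There is no real obstacle here; the only point requiring care is to fix the convention that $A = \Sigma^{1/2}$ is taken symmetric (rather than, say, a Cholesky factor), so that the operator-norm bound $\|A v\|^2 \leq \lambda_{\max}(\Sigma) \|v\|^2$ is immediate from the spectral decomposition of $\Sigma$. The rest is just KL-invariance under bijective pushforward plus the standard $T_2$ inequality for $\gamma_d$. If one wanted to avoid quoting the Otto--Villani theorem directly, the standard isotropic $T_2$ can instead be derived by tensorizing the 1D Gaussian $T_2$ inequality, which is why the lemma is labelled ``Tensorization of $\mathsf{T}_2$''.
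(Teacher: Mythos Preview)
Your argument is correct. The paper does not actually prove this lemma; it simply cites Proposition~1.8 of Gozlan--L\'eonard's survey on transport inequalities. Your route---reducing to the isotropic Talagrand inequality via the linear map $\Sigma^{1/2}$, using invariance of KL under bijective pushforward for the entropy side and the operator-norm bound $\|\Sigma^{1/2} v\|^2 \le \lambda_{\max}(\Sigma)\|v\|^2$ for the Wasserstein side---is precisely the standard way this result is established and is effectively what the cited reference does. The ``tensorization'' label in the paper alludes to the alternative derivation you mention at the end (building from the one-dimensional Gaussian $T_2$), but your change-of-variables argument is cleaner and makes the appearance of $\lambda_{\max}(\Sigma)$ transparent: the KL term is unchanged by the bijection, while the transport cost inflates by at most the top eigenvalue.
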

\begin{proof}
Proposition 1.8 in \cite{gozlan2010transport}
\end{proof}
\begin{lemma}[Reverse $\mathsf{T}_2$ for Gaussian Convolutions]
\label{lem:rev_t_2}
Suppose $\vZ \sim \mathcal{N}(0,\sigma^2 \vI)$, $\vA,\vB$ are random variables independent of $\vZ$. Then, $\KL{\vZ+\vA}{\vZ+\vB} \leq \frac{1}{2\sigma^2}\wass{2}^2(\vA,\vB)$
\end{lemma}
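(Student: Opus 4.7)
The plan is to use the joint convexity of KL divergence together with an optimal transport coupling between $\vA$ and $\vB$. Specifically, let $\pi$ be an optimal $\wass{2}$ coupling of the laws of $\vA$ and $\vB$, so that a pair $(A',B') \sim \pi$ has the correct marginals and satisfies $\mathbb{E}[\|A' - B'\|^2] = \wass{2}^2(\vA, \vB)$. Then the law of $\vZ + \vA$ can be written as the mixture
\begin{equation*}
\Law{\vZ + \vA} = \int \mathcal{N}(a, \sigma^2 \vI) \, d\pi(a, b),
\end{equation*}
and similarly $\Law{\vZ + \vB} = \int \mathcal{N}(b, \sigma^2 \vI) \, d\pi(a, b)$, since the first marginal of $\pi$ integrates the Gaussian translate out to $\Law{\vZ + \vA}$ (and analogously for $\vB$).

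Next I would apply the joint convexity of KL divergence to these two mixtures with respect to the common mixing measure $\pi$: for any coupling,
\begin{equation*}
\KL{\int \mathcal{N}(a, \sigma^2 \vI) \, d\pi(a,b)}{\int \mathcal{N}(b, \sigma^2 \vI) \, d\pi(a,b)} \leq \int \KL{\mathcal{N}(a, \sigma^2 \vI)}{\mathcal{N}(b, \sigma^2 \vI)} \, d\pi(a,b).
\end{equation*}
A direct computation for two isotropic Gaussians with equal covariance gives $\KL{\mathcal{N}(a, \sigma^2 \vI)}{\mathcal{N}(b, \sigma^2 \vI)} = \tfrac{\|a-b\|^2}{2\sigma^2}$. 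Substituting this into the inequality above and using that $\pi$ is the $\wass{2}$-optimal coupling yields
\begin{equation*}
\KL{\vZ + \vA}{\vZ + \vB} \leq \frac{1}{2\sigma^2} \mathbb{E}_{(A',B') \sim \pi} \|A' - B'\|^2 = \frac{1}{2\sigma^2} \wass{2}^2(\vA, \vB),
\end{equation*}
which is the claimed bound.

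There is no real obstacle here; the only subtlety is making sure the mixture representation and the joint convexity application are stated cleanly (equivalently, one can derive the same inequality via the data-processing inequality applied to the map $(a,b,z) \mapsto (a+z, b+z)$ acting on $\pi \otimes \mathcal{L}(\vZ)$, which gives the same bound without explicitly invoking joint convexity). Both routes reduce the statement to the elementary Gaussian KL computation, so the proof is essentially a two-line argument.
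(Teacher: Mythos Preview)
Your proposal is correct and takes essentially the same approach as the paper: both take a $\wass{2}$-optimal coupling, represent the convolved laws as mixtures of Gaussians, apply joint convexity of KL (the paper phrases this step as the log-sum inequality), and finish with the explicit formula $\KL{\mathcal{N}(a,\sigma^2\vI)}{\mathcal{N}(b,\sigma^2\vI)} = \tfrac{\|a-b\|^2}{2\sigma^2}$.
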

\begin{proof}
Let $\Gamma$ be a $\wass{2}$ optimal coupling between the laws of $\vA$ and $\vB$. Let $f:\mathbb{R}^d \to \mathbb{R}$ be defined by $f(\vx) = \frac{1}{(2\pi\sigma^2)^{d/2}}\exp\left(-\tfrac{\|\vx\|^2}{2\sigma^2}\right)$.
The density of $Z+A$ with respect to the Lebesgue measure is given by $P(\vx) = \mathbb{E} f(\vx-\vA)$ and similarly the density of $Z+B$ is given by $Q(\vx) = \mathbb{E}f(\vx-\vB)$. Therefore, the KL divergence can be written as:
\begin{align}
    \KL{\vZ+\vA}{\vZ+\vB} &= \int f(\vx-\vA) \log\left(\frac{\int f(\vx-\vA)d\Gamma(\vA\times \vB)}{\int f(\vx-\vB)d\Gamma(\vA\times \vB)}\right)d\Gamma(\vA\times \vB)\nonumber d \vx \nonumber\\
    &\leq \int  f(\vx-\vA) \log\left(\frac{ f(\vx-\vA)}{ f(\vx-\vB)}\right)d\Gamma(\vA\times \vB)\nonumber d \vx \nonumber\\
    &= \frac{1}{2\sigma^2}\int f(\vx-\vA) \left[\|\vx-\vB\|^2 - \|\vx-\vA\|^2\right]d\Gamma(\vA\times \vB) d \vx \nonumber \\
    &= \frac{1}{2\sigma^2}\int \|\vA-\vB\|^2 d\Gamma(\vA\times \vB) = \frac{1}{2\sigma^2}\wass{2}^2(\vA,\vB) 
\end{align}
The second step above follows from the log-sum inequality. The third step follows from the definition of $f$.  In the fourth step we have used Fubini's theorem to integrate out $\vx$, noting that $f(\vx-\vA)$ is the density of a Gaussian with covariance $\sigma^2 \vI$ and mean $\vA$. We have finally used the fact that $\Gamma$ is a $\wass{2}$ optimal coupling between $\vA$ and $\vB$.
\end{proof}
Under the (low probability) event that the conditions in Lemma \ref{lem:wass-clt-new} are not satisfied, we use the Wasserstein CLT of \cite{zhai2018clt} to quantify the approximate subgaussianity of the stochastic gradient noise. 
\begin{lemma}[\cite{zhai2018clt}]
\label{lem:zhai-clt}
Let $\vY = \nicefrac{1}{\sqrt{B}} \sum_{i=1}^{B} \vY_i$ where $\vY_1, \ldots, \vY_B \in \bR^d$ are zero-mean i.i.d random vectors with covariance matrix $\Sigma$ such that $\norm{\vY_i} \leq \beta$ holds almost surely. Let $\vZ \sim \cN(0, \Sigma)$ be sampled independently of $\vY_i$. Then,
\begin{align*}
    \wass{2}^2 (\vY, \vZ) \leq \frac{25 \beta^2 d \left(1 + \log(B)\right)^2}{B}
\end{align*}
\end{lemma}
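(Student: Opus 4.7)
The plan is to prove this Wasserstein CLT by a Lindeberg-style telescoping argument, replacing the $\vY_i$'s one at a time with i.i.d.\ Gaussian copies, while using the remaining partial Gaussian sum as a smoother to convert each one-step replacement into a KL-type computation. This mirrors the structure of the proof of Lemma \ref{lem:wass-clt-new}, though here there is no $\sqrt{\vI - \Sigma_\vY}$ correction factor, so the constants and the per-step bound will differ.

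First I would introduce $\vZ_1, \dots, \vZ_B \iidsim \cN(0, \Sigma)$, independent of the $\vY_i$'s, so that $\vZ \stackrel{d}{=} \tfrac{1}{\sqrt{B}}\sum_i \vZ_i$. By the triangle inequality for $\wass{2}$ together with the contraction $\wass{2}(\vU+\vX,\vV+\vX)\leq \wass{2}(\vU,\vV)$ whenever $\vX$ is independent of both sides, I would telescope
\[
\wass{2}(\vY,\vZ) \;\leq\; \sum_{k=1}^{B}\wass{2}\!\left(\vG_k+\tfrac{\vY_k}{\sqrt{B}},\,\vG_k+\tfrac{\vZ_k}{\sqrt{B}}\right),\qquad \vG_k \;:=\; \tfrac{1}{\sqrt{B}}\sum_{i>k}\vZ_i \;\sim\; \cN\!\left(0,\tfrac{B-k}{B}\Sigma\right),
\]
after absorbing the common $\tfrac{1}{\sqrt{B}}\sum_{i<k}\vY_i$ piece into the coupling. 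Each summand is a $\wass{2}$ distance between two Gaussian convolutions that differ only in a single $\tfrac{1}{\sqrt{B}}$-scaled term.

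Next, for each $k < B$ the Gaussian smoother $\vG_k$ has covariance at least $\tfrac{B-k}{B}\Sigma$, so I would apply the reverse $\mathsf{T}_2$ inequality (Lemma \ref{lem:rev_t_2}, generalized to non-isotropic covariance via Lemma \ref{lem:t2_tensor}) to reduce the term to a KL divergence of the form $\KL{\vG_k + \vY_k/\sqrt{B}}{\vG_k + \vZ_k/\sqrt{B}}$. That KL divergence can be expanded via a chi-squared computation analogous to the proof of Lemma \ref{lem:basic_lem_clt}: the density ratio is explicit because the denominator is Gaussian, and the numerator is a Gaussian mixture with mixing measure the law of $\vY_k$. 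Using $\|\vY_k\|\leq \beta$ to expand the resulting exponential and matching first and second moments of $\vY_k$ to $\vZ_k$ (both are mean zero with covariance $\Sigma$), the leading-order terms cancel and one is left with a per-step bound of order $\tfrac{\beta\sqrt{d}}{(B-k+1)}\cdot\tfrac{1}{\sqrt{B}}$, i.e., $\wass{2}$ per step of order $\tfrac{\beta\sqrt{d}}{B-k+1}$. Summing over $k$ yields $\sum_{k=1}^{B}\tfrac{\beta\sqrt{d}}{k}\lesssim \beta\sqrt{d}(1+\log B)$, and squaring gives the desired bound.

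The hard part will be the final summand ($k=B$), where no Gaussian smoothing is available and the reverse $\mathsf{T}_2$ step fails. I would handle this either by (i)~symmetrizing the telescope so that the Gaussian $\vG_0$ from the other end smooths the last term, or (ii)~directly estimating $\wass{2}\bigl(\tfrac{\vY_B}{\sqrt{B}},\tfrac{\vZ_B}{\sqrt{B}}\bigr)$ by appealing to boundedness $\|\vY_B\|\leq \beta$ and the fact that a centered Gaussian with covariance $\Sigma$ has $\bE\|\vZ_B\|^2 = \Tr(\Sigma)\leq \beta^2$, so the crude bound of order $\beta/\sqrt{B}$ already fits within the budget. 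A secondary obstacle is tracking the universal constant $25$ cleanly; as in the proof of Lemma \ref{lem:wass-clt-new}, I would first diagonalize $\Sigma$ and track the per-coordinate contributions, using the a.s.\ bound $\|\vY_i\|\leq \beta$ to control $\Tr(\Sigma)\leq \beta^2$ and $\|\Sigma\|_{\mathrm{op}}\leq \beta^2$ uniformly.
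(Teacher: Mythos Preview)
The paper does not supply its own proof of this lemma; it is quoted verbatim from \cite{zhai2018clt} and used as a black box (see the sentence preceding the lemma statement in Appendix~\ref{app-sec:sgld-entropic-clt}). Your Lindeberg-style telescoping is precisely Zhai's argument, and indeed the paper's own proof of Lemma~\ref{lem:wass-clt-new} follows the identical skeleton (triangle inequality over the hybrid sequence, then a one-step estimate analogous to Zhai's Lemma~1.6). The direction of the telescope---whether the already-replaced or the not-yet-replaced Gaussians serve as the smoother---is immaterial, since the resulting harmonic sum is the same after reindexing.

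Two small corrections to your sketch. First, to pass from $\wass{2}$ to $\KL$ you need the \emph{forward} Talagrand inequality, i.e.\ Lemma~\ref{lem:t2_tensor}, not the reverse $\mathsf{T}_2$ of Lemma~\ref{lem:rev_t_2} (which bounds $\KL$ by $\wass{2}$, the wrong direction here). Second, in your summation you dropped a factor of $1/\sqrt{B}$: the per-step bound is $\tfrac{\beta\sqrt{d}}{(B-k+1)\sqrt{B}}$, so the sum is $\tfrac{\beta\sqrt{d}(1+\log B)}{\sqrt{B}}$, and only then does squaring recover the $1/B$ in the claimed bound. Your handling of the endpoint $k=B$ via the crude moment estimate $\wass{2}\bigl(\tfrac{\vY_B}{\sqrt{B}},\tfrac{\vZ_B}{\sqrt{B}}\bigr)\lesssim \tfrac{\beta}{\sqrt{B}}$ is fine and matches how Zhai treats the analogous boundary term.
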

\subsection{Proof of Theorem \ref{thm:sgld-stat-indistinguishable}}
\label{proof:thm-sgld-stat-indistinguishable}
\begin{proof}
Denoting $\vN_k = \vN(\hvx_k, \xi_k)$, we note that the iterates of SGLD and LMC (with the same step-size and initialization) can be written as,
\begin{align*}
    \vx_{k+1} &= \vx_k - \eta \nabla F(\vx_k) + \sqrt{2 \eta} \vz_k \\
    \hvx_{k+1} &= \hvx_k - \eta \nabla F(\hvx_k) + \sqrt{2 \eta} \hvz_k 
\end{align*}
where $\vz_k$ and $\hvz_k$ are defined as,
\begin{align*}
    \vz_k &= \epsilon_k \sim \cN(0, \vI), \\
    \hvz_k &= \sqrt{\nicefrac{\eta}{2}} \vN_k + \heps_k, \ \ \heps_k \sim \cN(0, \vI) \\
\end{align*}
Defining the filtration $\cF_k = \sigma(\hvx_0, \dots, \hvx_k, \hvz_0, \dots, \hvz_{k-1})$, we observe that SGLD and LMC admit the same random function representation, i.e., there exists a measurable function $H_K$ such that,
\begin{align*}
    (\hvx_1, \dots, \hvx_{K+1}) &= H_K(\hvx_0, \hvz_0, \hvz_1, \dots, \hvz_K) \\
    (\vx_1, \dots, \vx_{K+1}) &= H_K(\vx_0, \vz_0, \vz_1, \dots, \vz_K)
\end{align*}
Since $\Law{\hvx_0} = \Law{\vx_0}$, we use the data processing inequality and Lemma \ref{lem:kl_chain_rule} to conclude the following.
\begin{align*}
    \KL{\hvx_1, \dots, \hvx_{K+1}}{\vx_1, \dots, \vx_{K+1}} &= \sum_{k=0}^{K} \bE\left[\KL{\Law{\hvz_k \ | \ \cF_k} }{\vz_k}\right]
\end{align*}
We shall now control each term in the above summation. To this end, let $\vX, \vX_1, \vX_2, \vZ, \vZ_1, \vZ_2, \vW \ \iidsim \ \cN(0, \vI)$. It follows that,
\begin{align*}
    \KL{\Law{\hvz_k \ | \ \cF_k} }{\vz_k} &= \KL{\vX_1 + \sqrt{\nicefrac{\eta}{2}}\vN_k}{\vZ_1\biggr|\cF_k} \\
    &= \KL{\sqrt{\nicefrac{1}{2}}\vX_2 + \sqrt{\nicefrac{1}{2}}\vX + \sqrt{\nicefrac{\eta}{2}}\vN_k}{\sqrt{\nicefrac{1}{2}}\vZ_2 + \sqrt{\nicefrac{1}{2}}\vZ\biggr|\cF_k} \\
    &\leq \frac{1}{2}\wass{2}^2\left(\vX + \sqrt{\eta}\vN_k, \vZ\biggr|\cF_k\right) 
\end{align*}
Where the last inequality follows from Lemma \ref{lem:rev_t_2}. Now, let $\vY = \sqrt{\eta} \vN_k$ and define $\Sigma_{\vY} = \bE\left[ \vY \vY^T \biggr|\cF_k \right]$. It follows that,
\begin{align}
\label{eqn:sgld-clt-kl-decomposition}
    \KL{\Law{\hvz_k \ | \ \cF_k} }{\vz_k} &\leq \frac{1}{2}\wass{2}^2\left(\vX + \sqrt{\eta}\vN_k, \vZ\biggr|\cF_k\right) \nonumber \\
    &\leq \underbrace{\wass{2}^2\left(\vX + \vY, \sqrt{\vI + \Sigma_{\vY}}\vZ\biggr|\cF_k\right)}_{\textrm{Wasserstein CLT Term}} + \underbrace{\wass{2}^2\left(\sqrt{\vI + \Sigma_{\vY}}\vZ, \vW\biggr|\cF_k\right)}_{\textrm{Covariance Mismatch Term}}
\end{align}
We first bound the covariance mismatch term via direct computation of the Wasserstein distance between two zero-mean Gaussians. It follows that, 
\begin{align*}
    \wass{2}^2\left(\sqrt{\vI + \Sigma_{\vY}}\vZ, \vW \biggr|\cF_k\right) = \Tr\left(2 \vI + \Sigma_{\vY} - 2 \sqrt{\vI + \Sigma_{\vY}} \right)
\end{align*}
Let $\lambda_1, \dots, \lambda_d \geq 0$ denote the eigenvalues of $\Sigma_{\vY}$. Then,
\begin{align*}
    \wass{2}^2\left(\sqrt{\vI + \Sigma_{\vY}}\vZ, \vW \biggr|\cF_k\right) &= \sum_{i=1}^d \left(2 + \lambda_i - 2 \sqrt{1 + \lambda_i}\right) \\
    &\leq \sum_{i=1}^{d} \nicefrac{\lambda^2_i}{4} \leq \nicefrac{1}{4}\left( \sum_{i=1}^{d} \lambda_i \right)^2 = \nicefrac{\Tr\left( \Sigma_{\vY} \right)^2}{4}
\end{align*}
Define $u_k = M \norm{\hvx_k} + G$. From the definition of $\vY$ and the \ref{eqn:defn-grad-growth} condition, it follows that
\begin{align*}
    \Tr\left( \Sigma_{\vY} \right) &= \nicefrac{\eta}{B} \bE\left[ \norm{\nabla f(\hvx_k, \xi_1) - \nabla F(\vx_k)}^2 \ | \ \hvx_k\right] \leq \frac{\eta u^2_k}{B} 
\end{align*}
Hence, the covariance mismatch term is bounded as follows
\begin{equation}
\label{eqn:sgld-cov-mismatch-bound}
    \wass{2}^2\left(\sqrt{\vI + \Sigma_{\vY}}\vZ, \vW\biggr|\cF_k\right) \leq \frac{\eta^2 u^4_k }{4 B^2} 
\end{equation}
For controlling the Wasserstein CLT term, we observe that $\vY = \nicefrac{1}{\sqrt{B}}\sum_{i=1}^{B} \vY_i$ where $\vY_i = \sqrt{\nicefrac{\eta}{B}} \vN(\hvx_k, \xi_{k, i})$. We note that $\norm{\vY_i} \leq u_k \sqrt{\nicefrac{\eta}{B}} $. Our proof proceeds by considering two cases
\paragraph{Case 1 : $u^2_k > \nicefrac{B}{5 \eta d}$} In this case, the conditions required to apply Lemma \ref{lem:wass-clt-new} are not satisfied, and hence, we control this term using the Wasserstein CLT of \cite{zhai2018clt}. 
\begin{align*}
    \wass{2}^2\left(\vX + \vY, \sqrt{\vI + \Sigma_{\vY}}\vZ\biggr|\cF_k\right) &= \wass{2}^{2}\left(\vX + \vY, \vZ_1 + \sqrt{\Sigma_{\vY}}\vZ_2\biggr|\cF_k\right) \\
    &= \wass{2}^{2}\left(\vY, \sqrt{\Sigma_{\vY}}\vZ_2\biggr|\cF_k\right) \leq \frac{25 \eta d  (1 + \log(B))^2 u^2_k}{B^2} 
\end{align*}
where the last inequality follows from Lemma \ref{lem:zhai-clt}
\paragraph{Case 2 : $u^2_k \leq \nicefrac{B}{5 \eta d}$} We first note that, $\Tr\left(\Sigma_{\vY}\right) \leq \nicefrac{\eta u^2_k}{B} \leq \nicefrac{1}{5d}$. We then define $\vG_i = (\vI + \Sigma_{\vY})^{-\nicefrac{1}{2}} \vY_i$ and $\vG = \nicefrac{1}{\sqrt{B}}\sum_{i=1}^{B} \vG_i$. Moreover, let $\Gamma_{\vG} = \bE\left[ \vG_i \vG^T_i \right]$. We note that, since $(\vI + \Sigma_{\vY})^{-\nicefrac{1}{2}} \preceq \vI$, $\norm{\vG_i}^2 \leq \norm{\vY_i}^2 \leq \nicefrac{1}{5}$. Furthermore,
\begin{align*}
\Gamma_{\vG} = \left(\vI + \Sigma_{\vY}\right)^{-1} \Sigma_{\vY} = \vI - \left(\vI + \Sigma_{\vY}\right)^{-1}
\end{align*}
Moreover, let $\lambda_1, \dots, \lambda_d$ denote the eigenvalues of $\Sigma_{\vY}$ and $\mu_1, \dots, \mu_d$ denote the eigenvalues of $\Gamma_{\vG}$. From the above identity, we note that, $\mu_i = \nicefrac{\lambda_i}{1 + \lambda_i}$ hence $\norm{\Gamma_{\vG}} \leq \norm{\Sigma_{\vY}} \leq \nicefrac{1}{5d}$ and $\Tr\left(\Gamma_{\vG}\right) \leq \Tr\left(\Sigma_{\vY}\right)$. Hence, the conditions required to apply Lemma \ref{lem:wass-clt-new} are satisfied. It follows that, 
\begin{align*}
    \wass{2}^2\left(\vX + \vY, \sqrt{\vI + \Sigma_{\vY}}\vZ\biggr|\cF_k\right) &\leq \norm{\vI + \Sigma_{\vY}} \wass{2}^2\left( \left(\vI + \Sigma_{\vY}\right)^{-\nicefrac{1}{2}} \vX + \vG, \vZ  \biggr|\cF_k\right) \\
    &\leq \frac{6}{5}\wass{2}^2 \left( \sqrt{\vI - \Gamma_{\vG}} \vX + \vG, \vZ \biggr|\cF_k \right) 
\end{align*}
where the first inequality follows from the fact that $\norm{\vI + \Sigma_{\vY}} = 1 + \norm{\Sigma_{\vY}} \leq 1 + \nicefrac{1}{5d} \leq \nicefrac{6}{5}$. Now, using the sharper Wasserstein CLT of Lemma \ref{lem:wass-clt-new}, we conclude that, 
\begin{align*}
    \wass{2}^2\left(\vX + \vY, \sqrt{\vI + \Sigma_{\vY}}\vZ\biggr|\cF_k\right) &\leq \frac{6}{5}\wass{2}^2 \left( \sqrt{\vI - \Gamma_{\vG}} \vX + \vG, \vZ \biggr|\cF_k \right) \\
    &\leq \frac{30 \eta^3 d \left(1 + \log(B) \right)^2 u^6_k}{B^4}
\end{align*}
From Case 1 and Case 2, it follows that,
\begin{align}
\label{eqn:sgld-wass-clt-bound}
\wass{2}^2\left(\vX + \vY, \sqrt{\vI + \Sigma_{\vY}}\vZ \biggr|\cF_k \right) &\leq \frac{25 \eta d  (1 + \log(B))^2 u^2_k}{B^2} \bEvent{u^2_k > \nicefrac{B}{5\eta} d}  \nonumber \\ 
&+ \frac{30 \eta^3 d \left(1 + \log(B) \right)^2 u^6_k}{B^4} \bEvent{u^2_k \leq \nicefrac{B}{5\eta} d} \nonumber \\
&\leq \frac{25 \eta d  (1 + \log(B))^2 u^2_k}{B^2} \bEvent{u^2_k > \nicefrac{B}{5\eta} d} + \frac{30 \eta^3 d \left(1 + \log(B) \right)^2 u^6_k}{B^4} 
\end{align}
Thus, from \eqref{eqn:sgld-clt-kl-decomposition}, \eqref{eqn:sgld-cov-mismatch-bound} and \eqref{eqn:sgld-wass-clt-bound}, it follows that,
\begin{align*}
    \KL{\Law{\hvz_k \ | \ \cF_k} }{\vz_k} &\leq \frac{\eta^2 u^4_k}{4 B^2} + \frac{30 \eta^3 d \left(1 + \log(B) \right)^2 u^6_k}{B^4} \\
    &+ \frac{25 \eta d  (1 + \log(B))^2 u^2_k}{B^2} \bEvent{u^2_k > \nicefrac{B}{5\eta} d}
\end{align*}
Recalling that $u_k = M \norm{\hvx_k} + G$, we use the \ref{eqn:defn-grad-growth} condition for $p = 6$ to conclude that,
\begin{align*}
    \bE\left[u^4_k\right] &\leq 8\left(M^{4} \bE\left[\norm{\hvx}^{4}\right] + G^{4}\right) \leq 8\left(M^{4} d^{2} + G^{4}\right) \\
    \bE\left[u^6_k\right] &\leq 32\left(M^{6} \bE\left[\norm{\hvx}^{6}\right] + G^{6}\right) \leq 32\left(M^{6} d^{3} + G^{6}\right) \\
    \bE\left[u^2_k \bEvent{u^2_k > \nicefrac{B}{5\eta} d} \right] &\leq \frac{75 \eta^2 d^2}{2B^2} \bE\left[u^6_k\right] \leq \frac{1200 \eta^2 d^2}{B^2}\left(M^6 d^3 + G^6\right)
\end{align*}
where the last inequality uses Lemma \ref{lem:tail-markov-control}. From the above inequalities, we obtain
\begin{align*}
    \bE\left[\KL{\Law{\hvz_k \ | \ \cF_k} }{\vz_k}\right] &\leq \frac{2 \eta^2}{B^2} \left(M^{4} d^{2} + G^{4}\right) + \frac{960 \eta^3}{B^4} \left(M^{6} d^{4} + G^{6} d\right)\left(1 + \log B \right)^2 \\
    &+ \frac{2000 \eta^3}{B^4}\left(M^{6} d^{3 + 3} + G^{6} d^3 \right)\left(1 + \log B \right)^2 \\
    &\leq \frac{2 \eta^2}{B^2} \left(M^{4} d^{2} + G^{4}\right) + \frac{3000 \eta^3}{B^4}\left(M^{6} d^{3 + 3} + G^{6} d^3 \right)\left(1 + \log B \right)^2
\end{align*}
Thus, we finally obtain the following statistical indistinguishability guarantee
\begin{align*}
    \KL{\hvx_{1:K}}{\vx_{1:K}} &\leq \frac{2 \eta^2 K}{B^2} \left(M^{4} d^{2} + G^{4}\right) + \frac{3000 \eta^3 K}{B^4}\left(M^{6} d^{6} + G^{6} d^3 \right)\left(1 + \log B \right)^2
\end{align*}
\end{proof}

\subsection{Convergence of Non-Smooth SGLD under \ref{eqn:defn-latala-oleskiewicz}}
In this section, we use $\Renyi{\mu}{\nu}{q}$ to denote the R\'enyi divergence of order $q$ between two measures $\mu$ and $\nu$.
\label{proof:cor-sgld-unstable-lo}
\begin{corollary}[Convergence of SGLD under LO]
\label{cor:sgld-unstable-lo}
Let the \ref{eqn:defn-holder-smoothness}, \ref{eqn:defn-grad-growth}, and \ref{eqn:defn-moment-growth} be satisfied with $p = 6$. Furthermore, assume the target $\pistar$ satisfies \ref{eqn:defn-latala-oleskiewicz} for some $\alpha \in [1, 2]$ and define $\beta = \nicefrac{2}{\alpha} - 1$. Then, for  $\epsilon \leq \nicefrac{1}{\mathsf{poly}(d)}$, the last iterate of SGLD, under appropriate Gaussian initialization, requires $N$ stochastic gradient oracle calls to ensure $\TV(\Law{\hvx_K}, \pistar) \leq \epsilon$, where
\begin{align*}
     N = \Thetatilde\left(\frac{d^{\max \left\{ 1 + \beta\left(1 + \nicefrac{1}{s}\right), \nicefrac{3}{2}\left(1+\beta\right) + \nicefrac{\beta}{2s} \right\}}}{\epsilon^{\nicefrac{2}{s}}}\right)
\end{align*}
\end{corollary}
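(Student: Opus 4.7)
The strategy is to combine Theorem \ref{thm:sgld-stat-indistinguishable} with an existing last-iterate TV convergence guarantee for LMC under $s$-H\"{o}lder smoothness and \ref{eqn:defn-latala-oleskiewicz}. By the triangle inequality,
\[
\TV(\Law{\hvx_K}, \pistar) \;\leq\; \TV(\Law{\hvx_K}, \Law{\vx_K}) \;+\; \TV(\Law{\vx_K}, \pistar),
\]
and I would choose the step size $\eta$, iteration count $K$ and batch size $B$ so that each term is at most $\epsilon/2$, then minimize the oracle cost $N = KB$.

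For the LMC bias term, I would invoke a last-iterate TV guarantee for LMC under $s$-H\"{o}lder smoothness and $\alpha$-LO, derivable from the R\'{e}nyi-divergence analyses of \cite{sinho-lmc-poincare-lsi, sinho-non-log-concave-lmc} by converting to TV via Pinsker. Under appropriate Gaussian initialization, ensuring $\TV(\Law{\vx_K}, \pistar) \leq \epsilon/2$ forces a step size $\eta \leq \Otilde(\epsilon^{2/s}/d^{1+\beta/s})$ and an iteration count $K \geq \Omegatilde(d^{1+\beta(1+1/s)}/\epsilon^{2/s})$. Taken alone (i.e., with $B = O(1)$), this already delivers the first candidate exponent $1 + \beta(1+1/s)$ in the $\max$.

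For the SGLD--LMC gap, I would apply Pinsker's inequality together with Theorem \ref{thm:sgld-stat-indistinguishable} under \ref{eqn:defn-grad-growth} and \ref{eqn:defn-moment-growth} with $p=6$. Absorbing constants and polylogarithmic factors in $B$, this gives
\[
\TV(\Law{\hvx_K}, \Law{\vx_K})^2 \;\lesssim\; \frac{\eta^2 K d^2}{B^2} \;+\; \frac{\eta^3 K d^6}{B^4}.
\]
Requiring each term to be $O(\epsilon^2)$ produces two lower bounds on $B$: (i) $B \gtrsim \eta\sqrt{K}\,d/\epsilon$ from the covariance-mismatch piece, and (ii) $B \gtrsim (\eta^3 K)^{1/4} d^{3/2}/\sqrt{\epsilon}$ from the higher-order Wasserstein-CLT piece. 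Substituting the LMC-imposed values of $\eta$ and $K$ into these two bounds yields the two exponents appearing in the statement: the first produces $N = KB$ with exponent $1 + \beta(1+1/s)$, and the second produces exponent $\tfrac{3}{2}(1+\beta) + \tfrac{\beta}{2s}$. The final complexity is the larger of the two, giving $N = \Thetatilde(d^{\max\{1+\beta(1+1/s),\,3(1+\beta)/2 + \beta/(2s)\}}/\epsilon^{2/s})$ as claimed.

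The main technical obstacle is the arithmetic of the parameter balancing across the two regimes: one must plug the LMC-optimal pair $(\eta,K)$ into each batch lower bound, simplify, and verify that in the regime $\epsilon \leq 1/\mathsf{poly}(d)$ the resulting $B \geq 1$ and the step-size restriction implicit in Theorem \ref{thm:sgld-stat-indistinguishable} (via the CLT tail event in Case 1 of its proof) is consistent with the LMC-side bound on $\eta$. A secondary check is that the \ref{eqn:defn-moment-growth} condition with $p=6$ is preserved along the SGLD trajectory under Gaussian initialization; this follows from a standard moment-propagation argument, since the stochastic gradient has at most linear growth by \ref{eqn:defn-grad-growth} and the added Gaussian noise has $O(\eta d)$ second moment per step.
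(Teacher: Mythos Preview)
Your overall strategy matches the paper's: split via the triangle inequality, invoke the LMC guarantee of \cite{sinho-lmc-poincare-lsi} (their Theorem 7) to fix $\eta = \Thetatilde(\epsilon^{2/s}/d^{1+\beta/s})$, $K = \Thetatilde(d^{1+\beta(1+1/s)}/\epsilon^{2/s})$, $T = \eta K = \Thetatilde(d^\beta)$, then use Theorem \ref{thm:sgld-stat-indistinguishable} and Pinsker to bound $\TV(\Law{\hvx_K}, \Law{\vx_K})$ and choose $B$. However, your attribution of the two exponents in the $\max$ to the two terms of Theorem \ref{thm:sgld-stat-indistinguishable} is incorrect, and carrying out the arithmetic as you describe would not reproduce the statement.

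If you substitute the LMC values of $(\eta,K)$ into your constraint (i), $B \gtrsim \eta\sqrt{K}\,d/\epsilon$, you get $KB$ with $d$-exponent $\tfrac{3}{2}(1+\beta)+\tfrac{\beta}{2s}$ --- the \emph{second} entry of the $\max$, not the first. Constraint (ii) gives $KB$ with $d$-exponent $2+\tfrac{5\beta}{4}+\tfrac{\beta}{2s}$, which does not appear in the corollary at all. The first exponent $1+\beta(1+1/s)$ arises solely from the floor $B\geq 1$, as you correctly noted in the preceding paragraph; the two entries in the $\max$ are exactly the cases $B=1$ versus $B=B_1$ from constraint (i) alone.

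The paper disposes of constraint (ii) by noting that, once $B$ is chosen from the leading term, the ratio of the second term to the first is $\eta d^4/B^2 = \Thetatilde(\epsilon^{2/s}d^{2-\beta})$, which is $\leq 1$ precisely when $\epsilon \lesssim d^{-s(2-\beta)/2}$. This is the actual role of the hypothesis $\epsilon \leq 1/\mathsf{poly}(d)$: it lets you drop the higher-order CLT term entirely, not merely handle a tail event. With only the leading term, the paper sets $B=\Otilde(\max\{1,\,d^{1/2}\Delta_0^{\beta(1-1/s)/2}\})$ (with $\Delta_0=O(d)$), and $N=KB$ yields the stated $\max$. Your plan reaches the same destination after this correction, but as written the parameter-balancing paragraph would not close.
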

\begin{proof}
Our proof relies on the recent result of \cite{sinho-lmc-poincare-lsi} where the authors establish an unstable convergence guarantee for LMC under H\"{o}lder smoothness and Lata\l{}a-Oleskiewicz inequality. In particular, Theorem 7 of \cite{sinho-lmc-poincare-lsi} implies that, under appropriate Gaussian initialization such that $\Delta_0 = \Renyi{\Law{\vx_0}}{\pistar}{3} = O(d)$, it suffices to set $\eta$ and $K$ as follows to achieve $\TV\left(\Law{\vx_{K+1}}, \pistar\right) \leq \epsilon$
\begin{align*}
    \eta &= \Thetatilde\left( \frac{\epsilon^{\nicefrac{2}{s}}}{d \Delta^{\nicefrac{\beta}{s}}_0} \right) \\
    K &= \Thetatilde\left(\frac{d \Delta^{\beta(1 + \nicefrac{1}{s})}_0}{\epsilon^{\nicefrac{2}{s}}}\right) \\
    T &= \eta K = \Thetatilde\left(\Delta^{\beta}_0\right) 
\end{align*}
where $\beta = \nicefrac{2}{\alpha} - 1$. Under this choice of $\eta$ and $T$, Theorem \ref{thm:sgld-stat-indistinguishable} suggests that $\KL{\hvx_{1:K}}{\vx_{1:K}} \leq O(\nicefrac{\eta T d^2}{B^2})$. Thus, to ensure $\TV(\Law{\hvx_{K+1}}, \Law{\vx_{K+1}}) \leq \nicefrac{\epsilon}{2}$, it suffices to set $B \geq \Otilde\left(\max \left\{ 1, d^{\nicefrac{1}{2}} \Delta^{\nicefrac{\beta}{2}\left(1-\nicefrac{1}{s}\right)}_0 \right\}\right)$. Hence, $\TV(\Law{\hvx_{K+1}}, \pistar) \leq \epsilon$ by subadditivity of Total Variation. The required stochastic gradient complexity is,
\begin{align*}
    N &= KB = \Thetatilde\left(\max\left\{\frac{d \Delta^{\beta\left(1 + \nicefrac{1}{s}\right)}_0}{\epsilon^{\nicefrac{2}{s}}}, \frac{d^{\nicefrac{3}{2}} \Delta^{\nicefrac{3\beta}{2} + \nicefrac{\beta}{2s}}_0}{\epsilon^{\nicefrac{2}{s}}}\right\}\right) \\
    &= \Thetatilde\left(\frac{d^{\max \left\{ 1 + \beta\left(1 + \nicefrac{1}{s}\right), \nicefrac{3}{2}\left(1+\beta\right) + \nicefrac{\beta}{2s} \right\}}}{\epsilon^{\nicefrac{2}{s}}}\right)
\end{align*}
where we use the fact that $\Delta_0 = O(d)$.
\end{proof}

\paragraph{Rates Under LSI and Smoothness} We recall that the LO inequality of order $\alpha = 2$ is equivalent to the Logarithmic Sobolev Inequality. Moreover, as a consequence of the fact $\alpha - 1 \leq s \leq 1$, we note that LSI and Holder continuity together imply smoothness. To this end, we observe that Corollary \ref{cor:sgld-unstable-lo} implies an oracle complexity of $\Thetatilde(\nicefrac{d^{  \nicefrac{3}{2}}}{\epsilon^2})$ to ensure $\epsilon$-convergence in TV for the last iterate. We note that this matches the guarantee implied by Theorem \ref{thm:sgld-lsi-stable} via Pinsker's inequality. However, we emphasize that Theorem \ref{thm:sgld-lsi-stable} is a stronger result as it establishes convergence in KL divergence, which automatically implies convergence in TV (due to Pinsker's inequality) and $\wass{2}$ (due to the Otto-Villani theorem). Furthermore, unlike Corollary \ref{cor:sgld-unstable-lo}, Theorem \ref{thm:sgld-lsi-stable} is a stable convergence guarantee.

\paragraph{Last-Iterate Convergence Under Poincare Inequality} From Assumption \ref{as:target-lo}, we recall that the LO inequality of order $\alpha = 1$ is equivalent to the Poincare Inequality. In this setting, we observe that that Corollary \ref{cor:sgld-unstable-lo} implies an oracle complexity of $\Thetatilde\left(\frac{d^{\max\left\{2 + \nicefrac{1}{s}, 3 + \nicefrac{1}{2s}\right\}}}{\epsilon^{\nicefrac{2}{s}}}\right)$ to ensure $\epsilon$-convergence in TV for the last iterate. To the best of our knowledge, this is the first known result for the last-iterate convergence of SGLD under the Poincare Inequality, as well as the first analysis of SGLD under Poincare Inequality that does not assume smoothness. For the smooth case, i.e., when $s=1$, we note that the implied oracle complexity is $\Thetatilde\left(\frac{d^{ 3.5}}{\epsilon^{2}}\right)$ for last-iterate convergence. When compared to the $\Otilde\left(\frac{d^{2.5}}{\epsilon^{4}}\right)$ guarantee of Theorem \ref{thm:sgld-fd-and-pi} for \emph{average-iterate convergence in TV}, we note that the rates implied by Corollary \ref{cor:sgld-unstable-lo} exhibit an improved $\epsilon$-dependence at the cost of a worse $d$ dependence. However, we highlight that, unlike Corollary \ref{cor:sgld-unstable-lo}, Theorem \ref{thm:sgld-fd-and-pi} is a stable convergence guarantee


\section{Analysis of Covariance Corrected SGLD}
\subsection{Technical Lemmas}
\begin{lemma}\label{lem:kl_triangle}
Let $\vZ, \vZ_2 \ \iidsim \ \mathcal{N}(0,\vI)$ and let $\vN$ be a zero-mean random vector independent of $\vZ, \vZ_2$ such that $\Sigma = \mathsf{Cov}\left[\vN\right] \prec \nicefrac{\vI}{2}$. Let $\hSigma$ be an arbitrary PSD matrix with $\hSigma \prec \nicefrac{\vI}{2}$ and let $\bar{\vZ} = (\vI - \nicefrac{\hSigma}{2})\vZ + \vN$. Then,
\begin{align*}
\KL{\bar{\vZ}}{\vZ} &\leq 2\wass{2}^2\left(\frac{\vZ_2}{\sqrt{2}} ,\sqrt{\tfrac{1}{2}\vI-\Sigma}\vZ_2 + \vN\right) \\
&\quad+8\lambda_{\max}(\tfrac{\vI}{2}-\Sigma)\KL{\sqrt{\tfrac{1}{2}\vI-\hat{\Sigma}}\vZ_2 }{\sqrt{\tfrac{1}{2}\vI-\Sigma}\vZ_2} + 2 \Tr\left(\hSigma\right)^3
\end{align*}
\end{lemma}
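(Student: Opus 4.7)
The plan is to follow the skeleton of the commented-out predecessor of Lemma~\ref{lem:kl_triangle} (which handled $\bar{\vZ} = \sqrt{\vI - \hSigma}\vZ + \vN$) and pay one extra triangle-inequality cost to absorb the linearization $\sqrt{\vI - \hSigma} \leftrightarrow \vI - \hSigma/2$. First I would split $\vZ \disteq \vZ_1/\sqrt{2} + \vZ_2/\sqrt{2}$ with $\vZ_1, \vZ_2 \iidsim \cN(0,\vI)$ independent of $\vN$. Because $\hSigma \prec \vI/2$, the excess covariance $(\vI - \hSigma/2)^2 - \vI/2 = \vI/2 - \hSigma + \hSigma^2/4$ is positive definite, so $\bar{\vZ} \disteq \vZ_1/\sqrt{2} + \sqrt{\vI/2 - \hSigma + \hSigma^2/4}\,\vZ_2 + \vN$. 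Applying Lemma~\ref{lem:rev_t_2} with $\sigma^2 = 1/2$ around the common $\vZ_1/\sqrt{2}$ yields
\begin{align*}
\KL{\bar{\vZ}}{\vZ} \leq \wass{2}^2\left(\sqrt{\tfrac{1}{2}\vI - \hSigma + \tfrac{\hSigma^2}{4}}\,\vZ_2 + \vN,\; \vZ_2/\sqrt{2}\right).
\end{align*}

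Next I would apply the triangle inequality for $\wass{2}$ together with $(a+b)^2 \leq 2a^2 + 2b^2$ twice, inserting the intermediates $\sqrt{\vI/2 - \Sigma}\,\vZ_2 + \vN$ and then $\sqrt{\vI/2 - \hSigma}\,\vZ_2$. This splits the bound into three pieces matching the three terms in the statement: (i) a CLT-type piece $2\wass{2}^2(\vZ_2/\sqrt{2},\,\sqrt{\vI/2-\Sigma}\vZ_2+\vN)$, already in the desired form; (ii) a covariance-mismatch piece $4\wass{2}^2(\sqrt{\vI/2 - \hSigma}\vZ_2,\,\sqrt{\vI/2 - \Sigma}\vZ_2)$, converted to KL via Lemma~\ref{lem:t2_tensor} at cost $2\lambda_{\max}(\vI/2 - \Sigma)$; and (iii) a linearization piece $4\wass{2}^2(\sqrt{\vI/2 - \hSigma + \hSigma^2/4}\,\vZ_2,\,\sqrt{\vI/2 - \hSigma}\,\vZ_2)$ that captures the cost of the Taylor expansion of the matrix square root.

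Piece (iii) is treated by coupling the two Gaussians through the same $\vZ_2$. Since both matrices are polynomials in $\hSigma$ they are simultaneously diagonalizable, so $\wass{2}^2(\cdot,\cdot)$ collapses to $\Tr((\sqrt{\vI/2 - \hSigma + \hSigma^2/4} - \sqrt{\vI/2 - \hSigma})^2)$, a scalar sum over eigenvalues $\lambda_i$ of $\hSigma$. Using the identity $\sqrt{a+b} - \sqrt{a} = b/(\sqrt{a+b}+\sqrt{a})$ with $a = 1/2 - \lambda_i$ and $b = \lambda_i^2/4$, together with the spectral bound $\hSigma \prec \vI/2$ that keeps $\sqrt{a}$ bounded away from zero, produces a higher-order estimate in $\hSigma$ from which the $2\Tr(\hSigma)^3$ term follows by elementary scalar manipulations.

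The main obstacle is piece (iii): the naive inequality $\sqrt{a+b} - \sqrt{a} \leq \sqrt{b}$ yields only an $\Tr(\hSigma^2)$-scale bound, which would be too coarse to preserve the gains promised for covariance correction in Theorem~\ref{thm:cc-sgld-stat-indistinguishable}. Deploying the sharper second-order expansion — licensed precisely by the strict spectral separation $\hSigma \prec \vI/2$ — is what converts the linearization cost from second- to higher-order in $\hSigma$, ensuring that the $2\Tr(\hSigma)^3$ term is genuinely negligible in the regime where the covariance-corrected update is meant to outperform vanilla SGLD.
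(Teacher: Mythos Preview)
Your proposal is correct and matches the paper's proof essentially step for step: the same Gaussian splitting $\vZ \disteq \vZ_1/\sqrt{2}+\vZ_2/\sqrt{2}$, the same application of Lemma~\ref{lem:rev_t_2}, the same two-stage triangle inequality through the intermediates $\sqrt{\vI/2-\Sigma}\,\vZ_2+\vN$ and $\sqrt{\vI/2-\hSigma}\,\vZ_2$, and the same conversion of piece (ii) via Lemma~\ref{lem:t2_tensor}. For piece (iii) the paper likewise reduces to the commuting eigenvalue computation and shows $\mu_i \leq \lambda_i^3/2$ by a Taylor expansion, which is equivalent to your rationalization $\sqrt{a+b}-\sqrt{a}=b/(\sqrt{a+b}+\sqrt{a})$; summing and using $\sum_i \lambda_i^3 \leq (\sum_i \lambda_i)^3$ gives the $2\Tr(\hSigma)^3$ term after the factor~$4$.
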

\begin{proof}
Since $\hSigma \prec \nicefrac{\vI}{2}$, $(\vI - \nicefrac{\hSigma}{2})^2 \succ \nicefrac{\vI}{2}$, and thus $\vB = \sqrt{(\vI - \nicefrac{\hSigma}{2})^2 - \nicefrac{\vI}{2}}$ is a well defined symmetric positive definite matrix. Furthermore, if $\vZ_1,\vZ_2$ are i.i.d. isotropic Gaussians over $\mathbb{R}^d$, we can write: $\vZ \disteq \frac{1}{\sqrt{2}}\vZ_1 + \frac{1}{\sqrt{2}}\vZ_2 $ and $\bar{\vZ} \disteq \frac{1}{\sqrt{2}}\vZ_1+ \vB\vZ_2 + \vN$, where $\vN$ is independent of both $\vZ_1$ and $\vZ_2$. Applying Lemma~\ref{lem:rev_t_2} with $\sigma = \frac{1}{\sqrt{2}}$ and $\vZ$ replaced with $\vZ_1$, we have:
\begin{align}
\label{eqn:kl-triangle-eqn1}
    &\KL{\bar{\vZ}}{\vZ} \leq \wass{2}^2\left(\frac{\vZ_2}{\sqrt{2}} ,\vB\vZ_2 + \vN\right)  \nonumber \\
    &\leq 2\wass{2}^2\left(\frac{\vZ_2}{\sqrt{2}} ,\sqrt{\tfrac{1}{2}\vI-\Sigma}\vZ_2 + \vN\right)+2\wass{2}^2\left(\sqrt{\tfrac{1}{2}\vI-\Sigma}\vZ_2 + \vN ,\vB\vZ_2 + \vN\right)  \nonumber\\
    &\leq 2\wass{2}^2\left(\frac{\vZ_2}{\sqrt{2}} ,\sqrt{\tfrac{1}{2}\vI-\Sigma}\vZ_2 + \vN\right)+2\wass{2}^2\left(\sqrt{\tfrac{1}{2}\vI-\Sigma}\vZ_2 ,\vB\vZ_2\right)  \nonumber\\
    &\leq 2\wass{2}^2\left(\frac{\vZ_2}{\sqrt{2}} ,\sqrt{\tfrac{1}{2}\vI-\Sigma}\vZ_2 + \vN\right)+4\wass{2}^{2}\left(\vB \vZ_2, \sqrt{\tfrac{1}{2}\vI-\hat{\Sigma}}\vZ_2\right)+4\wass{2}^2\left(\sqrt{\tfrac{1}{2}\vI-\Sigma}\vZ_2 ,\sqrt{\tfrac{1}{2}\vI-\hat{\Sigma}}\vZ_2 \right)\nonumber \\
    &\leq 2\wass{2}^2\left(\frac{\vZ_2}{\sqrt{2}} ,\sqrt{\tfrac{1}{2}\vI-\Sigma}\vZ_2 + \vN\right) +8\lambda_{\max}(\tfrac{\vI}{2}-\Sigma)\KL{\sqrt{\tfrac{1}{2}\vI-\hat{\Sigma}}\vZ_2 }{\sqrt{\tfrac{1}{2}\vI-\Sigma}\vZ_2} \nonumber \\
    &+ 4\wass{2}^{2}\left(\vB \vZ_2, \sqrt{\tfrac{1}{2}\vI-\hat{\Sigma}}\vZ_2\right),
\end{align}
where the second step uses the triangle inequality for $\wass{2}$ along with the inequality $(a+b)^2 \leq 2a^2 + 2b^2$, the third step follows from the definition of $\wass{2}$ by restricting to the $\wass{2}$-optimal coupling between $\sqrt{\tfrac{1}{2}\vI-\Sigma}\vZ_2$ and $\vB \vZ_2$, and the last step follows from an application of Lemma~\ref{lem:t2_tensor}. The remainder of our proof controls the term $\wass{2}^{2}\left(\vB \vZ_2, \sqrt{\tfrac{1}{2}\vI-\hat{\Sigma}}\vZ_2\right)$. To this end, we define the matrices $\vA_1, \vA_2$ and $\vA_3$ as follows,
\begin{align*}
    \vA_1 &= \tfrac{1}{2}\vI - \hSigma \\
    \vA_2 &= \vB^2 = \left(\vI - \tfrac{1}{2}\hSigma\right)^2 - \tfrac{1}{2}\vI \\
    \vA_3 &= \vA_1 + \vA_2 - 2 \left(\vA^{\nicefrac{1}{2}}_{1} \vA_2 \vA^{\nicefrac{1}{2}}_{1}\right)^{\nicefrac{1}{2}}
\end{align*}
Since $0 \preceq \hSigma \prec \nicefrac{\vI}{2}$, $\vA_1, \vA_2, \vA_3, \hSigma$ are simultaneously diagonalizable PSD matrices. To this end, let $\lambda_1, \dots, \lambda_d$ denote the eigenvalues of $\hSigma$. Clearly, $0 \leq \lambda_i < \nicefrac{1}{2}$ for all $i \in [d]$. Moreover, the corresponding eigenvalue $\mu_1, \dots, \mu_d$ of $\vA_3$ is given by 
\begin{align*}
    \mu_i = \left(1 - \nicefrac{\lambda_i}{2}\right)^2 - \lambda_i - 2 \sqrt{\left(\nicefrac{1}{2} - \lambda_i\right)\left[\left(1-\nicefrac{\lambda_i}{2}\right)^2 - \nicefrac{1}{2}\right]} \leq \nicefrac{\lambda^3_i}{2}
\end{align*}
where the last inequality follows from a Taylor expansion. By direct computation of the Wasserstein distance between zero-mean Gaussians, we have,
\begin{align*}
    \wass{2}^{2}\left(\vB \vZ_2, \sqrt{\tfrac{1}{2}\vI-\hat{\Sigma}}\vZ_2\right) &= \Tr(\vA_3) = \sum_{i=1}^{d} \mu_i \leq \sum_{i=1}^{d} \nicefrac{\lambda^3_i}{2} \leq \tfrac{1}{2}\left(\sum_{i=1}^{d} \lambda_i\right)^3 = \tfrac{1}{2}\Tr(\hSigma)^3
\end{align*}
Substituting the above inequality into \eqref{eqn:kl-triangle-eqn1}, we obtain,
\begin{align*}
    \KL{\bar{\vZ}}{\vZ} &\leq 2\wass{2}^2\left(\frac{\vZ_2}{\sqrt{2}} ,\sqrt{\tfrac{1}{2}\vI-\Sigma}\vZ_2 + \vN\right)\\
    &+8\lambda_{\max}(\tfrac{\vI}{2}-\Sigma)\KL{\sqrt{\tfrac{1}{2}\vI-\hat{\Sigma}}\vZ_2 }{\sqrt{\tfrac{1}{2}\vI-\Sigma}\vZ_2} + 2 \Tr(\hSigma)^3
\end{align*}
\end{proof}
\begin{lemma}
\label{lem:kl_mismatch}
For any PSD matrices $\Sigma, \hSigma \prec \nicefrac{\vI}{2}$
\begin{align}
    \KL{\sqrt{\tfrac{1}{2}\vI-\hat{\Sigma}}\vZ_2 }{\sqrt{\tfrac{1}{2}\vI-\Sigma}\vZ_2} &= \tr\left((\vI-2\Sigma)^{-1}\left(\Sigma - \hat{\Sigma}\right)\right)\nonumber \\&\quad+ \sum_{k=1}^{\infty}2^{k-1}\frac{\left[\tr(\hat{\Sigma}^k) - \tr(\Sigma^k)\right]}{k} 
\end{align}
\end{lemma}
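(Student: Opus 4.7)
The plan is to reduce the claim to the closed-form expression for the KL divergence between two centered Gaussians on $\mathbb{R}^d$ and then simplify each piece so that the trace term becomes $\tr\left((\vI - 2\Sigma)^{-1}(\Sigma - \hat{\Sigma})\right)$ and the log-determinant term becomes the infinite series. Concretely, I would start by noting that $\sqrt{\tfrac{1}{2}\vI - \hat{\Sigma}}\,\vZ_2 \sim \mathcal{N}(0, \tfrac{1}{2}\vI - \hat{\Sigma})$ and $\sqrt{\tfrac{1}{2}\vI - \Sigma}\,\vZ_2 \sim \mathcal{N}(0, \tfrac{1}{2}\vI - \Sigma)$, both of which are non-degenerate since $\hat{\Sigma}, \Sigma \prec \tfrac{1}{2}\vI$. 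The classical formula then gives
\begin{align*}
\KL{\sqrt{\tfrac{1}{2}\vI-\hat{\Sigma}}\vZ_2}{\sqrt{\tfrac{1}{2}\vI-\Sigma}\vZ_2} = \tfrac{1}{2}\Bigl[&\tr\bigl((\tfrac{1}{2}\vI - \Sigma)^{-1}(\tfrac{1}{2}\vI - \hat{\Sigma})\bigr) - d \\
&+ \log\det(\tfrac{1}{2}\vI - \Sigma) - \log\det(\tfrac{1}{2}\vI - \hat{\Sigma})\Bigr].
\end{align*}

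For the trace piece, I would use $(\tfrac{1}{2}\vI - \Sigma)^{-1} = 2(\vI - 2\Sigma)^{-1}$ and write $\tfrac{1}{2}\vI - \hat{\Sigma} = \tfrac{1}{2}(\vI - 2\Sigma) + (\Sigma - \hat{\Sigma})$, which yields
\begin{align*}
\tfrac{1}{2}\bigl[\tr((\tfrac{1}{2}\vI - \Sigma)^{-1}(\tfrac{1}{2}\vI - \hat{\Sigma})) - d\bigr] = \tr\bigl((\vI - 2\Sigma)^{-1}(\Sigma - \hat{\Sigma})\bigr),
\end{align*}
exactly the first term in the target identity.

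For the log-determinant piece, the factor $(\tfrac{1}{2})^d$ cancels and I am left with $\tfrac{1}{2}[\log\det(\vI - 2\Sigma) - \log\det(\vI - 2\hat{\Sigma})]$. Since $\hat{\Sigma}, \Sigma \prec \tfrac{1}{2}\vI$, the operator norms of $2\Sigma$ and $2\hat{\Sigma}$ are strictly less than one, so the matrix Mercator series $-\log(\vI - A) = \sum_{k \geq 1} A^k/k$ converges in operator norm. Taking traces and using linearity gives $-\log\det(\vI - 2\Sigma) = \sum_{k \geq 1} 2^k \tr(\Sigma^k)/k$ and similarly for $\hat{\Sigma}$; subtracting and dividing by two produces $\sum_{k \geq 1} 2^{k-1}[\tr(\hat{\Sigma}^k) - \tr(\Sigma^k)]/k$, matching the second term in the statement.

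The proof is essentially mechanical, so there is no real obstacle; the only technical point worth being careful about is the convergence and termwise trace of the matrix logarithm series, which is immediate from the spectral bound $\hat{\Sigma}, \Sigma \prec \tfrac{1}{2}\vI$. Assembling the trace and log-det pieces yields the claimed identity.
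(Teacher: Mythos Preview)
Your proposal is correct and follows essentially the same route as the paper: start from the closed-form KL between centered Gaussians, rewrite the trace term via $(\tfrac{1}{2}\vI-\Sigma)^{-1}=2(\vI-2\Sigma)^{-1}$ to obtain $\tr((\vI-2\Sigma)^{-1}(\Sigma-\hat\Sigma))$, and expand the log-determinant using the Mercator series (the paper phrases this last step via eigenvalues and the scalar Taylor expansion of $\log(1+x)$, which is the same computation).
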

\begin{proof}
From standard formula for KL divergence between two multi-variate Gaussians, it follows that:
\begin{equation*}
  \KL{\sqrt{\tfrac{1}{2}\vI-\hat{\Sigma}}\vZ_2 }{\sqrt{\tfrac{1}{2}\vI-\Sigma}\vZ_2} = \frac{1}{2}\left[\log\tfrac{\det(\tfrac{\vI}{2}-\Sigma)}{\det(\tfrac{\vI}{2}-\hat{\Sigma})}  - d + \tr\left((\tfrac{\vI}{2}-\Sigma)^{-1}(\tfrac{\vI}{2}-\hat{\Sigma})\right)\right]
\end{equation*}
We first note that by basic algebraic manipulation, 

$$ \tr\left((\tfrac{\vI}{2}-\Sigma)^{-1}(\tfrac{\vI}{2}-\hat{\Sigma})\right) - d = 2\tr\left((\vI-2\Sigma)^{-1}\left(\Sigma - \hat{\Sigma}\right)\right) $$
Now, consider $\log(\det(A))$ for a $d\times d$ PSD matrix $ 0\prec A \prec \vI$. Taking $\lambda_1,\dots,\lambda_d$ to be the eigenvalues of $A$, we have \begin{align}
    \log(\det(A)) = \sum_{i=1}^{d}\log(\lambda_i) &= \sum_{i=1}^{d} \log(1+\lambda_i-1) = \sum_{i=1}^{d}\sum_{k=1}^{\infty}(-1)^{k-1}\frac{(\lambda_i-1)^k}{k} \nonumber \\
    &= -\sum_{k=1}^{\infty}\frac{\tr\left[(\vI-A)^k\right]}{k} \label{eq:log_det}
\end{align} 
Therefore, $\log\tfrac{\det\left(\tfrac{\vI}{2}-\Sigma\right)}{\det\left(\tfrac{\vI}{2}-\hat{\Sigma}\right)} = \log\tfrac{\det(\vI-2\Sigma)}{\det(\vI-2\hat{\Sigma})} = \sum_{k=1}^{\infty}\frac{2^{k}}{k}\left[\tr(\hat{\Sigma}^k) - \tr(\Sigma^k)\right]$, which proves the result. 
\end{proof}
\begin{lemma}[Covariance Estimation Guarantees]
\label{lem:cov-estimation-guarantee}
Consider any $B^{\prime} \geq 1$. Let $\vy_1, \dots, \vy_{B^{\prime}}$ and $\tilvy_1, \dots, \tilvy_{B^{\prime}}$ be i.i.d samples from some probability measure $P$ supported on $\bR^d$, with covariance matrix $\Tilde{\Sigma}$. Furthermore, assume $\norm{\vy - \bE\left[\vy\right]} \leq M$ holds almost surely for any $\vy \sim P$. Define $\Sigma$ as $\nicefrac{\Tilde{\Sigma}}{B}$ and $\hSigma$ as follows,
\begin{align*}
    \hSigma = \frac{1}{2 B B^{\prime}} \sum_{j=1}^{B^{\prime}} \left(\vy_j - \tilvy_j\right)\left(\vy_j - \tilvy_j\right)^T
\end{align*}
Then, the following holds,
\begin{enumerate}
    \item $\bE\left[\hSigma\right] = \Sigma$
    \item $\bE\left[\Tr\left(\hSigma^2\right)\right] - \Tr\left(\Sigma^2\right) \leq \frac{4M^4}{B^{2} B^{\prime}}$
    \item $\Tr\left(\hSigma^{k}\right) \leq \left(\frac{2M^2}{B}\right)^k$
\end{enumerate}
\end{lemma}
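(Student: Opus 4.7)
The three claims are independent, so the plan is to verify them one by one, in each case using only the boundedness assumption $\|\vy - \bE[\vy]\| \leq M$ and i.i.d. structure. First I will rewrite $\hSigma$ in a more convenient form: set $\vu_j = (\vy_j - \tilvy_j)/\sqrt{2B}$ so that $\hSigma = \frac{1}{B'}\sum_{j=1}^{B'}\vu_j \vu_j^T$, where the $\vu_j$ are i.i.d. and (by the triangle inequality applied to $\vy_j - \bE\vy$ and $\tilvy_j - \bE\vy$) satisfy $\|\vu_j\|^2 \leq 2M^2/B$ almost surely. Centering cancels because $\vy_j$ and $\tilvy_j$ are i.i.d. copies, so $\bE[\vu_j \vu_j^T] = \bE[(\vy_j-\tilvy_j)(\vy_j - \tilvy_j)^T]/(2B) = \Tilde{\Sigma}/B = \Sigma$. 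Taking expectations termwise yields claim (1).

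For claim (2), I will compute $\bE[\Tr(\hSigma^2)]$ by expanding the double sum:
\begin{equation*}
\bE[\Tr(\hSigma^2)] = \frac{1}{B'^{2}} \sum_{j,k} \bE\bigl[(\vu_j^T \vu_k)^2\bigr].
\end{equation*}
For the $j\neq k$ terms, independence gives $\bE[(\vu_j^T\vu_k)^2] = \Tr(\bE[\vu_j\vu_j^T]\bE[\vu_k\vu_k^T]) = \Tr(\Sigma^2)$; for the $j=k$ terms, $\bE[(\vu_j^T\vu_j)^2] = \bE[\|\vu_j\|^4] \leq (2M^2/B)^2 = 4M^4/B^2$ using the almost-sure bound. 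Collecting, $\bE[\Tr(\hSigma^2)] = \frac{B'-1}{B'}\Tr(\Sigma^2) + \frac{1}{B'}\bE[\|\vu_j\|^4]$, and dropping the non-positive $-\Tr(\Sigma^2)/B'$ gives the claimed $4M^4/(B^2 B')$ bound.

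Claim (3) is a deterministic, pathwise bound. The key observation is that for any PSD matrix $A$ with eigenvalues $\lambda_1,\dots,\lambda_d \geq 0$ and any integer $k\geq 1$, $\Tr(A^k) = \sum_i \lambda_i^k \leq (\sum_i \lambda_i)^k = (\Tr A)^k$, since all eigenvalues are non-negative and the $k$-th power of the sum dominates the sum of $k$-th powers. Applying this to $\hSigma$ and noting $\Tr(\hSigma) = \frac{1}{B'}\sum_{j=1}^{B'}\|\vu_j\|^2 \leq 2M^2/B$ immediately gives $\Tr(\hSigma^k) \leq (2M^2/B)^k$.

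I do not anticipate a substantive obstacle here; all three parts reduce to elementary manipulations once $\hSigma$ is rewritten as $\frac{1}{B'}\sum \vu_j\vu_j^T$ with the bound $\|\vu_j\|^2 \leq 2M^2/B$. The only point requiring a bit of care is part (2), where one must correctly separate diagonal from off-diagonal contributions in the double sum and resist the temptation to bound $\bE[\|\vu_j\|^4]$ via the (suboptimal) route through $\Tr(\Sigma^2)$ rather than the a.s.\ bound.
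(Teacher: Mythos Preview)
Your proposal is correct and follows essentially the same approach as the paper: verify unbiasedness directly, bound $\Tr(\hSigma^k)$ via $\Tr(\hSigma^k)\le (\Tr\hSigma)^k$ together with the almost-sure bound on $\Tr\hSigma$, and for part (2) expand $\hSigma^2$ as a double sum, use independence on the off-diagonal terms to get $\Tr(\Sigma^2)$, and bound the diagonal terms by $4M^4/B^2$ using the almost-sure norm bound. Your substitution $\vu_j = (\vy_j - \tilvy_j)/\sqrt{2B}$ is a minor notational convenience but the argument is otherwise identical.
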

\begin{proof}
The first property can be verified directly by taking expectations and using the fact that $\vy_{1:B^{\prime}}, \tilvy_{1:B^{\prime}}$ are independent. To prove the third property, we note that $\norm{\vy - \bE\left[\vy\right]} \leq M$ implies that, $\Tr\left(\Sigma\right), \Tr\left(\hSigma\right) \leq \nicefrac{2M^2}{B}$. Hence, $\Tr\left(\hSigma^{k}\right) \leq \Tr\left(\hSigma\right)^k \leq \left(\frac{2M^2}{B}\right)^k$. Finally, to prove the second property, define $\vA_j = \left(\vy_j - \tilvy_j\right)\left(\vy_j - \tilvy_j\right)^T$. Then,
\begin{align*}
    \hSigma^2 = \frac{1}{4B^2 (B^{\prime})^2} \sum_{j=1}^{B^{\prime}} \vA^2_j + \frac{1}{4B^2 (B^{\prime})^2} \sum_{k, l \in [B^{\prime}], k \neq l}  \vA_k \vA_l
\end{align*}
Since  $\vy_{1:B^{\prime}}, \tilvy_{1:B^{\prime}}$ are sampled i.i.d, it follows that, $\bE\left[\vA_k \vA_l\right] = 4 \Sigma^2_i$ whenever $k \neq l$. Hence, we can conclude that,
\begin{align*}
    \bE\left[\hSigma^2\right] - \Sigma^2 = \frac{\bE\left[\vA^2_1\right]}{B^2 B^{\prime}} - \frac{\Sigma^2}{B^2 B^{\prime}}
\end{align*}
The final result follows by taking the trace on both sides and observing that $\Tr\left(\vA^2_1\right) \leq 16 M^4$
\end{proof}
\subsection{Proof of Theorem \ref{thm:cc-sgld-stat-indistinguishable}}
\label{proof:thm-cc-sgld-stat-indistinguishable}
\begin{proof}
Define $u_k = M \norm{\hvx_k} + G$. Denoting $\vN_k = \vN(\hvx_k, \xi_k)$ and $\Sigma_k = \bE \left[ \vN_k \vN^T_k \ | \ \hvx_k \right]$, we note that the iterates of CC-SGLD and LMC (with the same step-size and initialization) can be written as,
\begin{align*}
    \vx_{k+1} &= \vx_k - \eta \nabla F(\vx_k) + \sqrt{2 \eta} \vz_k \\
    \hvx_{k+1} &= \hvx_k - \eta \nabla F(\hvx_k) + \sqrt{2 \eta} \hvz_k 
\end{align*}
where $\vz_k$ and $\hvz_k$ are defined as,
\begin{align*}
    \vz_k &= \epsilon_k \sim \cN(0, \vI), \\
    \hvz_k &= \sqrt{\nicefrac{\eta}{2}} \vN_k + \heps_k \mathbb{I}_{\{u^2_k > \nicefrac{B}{5 \eta d}\}} + \left(\vI - \tfrac{\eta}{4}\hSigma_k\right) \heps_k \mathbb{I}_{\{u^2_k \leq \nicefrac{B}{5 \eta d}\}} , \ \ \heps_k \sim \cN(0, \vI) 
\end{align*}
Defining the filtration $\cF_k = \sigma(\hvx_0, \dots, \hvx_k, \hvz_0, \dots, \hvz_{k-1})$, we observe that CC-SGLD and LMC admit the same random function representation, i.e., there exists a measurable function $H_K$ such that,
\begin{align*}
    (\hvx_1, \dots, \hvx_{K+1}) &= H_K(\hvx_0, \hvz_0, \hvz_1, \dots, \hvz_K) \\
    (\vx_1, \dots, \vx_{K+1}) &= H_K(\vx_0, \vz_0, \vz_1, \dots, \vz_K)
\end{align*}
Since $\Law{\vx_0} = \Law{\hvx_0}$, we use the data processing inequality and Lemma \ref{lem:kl_chain_rule} to obtain,
\begin{align*}
    \KL{\hvx_1, \dots, \hvx_{K+1}}{\vx_1, \dots, \vx_{K+1}} &= \sum_{k=0}^{K} \bE\left[\KL{\Law{\hvz_k \ | \ \cF_k} }{\vz_k}\right]
\end{align*}
Define the filtration $\cG_k = \cF_k \vee \sigma(\xi^{(1)}_{k, j}, \xi^{(2)}_{k, j}  \ | \ i \in \{ 1, 2\}, j \in [B])$. By Jensen's inequality, it follows that, 
\begin{align*}
    \bE\left[\KL{\Law{\hvz_k \ | \ \cF_k} }{\vz_k}\right] \leq \bE\left[\KL{\Law{\hvz_k \ | \ \cG_k} }{\vz_k}\right]
\end{align*}
Let $E$ denote the event $E = \{ u^2_k > \nicefrac{B}{5\eta d} \}$. We note that $\hSigma_k$ is $\cG_k$ measurable and $E$ is $\cF_k$-measurable. Furthermore, let $\vX, \vW, \vZ \sim \cN(0, \vI)$. Hence,
\begin{align*}
    \bE\left[\KL{\Law{\hvz_k \ | \ \cG_k} }{\vz_k}\right] &= \KL{\sqrt{\nicefrac{\eta}{2}} \vN_k + \vX \bEvent{E} + \left(\vI - \nicefrac{\eta}{4}\hSigma_k\right)\vW \bEvent{E^C}}{\vZ\biggr|\cG_k} \\
    &=  \underbrace{\KL{\sqrt{\nicefrac{\eta}{2}}\vN_k + \vX}{\vZ\biggr|\cG_k} \bEvent{E}}_{\textrm{Uncorrected KL}} + \underbrace{\KL{\sqrt{\nicefrac{\eta}{2}}\vN_k + \left(\vI - \nicefrac{\eta}{4}\hSigma_k\right)\vW}{\vZ\biggr|\cG_k}\bEvent{E^C}}_{\textrm{Covariance Corrected KL}}
\end{align*}
We proceed by controlling the uncorrected and covariance corrected KL terms separately.
\paragraph{Bounding the Uncorrected KL} To control the uncorrected KL, we essentially repeat the same arguments as Theorem \ref{thm:sgld-stat-indistinguishable}. In particular, let $\vX_1, \vX_2, \vZ_1, \vZ_2, \vZ_3 \iidsim \cN(0, \vI)$. From Lemma \ref{lem:rev_t_2}, it follows that,
\begin{align*}
    \KL{\sqrt{\nicefrac{\eta}{2}}\vN_k + \vX}{\vZ\biggr|\cG_k} &= \KL{\sqrt{\nicefrac{1}{2}}\vX_1 + \sqrt{\nicefrac{1}{2}}\vX_2 + \sqrt{\nicefrac{\eta}{2}}\vN_k}{\sqrt{\nicefrac{1}{2}}\vZ_1 + \sqrt{\nicefrac{1}{2}}\vZ_2\biggr|\cG_k} \\
    &\leq \frac{1}{2}\wass{2}^2\left(\vX_1 + \sqrt{\eta}\vN_k, \vZ_1\biggr|\cG_k\right) \\
    &\leq \wass{2}^2\left(\vX_1 + \sqrt{\eta}\vN_k, \sqrt{\vI + \eta \Sigma_{k}}\vZ_3\biggr|\cG_k\right) + \wass{2}^2\left(\sqrt{\vI + \eta \Sigma_{k}}\vZ_3, \vZ_2\biggr|\cG_k\right)
\end{align*}
Hence, 
\begin{align*}
    \KL{\sqrt{\nicefrac{\eta}{2}}\vN_k + \vX}{\vZ\biggr|\cG_k} \mathbb{I}_{\{u^2_k > \nicefrac{B}{5\eta d}\}} &\leq \wass{2}^2\left(\vX_1 + \sqrt{\eta}\vN_k, \sqrt{\vI + \eta \Sigma_{k}}\vZ_3\biggr|\cG_k\right) \mathbb{I}_{\{u^2_k > \nicefrac{B}{5\eta d}\}} \\
    &+ \wass{2}^2\left(\sqrt{\vI + \eta \Sigma_{k}}\vZ_3, \vZ_2\biggr|\cG_k\right) \mathbb{I}_{\{u^2_k > \nicefrac{B}{5\eta d}\}}
\end{align*}
We recall from our analysis of Theorem \ref{thm:sgld-stat-indistinguishable} that the first term corresponds to the Wasserstein CLT term whereas the second term corresponds to the covariance mismatch. Thus, repeating the same arguments as Theorem \ref{thm:sgld-stat-indistinguishable}, we conclude that,
\begin{align*}
    \wass{2}^2\left(\sqrt{\vI + \eta \Sigma_{k}}\vZ_3, \vZ_2\biggr|\cG_k\right) &\leq \frac{\eta^2 u^4_k }{4 B^2}
\end{align*}
For the Wasserstein CLT term, we note that $\vN_k = \nicefrac{1}{\sqrt{B}}\sum_{j=1}^{B} \nicefrac{\vN_{k, j}}{\sqrt{B}}$ where $\norm{\vN_{k,j}} \leq u_k$. Thus, from the Wasserstein CLT of \cite{zhai2018clt}, it follows that,
\begin{align*}
    \wass{2}^2\left(\vX_1 + \sqrt{\eta}\vN_k, \sqrt{\vI + \eta \Sigma_{k}}\vZ_3\biggr|\cG_k\right) &= \eta \wass{2}^2\left(\vN_k, \sqrt{\Sigma_k}\vZ_3\biggr|\cG_k\right) \leq \frac{25 \eta d (1 + \log(B))^2 u^2_k}{B^2}
\end{align*}
Hence, we obtain,
\begin{align*}
    \KL{\sqrt{\nicefrac{\eta}{2}}\vN_k + \vX}{\vZ\biggr|\cG_k} \mathbb{I}_{\{u^2_k > \nicefrac{B}{5\eta d}\}} &\leq \frac{\eta^2 u^4_k }{4 B^2}\mathbb{I}_{\{u^2_k > \nicefrac{B}{5\eta d}\}} 
    + \frac{25 \eta d (1 + \log(B))^2 u^2_k}{B^2} \mathbb{I}_{\{u^2_k > \nicefrac{B}{5\eta d}\}}
\end{align*}
From Lemma \ref{lem:tail-markov-control}, we note that,
\begin{align*}
    \bE\left[u^2_k \bEvent{u^2_k > \nicefrac{B}{5\eta d}}\right] &\leq \frac{75 \eta^2 d^2}{B^2} \bE\left[u^6_k\right] \\
    \bE\left[u^4_k \bEvent{u^2_k > \nicefrac{B}{5\eta d}} \right] &= \bE\left[u^4_k \bEvent{u^4_k > \left(\nicefrac{B}{5\eta d}\right)^2} \right] \\
    &\leq \frac{375 \eta^3 d^3}{B^3} \bE\left[u^6_k\right]
\end{align*}
It follows that,
\begin{align*}
\bE\left[\KL{\sqrt{\nicefrac{\eta}{2}}\vN_k + \vX}{\vZ\biggr|\cG_k} \bEvent{u^2_k > \nicefrac{B}{5\eta d}}\right] &\leq \frac{100 \eta^5 d^3}{B^5}\bE\left[u^6_k\right] + \frac{1875 \eta^3 d^3\left(1 + \log(B)\right)^2}{B^4} \bE\left[u^6_k\right]  
\end{align*}
From the \ref{eqn:defn-grad-growth} and \ref{eqn:defn-moment-growth} conditions, we know that, $\bE\left[u^6_k\right] \leq 32 \left( M^6 C_{6} d^{3} + G^6\right)$. Hence, the uncorrected KL term is bounded as follows,
\begin{align}
\label{eqn:cc-sgld-uncorrected-kl}
\bE\left[\KL{\sqrt{\nicefrac{\eta}{2}}\vN_k + \vX}{\vZ\biggr|\cG_k} \bEvent{E}\right] \leq \left(\frac{3200 \eta^5}{B^3} + \frac{1875 \eta^3 \left(1 + \log(B)\right)^2}{B^4}\right)\left( M^6 C_{6} d^{6} + G^6 d^3\right)
\end{align}
\paragraph{Bounding the Covariance Corrected KL} Let $\vZ_1, \vZ_2 \ \iidsim \cN(0, \vI)$. We note that the presence of the indicator $\bEvent{E^c} = \bEvent{u^2_k \leq \nicefrac{B}{5\eta d}}$ ensures $\Sigma_k, \hSigma_k \prec \nicefrac{\vI}{2}$. Hence, applying Lemma \ref{lem:kl_triangle},
\begin{align*}
    \KL{\sqrt{\nicefrac{\eta}{2}}\vN_k + \left(I - \nicefrac{\eta}{4}\hSigma_k\right)\vW}{\vZ\biggr|\cG_k}\bEvent{E^C} &\leq  2 \wass{2}^2 \left(\sqrt{\nicefrac{1}{2}\vI - \nicefrac{\eta}{2}\Sigma_k}\vZ_1 + \sqrt{\nicefrac{\eta}{2}}\vN_k, \tfrac{\vZ_2}{\sqrt{2}} \biggr|\cG_k\right) \bEvent{E^C} \\
    &+ 4\norm{\vI - \eta \Sigma_k}\KL{\sqrt{\nicefrac{1}{2}\vI - \nicefrac{\eta}{2}\hSigma_k}\vZ_1}{\sqrt{\nicefrac{1}{2}\vI - \nicefrac{\eta}{2}\Sigma_k}\vZ_2\biggr|\cG_k} \bEvent{E^C} \\
    &+ \tfrac{\eta^3}{4}\Tr\left(\hSigma_k\right)^3 \bEvent{E^C} \\
    &\leq \wass{2}^2 \left(\sqrt{\vI - \eta\Sigma_k}\vZ_1 + \sqrt{\eta}\vN_k, \vZ_2\biggr|\cG_k \right) \bEvent{E^C} \\
    &+ 4\KL{\sqrt{\nicefrac{1}{2}\vI - \nicefrac{\eta}{2}\hSigma_k}\vZ_1}{\sqrt{\nicefrac{1}{2}\vI - \nicefrac{\eta}{2}\Sigma_k}\vZ_2\biggr|\cG_k} \bEvent{E^C} \\
    &+ \frac{\eta^3}{4}\Tr\left(\hSigma_k\right)^3 \bEvent{E^C}
\end{align*}
It follows that,
\begin{align*}
    \bE\left[\KL{\sqrt{\nicefrac{\eta}{2}}\vN_k + \sqrt{I - \nicefrac{\eta}{2}\hSigma_k}\vW}{\vZ\biggr|\cG_k}\bEvent{E^C}\right] &\leq \bE\left[\wass{2}^2 \left(\sqrt{\vI - \eta\Sigma_k}\vZ_1 + \sqrt{\eta}\vN_k, \vZ_2 \biggr|\cG_k\right) \bEvent{E^C}\right] \\
    &+ \frac{\eta^3}{4}\bE\left[\Tr\left(\hSigma_k\right)^3 \bEvent{E^C}\right] \\
    &+ \bE\left[\KL{\sqrt{\nicefrac{1}{2}\vI - \nicefrac{\eta}{2}\hSigma_k}\vZ_1}{\sqrt{\nicefrac{1}{2}\vI - \nicefrac{\eta}{2}\Sigma_k}\vZ_2\biggr|\cG_k} \bEvent{E^C} \right]
\end{align*}
As before, the first term corresponds to the Wasserstein CLT, the second term corresponds to the error due to linearization  of the matrix square root, and the last term corresponds to the covariance mismatch of CC-SGLD, which depends upon how well $\hSigma_k$ approximates $\Sigma_k$. We note that the error due to linearization can be easily controlled via the \ref{eqn:defn-grad-growth} and \ref{eqn:defn-moment-growth} conditions as follows:
\begin{align*}
    \frac{\eta^3}{4}\bE\left[\Tr\left(\hSigma_k\right)^3 \bEvent{E^C}\right] &\leq \frac{\eta^3}{4B^3} \bE\left[u^6_k\right] \leq \frac{8\eta^3}{B^3}\left( M^6 C_{6} d^{3} + G^6\right) 
\end{align*}
where we use the fact that $\Tr\left(\hSigma_k\right) \leq \tfrac{u^2_k}{B}$ and $\bE\left[u^6_k\right] \leq 32 \left( M^6 C_{6} d^{3} + G^6\right)$ as per the \ref{eqn:defn-grad-growth} and \ref{eqn:defn-moment-growth} conditions
We then control the Wasserstein CLT term via Lemma \ref{lem:wass-clt-new}. We note that, similar to Theorem \ref{thm:sgld-stat-indistinguishable}, the presence of the indicator $\bEvent{E^c} = \bEvent{u^2_k \leq \nicefrac{B}{5\eta d}}$ ensures that all conditions of Lemma \ref{lem:wass-clt-new} are satisfied. Hence, repeating similar arguments, we obtain the following bound
\begin{align*}
    \wass{2}^2 \left(\sqrt{\vI - \eta\Sigma_k}\vZ_1 + \sqrt{\eta}\vN_k, \vZ_2 \biggr|\cG_k\right) \bEvent{E^C} &\leq \frac{25 \eta^3 d \left(1 + \log(B)\right)^2 u^6_k}{B^4} 
\end{align*}
From the \ref{eqn:defn-grad-growth} and \ref{eqn:defn-moment-growth} conditions, we know that, $\bE\left[u^6_k\right] \leq 32 \left( M^6 C_{6} d^{3} + G^6\right)$. Hence,
\begin{align*}
    \bE\left[\wass{2}^2 \left(\sqrt{\vI - \eta\Sigma_k}\vZ_1 + \sqrt{\eta}\vN_k, \vZ_2 \biggr|\cG_k\right) \bEvent{E^C}\right] &\leq \frac{800 \eta^3 d}{B^4}\left(1 + \log(B)\right)^2 \left( M^6 C_{6} d^{3+1} + G^6 d\right)
\end{align*}
We now control the covariance mismatch term by applying Lemma \ref{lem:kl_mismatch}. We note that, since $\Tr(\Sigma), \Tr(\hSigma) \leq \nicefrac{u^2_k}{B}$, the presence of the indicator $\bEvent{u^2_k \leq \nicefrac{B}{5\eta d}}$ ensures that $\eta \Sigma_k, \eta \hSigma_k \prec \nicefrac{\vI}{2}$. Hence, it follows that,
\begin{align*}
    \KL{\sqrt{\nicefrac{1}{2}\vI - \nicefrac{\eta}{2}\hSigma_k}\vZ_1}{\sqrt{\nicefrac{1}{2}\vI - \nicefrac{\eta}{2}\Sigma_k}\vZ_2\biggr|\cG_k} \bEvent{E^C} &\leq \frac{1}{2}\Tr\left(\left(\vI - \eta \Sigma_k\right)^{-1} \left(\hSigma_k - \Sigma_k\right)\right) \bEvent{E^c} \\
    &+  \frac{1}{2} \sum_{j=1}^{\infty} \frac{\eta^j}{j} \left[\Tr\left(\hSigma^j_k\right) - \Tr\left(\hSigma^j_k\right) \right] \bEvent{E^c}
\end{align*}
We now control this quantity by first taking conditional expectation with respect to $\cF_k$. From Property 1 of Lemma \ref{lem:cov-estimation-guarantee}, we know that $\bE\left[ \hSigma_k \ | \ \cF_k\right] = \Sigma_k$. Furthermore $\bEvent{E}$ is $\cF_k$ measurable. Hence,
\begin{align*}
    \bE\left[\KL{\sqrt{\nicefrac{1}{2}\vI - \nicefrac{\eta}{2}\hSigma_k}\vZ_1}{\sqrt{\nicefrac{1}{2}\vI - \nicefrac{\eta}{2}\Sigma_k}\vZ_2\biggr|\cG_k}  | \cF_k \right] \bEvent{E^C}&\leq \frac{\eta^2}{4}\left[\bE\left[\Tr\left(\hSigma^2_k\right) \ | \ \cF_k \right] - \Tr\left(\hSigma^2_k\right) \right] \\
    &+ \frac{1}{6} \sum_{j=3}^{\infty} \eta^j \Tr\left(\hSigma^j_k\right) \bEvent{E^c}
\end{align*}
Applying Property 2 and Property 3 of Lemma \ref{lem:cov-estimation-guarantee} wherever appropriate,
\begin{align*}
    \frac{\eta^2}{4}\left[\bE\left[\Tr\left(\hSigma^2_k\right) \ | \ \cF_k \right] - \Tr\left(\hSigma^2_k\right) \right] &\leq \frac{\eta^2 u^4_k}{B^3} \\
    \sum_{j=3}^{\infty} \eta^j \Tr\left(\hSigma^j_k\right) \bEvent{E^c} &= \left(\frac{2 \eta u^2_k}{B}\right)^3 \sum_{j=0}^{\infty} \left(\frac{2\eta u^2_k}{B}\right)^j \bEvent{u^2_k \leq \nicefrac{B}{5\eta d}} \\
    &\leq \frac{8 \eta^3 u^6_k}{B^3} \sum_{j=0}^{\infty} \left(\nicefrac{2}{5}\right)^j \leq \frac{40 \eta^3 u^6_k}{3B^3}
\end{align*}
Hence, 
\begin{align*}
    \bE\left[\KL{\sqrt{\nicefrac{1}{2}\vI - \nicefrac{\eta}{2}\hSigma_k}\vZ_1}{\sqrt{\nicefrac{1}{2}\vI - \nicefrac{\eta}{2}\Sigma_k}\vZ_2\biggr|\cG_k}  | \cF_k \right] \bEvent{E^C} &\leq \frac{\eta^2 u^4_k}{B^3} + \frac{3 \eta^3 u^6_k}{B^3}
\end{align*}
Recall that, as per the \ref{eqn:defn-grad-growth} and \ref{eqn:defn-moment-growth} conditions, $\bE\left[u^4_k\right] \leq 8\left(M^4 C_{4} d^{2} + G^4\right)$ and $\bE\left[u^6_k\right] \leq 32\left(M^6 C_{6} d^{3} + G^6\right)$. Hence, the covariance corrected KL term is controlled as,
\begin{align}
\label{eqn:cc-sgld-cov-corr-kl}
    \bE\left[\KL{\sqrt{\nicefrac{1}{2}\vI - \nicefrac{\eta}{2}\hSigma_k}\vZ_1}{\sqrt{\nicefrac{1}{2}\vI - \nicefrac{\eta}{2}\Sigma_k}\vZ_2\biggr|\cG_k} \bEvent{E^C}\right]  &\leq \frac{8\eta^2}{B^3}\left(M^4 C_{4} d^{2} + G^4\right) + \frac{96 \eta^3}{B^3}\left(M^6 C_{6} d^{3} + G^6\right)
\end{align}
From \eqref{eqn:cc-sgld-uncorrected-kl} and \eqref{eqn:cc-sgld-cov-corr-kl}, we finally obtain the following statistical indistinguishability guarantee,
\begin{align*}
    \KL{\Law{\hvx_{1:K}}}{\Law{\vx_{1:K}}} &\leq \frac{8\eta^2 K}{B^3}\left(M^4 C_{4} d^{2} + G^4\right) + \frac{96 \eta^3 K}{B^3}\left(M^6 C_{6} d^{3} + G^6\right) \\
    &+ \left(\frac{3200 \eta^5 K}{B^3} + \frac{1875 \eta^3 K \left(1 + \log(B)\right)^2}{B^4}\right)\left( M^6 C_{6} d^{6} + G^6 d^3\right)
\end{align*}
\end{proof}
\subsection{Convergence of Non-Smooth CC-SGLD under LO}
\label{proof:cor-cc-sgld-lo}
As before, we use $\Renyi{\mu}{\nu}{q}$ to denote the R\'enyi divergence of order $q$ between two measures $\mu$ and $\nu$.
\begin{corollary}[Convergence of CC-SGLD under LO]
\label{cor:cc-sgld-unstable-lo}
Let the \ref{eqn:defn-holder-smoothness}, \ref{eqn:defn-grad-growth}, and \ref{eqn:defn-moment-growth} be satisfied with $p = 6$. Furthermore, assume the target $\pistar$ satisfies \ref{eqn:defn-latala-oleskiewicz} for some $\alpha \in [1, 2]$ and define $\beta = \nicefrac{2}{\alpha} - 1$. Then, for  $\epsilon \leq \nicefrac{1}{\mathsf{poly}(d)}$, the last iterate of CC-SGLD, under appropriate Gaussian initialization, requires $N$ stochastic gradient oracle calls to ensure $\TV(\Law{\hvx_K}, \pistar) \leq \epsilon$, where
\begin{align*}
     N = \Thetatilde\left(\frac{d^{\max \left\{ 1 + \beta\left(1 + \nicefrac{1}{s}\right), \nicefrac{4}{3}\left(1+\beta + \nicefrac{\beta}{2s}\right)\right\}}}{\epsilon^{\nicefrac{2}{s}}}\right)
\end{align*}
\end{corollary}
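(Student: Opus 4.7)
The plan is to mirror the proof of Corollary~\ref{cor:sgld-unstable-lo}, substituting the sharper trajectory-level KL bound of Theorem~\ref{thm:cc-sgld-stat-indistinguishable} for Theorem~\ref{thm:sgld-stat-indistinguishable}. The improvement of the leading-order KL term from $\eta^2 K d^2/B^2$ to $\eta^2 K d^2/B^3$ is precisely what reduces the required batch size from $\Thetatilde(d^{1/2}\,\cdots)$ to $\Thetatilde(d^{1/3}\,\cdots)$, and so sharpens the $d$-exponent in the oracle complexity from $3(1+\beta)/2+\beta/(2s)$ to $4(1+\beta+\beta/(2s))/3$.

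First I would invoke \cite[Theorem~7]{sinho-lmc-poincare-lsi} for LMC under \ref{eqn:defn-holder-smoothness} and \ref{eqn:defn-latala-oleskiewicz}. With a Gaussian initialization such that $\Delta_0 := \Renyi{\Law{\vx_0}}{\pistar}{3} = O(d)$, the settings $\eta = \Thetatilde(\epsilon^{2/s}/(d\,\Delta_0^{\beta/s}))$ and $K = \Thetatilde(d\,\Delta_0^{\beta(1+1/s)}/\epsilon^{2/s})$ give $T := \eta K = \Thetatilde(\Delta_0^{\beta})$ together with $\TV(\Law{\vx_K},\pistar) \leq \epsilon/2$. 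Running CC-SGLD with the same $\eta$, $K$ and initialization, Theorem~\ref{thm:cc-sgld-stat-indistinguishable} gives a leading-order contribution of $O(\eta T d^2/B^3)$ to $\KL{\hvx_{1:K}}{\vx_{1:K}}$; forcing this to be at most $\epsilon^2/4$ (so Pinsker yields $\TV(\Law{\hvx_K},\Law{\vx_K}) \leq \epsilon/2$), it suffices to take
\begin{equation*}
  B \;\gtrsim\; \Thetatilde\bigl(d^{1/3}\,\Delta_0^{\beta(1-1/s)/3}\,\epsilon^{(2/s-2)/3}\bigr).
\end{equation*}
The triangle inequality for TV then gives $\TV(\Law{\hvx_K},\pistar) \leq \epsilon$. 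Each step of CC-SGLD uses $O(B)$ first-order oracle queries (one batch of size $B$ for the gradient plus a fresh batch of $2B$ for the covariance estimator $\hat{\Sigma}$), so $N = K \cdot B$; substituting $\Delta_0 = O(d)$ and simplifying the $d$ and $\epsilon$ exponents produces the two arguments of the stated $\max$, corresponding respectively to the regime where $K$ alone dominates (small $s$, where $B = O(1)$ already works) and the regime where $KB$ with $B \asymp d^{1/3}$ dominates (for instance, $s=1$).

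The main obstacle is checking that the three subleading terms in Theorem~\ref{thm:cc-sgld-stat-indistinguishable}, namely $\eta^3 K d^3/B^3$, $\eta^5 K d^6/B^3$ and $\eta^3 K d^6(1+\log B)^2/B^4$, do not impose a stricter constraint on $B$ than the leading-order term. A short computation using $\eta = \Otilde(\epsilon^{2/s}/d)$ shows that each of them carries additional positive powers of $\epsilon$ (for instance, $\eta^3 K d^3/B^3 \lesssim T\,\epsilon^{4/s}\,d/B^3$), so under the standing assumption $\epsilon \leq 1/\mathsf{poly}(d)$ the leading term indeed dictates $B$. The remaining bookkeeping is algebraic manipulation of the exponents, entirely analogous to that carried out in the proof of Corollary~\ref{cor:sgld-unstable-lo}.
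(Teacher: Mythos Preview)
Your proposal is correct and follows essentially the same approach as the paper: invoke Theorem~7 of \cite{sinho-lmc-poincare-lsi} to fix $\eta$, $K$, and $T=\eta K$ so that LMC achieves $\TV\leq\epsilon/2$, then use the leading $\eta T d^2/B^3$ term from Theorem~\ref{thm:cc-sgld-stat-indistinguishable} together with Pinsker and the $\epsilon\leq 1/\mathsf{poly}(d)$ assumption to size $B$, and finally compute $N=KB$ with $\Delta_0=O(d)$. The only minor difference is that you track the extra $\epsilon^{(2/s-2)/3}$ factor in $B$ explicitly, whereas the paper drops it (since it is at most $1$) and takes $B=\Otilde\bigl(\max\{1,\,d^{1/3}\Delta_0^{\beta(1-1/s)/3}\}\bigr)$; this is harmless for the stated $\Thetatilde$ bound.
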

\begin{proof}
The proof of this result closely resembles that of Corollary \ref{cor:sgld-unstable-lo}. In particular, we know from Theorem 7 of \cite{sinho-lmc-poincare-lsi} that under appropriate Gaussian initialization such that $\Delta_0 = \Renyi{\Law{\vx_0}}{\pistar}{3}  = O(d)$, the following choice of $\eta$ and $K$ suffices to ensure $\TV\left(\Law{\vx_{K+1}}, \pistar\right) \leq \epsilon$
\begin{align*}
    \eta &= \Thetatilde\left( \frac{\epsilon^{\nicefrac{2}{s}}}{d \Delta^{\nicefrac{\beta}{s}}_0} \right) \\
    K &= \Thetatilde\left(\frac{d \Delta^{\beta(1 + \nicefrac{1}{s})}_0}{\epsilon^{\nicefrac{2}{s}}}\right) \\
    T &= \eta K = \Thetatilde\left(\Delta^{\beta}_0\right) 
\end{align*}
where $\beta = \nicefrac{2}{\alpha} - 1$. Since $\epsilon \leq \nicefrac{1}{\mathsf{poly}(d)}$, under this choice of $\eta$ and $T$, Theorem \ref{thm:cc-sgld-stat-indistinguishable} suggests that $\KL{\hvx_{1:K}}{\vx_{1:K}} \leq O(\nicefrac{\eta T d^2}{B^3})$. Thus, to ensure $\TV(\Law{\hvx_{K+1}}, \Law{\vx_{K+1}}) \leq \nicefrac{\epsilon}{2}$, it suffices to set $B \geq \Otilde\left(\max \left\{ 1, d^{\nicefrac{1}{3}} \Delta^{\nicefrac{\beta}{3}\left(1-\nicefrac{1}{s}\right)}_0 \right\}\right)$. Hence, $\TV(\Law{\hvx_{K+1}}, \Law{\vx_{K+1}}) \leq \epsilon$ by subadditivity of Total Variation. The required stochastic gradient complexity is,
\begin{align*}
    N &= KB = \Thetatilde\left(\max\left\{\frac{d \Delta^{\beta\left(1 + \nicefrac{1}{s}\right)}_0}{\epsilon^{\nicefrac{2}{s}}}, \frac{d^{\nicefrac{4}{3}} \Delta^{\nicefrac{4\beta}{3} + \nicefrac{2\beta}{3s}}_0}{\epsilon^{\nicefrac{2}{s}}}\right\}\right) \\
    &= \Thetatilde\left(\frac{d^{\max \left\{ 1 + \beta\left(1 + \nicefrac{1}{s}\right), \nicefrac{4}{3}\left(1+\beta + \nicefrac{\beta}{2s}\right)\right\}}}{\epsilon^{\nicefrac{2}{s}}}\right)
\end{align*}
where we use the fact that $\Delta_0 = O(d)$.
\end{proof}
We observe that for $s \geq \nicefrac{1}{2}$, the oracle complexity of CC-SGLD strictly improves upon SGLD (whereas for $s \leq \nicefrac{1}{2}$ both have the same oracle complexity since $B = O(1)$ in that setting). We elucidate this improvement for the special cases of LSI and PI as follows.

\paragraph{Rates Under Smoothness and \ref{eqn:defn-log-sobolev}} Recall that \ref{eqn:defn-latala-oleskiewicz} of order $\alpha = 2$ is equivalent to \ref{eqn:defn-log-sobolev}. Furthermore, LSI and Holder continuity together imply smoothness. To this end, Corollary \ref{cor:cc-sgld-unstable-lo} implies an oracle complexity of $\Thetatilde(\tfrac{d^{\nicefrac{4}{3}}}{\epsilon^2})$ for attaining last iterate $\epsilon$-convergence in TV. We note that this improves upon all our prior guarantees for SGLD under this setting. However, we highlight that the guarantee implied by Corollary \ref{cor:cc-sgld-unstable-lo} is unstable. 

\paragraph{Rates Under \ref{eqn:defn-poincare}} Since \ref{eqn:defn-latala-oleskiewicz} of order $\alpha = 1$ is equivalent to \ref{eqn:defn-poincare}, Corollary \ref{cor:cc-sgld-unstable-lo} implies an oracle complexity of $\Thetatilde\left(\frac{d^{\max\left\{2 + \nicefrac{1}{s}, \nicefrac{8}{3} + \nicefrac{2}{3s}\right\}}}{e^{\nicefrac{2}{s}}}\right)$ under this setting for last-iterate $\epsilon$-convergence in TV. For $s=1$, the implied oracle complexity is $\Thetatilde\left(\frac{d^{\nicefrac{10}{3}}}{\epsilon^2}\right)$. This strictly improves upon the last-iterate guarantee implied by Corollary \ref{cor:sgld-unstable-lo} in this setting. When compared to the $\Otilde\left(\frac{d^{2.5}}{\epsilon^4}\right)$ average-iterate guarantee implied by Theorem \ref{thm:sgld-fd-and-pi}, we note an improved dependence on $\epsilon$ at the cost of a sublinear additional $d$ dependence 

\section{Analysis of RBM and CC-RBM}
\subsection{Proof of Theorem \ref{thm:rbm-stat-indistinguishability}}
\label{proof:thm-rbm-stat-indistinguishability}
\begin{proof}
The proof of this result is similar to that of Theorem \ref{thm:sgld-stat-indistinguishable} and uses the same technical tools. We first note that the iterates of IPD and RBM, with the same initialization and step-size can be expressed as,
\begin{align*}
    \vx_{k+1}^{i} &= \vx_k^{i} + \eta \vg^{i}_k(\vx_k^{i}) +\frac{\eta}{n} \sum_{j =1}^{n} \vK^{ij}_{k}(\vx^{i}_k,\vx^{j}_k) + \sqrt{\eta}\sigma \vz^{i}_k \\
    \hvx_{k+1}^{i} &= \hvx_k^{i} + \eta \vg^{i}_k(\hvx_k^{i}) +\frac{\eta}{n} \sum_{j =1}^{n} \vK^{ij}_{k}(\hvx^{i}_k,\hvx^{j}_k) + \sqrt{\eta}\sigma \hvz^{i}_k
\end{align*}
where $\vz^{i}_k$ and $\hvz^{i}_k$ are defined as follows for any $k \in [K], n \in [n]$
\begin{align*}
    \vz^i_k &= \epsilon^i_k \sim \cN(0, \vI), \\
    \hvz^i_k &= \frac{\sqrt{\eta}}{\sigma} \vN^i_k + \heps^i_k, \ \ \heps^i_k \sim \cN(0, \vI) \\
    \vN^i_k &= \frac{1}{B} \sum_{j=1}^{B} \left[ \vK_k^{iI^{ij}_k}(\hvx_k^{i},\hvx_k^{I^{ij}_k}) - \frac{1}{n} \sum_{l=1}^{n} \vK^{il}_{k}(\hvx^{i}_k,\hvx^{l}_k)\right]
\end{align*}
where $I^{i1}_k, \dots,I^{iB}_k \ \iidsim \ \mathsf{Uniform}([n])$ for every $i \in [n]$. Define the filtration $\cF_k = \sigma(\hvx^{i}_0, \dots, \hvx^{i}_k, \hvz^{i}_0, \dots, \hvz^{i}_{k-1} | i \in [n])$. Clearly, $\vN^i_k$ is an empirical average of zero-mean i.i.d random variables conditioned on $\cF_k$. Furthermore, we note that IPD and RBM admit the same random function representation, i.e., there exists a measurable function $H_K$ such that,
\begin{align*}
    (\hvx^{i}_{1:K+1} | i \in [n]) &= H_K(\hvx^{i}_0, \hvz^i_0, \hvz^i_1, \dots, \hvz^i_K | i \in [n]) \\
    (\vx^{i}_{1:K+1} | i \in [n]) &= H_K(\vx^{i}_0, \vz^i_0, \vz^i_1, \dots, \vz^i_K | i \in [n])
\end{align*}
Since $\Law{\hvx^i_0} = \Law{\vx^i_0}$ for every $i \in [n]$, we use the data processing inequality and Lemma \ref{lem:kl_chain_rule} to conclude the following.
\begin{align*}
    \KL{\hvx^{i}_{1:K+1} | i \in [n]}{\vx^{i}_{1:K+1} | i \in [n]} &= \sum_{k=0}^{K} \sum_{i=1}^{n} \bE\left[\KL{\Law{\hvz^i_k \ | \ \cF_k} }{\vz^i_k}\right]
\end{align*}
We shall now control each term in the above summation by following the same steps as Theorem \ref{thm:sgld-stat-indistinguishable}. To this end, let $\vX, \vX_1, \vX_2, \vZ, \vZ_1, \vZ_2, \vW \ \iidsim \ \cN(0, \vI)$. It follows that,
\begin{align*}
    \KL{\Law{\hvz^i_k \ | \ \cF_k} }{\vz^i_k} &= \KL{\vX_1 + \nicefrac{\sqrt{\eta}}{\sigma}\vN_k^i}{\vZ_1\biggr|\cF_k} \\
    &= \KL{\sqrt{\nicefrac{1}{2}}\vX_2 + \sqrt{\nicefrac{1}{2}}\vX + \nicefrac{\sqrt{\eta}}{\sigma}\vN^i_k}{\sqrt{\nicefrac{1}{2}}\vZ_2 + \sqrt{\nicefrac{1}{2}}\vZ\biggr|\cF_k} \\
    &\leq \frac{1}{2}\wass{2}^2\left(\vX + \nicefrac{\sqrt{2\eta}}{\sigma}\vN^i_k, \vZ\biggr|\cF_k\right) 
\end{align*}
Where the last inequality follows from Lemma \ref{lem:rev_t_2}. Now, let $\vY = \nicefrac{\sqrt{2\eta}}{\sigma} \vN^i_k$ and define $\Sigma_{\vY} = \bE\left[ \vY \vY^T \biggr|\cF_k \right]$. It follows that,
\begin{align}
\label{eqn:rbm-clt-kl-decomposition}
    \KL{\Law{\hvz^i_k \ | \ \cF_k} }{\vz^i_k} &\leq \frac{1}{2}\wass{2}^2\left(\vX + \sqrt{\eta}\vN_k, \vZ\biggr|\cF_k\right) \nonumber \\
    &\leq \underbrace{\wass{2}^2\left(\vX + \vY, \sqrt{\vI + \Sigma_{\vY}}\vZ\biggr|\cF_k\right)}_{\textrm{Wasserstein CLT Term}} + \underbrace{\wass{2}^2\left(\sqrt{\vI + \Sigma_{\vY}}\vZ, \vW\biggr|\cF_k\right)}_{\textrm{Covariance Mismatch Term}}
\end{align}
We note that, $\Tr(\Sigma_{\vY}) \leq \nicefrac{4\eta M^2}{B\sigma^2}$. Controlling the covariance mismatch term in a manner similar to Theorem \ref{thm:sgld-stat-indistinguishable} (i.e., by direct computation of the Wasserstein distance), we obtain,  
\begin{align}
\label{eqn:rbm-cov-mismatch-bound}
    \wass{2}^2\left(\sqrt{\vI + \Sigma_{\vY}}\vZ, \vW\biggr|\cF_k\right) \leq \frac{\Tr(\Sigma_{\vY})^2}{4} \leq \frac{4 \eta^2 M^4}{\sigma^4 B^2}
\end{align}
To control the Wasserstein CLT term, we observe that the CLT structure of $\vN^i_k$ allows us to express $\vY$ as $\vY = \nicefrac{1}{\sqrt{B}} \sum_{j=1}^{B} \vY^{(j)}$ where $\norm{\vY^{(j)}} \leq \frac{2\sqrt{2\eta}}{\sigma \sqrt{B}} M$. We note that the condition $\eta \leq \nicefrac{B\sigma^2}{40 \eta M^2 d}$ ensures that $\norm{\vY^{(j)}}^2 \leq \nicefrac{1}{5}$ and $\Tr(\Sigma_{\vY}) \leq \nicefrac{1}{5d}$. Thus, all the conditions required to apply Lemma \ref{lem:wass-clt-new} are satisfied. Applying the same arguments used to control the Wasserstein CLT term in Case 2 of Theorem \ref{thm:sgld-stat-indistinguishable}, we conclude that the Wasserstein CLT term is bounded as follows for some universal constant $C_{\mathsf{CLT}}$,
\begin{align}
\label{eqn:rbm-wass-clt-bound}
    \wass{2}^2\left(\vX + \vY, \sqrt{\vI + \Sigma_{\vY}}\vZ\biggr|\cF_k\right) \leq \frac{C_{\mathsf{CLT}}\eta^3 M^6 d (1 + \log(B))^2}{\sigma^6 B^4}
\end{align}
Thus, from equations \eqref{eqn:rbm-clt-kl-decomposition}, \eqref{eqn:rbm-cov-mismatch-bound} and \eqref{eqn:rbm-wass-clt-bound}, we obtain the following statistical indistinguishability guarantee,
\begin{align*}
    \KL{(\hvx^i_k)_{i \in [n], k \in [K]}}{(\vx^i_k)_{i \in [n], k \in [K]}} &\leq C_{\mathsf{Cov}} \frac{\eta^2 M^4 nK}{B^2 \sigma^4} + C_{\mathsf{CLT}} \frac{d\eta^3 M^6 nK (1 + \log(B))^2}{B^4 \sigma^6} 
\end{align*}
where $C_{\mathsf{Cov}}$ and $C_{\mathsf{CLT}}$ are universal constants.
\end{proof}
\subsection{Proof of Theorem \ref{thm:cc-rbm-stat-indistinguishability}}
\label{proof:cc-rbm-stat-indistinguishability}
\begin{proof}
The proof of this result follows a structure similar to Theorem \ref{thm:rbm-stat-indistinguishability} and uses the same techniques as Theorem \ref{thm:cc-sgld-stat-indistinguishable}. We first note that the iterates of IPD and CC-RBM, with the same initialization and step-size can be expressed as,
\begin{align*}
    \vx_{k+1}^{i} &= \vx_k^{i} + \eta \vg^{i}_k(\vx_k^{i}) +\frac{\eta}{n} \sum_{j =1}^{n} \vK^{ij}_{k}(\vx^{i}_k,\vx^{j}_k) + \sqrt{\eta}\sigma \vz^{i}_k \\
    \hvx_{k+1}^{i} &= \hvx_k^{i} + \eta \vg^{i}_k(\hvx_k^{i}) +\frac{\eta}{n} \sum_{j =1}^{n} \vK^{ij}_{k}(\hvx^{i}_k,\hvx^{j}_k) + \sqrt{\eta}\sigma \hvz^{i}_k
\end{align*}
where $\vz^{i}_k$ and $\hvz^{i}_k$ are defined as follows for any $k \in [K], n \in [n]$
\begin{align*}
    \vz^i_k &= \epsilon^i_k \sim \cN(0, \vI), \\
    \hvz^i_k &= \frac{\sqrt{\eta}}{\sigma} \vN^i_k + \left(\vI - \nicefrac{\eta}{2\sigma^2} \hSigma^i_k\right)\heps^i_k, \ \ \heps^i_k \sim \cN(0, \vI) \\
    \vN^i_k &= \frac{1}{B} \sum_{j=1}^{B} \left[ \vK_k^{iI^{ij}_k}(\hvx_k^{i},\hvx_k^{I^{ij}_k}) - \frac{1}{n} \sum_{l=1}^{n} \vK^{il}_{k}(\hvx^{i}_k,\hvx^{l}_k)\right]
\end{align*}
where $I^{i1}_k, \dots,I^{iB}_k \ \iidsim \ \mathsf{Uniform}([n])$ for every $i \in [n]$. Define the filtration $\cF_k = \sigma(\hvx^{i}_0, \dots, \hvx^{i}_k, \hvz^{i}_0, \dots, \hvz^{i}_{k-1} | i \in [n])$. Clearly, $\vN^i_k$ is an empirical average of zero-mean i.i.d random variables conditioned on $\cF_k$. Furthermore, we note that IPD and RBM admit the same random function representation, i.e., there exists a measurable function $H_K$ such that,
\begin{align*}
    (\hvx^{i}_{1:K+1} | i \in [n]) &= H_K(\hvx^{i}_0, \hvz^i_0, \hvz^i_1, \dots, \hvz^i_K | i \in [n]) \\
    (\vx^{i}_{1:K+1} | i \in [n]) &= H_K(\vx^{i}_0, \vz^i_0, \vz^i_1, \dots, \vz^i_K | i \in [n])
\end{align*}
Since $\Law{\hvx^i_0} = \Law{\vx^i_0}$ for every $i \in [n]$, we use the data processing inequality and Lemma \ref{lem:kl_chain_rule} to conclude the following.
\begin{align*}
    \KL{\hvx^{i}_{1:K+1} | i \in [n]}{\vx^{i}_{1:K+1} | i \in [n]} &= \sum_{k=0}^{K} \sum_{i=1}^{n} \bE\left[\KL{\Law{\hvz^i_k \ | \ \cF_k} }{\vz^i_k}\right]
\end{align*}
We now control each term in the above summation by following the same steps as Theorem \ref{thm:cc-sgld-stat-indistinguishable}. To this end, we define the filtration $\cG_k$ as $\mathcal{G}_{k} = \mathcal{F}_{k}\vee \sigma(J_k^{ij},\bar{J}_k^{ij}: i \in [n], j \in [B^{\prime}])$, where $J_k^{ij}, \bar{J}_k^{ij}$ are the additional random variables used in the estimator $\hat{\Sigma}_k^{i}$. We note that by Jensen's inequality  $\bE\left[\KL{\Law{\hvz^i_k \ | \ \cF_k} }{\vz^i_k}\right] \leq \bE\left[\KL{\Law{\hvz^i_k \ | \ \cG_k} }{\vz^i_k}\right]$. Furthermore the condition $\eta \leq \nicefrac{B\sigma^2}{40M^2 d}$ ensures that $\tfrac{\eta}{\sigma^2}\Sigma^i_k,\tfrac{\eta}{\sigma^2} \hSigma^i_k \prec \nicefrac{\vI}{2}$. To this end, let $\vZ_2 \sim \cN(0, \vI)$ be sampled independent of everything else. Hence, from, Lemma \ref{lem:kl_chain_rule},
\begin{align}
\label{eqn:cc-rbm-kl-decompose}
    \KL{\Law{\hvz^i_k \ | \ \cG_k} }{\vz^i_k} &\leq 2\wass{2}^2\left(\frac{\vZ_2}{\sqrt{2}} ,\sqrt{\tfrac{1}{2}\vI-\frac{\eta}{\sigma^2}\Sigma_k^{i}}\vZ_2 + \frac{\sqrt{\eta}}{\sigma}\vN_k^{i}\biggr|\mathcal{G}_{k}\right) + \frac{2\eta^3}{\sigma^6} \Tr(\hSigma_k)^3\nonumber \\
    &\quad+8\lambda_{\max}(\tfrac{\vI}{2}-\tfrac{\eta}{\sigma^2}\Sigma)\KL{\sqrt{\tfrac{1}{2}\vI-\tfrac{\eta}{\sigma^2}\hat{\Sigma}_k^{i}}\vZ_2 }{\sqrt{\tfrac{1}{2}\vI-\tfrac{\eta}{\sigma^2}\Sigma_k^{i}}\vZ_2\biggr|\mathcal{G}_{k}} \nonumber \\
    &\leq 2\wass{2}^2\left(\frac{\vZ_2}{\sqrt{2}} ,\sqrt{\tfrac{1}{2}\vI-\frac{\eta}{\sigma^2}\Sigma_k^{i}}\vZ_2 + \frac{\sqrt{\eta}} {\sigma}\vN_k^{i}\biggr|\mathcal{G}_{k}\right) + \frac{2\eta^3}{\sigma^6} \Tr(\hSigma_k)^3\nonumber \\
    &\quad+4\KL{\sqrt{\tfrac{1}{2}\vI-\tfrac{\eta}{\sigma^2}\hat{\Sigma}_k^{i}}\vZ_2 }{\sqrt{\tfrac{1}{2}I-\tfrac{\eta}{\sigma^2}\Sigma_k^{i}}\vZ_2\biggr|\mathcal{G}_{t}} \nonumber \\
    &= \wass{2}^2\left(\vZ_2 ,\sqrt{I-\frac{2\eta}{\sigma^2}\Sigma_k^{i}}\vZ_2 + \frac{\sqrt{2\eta}}{\sigma}\vN_k^{i}\biggr|\mathcal{G}_{k}\right) + \frac{16\eta^3 M^6}{\sigma^6 B^3} \nonumber \\
    &\quad+4\KL{\sqrt{\tfrac{1}{2}\vI-\tfrac{\eta}{\sigma^2}\hat{\Sigma}_k^{i}}\vZ_2 }{\sqrt{\tfrac{1}{2}I-\tfrac{\eta}{\sigma^2}\Sigma_k^{i}}\vZ_2\biggr|\mathcal{G}_{k}} 
\end{align}
We note that the first term is the Wasserstein CLT term which is controlled in a manner similar to Theorem \ref{thm:rbm-stat-indistinguishability}. Thus, for some universal constant $C_{\textrm{CLT}}$,
\begin{align}
\label{eqn:cc-rbm-wass-clt-bound}
    \wass{2}^2\left(\vZ_2 ,\sqrt{I-\frac{2\eta}{\sigma^2}\Sigma_k^{i}}\vZ_2 + \frac{\sqrt{2\eta}}{\sigma}\vN_k^{i}\biggr|\mathcal{G}_{k}\right) \leq \frac{C_{\mathsf{CLT}}\eta^3 M^6 d (1 + \log B)^2}{\sigma^6 B^4}
\end{align}
To control the covariance mismatch term, we use the same techniques as Theorem \ref{thm:cc-sgld-stat-indistinguishable}. In fact, using Lemma \ref{lem:kl_mismatch} and the properties of the covariance estimator established in Lemma \ref{lem:cov-estimation-guarantee}, we obtain, 
\begin{align}
\label{eqn:cc-rbm-cov-mismatch-bound}
&\mathbb{E}\left[\KL{\sqrt{\tfrac{1}{2}\vI-\tfrac{\alpha}{\sigma^2}\hat{\Sigma}_k^{i}}\vZ_2 }{\sqrt{\tfrac{1}{2}\vI-\tfrac{\alpha}{\sigma^2}\Sigma_k^{i}}\vZ_2\biggr|\mathcal{G}_{k}}\right] \nonumber \\
&= \mathbb{E}\tr\left(\left(\vI-2\frac{\eta\Sigma_k^{i}}{\sigma^2}\right)^{-1}\left(\frac{\eta\Sigma_k^{i}}{\sigma^2} - \frac{\eta\hat{\Sigma}_k^{i}}{\sigma^2}\right)\right)+ \sum_{k=1}^{\infty}\frac{2^{k-1}\eta^k}{\sigma^{2k}}\mathbb{E}\frac{\left[\tr((\hat{\Sigma}_k^{i})^k) - \tr((\Sigma_k^{i})^k)\right]}{k} \nonumber \\
&= \sum_{k=2}^{\infty}\frac{2^{k-1}\eta^k}{\sigma^{2k}}\mathbb{E}\frac{\left[\tr((\hat{\Sigma}_k^{i})^k) - \tr((\Sigma_k^{i})^k)\right]}{k}\nonumber \\
&\leq \frac{4M^4\eta^2}{B^2 B^{\prime}\sigma^4} + \sum_{k=3}^{\infty}\frac{2^{k-1}\eta^k}{\sigma^{2k}}\mathbb{E}\frac{\left[\tr((\hat{\Sigma}_k^{i})^k) - \tr((\Sigma_k^{i})^k)\right]}{k}\nonumber \\
&\leq \frac{4M^4\eta^2}{B^2 B^{\prime}\sigma^4} + \frac{32\eta^3M^6}{3\sigma^6 B^3(1-\tfrac{4\eta M^2}{\sigma^2 B})}  \leq C_{\mathsf{Cov}}\left[\frac{M^4\eta^2}{B^2 B^{\prime}\sigma^4} + \frac{\eta^3M^6}{\sigma^6 B^3} \right]
\end{align}
where $ C_{\mathsf{Cov}}$ is some universal constant. Hence, from equations \eqref{eqn:cc-rbm-kl-decompose}, \eqref{eqn:cc-rbm-wass-clt-bound} and \eqref{eqn:cc-rbm-cov-mismatch-bound}, we obtain the following guarantee
\begin{align*}
    \KL{(\hvx^i_k)_{i \in [n], k \in [K]}}{(\vx^i_k)_{i \in [n], k \in [K]}} &\lesssim \frac{\eta^2 M^4 n K}{B^2 B^{\prime} \sigma^4} + \frac{\eta^3 M^6 n K}{B^3 \sigma^6} +  \frac{d\eta^3 M^6 nK (1 + \log B)^2}{B^4 \sigma^6} 
\end{align*}
\end{proof}

\section{Discussion on Assumptions}
\label{app-sec:assumption-discuss}
\subsection{Stochastic Gradient Growth}
\label{app-subsec:grad-growth}
We discuss how Assumption \ref{as:stoc-grad-growth} is actually weaker than the assumptions made in \cite{raginsky2017non} and \cite{quanquan-sgld}. Beyond this, we also demonstrate how, unlike our result, \cite{raginsky2017non} requires the stochastic gradient noise to become very small (i.e. $\Otilde(\epsilon^4)$) to ensure $\epsilon$-convergence.
\subsubsection{Assumption \ref{as:stoc-grad-growth} and \cite{raginsky2017non}}
We first compare Assumption \ref{as:stoc-grad-growth} to the assumptions made in \cite{raginsky2017non}. Our work analyzes SGLD with the random batch stochastic approximation, (i.e. SGLD run using mini-batch stochastic gradient estimate $\tfrac{1}{B} \sum_{j=1}^{B} \nabla f(\hvx_k, \xi_{k,j})$), which is the most commonly used stochastic approximation in the literature. Moreover, both our work and \cite{raginsky2017non} consider the general stochastic problem $F(\vx) = \bE_{\xi}[f(\vx, \xi)]$ (i.e. both are more general than finite-sum problems) \\

\noindent Under setting, we note that \cite{raginsky2017non} considers the following assumptions 
\begin{itemize}
    \item \textbf{Boundedness at a Point} $\|\nabla f(0, \xi)\| \leq B \ \forall \ \xi \in \Xi$, as implied in Assumption A.1 of \cite{raginsky2017non}
    \item \textbf{Component Smoothness} $\| \nabla f(\vx, \xi) - \nabla f(\vy, \xi) \| \leq M \|\vx - \vy\|, \ \forall \ \vx, \vy \in \bR^d, \xi \in \Xi$ as implied in Assumption A.2 of \cite{raginsky2017non}
\end{itemize}
Applying Assumption A.2 with $\vy=0$ shows that $\|\nabla f(\vx, \xi) \| \leq M \|\vx\| + B \ \forall \ \vx \in \bR^d$. Moreover, since $\nabla F(\vx) = \bE_{\xi}[\nabla f(\vx, \xi)]$, Jensen's inequality implies that $\|\nabla F(\vx)\| = \|\bE_{\xi}[\nabla f(\vx, \xi)]\| \leq \bE_{\xi}[\|\nabla f(\vx, \xi)\|] \leq M \|\vx\| + B$. Finally, from the triangle inequality, we conclude that $\|\nabla F(\vx) - \nabla f(\vx, \xi)\| \leq \|\nabla F(\vx)\| + \|\nabla f(\vx, \xi)\| \leq 2 M \|\vx\| + 2B$. This is identical to Assumption \ref{as:stoc-grad-growth} (with $M \leftrightarrow 2M$ and $G \leftrightarrow 2B$). Thus, Assumption A.1 and A.2 of \cite{raginsky2017non} imply Assumption \ref{as:stoc-grad-growth}. \\

\noindent Along similar lines, one can show that Assumption \ref{as:stoc-grad-growth} is more general than that of \cite{quanquan-sgld}. Firstly, \textbf{\cite{quanquan-sgld} consider the finite-sum problem}, i.e., $F(\vx) = \nicefrac{1}{n} \sum_{i \in [n]} f_i(\vx)$, whereas we consider the general stochastic problem $F(\vx) = \bE_{\xi}[f(\vx, \xi)]$. Moreover, Assumption 4.4 of \cite{quanquan-sgld} assumes \textbf{component smoothness}, i.e., $\|\nabla f_i(\vx) - \nabla f_i(\vy)\| \leq L\|\vx - \vy\| \ \forall \ i \in [n], \vx, \vy \in \bR^d$. Lastly, \cite{quanquan-sgld} uniformly bound the gradient at a point, i.e., their proof relies on the fact that there exists a constant $G$ such that $\| \nabla f_i(0) \| \leq G \ \forall \ i \in [n]$ (see Lemma 6.2 in \cite{quanquan-sgld} and also their proof of Theorem 4.5 in Appendix A.1 which uses Lemma 6.2). \\

\noindent Under this setting, one can apply the same arguments as above to show that Assumption \ref{as:stoc-grad-growth} of our work is weaker than the assumptions in \cite{quanquan-sgld}. In particular, the component smoothness and boundedness at a point assumptions imply $\|\nabla f_i(\vx)\| \leq L \|\vx\| + G \ \forall \ i \in [n], \vx \in \bR^d$. Then, one can apply triangle inequality and Jensen's inequality to show that $\|\nabla F(\vx) - \nabla f_i(\vx)\| \leq 2L \|\vx\| + 2G \ \forall \ i \in [n], \vx \in \bR^d$.  \\

\noindent We note that, unlike \cite{raginsky2017non, quanquan-sgld} our work does not make any component smoothness or uniform boundedness at a point assumptions. 
\subsubsection{Magnitude of Stochastic Gradient Noise in \cite{raginsky2017non}}
In Assumption A.4 of their paper, \cite{raginsky2017non} assume that, for any $\vx \in \bR^d$, \textbf{the variance of the stochastic gradient oracle at $\vx$ is bounded by $2\delta (M^2 \|\vx\|^2 + B^2)$.} \\

\noindent Beyond Assumption A.4, \cite{raginsky2017non} also assume component dissipativity (i.e., dissipativity of each $f(\vx, \xi)$, see Assumption A.3), component smoothness and boundedness at a point (discussed above), to prove the following convergence guarantee for SGLD in Wasserstein-2 distance (see Proposition 3.3 of \cite{raginsky2017non}). 
\begin{equation}
\label{eqn:raginsky-wass-rate}
    \wass{2}(\Law{\hvx_K}, \pistar) \leq C_0 K \eta \delta^{\nicefrac{1}{4}} + C_1 K \eta^{\nicefrac{5}{4}} + C_2 e^{-\nicefrac{K\eta}{\beta c_{\mathsf{LS}}}}
\end{equation}
where $C_0, C_1, C_2, c_{\mathsf{LS}}$ are problem dependent constants. Consider any $\epsilon \geq 0$. To ensure $\wass{2}(\Law{\hvx_K}, \pistar) \leq O(\epsilon)$, one must ensure each term in \eqref{eqn:raginsky-wass-rate} is $O(\epsilon)$. To set  $ C_2 e^{-\nicefrac{K\eta}{\beta c_{\mathsf{LS}}}} \leq O(\epsilon)$, $\eta$ and $K$ must satisfy $\eta K \gtrsim \ln(\nicefrac{1}{\epsilon})$. This implies that $K\eta \delta^{\nicefrac{1}{4}} \gtrsim \delta^{\nicefrac{1}{4}} \ln(\nicefrac{1}{\epsilon})$. However, to ensure $\epsilon$-convergence, $ K \eta \delta^{\nicefrac{1}{4}}$ must be $O(\epsilon)$ which implies $\delta^{\nicefrac{1}{4}} \ln(\nicefrac{1}{\epsilon}) \leq O(\epsilon)$ i.e. $\delta \leq O(\epsilon^4)$ \\

\noindent Note that since $\delta$ effectively controls the variance of the stochastic gradient oracle, $\delta \leq O(\epsilon^4)$ means that the strength of the stochastic gradient noise must be very small to ensure convergence. On the contrary, in Assumption \ref{as:stoc-grad-growth} (and in its relaxation in Assumption \ref{as:stoc-grad-subg}), we allow $M = O(1), G = O(\sqrt{d})$ and only require a constant $O(\sqrt{d})$ (or $O(d^{\nicefrac{1}{3}})$ for CC-SGLD) batch size in all our results. Hence, our results ensure $\epsilon$-convergence without needing the strength of the stochastic gradient noise to diminish with $\epsilon$. 
\subsection{Moment Bounds}
\label{app-subsec:assumption-moment-bounds}
We now demonstrate that dissipativity and smoothness of $F$ along with the growth condition on stochastic gradients implies \ref{eqn:defn-moment-growth}. This result is a straightforward adaptation of Lemma 3.2 of \cite{raginsky2017non} under relaxed conditions (as highlighted above, the growth condition is weaker than Assumptions A.1 and A.2 of \cite{raginsky2017non}. Moreover. Raginsky et. al. assume dissipativity for each component $f(\vx, \xi)$ while we only assume it for $F$).  \\

\noindent Suppose $F$ is \ref{eqn:defn-smoothness} and $(m, b)$ dissipative, i.e., $\dotp{\nabla F(\vx)}{\vx} \geq m \|\vx\|^2 - b$. Also assume that \ref{eqn:defn-grad-growth} is satisfied (the proof holds even with \ref{eqn:defn-grad-growth-subg}). For convenience, assume $\nabla F(0) = 0$, i.e., $0$ is a stationary point of $F$ (the result holds even if we consider any arbitrary stationary point $\vx^* \neq 0$). Recall that the trajectory $\hvx_{1}, \hvx_{2}, \dots, \hvx_K$ follows the update rule
\begin{align*}
    \hvx_{k+1} = \hvx_{k} - \eta \vg_k + \sqrt{2\eta} \epsilon_k, \quad \vg_k = \tfrac{1}{B} \sum_{j=1}^{B} \nabla f(\hvx_k, \xi_{k, j}) \quad \epsilon_k \sim \cN(0, \vI)
\end{align*}
Let $\hvy_k = \hvx_k - \eta \vg_k$. Using the fact that $\epsilon_k \sim \cN(0, \vI)$
\begin{equation}
\label{eqn:moment-bound-ineq1}
    \bE[\|\hvx_{k+1}\|^2|\hvx_k] = \bE\left[\|\hvy_k\|^2 + \sqrt{2\eta}\dotp{\epsilon_k}{\vg_k} + 2\eta\|\epsilon_k\|^2|\hvx_k\right] = \bE[\|\hvy_k\|^2 | \hvx_k] + 2\eta d
\end{equation}
Let $\vN_k = \vg_k - \nabla F(\hvx_k)$. It follows that, $\|\hvy_k\|^2 = \|\hvx_k\|^2 - 2\eta \dotp{\vg_k}{\hvx_k} + \eta^2 \|\vg_k\|^2$. Moreover, $\bE[\vg_k | \hvx_k] = \nabla F(\hvx_k)$, and, by the \ref{eqn:defn-smoothness} and \ref{eqn:defn-grad-growth} conditions, 
\begin{align*}
    \bE[\|\vg_k\|^2|\hvx_k] \leq \|\nabla F(\hvx_k)\|^2 + 2(M^2 \|\hvx_k\|^2 + G^2) \leq (L^2 + 2M^2)\|\hvx_k\|^2 + 2G^2
\end{align*}
Hence,
\begin{align}
\label{eqn:moment-bound-ineq2}
    \bE[\|\hvy_k\|^2 | \hvx_k] &\leq (1 + \eta^2 L^2 + 2\eta^2 M^2)\|\hvx_k\|^2 - 2\eta \dotp{\nabla F(\hvx_k)}{\hvx_k} + 2\eta^2 G^2\nonumber \\
    &\leq (1 - 2\eta m + \eta^2 L^2 + 2\eta^2 M^2)\|\hvx_k\|^2 + 2\eta^2 G^2 + 2\eta b
\end{align}
where the last inequality follows from dissipativity. Now, setting $\eta \leq \tfrac{m}{L^2 + 2M^2}$, we conclude from \eqref{eqn:moment-bound-ineq1} and \eqref{eqn:moment-bound-ineq2}, 
\begin{align*}
    \bE[\|\hvx_{k+1}\|^2] \leq (1 - \eta m) \bE[\|\hvx_k\|^2] + 2\eta d + 2\eta b + 2\eta^2 G^2
\end{align*}
Unrolling the above recurrence and using we obtain,
\begin{align*}
    \bE[\|\hvx_k\|^2] \leq 2d/m + 2b/m + 2\eta G^2/m \lesssim d \ \forall \ k \in [K]
\end{align*}
From the above and \eqref{eqn:moment-bound-ineq2} it also follows that $\bE[\|\hvy_k\|^2] \lesssim d\ \forall \ k \in [K]$. Thus, we have established \ref{eqn:defn-moment-growth} for $p=2$. We follow a similar procedure for $p=4$. In particular, expanding powers and taking expectations wrt $\epsilon_k$ gives us
\begin{align}
\label{eqn:moment-bound-ineq3}
\bE[\|\hvx_{k+1}\|^4] &\leq \bE[\|\hvy_k\|^4] +  8\eta \bE[\|\hvy_k\|^2] + 8\eta^2 d^2 + 4\eta d\bE[\|\hvy_k\|]^2 \nonumber \\
&\leq \bE[\|\hvy_k\|^4] + C\left(\eta d + \eta d^2 + \eta^2 d^2\right)
\end{align}
where $C \geq 0$ is an absolute numerical constant. The last inequality uses the fact that $\bE[\|\hvy_k\|^2] \lesssim d$. As before, using $\hvy_k = \hvx_k - \eta \nabla F(\hvx_k) - \eta \vN_k$, using the dissipativity and moment growth conditions and expanding powers, we get, 
\begin{align*}
    \|\hvy_k\|^4 \leq \left[1 - 4 \eta m + c_1 \eta^2(L^2 + M^2) + c_2 \eta^4 (L^2 + M^2)^2 \right]\|\hvx_k\|^4 + (c_3 \eta^2 G^2 + c_4 \eta b) \|\hvx_k\|^2 + c_5 \eta^4 G^4
\end{align*}
where $c_1, \ldots, c_5$ are universal constants. Taking expctations and using the fact that $\bE[\|\hvx_k\|^2] \lesssim d$, we conclude from the above inequality and \eqref{eqn:moment-bound-ineq3} that,
\begin{align*}
    \bE[\|\hvx_{k+1}\|^4] \leq \left[1 - 4 \eta m + c_1 \eta^2(L^2 + M^2) + c_2 \eta^4 (L^2 + M^2)^2 \right]\bE[\|\hvx_k\|^4] + C(\eta d + \eta d^2 + \eta^2 d^2 + \eta^4 G^4)
\end{align*}
Setting $\eta \leq \tfrac{m}{c_6(L^2 + M^2)}$ for some large enough universal constant $c_6$ and unrolling the above recurrence, we conclude $\bE[\|\hvx_{k}\|^4] \lesssim d^2 \ \forall k \in [K]$. A similar procedure can be followed for $p=6, 8, \ldots$ to show that for any even $p$, the \ref{eqn:defn-moment-growth} condition holds under some appropriate choice of $\eta$. Extension to odd $p$ follows by an application of Jensen's inequality (i.e. $\bE[\|\hvx_k\|^{2m-1}] \leq (\bE[\|\hvx_k\|^{2m}])^{\nicefrac{2m-1}{2m}} \lesssim d^{\nicefrac{m-1}{2}}$) \\

\noindent Finally, since dissipativity is a weaker condition than strong convexity outside of a compact set (as per the discussion in \cite{cheng2020sharp}, Section 2.1), the above proof also establishes that strong convexity outside of a compact set implies \ref{eqn:defn-moment-growth} 

\end{document}